\newtheorem{Def}{Definition}[section]
\newtheorem{Thm}[Def]{Theorem}
\newtheorem{eg}[Def]{Example}
\newtheorem{Lem}[Def]{Lemma}
\newtheorem{Prop}[Def]{Proposition}
\newtheorem{Cor}[Def]{Corollary}
\newtheorem{Rem}[Def]{Remark}
\newtheorem{Con}[Def]{Conjecture}
\numberwithin{equation}{section}
\newcommand{\BB}{\mathbb{B}}
\newcommand{\RR}{\mathbb{R}}
\newcommand{\SSp}{\mathbb{S}}
\newcommand{\ZZ}{\mathbb{Z}}
\newcommand{\cA}{\mathcal{A}}
\newcommand{\cB}{\mathcal{B}}
\newcommand{\cC}{\mathcal{C}}
\newcommand{\cE}{\mathcal{E}}
\newcommand{\cF}{\mathcal{F}}
\newcommand{\cH}{\mathcal{H}}
\newcommand{\cI}{\mathcal{I}}
\newcommand{\cL}{\mathcal{L}}
\newcommand{\cM}{\mathcal{M}}
\newcommand{\cR}{\mathcal{R}}
\newcommand{\cS}{\mathcal{S}}
\newcommand{\cT}{\mathcal{T}}
\newcommand{\cV}{\mathcal{V}}
\newcommand{\scH}{\mathscr{H}}
\newcommand{\scL}{\mathscr{L}}
\newcommand{\scR}{\mathscr{R}}
\newcommand{\scT}{\mathscr{T}}
\newcommand{\scU}{\mathscr{U}}
\newcommand{\scV}{\mathscr{V}}
\newcommand{\scW}{\mathscr{W}}
\newcommand{\mbfF}{\mathbf{F}}
\newcommand{\mbfG}{\mathbf{G}}
\newcommand{\mbfI}{\mathbf{I}}
\newcommand{\mbfP}{\mathbf{P}}
\newcommand{\mbfQ}{\mathbf{Q}}
\newcommand{\I}{\mathcal{I}}
\newcommand{\Int}{\text{Int}}
\newcommand{\Clos}{\text{Clos}}
\newcommand{\spt}{\text{spt}}
\newcommand{\graph}{\mathrm{graph}}
\newcommand{\Euc}{\text{Euc}}
\newcommand{\orig}{\mathbf{0}}
\newcommand{\vol}{\mathrm{vol}}
\newcommand{\ES}{\mathrm{ES}}
\newcommand{\ind}{\mathrm{ind}}
\title{On mean curvature flow translators with prescribed ends}
\author{Ao Sun$^{1}$}
\address{$^1$Lehigh University, Department of Mathematics, Chandler-Ullmann Hall, Bethlehem, PA 18015; email: aos223@lehigh.edu}
\author{Zhihan Wang$^2$}
\address{$^2$Department of Mathematics, Princeton University, Fine Hall, 304 Washington Road, Princeton, NJ 08540
	\\
	Current address: Cornell University, Department of Mathematics, 310 Malott Hall, Ithaca, NY 14853; email: zhihanw@math.princeton.edu}
\date{\today}
\begin{document}
	
	\begin{abstract}
		
		Given a smooth closed embedded self-shrinker $S$ with index $I$ in $\RR^{n}$, we construct an $I$-dimensional family of complete translators polynomially asymptotic to $S\times\RR$ at infinity, which answers a long-standing question by Ilmanen. We further prove that $\RR^{n+1}$ can be decomposed in many ways into a one-parameter family of closed sets $\coprod_{a\in \RR} T_a$, and each closed set $T_a$ contains a complete translator asymptotic to $S\times\RR$ at infinity. If the closed set $T_a$ fattens, namely it has nonempty interior, then there are at least two translators asymptotic to each other at an exponential rate, which can be viewed as a kind of nonuniqueness. We show that this fattening phenomenon is non-generic but indeed happens.
	\end{abstract}
	\maketitle
	\tableofcontents

	%%%%%%%%%%%%%%%%%%%%%%%%%%%%%%%%%%%%%%%%%%%%%%%%%%%%%%%%%%%%%%%%%%%%%%%%%%%%%%%%%%
	%%%%%%%%%%%%%%%%%%%%%%%%%%%%%%%%%%%%%%%%%%%%%%%%%%%%%%%%%%%%%%%%%%%%%%%%%%%%%%%%%%
	%%%%%%%%%%%%%%%%%%%%%%%%%%%%%%%%   Introduction   %%%%%%%%%%%%%%%%%%%%%%%%%%%%%%%%
	%%%%%%%%%%%%%%%%%%%%%%%%%%%%%%%%%%%%%%%%%%%%%%%%%%%%%%%%%%%%%%%%%%%%%%%%%%%%%%%%%%
	%%%%%%%%%%%%%%%%%%%%%%%%%%%%%%%%%%%%%%%%%%%%%%%%%%%%%%%%%%%%%%%%%%%%%%%%%%%%%%%%%%
	\section{Introduction} \label{Sec_Intro}
	
	In this paper, we study translators of mean curvature flow. Let $\RR^{n+1}=\RR^n\times\RR$ be parametrized by $(x,z)$, and $z$ is the last coordinate function, with $\partial_z$ being the coordinate vector. A \textbf{translator} is a hypersurface $\Sigma\subset \RR^{n+1}$ such that \[
	\vec{H}_\Sigma=\vec{v}^\perp,  \]
	for some unit vector $\vec{v}\in \RR^{n+1}$, where $\vec{H}_\Sigma$ is the mean curvature vector of $\Sigma$ and $\perp$ denotes the projection onto the normal bundle of $\Sigma$. By composing with a rotation in $\RR^{n+1}$, we may assume without loss of generality that $\vec{v} = \partial_z$ throughout this article.
	
	Every translator $\Sigma$ is naturally associated with an eternal mean curvature flow $\{\Sigma+t\partial_z\}_{t\in\RR}$ in $\RR^{n+1}$.  In this way,  translators are natural type II singularity models of mean curvature flow, see \cite{AV97_DegenerateNeckpinches, Hamilton95_HarnackMCF}.  Various examples of mean curvature flow translators have been constructed in the literature, especially in $\RR^3$, see \cite{Mullins56_MCF, AltschulerWu94_Transl, ClutterbuckSchnurerSchulze07_TranslCatenoid,  WangXJ11_ConvexMCF, Nguyen09_TranslTrident, Nguyen13_Transl, Nguyen15_DoublyPeriodTransl, DavilaDelPinoNguyen17_FiniteTopTransl, Smith15_TranslPrescibGenus, BourniLangfordTinaglia20_Transl, HIMW19_TranslGraph, HMW19_ScherkTransl, HMW22_SemiGraphTransl}.  
	The analogs of translators also play essential roles in the study of other geometric flows. For example, there are various studies of Ricci flow that focus on the steady Ricci solitons, which are known to be the analog of mean curvature flow translators. Such solutions are important to the study of Ricci flow singularities, see the pioneering work of Hamilton \cite{Hamilton88_Cigar}.
	
	Start with a translator $\Sigma\subset \RR^{n+1}$ of \textbf{finite entropy} (see the definition in Section \ref{Subsec_Prelim MCF and Entropy}), an argument of Ilmanen \cite{Ilmanen94_EllipReg} shows that any tangent flow at $-\infty$ of the ancient mean curvature flow $\{\Sigma + t\partial_z\}_{t<0}$ is generated by a \textbf{self-shrinker} splitting in $z$-direction (see Corollary \ref{Cor_Pre_Blow down transl MCF split} for a precise statement). 
	Here, a \textbf{self-shrinker} is a (weak) submanifold in $\RR^{n+1}$ satisfying $\vec{H}=-X^\perp/2$. Self-shrinkers are known to be the type I singularity model for mean curvature flows and have been widely studied in recent years. Besides the simplest example of generalized round cylinders $\SSp_{\sqrt{2k}}^k\times \RR^{n-k}\subset \RR^{n+1}$, more examples of self-shrinkers have been constructed, see \cite{Angenent92_Doughnut, Moller11_ClosedShrinker, Nguyen14_Shrinker, 
		KapouleasKleeneMoller18_DesingShrinker, Ketover16_Shrinker, DruganLeeNguyen18_Survey_SymShrinker, SunWangZhou20_MinmaxShrinker, KapouleasMcGrath20_DoublingShrinker, BuzanoNguyenSchulz21_ShrinkerGenus, Riedler22_ClosedShrinker}. We include a brief discussion of self-shrinkers in Section \ref{Subsec_Prelim MCF and Entropy}. 
	
	From this asymptotic point of view, all the examples above of translators turn out to be either of infinite entropy, or have a tangent flow at infinity to be the generalized round cylinders $\SSp^k_{\sqrt{2k}}\times \RR^{n-k}$, possibly with multiplicities.  This brings to us the natural question of whether other self-shrinkers can be the asymptotic of some translator in $\RR^{n+1}$.
	
	In this paper, we construct new families of translators with prescribed ends. Throughout this paper, we will be focused on hypersurfaces, namely submanifolds with codimension $1$.

	\begin{Thm}\label{Thm_Intro_main}
		Suppose $S\subset \RR^n$ is a closed embedded smooth self-shrinker with \textbf{index $I$}. Then there exists an $I$-parameter family of complete embedded translators in $\RR^{n+1}$, possibly with singularities of dimension $\leq n-7$, each of which has the unique tangent flow at $-\infty$ to be $S\times\RR$ with multiplicity one.
	\end{Thm}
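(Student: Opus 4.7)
Any translator $\Sigma$ with multiplicity-one tangent flow $S\times\RR$ at $-\infty$ must, at its $z\to+\infty$ end, be a small normal graph over the self-similar profile $\Pi_S:=\{(\sqrt{z}\,\omega,z):\omega\in S,\,z>0\}$. Parametrizing the graph by $u(\omega,z)$ and substituting into $\vec{H}_\Sigma=(\partial_z)^\perp$, I translate the translator equation into a nonlinear equation on $S\times(z_0,\infty)$. After introducing a logarithmic time $\tau$ related to $z$ and rescaling appropriately, the equation takes the schematic form $\partial_\tau u = L_S u + \cN(u,Du,D^2u;\tau)$, where
\[
L_S=\Delta_S-\tfrac12\, X\cdot\nabla_S+|A_S|^2+\tfrac12
\]
is the shrinker Jacobi operator of $S$ (self-adjoint in the Gaussian-weighted $L^2$) and $\cN$ collects nonlinear and lower-order terms that are exponentially small in $\tau$. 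Crucially, the translator symmetry forces the ansatz to be $z$-translation invariant at leading order, which replaces the (infinite-index) operator of $S\times\RR$ by the finite-index operator $L_S$.

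\textbf{Stage 2 (Local construction via spectral analysis).} Because $S$ has index $I$, the operator $L_S$ has exactly $I$ positive eigenvalues $\mu_1,\dots,\mu_I>0$ with $L^2$-orthonormal eigenfunctions $\phi_1,\dots,\phi_I$, spanning the $I$-dimensional unstable subspace. For each $a=(a_1,\dots,a_I)\in\RR^I$ in a small neighborhood of $0$, I will first build a formal asymptotic expansion for $u$ whose leading order is the prescribed combination of the $\phi_i$'s weighted by the appropriate decaying exponentials in $\tau$. I then solve for the correction by a Banach fixed-point argument in a weighted Hölder space: modulo the unstable subspace (which is already determined by the choice of $a$), the linearization $\partial_\tau-L_S$ admits a bounded right inverse with sufficient decay estimates, and the smallness of $\cN$ in the chosen norm makes the resulting map a contraction. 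This yields a smooth embedded local translator $\Sigma_a^{\mathrm{loc}}$ on a half-cylinder $\{z\geq z_0(|a|)\}$ with polynomial rate of convergence to $\Pi_S$, and the leading coefficients $a_i$ are recoverable from the asymptotics, so distinct parameters produce distinct local ends.

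\textbf{Stage 3 (Global extension) and main obstacle.} To promote $\Sigma_a^{\mathrm{loc}}$ to a complete translator in $\RR^{n+1}$, I treat $\partial\Sigma_a^{\mathrm{loc}}\subset\{z=z_0\}$ as Dirichlet-type boundary data and fill in the region $\{z\leq z_0\}$ by Ilmanen's elliptic regularization or an equivalent weak Brakke-flow/level-set continuation; the translating direction $\partial_z$ provides a natural barrier/energy functional. Standard Allard and Schoen--Simon partial regularity then yields the bound of $n-7$ on the dimension of the singular set. The delicate step -- and the main obstacle -- is this global extension: one must rule out collapse, non-properness, or uncontrolled singular behavior when prolonging the end across a compact region, and ensure that the resulting closed set still meets $\Sigma_a^{\mathrm{loc}}$ smoothly at $\{z=z_0\}$ (so that the complete translator inherits the prescribed asymptotic end). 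By contrast, Stage 2 is largely a standard application of implicit-function techniques in weighted function spaces once the spectral framework of $L_S$ and the correct function spaces are set up.
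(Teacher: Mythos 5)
Your Stage 1 and Stage 2 track the paper reasonably well. The paper works directly with the degenerate operator $T_S := \partial_z^2 + \partial_z + z^{-1}L_S$ rather than passing to a logarithmic time variable, but the essential point — that the Jacobi operator $L_S$ governs the asymptotics, and the $I$-dimensional positive eigenspace of $L_S$ parametrizes the admissible polynomially decaying ends — is the same. The paper's implementation (Theorem \ref{Thm_Moduli of transl end } and Lemma \ref{Lem_Moduli_contraction map}) solves a cut-off equation on $S\times\RR_{>z_0/12}$ with Dirichlet data at $z=z_0/12$ and the $I$ free parameters appear as the projection of $\partial_z u$ onto $E_{<0}$ at that slice, rather than as asymptotic coefficients at infinity, but this is a cosmetic difference. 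The analytic work (weighted $L^2$ and Schauder estimates that exploit both elliptic and parabolic scalings of $T_S$) is non-trivial and you correctly identify its existence, though you are too quick to call it "largely a standard application of implicit-function techniques."

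Stage 3 is where the genuine gap lies. Your proposal — "treat $\partial\Sigma_a^{\mathrm{loc}}\subset\{z=z_0\}$ as Dirichlet-type boundary data and fill in $\{z\leq z_0\}$" — would not produce a smooth translator. A Plateau problem solution for the $\I$-functional in $\{z<z_0\}$ with that boundary data generically meets $\Sigma_a^{\mathrm{loc}}$ only continuously at $\{z=z_0\}$, not $C^1$, so the union fails to satisfy the translator equation there. You flag this matching issue as a concern but propose no mechanism to resolve it, and the Dirichlet-gluing framework fundamentally cannot. The paper's construction (Theorem \ref{Thm_Existence of Transl w prescib end}) does something different: it minimizes $\I$ among currents in all of $\RR^{n+1}$ with boundary $\Sigma_e\cap\RR^n\times\{R\}$ at a \emph{large} height $R\gg z_0$, and then sends $R\to+\infty$. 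The minimizers $\Sigma_R$ are already translators throughout $\{z<R\}$, so the limit is automatically a smooth (up to dimension $n-7$) translator; crucially, the limit does not coincide with $\Sigma_a^{\mathrm{loc}}$ on the end but is only \emph{exponentially asymptotic} to it. The decisive technical input you omit entirely is the uniform-in-$R$ compactness: one must show $\Sigma_R$ stays $\delta$-close to the prescribed end outside a compact set independent of $R$. The paper achieves this by combining a barrier/decay-improvement argument with a parabolic blow-down and the Liouville-type classification of ancient rescaled flows asymptotic to $S$ from \cite{CCMS20_GenericMCF}; without such a rigidity input, the minimizers could drift as $R\to\infty$ and the limit could lose the prescribed end. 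This is the real content of the global step, not merely Federer–Fleming compactness and partial regularity.

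Two smaller points: the $n-7$ singular dimension bound comes from the minimizing (area-for-a-conformal-metric) hypersurface regularity theory rather than Allard regularity, which only gives $\epsilon$-regularity; and the argument that distinct parameters yield non-congruent translators requires the quantitative lower bound of Theorem \ref{Thm_Moduli of transl end }(ii), which your Stage 2 gestures at ("recoverable from asymptotics") but does not actually produce.
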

	
	Our main theorem answers a question of Ilmanen. As the parabolic blow-down limit of a translator gives a corresponding self-shrinker, in \cite[Appendix J]{Ilmanen94_EllipReg}, Ilmanen asked for what class of translators and self-shrinkers, this correspondence is one-to-one. Theorem \ref{Thm_Intro_main} implies that for any closed self-shrinker other than the sphere, this correspondence is not one-to-one, but multi-family-to-one. 
	
	Recall that self-shrinkers are critical points of the Gaussian area functional $\int_S e^{-|x|^2/4}$,  and the \textbf{index} of a self-shrinker $S$ is defined as the dimension of the negative eigenspace of the Jacobi operator $-L_S:= -(\Delta_S - \nabla_S + |A_S|^2 + 1/2)$ of the Gaussian area functional on $S$.	
	It was proved by Colding-Minicozzi \cite{ColdingMinicozzi12_generic} that any closed self-shrinker in $\RR^{n}$ other than the round sphere has index $I\geq n+2$. Because the space of translations in $\RR^{n+1}$ acting on translators has dimension $n+1$, our construction gives at least one dimensional family of pairwise non-congruent translators when $S$ is not a round sphere.  On the other hand, if $S$ is the round sphere, then by the work of Hershkovits \cite{Hershkovits20_Translators} and Choi-Haslhofer-Hershkovits-White \cite{ChoiHaslhoferHershkovitsWhite22_AncientMCF}, any such translator with the tangent flow at $-\infty$ to be $\SSp^{n-1}_{\sqrt{2n-2}}\times \RR$ is a translation of the well-known bowl soliton \cite{AltschulerWu94_Transl} in $\RR^{n+1}$, which is recovered from our construction.
	
	The possible existence of low dimensional singularities in the Theorem comes from the regularity theory of minimizing hypersurfaces. In particular, Theorem \ref{Thm_Intro_main} produces smooth translators with prescribed end in $\RR^{n+1\leq 7}$.  Hereafter in this paper, when we refer to a hypersurface, we allow for a singular set of codimension $\geq 7$ unless otherwise specified. 
	
	\subsection{Translating ends}
	Our construction is highly motivated by the work of Chan \cite{Chan97} to construct minimizing hypersurfaces that are asymptotic to a given strictly minimizing hypercone. In Chan's work, she first constructed a continuous family of exterior minimal graphs over the given cone. As an analogy, we derive,
	\begin{Thm}[c.f. Theorem \ref{Thm_Moduli of transl end }] \label{Thm_Intro_Moduli of Transl end} 
		Suppose $S\subset \RR^n$ is a closed embedded smooth self-shrinker with unit normal field $\nu$ and \textbf{index $I$}. Then for every sufficiently large $z_0>1$, there exists a continuous $\BB^I$-parametrized family of functions $\{u_{\varphi, z_0}\in C^2(S\times [z_0, +\infty))\}_{\varphi\in \BB^I}$ such that for every $\varphi\in \BB^I$, $u_{\varphi, z_0}(\cdot, z)$ polynomially tends to $0$ as $z\to +\infty$ and that \[
		\ES[u_{\varphi, z_0}]:= \{\left(\sqrt{z}(x+u(x, z)\nu_x), z\right): x\in S, z> z_0\},  \]
		satisfies the translator equation in $\RR^n\times (z_0, +\infty)$. Here $\BB^I$ is the unit ball in $\RR^I$.  Such $\ES[u_{\varphi, z_0}]$ is referred to as a \textbf{pl-simple translating end} over $S\times \RR_+$.
		
		Moreover, for different $\varphi \neq \varphi'$, $u_{\varphi, z_0}-u_{\varphi', z_0}$ has a polynomial decaying lower bound;  And up to an exponential decaying error, the above families exhaust all possibly pl-simple translating ends over $S\times \RR_+$.
	\end{Thm}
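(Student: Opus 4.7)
The plan is to reduce the translator equation on the end to a quasilinear elliptic PDE on the half-cylinder $S\times[z_0,\infty)$ and carry out an asymptotic analysis analogous to Chan's treatment of minimal graphs over strictly minimizing cones. Parameterizing the end by $(x,z)\mapsto(\sqrt z\,(x+u(x,z)\nu_x),z)$, the translator equation $\vec H=\partial_z^\perp$ becomes a quasilinear elliptic equation $\mathcal F[u]=0$. Substituting $\tau=\log z$ and $v(x,\tau)=u(x,e^\tau)$ should desingularize the equation at infinity, so that the linearization at $v=0$ takes the form of an autonomous operator
\[
  \mathcal L_0 v \;=\; \partial_\tau^2 v + \alpha\,\partial_\tau v + L_S v
\]
for some dimension-dependent constant $\alpha$, plus a nonlinearity $\mathcal N(v,\nabla v,\nabla^2 v)$ that is quadratic in small $v$ and a non-autonomous correction $\mathcal E(\,\cdot\,;\tau)$ whose coefficients decay like $e^{-\tau}$.

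The main linear step is indicial analysis. Diagonalizing $L_S$ with eigenvalues $\lambda_1\ge\cdots\ge\lambda_I>0\ge\lambda_{I+1}\ge\cdots$ and orthonormal eigenfunctions $\phi_k$, the linearized equation decouples into scalar ODEs with characteristic exponents $r_\pm(\lambda_k)=\tfrac12(-\alpha\pm\sqrt{\alpha^2-4\lambda_k})$. For each of the $I$ index eigenvalues both exponents have negative real part, so both modes decay, and the slower exponent $r_+(\lambda_k)$ carries the ``free asymptotic data''; for each non-positive eigenvalue only the faster exponent is admissible. Choosing a weight $\mu$ strictly between $-r_+(\lambda_I)$ and the next decay threshold, $\mathcal L_0$ becomes a Fredholm map between weighted Hölder spaces $e^{-\mu\tau}C^{2,\gamma}\to e^{-\mu\tau}C^{0,\gamma}$, with an $I$-dimensional kernel $\mathcal K$ spanned by $\{\phi_k\,e^{r_+(\lambda_k)\tau}\}_{k=1}^I$ and a bounded right inverse on a chosen complement.

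The nonlinear construction then proceeds by contraction. Fixing a continuous linear embedding $\Psi\colon\BB^I\hookrightarrow\mathcal K$ and writing $v=\Psi(\varphi)+w$ with $w$ in the complement of $\mathcal K$, the equation becomes the fixed-point problem
\[
  w \;=\; -\,\mathcal L_0^{-1}\!\bigl(\mathcal N(\Psi(\varphi)+w)+\mathcal E(\Psi(\varphi)+w;\tau)\bigr)
\]
on a small ball in the weighted space. For $z_0$ large and $|\varphi|$ small, the quadratic bound on $\mathcal N$ together with the $e^{-\tau}$ decay of $\mathcal E$ makes the right-hand side contractive, producing a unique $w(\varphi)$ depending continuously on $\varphi$ and hence the family $u_{\varphi,z_0}=\Psi(\varphi)+w(\varphi)$. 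The polynomial lower bound on $u_\varphi-u_{\varphi'}$ follows immediately from the nonzero kernel leading term of $\Psi(\varphi)-\Psi(\varphi')$ combined with the fact that $w(\varphi)-w(\varphi')$ lies in the faster-decaying complement. For the exhaustion clause, one projects the leading asymptotic of any given pl-simple end onto $\mathcal K$ at infinity to match some $\varphi^*\in\BB^I$; the residual difference $u'-u_{\varphi^*,z_0}$ then lies in the complement with no slow-mode contribution, and a further indicial analysis at the next spectral threshold upgrades ``faster than every slow mode'' to genuine exponential-in-$z$ decay.

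The main obstacle, I expect, is the linear step: setting up weighted spaces on which $\mathcal L_0$ is Fredholm with kernel of exactly dimension $I$. This requires computing the constant $\alpha$, verifying that the indicial window $(-r_+(\lambda_I),-r_+(\lambda_{I+1}))$ is nonempty so the slow-mode ansatz is genuine (i.e.\ $\alpha^2>4\lambda_k$ for all $k\le I$, or the appropriate complex analog), and confirming that the non-autonomous tail $\mathcal E$ is subordinate enough not to disturb the indicial roots. The upgrade from ``faster than every slow mode'' to genuine exponential-in-$z$ decay in the exhaustion claim is a further delicate point requiring careful bookkeeping of the full tail spectrum of $\mathcal L_0$.
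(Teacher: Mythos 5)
There is a genuine gap at the central linear step. You claim that the substitution $\tau=\log z$ turns the linearization into an autonomous elliptic operator $\mathcal{L}_0 v = \partial_\tau^2 v + \alpha\,\partial_\tau v + L_S v$ to which the standard Chan-style indicial/Fredholm machinery applies. This is false. Writing $v(x,\tau)=u(x,e^\tau)$ and using $\partial_z = e^{-\tau}\partial_\tau$, $\partial_z^2 = e^{-2\tau}(\partial_\tau^2-\partial_\tau)$, the principal part $z\bigl(\partial_z^2 + \partial_z + z^{-1}L_S\bigr)u$ of the translator operator becomes
\begin{align*}
	e^{-\tau}\partial_\tau^2 v + \bigl(1-e^{-\tau}\bigr)\partial_\tau v + L_S v\, .
\end{align*}
The coefficient of $\partial_\tau^2$ is $e^{-\tau}$, so the operator is \emph{not} autonomous; as $\tau\to+\infty$ it degenerates to the heat-type operator $\partial_\tau + L_S$. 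There is no constant $\alpha$ and no quadratic indicial polynomial $r^2+\alpha r+\lambda_k$. For the eigenmode with eigenvalue $\mu$ of $-L_S$, the scalar ODE is $u''+u'-\mu z^{-1}u=0$, which has an \emph{irregular} singular point at $z=\infty$: one asymptotic mode decays like $z^{\mu}$ and the other like $e^{-z}$ (Lemmas \ref{Lem_Moduli_Sol cL_mu = 0 w est} and \ref{Lem_Moduli_Sol cL_mu = 0 fast decay} in the paper), not like two powers $z^{r_\pm}$. So the whole indicial picture you build — the window $(-r_+(\lambda_I),-r_+(\lambda_{I+1}))$, the Fredholm statement in $e^{-\mu\tau}C^{2,\gamma}$, the kernel spanned by $\{\phi_k e^{r_+(\lambda_k)\tau}\}$ — does not exist. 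The paper emphasizes exactly this point: unlike Chan's minimal cone problem (where the cone itself is a solution and the log-substitution yields a genuinely autonomous operator), here $\scT(0)\neq0$ and the linear operator is ``highly degenerate as $z\to\infty$,'' exhibiting parabolic, not elliptic, behavior at infinity.

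Your structural intuition is partially right: the count of free parameters is still $I$, one per negative eigenvalue of $-L_S$, and there is still one slow mode (polynomial) and one fast mode (here exponential, $e^{-z}$) per eigenvalue; and the eventual argument is still a contraction mapping. But because of the degeneracy, one cannot invoke off-the-shelf weighted Fredholm theory. The paper instead develops a hybrid analysis: Gaussian $L^2$ estimates per eigenmode (Lemma \ref{Lem_Moduli_T_S u = f}), Moser iteration for $C^0$ (Lemma \ref{Lem_Moduli_C^0 est}), a parabolic Harnack inequality, a parabolic Schauder estimate for the ``long-range'' $C^{2,\alpha}_\star$ norm using the parabolic rescaling $\tau=z/R$, $\hat z=(z-R\tau)/\sqrt{R}$ (Lemma \ref{Lem_Moduli_C^2,alpha_star est, interior}), and an elliptic short-range Schauder estimate for $C^{2,\alpha}_\sharp$, with the delicate point that there is \emph{no} long-range boundary Schauder estimate (Remark \ref{Rem_Moduli_No long term bdy C^k est}) — which forces the cut-off trick in equation (\ref{Moduli_Transl equ w cut-off and prescib bdy deriv}). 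The $I$ parameters enter as prescribed data $\Pi_{<0}\partial_z u(\cdot,z_0)=\varphi$, not as a projection onto a slow-mode kernel. Your final remark about the exhaustion being a ``delicate point'' is correct, but in the paper's framework the upgrade to exponential decay is comparatively clean once the polynomial asymptotics are pinned down via Corollary \ref{Cor_Moduli_Sharp asymp for T_S u = 0} and Lemma \ref{Lem_Moduli_general transl end is one of the constructed}.
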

	Here, a ``pl-simple end'' stands for a ``polynomially asymptotic simple translating end'', and we refer the readers to Definition \ref{Def_Moduli_p-simple Transl end} for the precise meaning of them. A precise description of this family $\{u_{\varphi, z_0}\}_{\varphi\in \BB^I}$ is in Theorem \ref{Thm_Moduli of transl end }. 
	
	As opposed to the minimal surface case where the given minimal cone itself provides a canonical exterior graph, there's no canonical translating end asymptotic to $S\times \RR$. This somehow suggests that the construction of translating ends is less like a perturbative argument.
	
	One technical difficulty for our construction is to study the dominated linear operator of translator-mean-curvature, $(\partial_z^2+\partial_z+z^{-1}L_S)$, where recall $L_S$ is the Jacobi operator of the self-shrinker $S$. This operator is highly degenerate as $z\to\infty$, which makes it hard to obtain uniform estimates. One novel discovery is that this operator has both elliptic and parabolic features.  We obtain the desired asymptotic estimate by using both elliptic estimates and parabolic estimates for different transformations of this operator.  Based on this, Theorem \ref{Thm_Intro_Moduli of Transl end} is proved by a fixed point argument.
	
	We also find a natural \textbf{one-sided deformation} action $\varpi$ of $\RR$ on the space of pl-simple translating end over $S\times\RR_+$. For every $a\neq 0\in \RR$, $\varpi(a)$ maps any pl-simple end $\Sigma_0$ to some pl-simple end $\Sigma_a$ lying on one side of $\Sigma_0$. See Lemma \ref{Lem_Moduli_one-sided transl end}. A similar one-sided deformation action for minimal hypersurfaces is mentioned in \cite[Section 4, (2)]{Chan97}. In the context of mean curvature flow, the one-sided perturbation also plays a significant role, see \cite{ColdingMinicozzi12_generic, CIMW13_EntropyMinmzer, CCMS20_GenericMCF, SunXue2021_initial_closed, SunXue2021_initial_conical}.
	
	\subsection{Complete translators}
	To go from translating ends to complete translators, we use the following variational characterization of translators. Recall that translators are critical points of the weighted area functional \[
	\I(\Sigma)= \int_\Sigma e^z\ d\scH^n(x, z).  \]
	Hence, the translators are minimal hypersurfaces with respect to a noncomplete metric of $\RR^{n+1}$. This elliptic characterization plays a significant role in Ilmanen's elliptic regularization \cite{Ilmanen94_EllipReg}. 
	
	Now we state our theorem on the construction of complete translators.
	\begin{Thm}[c.f. Theorem \ref{Thm_Pf main thm_Transl region}] \label{Thm_Intro_Transl region}
		For each pl-simple translating end $\Sigma_0\subset \RR^n\times (z_0, +\infty)$ constructed in Theorem \ref{Thm_Intro_Moduli of Transl end}, there's a unique closed subset $T[\Sigma_0]\subset \RR^{n+1}$ which contains all $\I$-minimizing translators exponentially asymptotic to $\Sigma_0$, and such that one of the following holds,
		\begin{enumerate} [(i)]
			\item $T[\Sigma_0]$ is the support of a hypersurface, and hence is the unique $\I$-minimizing translator exponentially asymptotic to $\Sigma_0$;
			\item $T[\Sigma_0]$ has non-empty interior, and $\partial T[\Sigma_0] = T^+ \sqcup T^-$, each of $T^\pm$ is an $\I$-minimizing translator exponentially asymptotic to $\Sigma_0$.
		\end{enumerate}
		Moreover, $T[\Sigma_0]$ varies upper-semi-continuously with respect to the ends $\Sigma_0$; And if let $\{\Sigma_a := \varpi(a)[\Sigma_0]\}_{a\in \RR}$ be the one-sided deformations of $\Sigma_0$ as mentioned above, then 
		\begin{align}
			\RR^{n+1} = \coprod_{a\in \RR} T[\varpi(a)[\Sigma_0]].  \label{Intro_R^(n+1) decomp into transl region}	
		\end{align}
	\end{Thm}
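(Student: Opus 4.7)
The plan is to adapt the Plateau-type construction of Chan, exploiting the variational characterization of translators as $\I$-minimizers for $\I(\Sigma)=\int_\Sigma e^z\,d\scH^n$, and to realize $T[\Sigma_0]$ as the closed region bounded by two extremal $\I$-minimizing translators exponentially asymptotic to $\Sigma_0$. Concretely, I would fix a pl-simple end $\Sigma_0\subset\RR^n\times(z_0,+\infty)$ with unit normal $\nu_0$, and for each large $R$ pose two one-sided Plateau-type problems for $\I$ inside a large ball: prescribed boundary $\Sigma_0\cap\{z=R\}$, with competitors constrained to stay on one specified side of $\Sigma_0$ near the end. Lower semicontinuity of $\I$ and compactness of weighted minimizers (smooth up to codimension $7$) yield $\I$-minimizers $\Sigma_R^\pm$, one per side. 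Local $\I$-mass bounds, monotonicity, and interior curvature estimates permit $R\to+\infty$ and extraction of complete $\I$-minimizing translators $T^+,T^-$. The growth/decay dichotomy for the linearized operator $\partial_z^2+\partial_z+z^{-1}L_S$ developed for Theorem \ref{Thm_Intro_Moduli of Transl end} then upgrades $C^2_{\text{loc}}$-closeness of $T^\pm$ to $\Sigma_0$ along the end into exponential decay, once one checks that the one-sided constraint forces the coefficient of any non-decaying Jacobi deformation to vanish.

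Next, I would define $T[\Sigma_0]$ as the closed region between $T^+$ and $T^-$ (containing both). Any $\I$-minimizing translator $\Sigma$ exponentially asymptotic to $\Sigma_0$ must lie in $T[\Sigma_0]$: since $\Sigma-\Sigma_0$ and $T^\pm-\Sigma_0$ both decay exponentially along the end, any interior contact point between $\Sigma$ and $T^\pm$ forces coincidence by the strong maximum principle for the translator equation, so the family of such minimizers is totally ordered by $\nu_0$. Case (i) occurs precisely when $T^+=T^-$, and case (ii) holds otherwise, with $\partial T[\Sigma_0]=T^+\sqcup T^-$. Upper-semicontinuity of $\Sigma_0\mapsto T[\Sigma_0]$ then follows from the same compactness: if $\Sigma_0^{(k)}\to\Sigma_0$ in the moduli topology, the extremal minimizers $T^\pm_k$ of $T[\Sigma_0^{(k)}]$ have uniformly bounded local $\I$-mass and subconverge to $\I$-minimizing translators exponentially asymptotic to $\Sigma_0$, hence landing in $T[\Sigma_0]$.

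For the decomposition (\ref{Intro_R^(n+1) decomp into transl region}), I would use the one-sided deformation $\varpi$ to produce a monotone family $\Sigma_a:=\varpi(a)[\Sigma_0]$ of pairwise disjoint pl-simple ends parametrized by $a\in\RR$. The maximum principle applied to the extremal translators of $T[\Sigma_a]$ and $T[\Sigma_{a'}]$ for $a\neq a'$ yields $T[\Sigma_a]\cap T[\Sigma_{a'}]=\emptyset$, since both pairs stay exponentially close to their respective ends and these ends are strictly separated. For coverage, fix $p\in\RR^{n+1}$; for $|a|$ sufficiently large the end $\Sigma_a$ lies far from $p$ in the $\nu_0$-direction, and the extremal translators inherit this via the maximum principle, so $T[\Sigma_a]$ is disjoint from $p$. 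A connectedness-and-continuity argument invoking the upper-semicontinuity established above then produces some $a^*\in\RR$ with $p\in T[\Sigma_{a^*}]$.

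The main obstacle will be the exponential-asymptoticity step: ensuring that the Plateau limits $T^\pm$ do not carry any polynomial-order deformation from the $I$-dimensional kernel of the linearized translator operator. This rigidity is what makes the total-ordering argument clean enough to force both the dichotomy (i)--(ii) and the disjointness across $\{T[\Sigma_a]\}_{a\in\RR}$, hence the partition (\ref{Intro_R^(n+1) decomp into transl region}). A secondary technicality is the coverage step: since $T$ is only upper-semicontinuous (fattening in case (ii) is exactly where single-valued continuity breaks down), showing that every $p$ lies in some $T[\Sigma_{a^*}]$ will require careful handling of the set $\{a:T[\Sigma_a]\text{ lies above }p\}$ and its complement via an intermediate-value-style argument.
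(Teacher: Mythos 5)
Your proposal takes a genuinely different technical route from the paper, and two of the steps as written have real gaps.

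First, constructing $T^\pm$ via \emph{one-sided Plateau problems} introduces an obstacle problem: if the competitors for $\I$ are constrained to lie on one side of (some extension of) $\Sigma_0$, the minimizer may contact the obstacle and fail to satisfy the translator equation on the contact set. The paper avoids this entirely. For each $a\in\RR$ it first produces some unconstrained $\I$-minimizer $T_a\in\cT[\varpi(a)[\Sigma_e]]$ from Theorem~\ref{Thm_Existence of Transl w prescib end}, then establishes the nesting $\Omega_{T_a}\supset\Clos(\Omega_{T_0})$ for $a>0$ (Claim~1 in the proof of Theorem~\ref{Thm_Pf main thm_Transl region}), and finally sets $\Omega^+:=\bigcap_{a>0}\Omega_a$, $\Omega^-:=\bigcup_{a<0}\Omega_a$, $T[\Sigma_e]:=\Omega^+\setminus\Omega^-$. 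The boundaries $\partial\Omega^\pm$ are limits of genuine translators, hence translators. Relatedly, your claim that $\cT[\Sigma_0]$ is totally ordered "by maximum principle at a contact point" is not secured: strong maximum principle only rules out one-sided contact, and two minimizers with the \emph{same} end could a priori cross. The nesting in the paper is extracted by comparing an element of $\cT[\Sigma_e]$ against elements of $\cT[\varpi(a)[\Sigma_e]]$ for $a\neq 0$, where the respective ends are strictly polynomially separated (by $az^{\mu_1}\psi_1$) while each translator is only exponentially close to its own end; that strict separation near infinity is exactly what makes the $\I$-comparison plus strong maximum principle propagate the ordering.

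Second, your coverage argument rests on a mistaken heuristic. You assert that for $|a|$ large the end $\Sigma_a$ ``lies far from $p$''; but $\varpi(a)$ only shifts the end by $a z^{\mu_1}\psi_1$, which decays polynomially, so every $\Sigma_a$ has identical leading asymptotics and none moves away from any fixed point. The exhaustion $\bigcup_{a>0}\Omega_a=\RR^{n+1}$ (and dually $\bigcap_{a<0}\Omega_a=\emptyset$) is a nontrivial fact. The paper proves it by contradiction: if $\Omega_{+\infty}\neq\RR^{n+1}$, then $\partial\Omega_{+\infty}$ is an $\I$-minimizing translator whose tangent flow at $-\infty$ is $m|S\times\RR|$ with $m\in\ZZ_{\geq 1}$ (avoidance kills any extra component supported in the exterior); entropy lower-semicontinuity together with $\lambda[\partial\Omega_a]=\lambda[S\times\RR]$ for all $a$ and Corollary~\ref{Cor_Pre_Blow down transl MCF split} forces $m=1$; hence $\partial\Omega_{+\infty}$ has a simple end over $S\times\RR_+$ and, by Lemma~\ref{Lem_Moduli_General one-sided transl end}, it equals $\varpi(a)[\Sigma_e]$ for some $a$, contradicting $\partial\Omega_{+\infty}\subset\Int(\Omega_{a+1})$. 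Your connectedness argument does correctly produce $p\in T[\Sigma_{a^*}]$ once the set $\{a:p\in\Omega_a\}$ is known to be nonempty and bounded below, but establishing nonemptiness is precisely the exhaustion step, and that is where the entropy/avoidance argument is essential and cannot be replaced by ``$\Sigma_a$ moves far away.''
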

	The idea to construct complete translators that are \textbf{fast} asymptotic to a given end $\Sigma_0$ is also motivated by Chan \cite{Chan97}. We briefly describe it here, and the details are carried out in Section \ref{Sec_Pf of Main Thm}. For each $R$ sufficiently large, we minimize $\I$-functional among hypersurfaces with boundary $\Sigma_0\cap \RR^n\times\{R\}$ to find $\Sigma_R$, an $\I$-minimizer with boundary. Then we send $R\to +\infty$ and take the limit of $\Sigma_R$.  The key is to argue that outside a uniform compact subset (independent of $R$), $\Sigma_R$ stays close to $\Sigma_0$. To see this, for a fixed $\delta\ll 1$, we focus on the maximal region where $\Sigma_R$ is $\delta$-close to $\Sigma_0$. The asymptotic analysis of the operator $(\partial_z^2 + \partial_z + z^{-1}L_S)$ enable us to improve this $\delta$-closeness to a fast decaying estimate between $\Sigma_0$ and $\Sigma_R$. Then, suppose for contradiction, the $\delta$-close-region of $\Sigma_R$ and $\Sigma_0$ tends to infinity as $R\to +\infty$, an appropriate parabolic blow down will produce an ancient rescaled mean curvature flow fast asymptotic to $S$ near $-\infty$ and not equal to $S$ itself.  This violates a Liouville-type Theorem of \cite{CCMS20_GenericMCF}.
	
	One may compare the decomposition (\ref{Intro_R^(n+1) decomp into transl region}) with the well-known minimal foliation of Hardt-Simon \cite{HardtSimon85}.   Recall that for a minimizing hypercone $C\subset \RR^{n+1}$, \cite{HardtSimon85} constructed a foliation $\{\cS_t\}_{t\in \RR}$ of $\RR^{n+1}$ by minimizing hypersurfaces, where $\cS_0 = C$ and $\cS_{\pm t} = t\cdot \cS_{\pm 1}$ are smooth minimizing hypersurfaces for every $t>0$. For translators, rigid motions of a single translator $\Sigma\subset\RR^{n+1}$ is never a foliation, unless $\Sigma$ is convex. Moreover, $T[\Sigma_a]$ in (\ref{Intro_R^(n+1) decomp into transl region}) need not be hypersurfaces, see the following Theorem \ref{Thm_Intro_Fattening or not}.
	
	Notice that Theorems \ref{Thm_Intro_Moduli of Transl end} and \ref{Thm_Intro_Transl region} prove Theorem \ref{Thm_Intro_main}, since each two different pl-simple translating ends $\Sigma_\pm$ over $S\times \RR_+$ are asymptotic to each other polynomially and not better, which means the corresponding $T[\Sigma_\pm]$ are distinct. 
	
	Another remark is that our construction (by using the implicit function theorem) only constructed a subset of translating ends and translators, but not necessarily all of them. It would be an interesting question that if our construction can obtain all the translators that are asymptotic to $S\times\RR$.

	%	Our construction is highly motivated by the construction of minimal hypersurfaces that are asymptotic to a given cone by Hardt-Simon \cite{HardtSimon85} and Chan \cite{Chan97}. Recall that Hardt-Simon's construction can obtain a decomposition of the whole Euclidean space into the union of minimal hypersurfaces. We prove a similar decomposition for translators. Let $\Sigma_e$ be a simple translating end, and $T[\Sigma_e]$ is a closed set that contains all the translators that are exponentially asymptotic to $\Sigma_e$, and also minimizing with respect to the functional $I(\Sigma)=\int_\Sigma e^{z}d\mathcal{\cH}^n$.
	
	%	\begin{Thm}\label{Thm_main2}
		%		Given a simple translating end $\Sigma_e$, there exists a continuous one-paramter family of simple translating ends $\{\varpi(a)\Sigma_e\}_{a\in\RR}$, such that 
		%		\[
		%		\RR^{n+1} = \coprod_{a\in \RR} T[\varpi(a)[\Sigma_e]].   
		%		\]
		%	\end{Thm}
	\subsection{Fattening phenomena}
	
	Another novel discovery in this paper is the fattening phenomenon. As in Theorem \ref{Thm_Intro_Transl region}, $T[\Sigma_0]$ either is the support of a translator, or it has nonempty interior. We call $T[\Sigma_0]$ in case (ii) \textbf{fattening}. We show that non-fattening is a generic phenomenon, while fattening can also happen.
	
	\begin{Thm} [c.f. Corollary \ref{Cor_Pf main thm_generic nonfattening} \& Thereom \ref{Thm_Angenent torus fattens}] \label{Thm_Intro_Fattening or not}
		For every closed embedded self-shrinker $S\subset \RR^n$, for generic pl-simple ends $\Sigma_0$, $T[\Sigma_0]$ does not fattens; 
		On the other hand, suppose $S$ is the Angenent torus \cite{Angenent92_Doughnut} in $\RR^3$. Then there exists a pl-simple translating end $\Sigma_0$, such that $T[\Sigma_0]$ fattens. 
	\end{Thm}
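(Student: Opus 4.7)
The plan splits into the two assertions of the theorem.

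\textbf{Generic non-fattening.} I would invoke the decomposition $\RR^{n+1}=\coprod_{a\in\RR}T[\varpi(a)[\Sigma_e]]$ from \eqref{Intro_R^(n+1) decomp into transl region}. The closed sets $T[\varpi(a)[\Sigma_e]]$ are pairwise disjoint, so any two that fatten give disjoint non-empty open subsets of $\RR^{n+1}$. Since $\RR^{n+1}$ is separable, it admits only countably many pairwise disjoint non-empty open sets, hence $\{a\in\RR:T[\varpi(a)[\Sigma_e]]\text{ fattens}\}$ is at most countable. Thus for each pl-simple end $\Sigma_e$, non-fattening holds for all but countably many parameters along the one-sided deformation, yielding both Baire genericity and full Lebesgue measure. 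Running this argument over the full $I$-parameter moduli of ends from Theorem \ref{Thm_Intro_Moduli of Transl end} produces the claimed genericity.

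\textbf{Fattening for the Angenent torus.} The plan is to exploit the $O(2)$-symmetry of the Angenent torus $S$ about its axis $L\subset\RR^3$ together with the fact that $S$ bounds a solid torus $\Omega^-\subset\RR^3$ with $L\cap\overline{\Omega^-}=\emptyset$. I would fix a rotationally symmetric pl-simple end $\Sigma_0$ and construct two rotationally symmetric $\I$-minimizing translators $\Sigma^{\mathrm{in}},\Sigma^{\mathrm{out}}$ exponentially asymptotic to $\Sigma_0$ by running the Plateau-type/elliptic-regularization procedure of Theorem \ref{Thm_Intro_Transl region} in two topologically distinct classes. For each large $R$ one solves the boundary problem with $\partial=\Sigma_0\cap\{z=R\}$ restricted (a) to hypersurfaces disjoint from the axis $L\times\RR$ (producing $\Sigma^{\mathrm{in}}$ in the limit), and (b) to hypersurfaces forced to cross $L\times\RR$ (producing $\Sigma^{\mathrm{out}}$). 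Each limit exists by the compactness argument already used to prove Theorem \ref{Thm_Intro_Transl region} and is exponentially asymptotic to $\Sigma_0$. Provided $\Sigma^{\mathrm{in}}\neq\Sigma^{\mathrm{out}}$, both lie in $T[\Sigma_0]$, which must then fatten.

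The main obstacle is proving that $\Sigma^{\mathrm{in}}$ and $\Sigma^{\mathrm{out}}$ are genuinely distinct rather than collapsing onto the same translator. The distinguishing invariant is the intersection with the axis $L\times\RR$: $\Sigma^{\mathrm{in}}$ avoids it while $\Sigma^{\mathrm{out}}$ meets it. Preserving this property in the limit $R\to\infty$ requires a barrier and maximum-principle argument based on the $O(2)$-equivariance together with the strict mean-convexity of the axis in the translator metric. The most delicate point is ensuring that the axis-avoiding minimizer $\Sigma^{\mathrm{in}}$ does not degenerate, for instance by collapsing onto $\Sigma_0$ or flattening onto $L\times\RR$; here the positive inradius of $\Omega^-$, the uniform $\I$-minimality, and a modulus-of-continuity estimate on rotationally symmetric $\I$-minimizers should supply the compactness needed to extract a smooth pl-simple limit that genuinely separates from $L\times\RR$. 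Once $\Sigma^{\mathrm{in}}$ and $\Sigma^{\mathrm{out}}$ are established as distinct translators, the conclusion of case (ii) of Theorem \ref{Thm_Intro_Transl region} delivers the fattening of $T[\Sigma_0]$.
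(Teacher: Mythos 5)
Your argument for generic non-fattening starts from the same observation the paper uses: the decomposition $\RR^{n+1}=\coprod_a T[\varpi(a)[\Sigma_e]]$ together with separability of $\RR^{n+1}$ forces fattening to occur at only countably many $a$ on each $\RR$-orbit. However, ``running this argument over the full $I$-parameter moduli'' is a real gap. Countable-per-orbit does not by itself yield a meager subset of $\BB^I$: the Kuratowski--Ulam analogue of Fubini for Baire category requires the bad set already to have the Baire property, and the Lebesgue version requires measurability, neither of which is automatic. The paper supplies exactly what is missing: the upper-semi-continuity of $T[\Sigma_\varphi]$ in $\varphi$ (Theorem~\ref{Thm_Pf main thm_Transl region}) makes each set $Z_j:=\{\varphi\in\BB^I:\,T[\Sigma_\varphi]\cap\overline{\BB_j}\text{ contains a ball of radius }1/j\}$ closed, and the continuity of the $\RR$-action (Corollary~\ref{Cor_Moduli_RR-action conti}) upgrades the per-orbit countability to $\Int(Z_j)=\emptyset$. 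Both ingredients are needed and are absent from your proposal.

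For the Angenent torus your route is structurally different from the paper's and does not close as written. You fix a rotationally symmetric pl-simple end $\Sigma_0$ and try to manufacture two distinct $\I$-minimizers $\Sigma^{\mathrm{in}},\Sigma^{\mathrm{out}}$ by restricting the Plateau problem to topological classes (avoiding versus crossing the axis). But Corollary~\ref{Cor_Pf main thm_generic nonfattening} shows that for a generic $\Sigma_0$ there is only one $\I$-minimizer in $\cT[\Sigma_0]$, so for a generic choice your two constrained constructions must collapse onto the same surface; your proposal offers no mechanism for selecting the non-generic $\Sigma_0$ at which they could differ. There is a second difficulty: a minimizer in a constrained topological class need not be an unconstrained $\I$-minimizer, so even if $\Sigma^{\mathrm{in}}\neq\Sigma^{\mathrm{out}}$ you would not automatically have both inside $T[\Sigma_0]$. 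The paper sidesteps both issues by arguing by contradiction over the entire one-parameter family $\{T_a:=T[\varpi(a)[\Sigma_e]]\}_a$: assuming no $T_a$ fattens, Lemmas~\ref{Lem_Fattening_Rescaled lim of one-sided deform}--\ref{Lem_topology change} (built on the one-sided ancient rescaled flows of \cite{CCMS20_GenericMCF} and the elliptic-regularization barrier Lemma~\ref{Lem_Fattening_Ellip Reg of mean convex domain}) show that $T_a$ avoids a fixed cylinder around the rotation axis as $a\to-\infty$ yet crosses the axis as $a\to+\infty$, and the resulting continuous isotopy of $T_a\cap\RR^3\times\RR_{\leq\bar z}$ cannot interpolate between a non-contractible and a contractible rotational loop. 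This intermediate-value / topology-jump argument is precisely what locates a fattening end, and it is the key idea your proposal lacks.
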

	
	In the minimal surface analogy, Chan \cite[Section 4, (1)]{Chan97} asked whether the minimizing hypersurface fast asymptotic to an exterior minimal graph is unique or not. Our Theorem \ref{Thm_Intro_Fattening or not} provides a negative answer to the analogy of this question for translators.
	
	In our construction, the fattening shows up because of the topology gap. More precisely, we find examples in the one-parameter family of translators in Theorem \ref{Thm_Intro_Transl region} with different topology types. This implies that there must be a jump in this one-parameter family which corresponds to fattening.
	
	We adopt the terminology ``fattening'' from the study of level set flow. Level set flow is a weak formulation of mean curvature flow. Each time slice of the mean curvature flow is viewed as the nodal set of the level set flow function, and the evolution of mean curvature flow is non-unique if the nodal set fattens, namely, it has nonempty interior.
	
	There are several similarities between the fattening of level set flow and the fattening phenomenon in this paper. First, both fattening phenomena lead to the nonuniqueness of solutions. Second, the fattening phenomena rarely happen. For the level set flow, it is known that fattening can only happen for a countably many level sets, hence it is not generic, see \cite[11.4]{Ilmanen94_EllipReg}. We prove that the fattening can only happen for a meager set of ends, c.f. Corollary \ref{Cor_Pf main thm_generic nonfattening}. Theorems \ref{Thm_Intro_Transl region} and \ref{Thm_Intro_Fattening or not} provide a new perspective on understanding the fattening of mean curvature flow.
	
	After the first version of this paper, there has been much progress on the construction of fattening level set flows (with smooth initial data). We refer the readers to recent progress such as \cite{IlmanenWhite25_Fattening, ChodoshDanielsHolgateSchulze24_MCFConical, lee2024closed, ketover2024self}.
	
	\subsection{Organization of the paper.}
	In Section \ref{Sec_Pre}, we start with a brief review of some basic notions in geometric measure theory and mean curvature flow. Then we discuss translators and their tangent flow at $-\infty$. 
	Section \ref{Sec_Moduli} is devoted to studying pl-simple translating ends over a closed self-shrinker times $\RR$. This includes the technical analysis of the operator $(\partial_z^2 + \partial_z + z^{-1}L_S)$.
	In Section \ref{Sec_Pf of Main Thm}, we associate to every pl-simple end at least one complete $\I$-minimizing translator and prove Theorem \ref{Thm_Intro_Transl region}.
	In Section \ref{Sec_Fattening}, we restrict to the case where $S$ is the Angenent torus.  Among the $\RR$-family of rotationally symmetric decomposition (\ref{Intro_R^(n+1) decomp into transl region}) of $\RR^{n+1=4}$, the topology of $T[\Sigma_a]$ as $a\to \pm\infty$ is proved to be different. This reflects the fattening phenomenon for some $T[\Sigma_a]$.

	\subsection*{Acknowledgement}	
	The authors would like to thank Professor Sigurd Angenent, Professor Xiaodong Cao, Professor Andr\'e Neves, and Professor Brian White for their interest. The second-named author would like to thank his advisor, Fernando Cod\'a Marques, for his support. We are also grateful to the reviewers for many helpful comments.

	%%%%%%%%%%%%%%%%%%%%%%%%%%%%%%%%%%%%%%%%%%%%%%%%%%%%%%%%%%%%%%%%%%%%%%%%%%%%%%%%%%
	%%%%%%%%%%%%%%%%%%%%%%%%%%%%%%%%%%%%%%%%%%%%%%%%%%%%%%%%%%%%%%%%%%%%%%%%%%%%%%%%%%
	%%%%%%%%%%%%%%%%%%%%%%%%%%%%%%%%%%%%%%%%%%%%%%%%%%%%%%%%%%%%%%%%%%%%%%%%%%%%%%%%%%
	%%%%%%%%%%%%%%%%%%%%%%%%%%%%%%%%%%%%%%%%%%%%%%%%%%%%%%%%%%%%%%%%%%%%%%%%%%%%%%%%%%
	%%%%%%%%%%%%%%%%%%%%%%%%%%%%%%     Preliminary     %%%%%%%%%%%%%%%%%%%%%%%%%%%%%%%
	%%%%%%%%%%%%%%%%%%%%%%%%%%%%%%%%%%%%%%%%%%%%%%%%%%%%%%%%%%%%%%%%%%%%%%%%%%%%%%%%%%
	%%%%%%%%%%%%%%%%%%%%%%%%%%%%%%%%%%%%%%%%%%%%%%%%%%%%%%%%%%%%%%%%%%%%%%%%%%%%%%%%%%
	%%%%%%%%%%%%%%%%%%%%%%%%%%%%%%%%%%%%%%%%%%%%%%%%%%%%%%%%%%%%%%%%%%%%%%%%%%%%%%%%%%
	%%%%%%%%%%%%%%%%%%%%%%%%%%%%%%%%%%%%%%%%%%%%%%%%%%%%%%%%%%%%%%%%%%%%%%%%%%%%%%%%%%	
	\section{Preliminaries} \label{Sec_Pre}
	Throughout this paper, let $\RR^N$ be the $N$-dimensional Euclidean space. Let
	\begin{itemize}
		\item $\BB_r^N(x)\ $ be the open ball of radius $r$ in $\RR^N$ centered at $x$; we may omit the superscript $N$ if there's no confusion about dimension; we may write $\BB_r:= \BB_r(\mathbf{0})$ to be the ball centered at the origin $\mathbf{0}$, and write $\BB^N$ be the unit ball in $\RR^N$ centered at the origin;
		\item $\SSp^{N-1} := \partial \BB_1^N\ $ be the unit sphere;
		\item $\RR_{\geq a}:= [a, +\infty)\ $; similar notations are used for $\ZZ$ in place of $\RR$ and $>, \leq, <$ in place of $\geq$; 
		\item $\eta_{x, r}\ $ be the affine transformation of $\RR^N$, sending $y$ to $(y-x)/r$;
		\item $\scH^k\ $ be the $k$-dimensional Hausdorff measure;
		\item $g_{\Euc}\ $ be the Euclidean metric on $\RR^N$.
	\end{itemize}
	
	For a subset $E\subset \RR^N$, let $\Int(E)$ and $\Clos(E)$ be its interior and closure; $\partial E:= \Clos(E)\setminus \Int(E)$ be its topological boundary;  $\BB_r(E):= \bigcup_{x\in E} \BB_r(x)$ be its $r$-neighborhood; $(E-x)/r:= \eta_{x, r}(E)$ be its translation and dilation in $\RR^N$.
	
	For a hypersurface $\Sigma \subset \RR^N$ with a unit normal field $\nu$, let
	\begin{itemize}
		\item $A_\Sigma := \nabla \nu\ $ be the second fundamental form of $\Sigma$. 
		\item $H_\Sigma := -tr_\Sigma(A_\Sigma)\ $ be the scalar mean curvature of $\Sigma$, and $\vec{H}_\Sigma:= -H_\Sigma \nu$ be the mean curvature vector; In general, for a submanifold $\Sigma\subset \RR^N$ of any codimension, the mean curvature vector $\vec{H}_\Sigma := \nabla_{\partial_i}\partial^i$ is also well defined;
		\item $\graph_\Sigma(u):= \{x+u(x)\nu_x: x\in \Omega\}\ $ be the graph over $\Sigma$ of some function $u$ defined on a subdomain $\Omega\subset \Sigma$.
	\end{itemize}

	\subsection{Mean curvature flow and entropy} \label{Subsec_Prelim MCF and Entropy}
	Let $N\geq n>0$ be integers, $J\subset\RR$ be an interval. A family of $n$-dimensional submanifolds $\{M_t\subset \RR^N\}_{t\in J}$ is \textbf{flowing by mean curvature} if they satisfy	\[
	\left(\partial_t X(t)\right)^\perp = \vec{H}_{M_t}.
	\]	
	Here $X(t)$ is the position vector of $M_t$, $\perp$ means projection onto the normal bundle of $M_t$. 
	
	In this paper, we will be focused on the asymptotics of a mean curvature flow at $-\infty$ time. In order to study the asymptotics of mean curvature flow, Huisken \cite{Huisken90} introduced the \textbf{rescaled mean curvature flow (RMCF)}. Suppose $\{M_t\}$ is defined for $t\in(-\infty,0)$, then we can define a new flow $\tilde{M}_\tau:=e^{\tau/2}\cdot M_{-e^{-\tau}}$ for $\tau\in\RR$, satisfying the equation
	\begin{equation}
		(\partial_\tau \tilde{X})^\perp = \vec{H}_{\tilde{M}_\tau} + \frac{\tilde{X}(\tau)^\perp}{2}\,. 
	\end{equation}
	Recall that \textbf{Gaussian area} of a $n$-dimensional submanifold $S\subset\RR^N$ is defined by 
	\begin{equation}
		\displaystyle{\cF[S]:=(4\pi)^{-n/2}\int_Se^{-\frac{|x|^2}{4}}}\ d\scH^n.
	\end{equation}
	Huisken showed that any RMCF $\{\tilde{M}_\tau\}_\tau$ is a gradient flow of $\cF$. As a consequence, $\cF[\tilde{M}_\tau]$ is non-increasing in $\tau$, and this fact is known as Huisken's monotonicity. Based on the monotonicity of $\cF$, any subsequential (weak) limit of $\tilde{M}_\tau$ as $\tau\to-\infty$ should be a critical point of $\cF$, which is called a \textbf{self-shrinker}, i.e. a submanifold (integral varifold) $S$ satisfying \[
	\vec{H}_S+\frac{X^\perp}{2}=0\ .
	\]
	
	Let $S\subset\RR^N$ be a smooth self-shrinker of codimension one. By \cite{ColdingMinicozzi12_generic}, for $u\in C_c^\infty(S)$, the second variation of $\cF$ is, \[
	\delta^2 \cF[S](u, u) = C_N\cdot \int_S \left( |\nabla_S u|^2 - \left(|A_S|^2 + \frac{1}{2}\right)u^2 \right) e^{-|x|^2/4}\ d\scH^{N-1}(x),   \]
	for some constant $C_N>0$.  The Euler-Lagrangian operator of $\delta^2\cF[S]$ with respect to the Gaussian $L^2$ norm is the \textbf{Jacobi operator} of $S$,
	\begin{align}
		L_S := \Delta_S - \frac{X}{2}\cdot\nabla_S + |A_S|^2 + \frac{1}{2} \,.  \label{Pre_Gaussian Jac oper}
	\end{align}
	
	For a general submanifold $S\subset \RR^N$, $\vec{H}_S+\frac{X^\perp}{2}$ is called the \textbf{shrinker mean curvature vector} of $S$. When $S$ is a connected hypersurface with a unit normal vector field $\nu$, $\vec{H}_S+\frac{X^\perp}{2}=-(H_S-\frac{\langle X,\nu\rangle}{2})\nu$, and $(H_S-\frac{\langle X,\nu\rangle}{2})$ is called the \textbf{shrinker mean curvature}. If $(H_S-\frac{\langle X,\nu\rangle}{2})>0$ (resp. $(H-\frac{\langle X,\nu\rangle}{2})<0$), $S$ is called \textbf{shrinker mean convex} (resp. \textbf{shrinker mean concave}).
	
	Colding-Minicozzi \cite{ColdingMinicozzi12_generic} introduced a quantity which is called \textbf{entropy}. Suppose $S$ is an $n$-dimensional submanifold in $\RR^N$, the entropy is defined as
	\begin{equation}
		\lambda[S]:=\sup_{x_0\in\RR^N, t_0>0}\cF[t_0^{-1}(S-x_0)]\ .
	\end{equation}
	
	By Huisken's monotonicity formula, if $\{M_t\}_t$ is a mean curvature flow, then $\lambda[M_t]$ is non-increasing as $t$ increases. If $S$ is a self-shrinker, \cite{ColdingMinicozzi12_generic} showed that $\cF[S]=\lambda[S]$.
	
	\subsection{Geometric measure theory} \label{Subsec_Prelim GMT}
	We refer the readers to \cite{Simon83_GMT} for detailed definitions and discussions in geometric measure theory. 
	
	Suppose $N>n>0$ are integers, $\Omega\subset\RR^N$ is an open set. We use $\cI\cV_n(\Omega)$ to denote the space of integral $n$-varifolds defined on $\Omega$. $\|V\|$ denotes the associated Radon measure for $V\in\cI\cV_n(\Omega)$. For a smooth $n$-submanifold $S$, we write $|S|$ to be its associated integral $n$-varifold.
	
	We call a family of integral varifold $V_j\in\cI\cV_n(\Omega)$ \textbf{$\mbfF$-converges} to $V_\infty$, if they converges as Radon measure over $\Omega\times \mathbf{Gr}_n(\RR^N)$, where $\mathbf{Gr}_n(\RR^N)$ is the $n$-Grassmannian on $\RR^N$.
	
	There is also a geometric measure-theoretic notion of mean curvature flow, known as the \textbf{Brakke motions}, \cite{Brakke78, Ilmanen94_EllipReg}. A family of Radon measures $\{\mu_t\}_{t\geq 0}$ defined on $\Omega$ is a \textbf{Brakke motion} if for all test function $\phi\in C^2_c(\Omega)$ with $\phi\geq 0$,	\[
	\limsup_{s\to t}\frac{\mu_s(\phi)-\mu_t(\phi)}{s-t}	\leq \int (-\phi H^2+\nabla^\bot \phi\cdot \vec{H})d\mu_t,  \]
	where $\vec{H}$ is the mean curvature vector of $\mu_t$ whenever $\mu_t$ is rectifiable and has $L^2$-mean curvature in the varifold sense, otherwise the right-hand side is defined to be $-\infty$. In this paper, we only focus on \textbf{integral Brakke motion}, i.e. $\mu_t$ are associated Radon measure of integral varifolds in $\cI\cV_n(\Omega)$ for a.e. $t$.  The \textbf{support} of a Brakke motion is a closed subset of spacetime given by, \[
	\spt(\{\mu_t\}_{t\geq 0}):= \Clos\Big(\bigcup_{t\geq 0} \spt(\mu_t)\times\{t\}\Big) \subset \RR^N\times \RR.  \]
	We say that a sequence of flow $\{\mu_t^j\}_{t\geq 0}$ converges to $\{\mu_t^\infty\}_{t\geq 0}$ \textbf{in the Brakke sense}, if $\mu^j_t$ measure-converges to $\mu^\infty_t$ for all $t$, and the the associated varifolds converge for all but countably many $t$. Brakke \cite{Brakke78} proved that any sequence of integral Brakke motion with uniformly bounded area has a converging subsequence in the Brakke sense. Moreover, if the flow converges in the Brakke sense, then by Huisken's Monotonicity and avoidance principle, the support of the flow also converges locally in Hausdorff distance.
	
	$n$-varifolds are generalizations of submanifolds, so it is natural to define the translation and dilation of a varifold $V$. For any $(x_0,t_0)\in \RR^N\times(0,+\infty)$, we denote for simplicity $(V-x_0)/t_0$ to be the push forward of $V\in \cI\cV_n(\Omega)$ by $\eta_{x_0, t_0}$, in other words, for any measurable set $E\subset((\Omega-x_0)/t_0)\times \mathbf{Gr}_n(\RR^N)$, \[
	\left((V-x_0)/t_0)\right)(E) := (\eta_{x_0,t_0}{}_\sharp V)(E) = t_0^{-n} \cdot V((\eta_{x_0, t_0}^{-1}\times id_{\mathbf{Gr}_n(\RR^N)})(E)).  \]
	The definition of the Gaussian area $\cF$ and the entropy $\lambda$ are also naturally extended to varifolds. Moreover, the entropy is lower semi-continuous under varifold convergence.
	
	\begin{Prop}\label{prop:semicontinuity_entropy}
		Suppose $\{V_j\}_{j=1}^\infty$ is a sequence of integral $n$-varifolds in $\RR^N$ and $V_j$ $\mbfF$-converges to a integral $n$-varifold $V_\infty$ as $j\to\infty$. Then \[
		\liminf_{j\to\infty}\lambda[V_j]\geq \lambda[V_\infty].		\]
	\end{Prop}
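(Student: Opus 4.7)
The plan is to exploit that $\mbfF$-convergence of integral varifolds implies weak-$\ast$ convergence of their weight measures $\|V_j\|\rightharpoonup\|V_\infty\|$, combined with a near-optimal choice of rescaling center and scale for $V_\infty$. First I would dispose of the trivial case and assume $\Lambda := \liminf_j \lambda[V_j] < \infty$, passing to a subsequence realizing the liminf. For each $\epsilon > 0$ pick $(x_0, t_0) \in \RR^N \times \RR_{>0}$ with $\cF[(V_\infty - x_0)/t_0] \geq \lambda[V_\infty] - \epsilon$, and set $W_j := (V_j - x_0)/t_0$, $W_\infty := (V_\infty - x_0)/t_0$. Since push-forward by the affine map $\eta_{x_0, t_0}$ is continuous in the $\mbfF$-topology, $W_j$ $\mbfF$-converges to $W_\infty$, so it suffices to show $\liminf_j \cF[W_j] \geq \cF[W_\infty]$.

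The main obstacle is that the Gaussian weight $e^{-|x|^2/4}$ is not compactly supported, so weak-$\ast$ convergence of $\|W_j\|$ does not immediately apply to the defining integral of $\cF$. I would resolve this by a standard truncation: choose $\phi_R \in C_c(\RR^N)$ with $0 \leq \phi_R \leq 1$, $\phi_R \equiv 1$ on $\BB_R$, and $\spt(\phi_R) \subset \BB_{2R}$, so that $\phi_R(x) e^{-|x|^2/4}$ is continuous and compactly supported. Weak-$\ast$ convergence then yields
\[ \liminf_{j\to\infty} \cF[W_j] \;\geq\; (4\pi)^{-n/2} \lim_{j\to\infty} \int \phi_R(x)\, e^{-|x|^2/4} \, d\|W_j\|(x) \;=\; (4\pi)^{-n/2} \int \phi_R(x)\, e^{-|x|^2/4} \, d\|W_\infty\|(x). \]
Letting $R \to \infty$ and applying monotone convergence on the right-hand side recovers $\cF[W_\infty]$, whence $\liminf_j \lambda[V_j] \geq \liminf_j \cF[W_j] \geq \cF[W_\infty] \geq \lambda[V_\infty] - \epsilon$. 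Sending $\epsilon \to 0$ yields the proposition.

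The only conceptual point beyond routine bookkeeping is the continuity of the affine rescaling with respect to $\mbfF$-convergence, which is immediate from the definition since $\eta_{x_0,t_0}$ is a diffeomorphism of $\RR^N$; otherwise the argument is a direct application of lower semicontinuity of $\cF$ under weak-$\ast$ convergence together with the variational definition of $\lambda$. I do not anticipate a serious obstacle here.
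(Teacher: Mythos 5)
Your proof is correct and follows essentially the same route as the paper: reduce to a fixed (near-optimal) center and scale, push the $\mbfF$-convergence through the affine rescaling, and invoke lower semi-continuity of $\cF$ under weak-$\ast$ convergence. You simply spell out the truncation/monotone-convergence argument that the paper compresses into the phrase "lower semi-continuity of the integrals over varifolds."
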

	
	\begin{proof}
		It suffices to show that $\liminf_{j\to\infty}\cF[(V_j-x_0)/t_0]\geq \cF[(V_\infty-x_0)/t_0]$. Up to a translation and dilation, we only need to show $\liminf_{j\to\infty}\cF[V_j]\geq \cF[V_\infty]$. This is straightforward from the lower semi-continuity of the integrals over varifolds.
	\end{proof}

	In general, the varifolds may not be smooth, and $\mbfF$-convergence can be complicated. Thanks to Brakke-White's regularity theorem, we can better characterize convergence if the flows are regular.
	
	\begin{Thm}[\cite{White05_MCFReg}] \label{Thm_Pre_Brakke Reg}
		Suppose $\epsilon>0$, $\{\Sigma_t^j\}_{t\in (0,\epsilon)}$ is a sequence of smooth mean curvature flow in $\RR^N$ converging to a smooth mean curvature flow $\{\Sigma_t^\infty\}_{t\in (0,\epsilon)}$ with multiplicity $1$ in the Brakke sense. Then the convergence is in $C_{loc}^\infty(\RR^N\times(0,\epsilon))$.
	\end{Thm}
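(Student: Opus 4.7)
The plan is to derive the smooth convergence from White's Brakke regularity theorem (the genuine content of \cite{White05_MCFReg}), using Huisken's monotonicity formula together with the multiplicity-one hypothesis to verify the density hypothesis of that regularity theorem. Since this is a standard consequence and the excerpt already assumes White's result, I would essentially package the application, so I expect the hardest step to be a clean statement of the density upper bound along the convergent sequence; the rest is routine.

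First I would fix an arbitrary spacetime point $(x_0, t_0)\in \spt(\{|\Sigma_t^\infty|\}_t)$ with $t_0\in (0, \epsilon)$. Since $\Sigma^\infty_{t_0}$ is smooth and carries multiplicity one, Huisken's monotonicity formula yields that the Gaussian density of the ambient limit flow at $(x_0, t_0)$ equals exactly $1$. By continuity of the Huisken density in the backward parabolic scale and by the smoothness of $\Sigma^\infty$, there is a parabolic cylinder $P_r(x_0, t_0) := \BB_r(x_0)\times (t_0 - r^2, t_0)$ on which the density ratio of $\{|\Sigma^\infty_t|\}$ is bounded above by $1+\eta/2$, where $\eta$ is the threshold appearing in White's Brakke regularity theorem.

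Next I would transfer this to the approximating sequence. Brakke-sense convergence gives $|\Sigma_t^j|\to |\Sigma_t^\infty|$ as Radon measures for a.e.\ $t$, and combined with Huisken's monotonicity applied to each smooth flow $\{\Sigma^j_t\}$, the Gaussian density ratios of $|\Sigma^j_t|$ on slightly smaller parabolic cylinders converge to those of the limit. Consequently, for all $j$ sufficiently large, the density hypothesis of \cite{White05_MCFReg} holds on a fixed sub-cylinder $P_{r/2}(x_0, t_0)$, so Brakke regularity produces uniform $C^k$ bounds (for every $k$) on $\Sigma^j_t$ inside an even smaller parabolic neighborhood, independent of $j$.

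Finally, I would conclude by a diagonal Arzelà–Ascoli argument: the uniform $C^k$ estimates, combined with the already-established Brakke convergence (which pins down the limit and in particular the local graphical representation), force $\Sigma^j_t$ to converge to $\Sigma^\infty_t$ in $C^\infty$ on compact subsets of this neighborhood of $(x_0, t_0)$. A covering of an arbitrary compact set $K\subset\spt(\{|\Sigma^\infty_t|\})\cap (\RR^N\times (0,\epsilon))$ by such neighborhoods and passing to a common tail of $j$ yields the desired $C^\infty_{loc}$ convergence. The only subtlety I anticipate is making sure the density upper bound passes to the sequence uniformly on a fixed cylinder; this is handled by combining measure convergence at generic time slices with monotonicity to interpolate, a standard maneuver in the regularity theory of Brakke flows.
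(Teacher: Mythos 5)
The paper states this result as a direct citation of \cite{White05_MCFReg} and does not supply its own proof. Your derivation---using the smoothness and multiplicity-one assumption on the limit together with Huisken monotonicity to establish Gaussian density ratios close to $1$ on a parabolic cylinder, transferring this to the approximating flows via measure convergence, invoking White's $\varepsilon$-regularity to obtain uniform $C^k$ estimates, and then concluding by Arzel\`a--Ascoli and a covering argument---is the standard and correct way to deduce this statement from the local regularity theorem of that paper, and is essentially how White himself obtains it as a corollary.
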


	\subsection{Translators} \label{Subsec_Prelim Transl}
	Now we restrict our attention to translators. Let $N\geq n >0$ be integers; parametrize $\RR^{N+1} = \RR^N\times\RR$ by $(x, z)$. We may view $z$ to be the last coordinate function, with $\partial_z$ being the coordinate vector. We may also abuse the notation to view $\partial_z = (\orig, 1)\in \RR^N\times \RR$. Throughout this paper, we only consider translators moving in the $\partial_z$ direction.
	
	A \textbf{translator}, or a translating soliton, is a submanifold $\Sigma\subset \RR^{N+1}$ satisfying 
	\begin{align}
		\vec{H}_\Sigma - \partial_z^\perp=0\,.  \label{Pre_Transl equ}
	\end{align}
	Motivated by this equation, $\vec{H}_\Sigma - \partial_z^\perp$ is also called the \textbf{translator-mean-curvature vector} for a general submanifold $\Sigma\subset \RR^{N+1}$. Translators are named due to the following fact: $\Sigma$ is a translator if and only if $\{\Sigma + t\partial_z\}_{t\in \RR}$ is a mean curvature flow in $\RR^{N+1}$. 
	
	There is a variational characterization of translators introduced by Ilmanen \cite{Ilmanen94_EllipReg}.  For $\epsilon>0$ and an $n$-dimensional submanifold (possibly with boundary) $\Sigma\subset \RR^{N+1}$, define 
	\begin{align}
		\I^{\epsilon}[\Sigma]:= \int_\Sigma e^{z/\epsilon}\ d\scH^n(x, z)\,.  \label{Pre_Transl I functional}
	\end{align}
	For simplicity, we write $\I:= \I^1$.  Clearly, this functional is invariant under translations of submanifolds in $x$-directions. Under rescalings, we have $
	\I[\Sigma] = \epsilon^{-n}\I^\epsilon[\epsilon\cdot \Sigma]$. By \cite{Ilmanen94_EllipReg}, critical points of $\I$ are translators. (Note that our functional $\cI$ differs from the one in \cite{Ilmanen94_EllipReg} by a sign in $z$, which corresponds to the opposite translating direction.)
	It's also clear from (\ref{Pre_Transl I functional}) that translators are minimal submanifolds under the conformal metric $g = e^{2z/n}g_\Euc$, and the translator-mean-curvature vector is also proportional to the mean curvature vector under $g$. In particular, the strong maximum principle of Solomon-White \cite{SolomonWhite89_Maxim} and Ilmanen \cite{Ilmanen96} also applies to translators.
	
	It is not hard to see that a complete translator can not be compact. In fact, given a complete translator $\Sigma$, for sufficiently large $z_0\gg 0$, $\Sigma\cap\{z=z_0\}\not=\emptyset$. To understand the asymptotic behavior of $\Sigma$ as $z\to+\infty$, we need a blow-down analysis. For $\tau\in \RR$, we let 
	\begin{align}
		\tilde{\Sigma}(\tau) := e^{\tau/2}\cdot(\Sigma - e^{-\tau}\partial_z)\,,   \label{Pre_RMCF assoc to transl}
	\end{align}
	Recall that by \cite{Huisken90}, $\{\tilde{\Sigma}(\tau)\}_{\tau\in \RR}$ solves the RMCF equation, 
	\begin{align}
		(\partial_\tau \tilde{X})^\perp = \vec{H}_{\tilde{\Sigma}(\tau)} + \frac{\tilde{X}(\tau)^\perp}{2}\,.  \label{Pre_RMCF equ}
	\end{align}
	
	The following lemma is proved in \cite{Ilmanen94_EllipReg}, which shows that the blow-down limit of translators splits in $\RR_z$-direction.
	\begin{Lem} \label{Lem_Pre_blow down transl split}
		Let $\Omega\subset \RR^N$ be an open subset;  $\epsilon_j\searrow 0$, $R_j\epsilon_j\nearrow +\infty$ be sequences of constants; let $V_j\in \cI\cV_n(\Omega\times (0,R_j))$ be stationary with respect to $\I^{\epsilon_j}$, $1\leq j<+\infty$. Suppose when $j\to \infty$, $\{V_j(t):= V_j + (t/\epsilon_j)\partial_z\}_{-R_j\epsilon_j< t<0}$ converges to $\{V_\infty(t)\}_{t<0}$ in Brakke sense.  
		Then $V_\infty(t)$ splits in $\RR_z$-direction for all but countably many $t<0$.
	\end{Lem}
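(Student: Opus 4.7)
The plan is to unpack the scaling built into the hypothesis. The first variation of $\I^{\epsilon_j}$ shows that $V_j$ being stationary is equivalent to the translator equation $\vec{H}_{V_j} = \epsilon_j^{-1}\partial_z^\perp$, so each shifted family $V_j(t) := V_j + (t/\epsilon_j)\partial_z$ is genuinely a Brakke motion on $\Omega\times (-R_j\epsilon_j, 0)$. The central identity driving the whole proof is the exact intertwining
\[
V_j(t + s\epsilon_j) \;=\; V_j(t) + s\,\partial_z, \qquad s\in\RR,
\]
between the time parameter of the flow and the $\partial_z$-translation group on the ambient space. Since $\epsilon_j\searrow 0$, for each fixed $s$ the time shift $s\epsilon_j$ becomes negligible in the limit, which already suggests the formal identity $V_\infty(t) + s\partial_z = V_\infty(t)$, precisely the $\RR_z$-splitting.

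To make this rigorous, I would test the identity against an arbitrary $\phi\in C_c(\Omega\times\RR)$ and average in a small time window $[t_0-\delta,t_0]\subset(-\infty,0)$:
\[
\int_{t_0-\delta}^{t_0}\!\!\int\phi(x,z-s)\,d\|V_j(t)\|(x,z)\,dt \;=\; \int_{t_0-\delta+s\epsilon_j}^{t_0+s\epsilon_j}\!\!\int\phi(x,z)\,d\|V_j(t')\|(x,z)\,dt'.
\]
Huisken's monotonicity applied to $\{V_j(t)\}$, together with the uniform local mass bounds implicit in Brakke-sense convergence, lets me pass spacetime integrals of continuous test functions to the limit; the $s\epsilon_j$-shift in the limits of integration collapses as $j\to\infty$. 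A Lebesgue differentiation argument in $t_0$, run over a countable dense family of $\phi$'s and $s$'s, then produces a full-measure set of $t<0$ on which $\|V_\infty(t)\|$ is invariant under every $z$-translation. Integrality of $V_\infty(t)$ lifts the measure-level invariance to varifold $\RR_z$-splitting, since the density must be constant along each $z$-line through a point of the support and the approximate tangent planes inherit $\partial_z$.

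To upgrade ``almost every $t$'' to ``all but countably many $t$'', I would use that for each nonnegative $\psi\in C_c^2$ the function $t\mapsto\|V_\infty(t)\|(\psi)$ has bounded variation on compact subintervals of $(-\infty,0)$, a direct consequence of the Brakke inequality and the uniform mass bound; hence it is continuous off a countable exceptional set $\cT_\psi$. Taking a countable dense family $\{\psi_k\}$ and setting $\cT := \bigcup_k \cT_{\psi_k}$, at every $t\notin\cT$ one has pointwise varifold convergence $V_j(t)\to V_\infty(t)$ together with the intertwining identity, and the splitting follows directly without any time-averaging.

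The main obstacle, in my view, is precisely this time-regularity issue: Brakke convergence does not come with pointwise convergence at drifting times $t+s\epsilon_j$, so the intertwining identity cannot be passed to the limit naively. Time-averaging followed by Lebesgue differentiation, combined with the bounded-variation regularity of $t\mapsto \|V_\infty(t)\|(\psi)$, is the device that circumvents this; once it is in place, everything else is a soft consequence of the scaling encoded in the translator equation and the formula $V_j(t+s\epsilon_j) = V_j(t)+s\partial_z$.
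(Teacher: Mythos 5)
Your proposal is correct and in effect reconstructs the argument that the paper simply cites from Ilmanen [8.8]: the intertwining identity $V_j(t+s\epsilon_j)=V_j(t)+s\partial_z$ is the key scaling fact, and time-averaging against test functions, Lebesgue differentiation, and the bounded-variation regularity of $t\mapsto\|V_\infty(t)\|(\psi)$ are exactly the devices needed to push it through the Brakke limit. (One cosmetic sign slip: with $\phi(x,z-s)$ on the left of your displayed identity, the change of variables shifts the time window by $-s\epsilon_j$, not $+s\epsilon_j$; this does not affect the argument.)
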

	\begin{proof}
		The proof is in \cite[8.8]{Ilmanen94_EllipReg}. Note that in \cite{Ilmanen94_EllipReg}, the sequence $\{V_j\}_{j=1}^\infty$ is generated by the elliptic regularization, but the proof of the splitting of $V_\infty(t)$ does not rely on this assumption.
	\end{proof}
	
	By Huisken's monotonicity formula, the Gaussian area is monotone non-increasing along the RMCF $\{\tilde{\Sigma}(\tau)\}_{\tau\in \RR}$, and if \[
	\lim_{\tau\to -\infty} \cF[\tilde{\Sigma}(\tau)] < +\infty\,,  \] 
	then any subsequential measure-theoretic limit of $\tilde{\Sigma}(\tau)$ when $\tau\to -\infty$ will be a self-shrinker in $\RR^{N+1}$, known as a \textbf{tangent flow} of $\{\tilde{\Sigma}(\tau)\}$ (for simplicity, we call it the tangent flow of the translator $\Sigma$ if there's no confusion) at $-\infty$.   As a corollary of Lemma \ref{Lem_Pre_blow down transl split}, we have,

	\begin{Cor} \label{Cor_Pre_Blow down transl MCF split}
		Suppose $\Sigma\subset \RR^{N+1}$ is an $n$-dimensional translator with finite entropy. Then any tangent flow $\Gamma\in \cI\cV_n(\RR^{N+1})$ of $\Sigma$ at $-\infty$ is a self-shrinker splitting in $z$-direction, in other words, for any $\alpha\in\RR$, $\Gamma+\alpha\partial_z=\Gamma$.  Moreover, $\lambda[\Sigma] = \cF[\Gamma]$.
	\end{Cor}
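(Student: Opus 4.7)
The proof has two parts: showing $\Gamma$ is invariant under $z$-translations, and establishing $\lambda[\Sigma] = \cF[\Gamma]$. For the first, I would apply Lemma \ref{Lem_Pre_blow down transl split} to a carefully chosen auxiliary family. Fix any sequence $\tau_j \to -\infty$ along which $\tilde{\Sigma}(\tau_j)$ $\mbfF$-converges to $\Gamma$, and set $\epsilon_j := e^{\tau_j/2} \searrow 0$, $V_j := \epsilon_j \Sigma$. The scaling identity $\I[\Sigma] = \epsilon^{-n}\I^{\epsilon}[\epsilon\Sigma]$ ensures that $V_j$ is stationary for $\I^{\epsilon_j}$. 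A direct computation identifies
$$V_j(t) := V_j + (t/\epsilon_j)\partial_z = \epsilon_j(\Sigma + (t/\epsilon_j^2)\partial_z) = \epsilon_j M_{t/\epsilon_j^2},$$
where $M_s := \Sigma + s\partial_z$ is the translator's MCF; thus $\{V_j(t)\}_t$ is the parabolic rescaling of $\{M_s\}_s$ by $\epsilon_j$, and at $t = -1$ it coincides with $\tilde{\Sigma}(\tau_j)$. The finite entropy assumption yields uniform mass bounds, so after passing to a subsequence, $V_j(t)$ converges in Brakke sense to some $V_\infty(t)$ with $V_\infty(-1) = \Gamma$; Huisken's monotonicity forces $V_\infty$ to be a self-shrinker flow, i.e. $V_\infty(t) = \sqrt{-t}\,\Gamma$. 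Lemma \ref{Lem_Pre_blow down transl split} now yields that $V_\infty(t)$ splits in $z$-direction for all but countably many $t < 0$; by the self-shrinker scaling, $\Gamma$ itself is $z$-translation invariant.

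For the entropy identity, Huisken's monotonicity applied to $\{M_s\}_s$ gives that $\Phi(x_0, t_0; s) := \cF[(M_s - x_0)/\sqrt{t_0 - s}]$ is non-increasing in $s$ for every base $(x_0, t_0) \in \RR^{N+1} \times \RR_{>0}$; substituting $s = 0$ gives $\cF[(\Sigma - x_0)/\sqrt{t_0}] = \Phi(x_0, t_0; 0)$. Writing $r := \sqrt{t_0 - s}$, a direct computation shows
$$r^{-1}(M_{t_0 - r^2} - x_0) = \left(r^{-1}\Sigma - r\partial_z\right) + r^{-1}(t_0\partial_z - x_0),$$
and as $r \to \infty$ (equivalently $s \to -\infty$) the parenthesized term tends in Brakke sense to $\Gamma$ while the last term vanishes. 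Using the $z$-invariance of $\Gamma$ just established, the $-r\partial_z$ shift is absorbed, giving $\lim_{s\to-\infty}\Phi(x_0, t_0; s) = \cF[\Gamma]$ independently of the basepoint. Taking supremum yields $\lambda[\Sigma] \leq \cF[\Gamma]$. The reverse inequality follows from Colding--Minicozzi's identity $\cF[\Gamma] = \lambda[\Gamma]$ for self-shrinkers \cite{ColdingMinicozzi12_generic}, Proposition \ref{prop:semicontinuity_entropy}, and the preservation of entropy along a translator's MCF and under rescalings, so that $\lambda[\tilde{\Sigma}(\tau_j)] = \lambda[\Sigma]$ for all $j$.

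The main obstacle is the bookkeeping in the first part: correctly identifying the Huisken-rescaled slice $\tilde{\Sigma}(\tau_j)$ with the time slice $V_j(-1)$ of an auxiliary MCF generated by a stationary-for-$\I^{\epsilon_j}$ varifold, so that Lemma \ref{Lem_Pre_blow down transl split} applies cleanly. Once this identification is made and the $z$-invariance of $\Gamma$ follows, the entropy equality reduces to tracking the Gaussian density along parabolic rescaling.
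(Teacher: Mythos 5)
Your treatment of the $z$-invariance via Lemma \ref{Lem_Pre_blow down transl split} and of the inequality $\lambda[\Sigma]\geq\cF[\Gamma]$ via lower semi-continuity and translation/dilation invariance matches the paper's argument. But the reverse inequality $\lambda[\Sigma]\leq\cF[\Gamma]$ has a genuine gap: you pass directly from Brakke convergence of $r^{-1}\Sigma - r\partial_z = \tilde{\Sigma}(-2\log r)$ to $\Gamma$ to the claim that the Gaussian areas converge, $\Phi(x_0,t_0;s)\to\cF[\Gamma]$. Brakke (or $\mbfF$-) convergence yields only lower semi-continuity of $\cF$ (Proposition \ref{prop:semicontinuity_entropy}), i.e. $\cF[\Gamma]\leq\lim_{s\to-\infty}\Phi(x_0,t_0;s)$, which is the wrong direction. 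Since the weight $e^{-|x|^2/4}$ is not compactly supported, Gaussian area could escape to spatial infinity along the sequence, leaving $\lim_s\Phi>\cF[\Gamma]$ and hence $\lambda[\Sigma]>\cF[\Gamma]$.

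The paper closes this by proving the non-concentration estimate (\ref{Pre_cF-nonconcentration near infty}), namely $\cF[\tilde{\Sigma}_\tau\setminus\BB_R]\leq R^n e^{-(R^2-1)/4}\,\lambda[\Sigma]$, which is uniform in $\tau$ and relies on the finite-entropy hypothesis. Your invocation of finite entropy is used only for Brakke compactness, not for this tightness; and the remark that ``the $-r\partial_z$ shift is absorbed'' by $z$-invariance does not substitute for it, since that shift is already built into the approximating sequence $\tilde{\Sigma}(\tau)$ rather than sitting beside $\Gamma$. Supplying the non-concentration estimate would complete the argument, which otherwise follows the paper's route.
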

	\begin{proof}
		The tangent flow being splitting follows directly from Lemma \ref{Lem_Pre_blow down transl split}.  To compute the entropy of $\Sigma$, first by Proposition \ref{prop:semicontinuity_entropy} and translation dilation invariance of the entropy, $\lambda[\Sigma]\geq \lambda[\Gamma] = \cF[\Gamma]$. On the other hand, for any $x_0\in\RR^N$, $t_0\in(0,\infty)$ and $s>0$, by Huisken's monotonicity formula,
		\[
		\cF[t_0^{-1}(\Sigma-x_0)]
		\leq 
		\cF[(t_0^2+s)^{-1/2}(\Sigma-s\partial_z-x_0)].
		%    \textcolor{red}{\cF[e^{\tau/2}(\Sigma - e^{-\tau}\partial_z - x_0)]}
		%    \textcolor{blue}{\cF[(t_0^2+s)^{-1/2}(\Sigma - (t_0+s)\partial_z - x_0 +t_0\partial_z)]}
		\]
		Now suppose $\Gamma$ is the limit of $s_i^{-1/2}\cdot (\Sigma-s_i\partial_z)$ for a sequence $s_i\nearrow+\infty$. Then taking $s=s_i-t_0^2$ and notice that $s_i^{-1/2}x_0\to 0$, $s_i^{-1/2}t_0^2\searrow0$ and the following non-concentration of $\cF$ near infinity, 
		\begin{align}
			\limsup_{R\nearrow +\infty}\ \limsup_{\tau\to -\infty}\ \cF[\tilde{\Sigma}_\tau \setminus \BB_R] = 0,  \label{Pre_cF-nonconcentration near infty} 
		\end{align}
		we see that $\cF[t_0^{-1}(\Sigma-x_0)]\leq \cF[\Gamma]$. This implies that $\lambda[\Sigma]\leq \cF[\Gamma]$. Hence $\lambda[\Sigma]= \cF[\Gamma]$.
		
		To prove (\ref{Pre_cF-nonconcentration near infty}), notice that for every $R>1$, 
		\begin{align*}
			\cF[\tilde{\Sigma}_\tau \setminus \BB_R] & = R^n\int_{(\tilde{\Sigma}_\tau /R)\setminus \BB_1} (4\pi)^{-n/2} e^{-|y|^2(R^2-1)/4}\cdot e^{-|y|^2/4}\ d\scH^n(y) \\
			& \leq R^n e^{-(R^2-1)/4}\cdot \cF[\tilde{\Sigma}_\tau/R]\ \  \leq R^n e^{-(R^2-1)/4}\cdot \lambda[\Sigma],
		\end{align*}
		where the RHS does not depend on $\tau$ and tends to $0$ as $R\nearrow +\infty$.
	\end{proof}
	
	In the application, we shall also compute the entropy of translators with simple ends. The following lemma guarantees that Corollary \ref{Cor_Pre_Blow down transl MCF split} applies in this case.
	\begin{Lem} \label{Lem_Pre_Transl w simple end has entropy finite}
		Suppose $\Sigma\subset \RR^{N+1}$ is a translator with unique tangent flow at $-\infty$ to be $S\times\RR$, where $S$ is a smooth closed self-shrinker.  Then $\lambda[\Sigma]<+\infty$.
	\end{Lem}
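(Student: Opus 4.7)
The plan is to show $\lambda[\Sigma] \le \cF[S \times \RR]$, and to note that $\cF[S \times \RR]$ is finite via a Fubini computation giving $\cF[S \times \RR] = \cF[S]$ (viewing $S$ as an $(n-1)$-dimensional self-shrinker in $\RR^N$, which is smooth and closed).

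First I would fix arbitrary $x_0 \in \RR^{N+1}$ and $t_0 > 0$ and apply Huisken's monotonicity formula to the MCF $\{\Sigma + t\partial_z\}_{t \in \RR}$ centered at the spacetime point $(x_0, t_0^2)$, obtaining for every $T > 0$
\begin{align*}
\cF[t_0^{-1}(\Sigma - x_0)] \;\le\; \cF\bigl[(t_0^2 + T)^{-1/2}(\Sigma - T\partial_z - x_0)\bigr].
\end{align*}
Setting $R := \sqrt{t_0^2 + T}$, $\tau_R := -2 \log R$ and $\xi_R := R^{-1}(t_0^2 \partial_z - x_0)$, the right-hand side is identified via (\ref{Pre_RMCF assoc to transl}) with $\cF[\tilde\Sigma(\tau_R) + \xi_R]$, where $\tau_R \to -\infty$ and $\xi_R \to 0$ as $T \to \infty$. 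Thus it suffices to prove the single estimate
\begin{align*}
\limsup_{R \to \infty} \cF[\tilde\Sigma(\tau_R) + \xi_R] \;\le\; \cF[S \times \RR],
\end{align*}
and then take the supremum over $(x_0, t_0)$.

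To establish the $\limsup$ estimate, I would split the Gaussian integral defining $\cF[\tilde\Sigma(\tau_R) + \xi_R]$ into the contribution from $\BB_\rho$ and its complement, for large $\rho$. The hypothesized $\mbfF$-convergence of $\tilde\Sigma(\tau_R)$ to $S \times \RR$ with multiplicity one, combined with Brakke--White regularity (Theorem \ref{Thm_Pre_Brakke Reg}), promotes the convergence to $C^\infty$-convergence on any bounded region; together with $\xi_R \to 0$, this forces the $\BB_\rho$ contribution to converge to the corresponding part of $\cF[S \times \RR]$.

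The main obstacle is controlling the tail $\int_{\RR^{N+1} \setminus \BB_\rho}$ uniformly in $R$, since the non-compactness of $S \times \RR$ in $z$ means that $\mbfF$-convergence alone does not preclude Gaussian mass escaping to $z = \pm\infty$. The key is to exploit the $\partial_z$-translation invariance of the limit: for each integer $a$, Brakke--White regularity applied in the shifted window $\BB_M^N \times [a, a+1]$, which still sees the smooth cylinder $S \times [a, a+1]$ by translation invariance, yields a uniform-in-$a$ local area ratio bound on $\tilde\Sigma(\tau_R) \cap (\BB_M^N \times [a, a+1])$ for $R$ sufficiently large (depending only on $M$, not on $a$). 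Multiplying by the rapidly decaying Gaussian factor $e^{-|(x, z)|^2/4}$ and summing over $|a| \ge \rho$ produces a uniform-in-$R$ tail bound which vanishes as $\rho \to \infty$. Combining bounded-region convergence with this uniform tail bound yields the $\limsup$ inequality, and hence $\lambda[\Sigma] \le \cF[S \times \RR] < \infty$.
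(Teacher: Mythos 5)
Your monotonicity reduction and the bounded-region convergence on $\BB_\rho$ are both fine, but there is a real gap in the tail estimate, which is essentially the non-concentration estimate (\ref{Pre_cF-nonconcentration near infty}) that the paper only proves (in Corollary~\ref{Cor_Pre_Blow down transl MCF split}) \emph{after} $\lambda[\Sigma]<\infty$ is already established from this lemma. You assert that, by the $\partial_z$-translation invariance of $S\times\RR$, Brakke--White regularity in each shifted window $\BB^N_M\times[a,a+1]$ yields an area ratio bound on $\tilde\Sigma(\tau_R)\cap(\BB^N_M\times[a,a+1])$ for $R$ large depending only on $M$, \emph{not} on $a$. This uniformity does not follow from the hypothesis: $\mbfF$-convergence is locally uniform, so Brakke--White gives a threshold $R_0$ for each fixed window, but a priori $R_0$ can diverge as $|a|\to\infty$. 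Translation invariance of the \emph{limit} does not transfer to uniform-in-$a$ estimates on the \emph{sequence} $\tilde\Sigma(\tau_R)$, which is not $\partial_z$-invariant; as far as the convergence hypothesis is concerned, the flow could degenerate in windows near $|z|\sim e^{-\tau_R/2}$, precisely where the Gaussian weight is competing against a potentially unbounded area.

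What actually supplies the uniform control is the translator structure, not translation invariance of the limit. The slices $\Sigma_\tau:=\Sigma\cap\{z=\tau\}$, rescaled by $\tau^{-1/2}$, \emph{are} the $z=0$ slices of $\tilde\Sigma(-\log\tau)$, so a single application of Brakke--White regularity at $z=0$ shows $\tau^{-1/2}\Sigma_\tau$ is $C^2$-close to $S$ for all $\tau\ge\tau_0$ uniformly. (In your blown-down picture this is the reparametrization $\tilde\Sigma(\tau)\cap(\RR^N\times\{a\})=(1+ae^{\tau/2})^{1/2}\bigl(\tilde\Sigma(\tau')\cap(\RR^N\times\{0\})\bigr)+a\partial_z$ with $\tau'=\tau-\log(1+ae^{\tau/2})$, which is what must replace the translation-invariance heuristic.) With the uniform slice control in hand, the paper bypasses the blow-down entirely: it splits $\Sigma$ into the compact bottom $\Sigma\cap\{z\le 2\tau_0\}$, whose entropy is finite by subadditivity of $\lambda$ and Colding--Minicozzi's bound for compact hypersurfaces, and the end $\widetilde\Sigma=\Sigma\cap\{z\ge2\tau_0\}$, where it estimates $\cF$ directly via the coarea formula, bounding each $z$-slice by $\lambda[S]+1$ and integrating a one-dimensional Gaussian in $z$. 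Repairing your argument essentially requires re-deriving this slicing computation.
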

	\begin{proof}
		We first claim that there exists $\bar\tau\in\RR$ and $R_0>0$ such that for all $\tau>\bar\tau$, $\tau^{-1/2}\Sigma\cap\{z=\tau^{1/2}\}\subset B_{R_0}\times \{z=\tau^{1/2}\}$. Suppose for contradiction that there exists $(x_j,z_j)\in\Sigma$ such that $|z_j|\to\infty$ and $z_j^{-1}|x_j|^2\to\infty$. Let $\lambda_j:=|x_j|$ and we define $\Sigma_j(t):=\lambda^{-1}_j(\Sigma+\lambda^2_jt \partial_z)$ to be the blow-down sequence of MCFs. Because the unique tangent flow of $\Sigma$ at $-\infty$ is $S\times\RR$, $\Sigma_j(t)$ converges to $\{\sqrt{-t}S\times\RR\}_{t\leq 0}$ in the Brakke sense as $j\to\infty$. Note that $(\lambda_j^{-1}x_j,0)\in\Sigma_j(-|x_j|^{-2}z_j)$, so by the convergence of $\Sigma_j(t)$, the spacetime points $(\lambda_j^{-1}x_j,0,-|x_j|^{-2}z_j)\to (x_\infty,0,0)\in \spt(\{\sqrt{-t}S\times\RR\}_{t\leq 0})$. But $|x_\infty|=1$, which is a contradiction because $\spt(\{\sqrt{-t}S\times\RR\}_{t\leq 0})\cap\{t\geq 0\}=\orig\times\RR\times\{0\}$.
		
		With this claim, Brakke-White regularity Theorem \ref{Thm_Pre_Brakke Reg} implies that for any $\epsilon>0$, there exists $\tau_0>0$ such that for $\tau>\tau_0$, $\tau^{-1/2}\Sigma\cap\{z=\tau^{1/2}\}$ is a graph of function $v_\tau$ over $S\times\{\tau^{1/2}\}$ inside $\RR^{N-1}\times\{\tau^{1/2}\}$, with $\|v_\tau\|_{C^{2}}\leq \epsilon$. We denote by $\widetilde\Sigma:=\Sigma\cap\{z\geq 2\tau_0\}$, and $\Sigma_\tau:=\Sigma\cap\{z=\tau\}$.
		
		By the definition of $\lambda$, it is straightforward to check that for two hypersurfaces $\Sigma_1$ and $\Sigma_2$, $\lambda[\Sigma_1\cup \Sigma_2]\leq \lambda[\Sigma_1]+\lambda[\Sigma_2]$. So it suffices to show $\lambda\left[\ \overline{\Sigma\backslash\widetilde\Sigma}\ \right]<+\infty$ and $\lambda[\widetilde\Sigma]<+\infty$. We claim that $\overline{\Sigma\backslash\widetilde\Sigma}=\Sigma\cap\{z\leq 2\tau_0\}$ is compact. Suppose by contradiction that $(x_j,z_j)\in\Sigma$ such that $\lambda_j^{2}:=|x_j|^2-z_j\to\infty$. Define $\Sigma_j(t):=\lambda_j^{-1}(\Sigma+\lambda_j^2t\partial_z)$, and again, $\Sigma_j(t)$ converges to $\{\sqrt{-t}S\times\RR\}_{t\leq 0}$ in Brakke sense. Similar to the proof of the first claim, $(\lambda_j^{-1}x_j,0,-\lambda_j^{-2}z_j)$ converges to a point in $\spt(\{\sqrt{-t}S\times\RR\}_{t\leq 0})$. But $(\lambda_j^{-1}x_j,0,-\lambda_j^{-2}z_j)$ converges to $(x_\infty,0,z_\infty)$ with $|x_\infty|^2+z_\infty=1$, and $z_\infty\geq 0$. This point is not in $\spt(\{\sqrt{-t}S\times\RR\}_{t\leq 0})$, which is a contradiction.
		
		As a consequence of the claim that $\overline{\Sigma\backslash\widetilde\Sigma}$ is compact, by \cite[Lemma 7.2]{ColdingMinicozzi12_generic}, $\lambda\left[\ \overline{\Sigma\backslash\widetilde\Sigma}\ \right]<+\infty$. It remains to show $\lambda[\widetilde\Sigma]<+\infty$. From \cite[Section 2]{baldauf-Sun2018_sharp}, if $\epsilon$ is chosen sufficiently small, when $\tau\geq 2\tau_0$, $\lambda[\Sigma_\tau]<\lambda[S]+1$, where $\Sigma_\tau$ is viewed as a hypersurface in $\RR^{N}$. Then for any $x_0\in\RR^{N}$, $z_0\in\RR$ and $t_0>0$, 
		\[\begin{split}
			&\int_{\widetilde{\Sigma}}(4\pi t_0)^{-{N-1}/2}e^{-\frac{|x-x_0|^2+(z-z_0)^2}{4t_0}}d\cH^{N}(x,z)
			\\
			&=
			\int_{\tau=2\tau_0}^\infty\int_{\Sigma_\tau}(4\pi t_0)^{-{N-1}/2}e^{\frac{-|x-x_0|^2}{4t_0}}
			\frac{1}{|\nabla_\Sigma z|}
			d\cH^{N-1}(x)
			e^{-\frac{(\tau-\tau_0)^2}{4t_0}}d\cH^1(\tau)
			\\
			&\leq 
			2(\lambda[S]+1)
			\int_{\tau=2\tau_0}^\infty
			(4\pi t_0)^{-1/2}e^{-\frac{(\tau-\tau_0)^2}{4t_0}}
			d\cH^1(\tau)\leq 2(\lambda[S]+1)
			.
		\end{split}
		\]
		Here we use the coarea formula, and we notice that $|\nabla_\Sigma z|^2=|(e_z)^\top|^2=1-|(e_z)^\bot|^2=1-H^2$, and when $\tau_0$ is chosen sufficiently large, $\widetilde{\Sigma}$ has $H<\sqrt{4/3}$, which yields $|\nabla_\Sigma z|^{-1}\leq 2$. This concludes that $\lambda[\widetilde{\Sigma}]<+\infty$.	
	\end{proof}
	
	In view of Corollary \ref{Cor_Pre_Blow down transl MCF split}, an interesting question is whether the tangent flow at $-\infty$ of a general translator is unique. To the best of the authors' knowledge, the uniqueness of cylindrical tangent flow for mean curvature flow is still widely open. Only a few special cases are proved, see \cite{CM15_Lojasiewicz, Zhu20_Lojasiewicz}.  On the other hand, in view of the work of Simon \cite{Simon89} on minimal graphs, it's plausible to conjecture that tangent flows at $-\infty$ of a general translator is unique, provided one of the tangent flow is a smooth closed or asymptotic conic self-shrinker$\times \RR$ with multiplicity $1$.

	%\section{Uniqueness of simple asymptotic} \label{Sec_Uniq}

	%%%%%%%%%%%%%%%%%%%%%%%%%%%%%%%%%%%%%%%%%%%%%%%%%%%%%%%%%%%%%%%%%%%%%%%%%%%%%%%%%%
	%%%%%%%%%%%%%%%%%%%%%%%%%%%%%%%%%%%%%%%%%%%%%%%%%%%%%%%%%%%%%%%%%%%%%%%%%%%%%%%%%%
	%%%%%%%%%%%%%%%%%%%%%%%%%%%%%%%%%%%%%%%%%%%%%%%%%%%%%%%%%%%%%%%%%%%%%%%%%%%%%%%%%%
	%%%%%%%%%%%%%%%%%%%%%%%%%%%%%%%%%%%%%%%%%%%%%%%%%%%%%%%%%%%%%%%%%%%%%%%%%%%%%%%%%%
	%%%%%%%%%%%%%%%%%%%%     Moduli  Space  of  Simple  Ends     %%%%%%%%%%%%%%%%%%%%%
	%%%%%%%%%%%%%%%%%%%%%%%%%%%%%%%%%%%%%%%%%%%%%%%%%%%%%%%%%%%%%%%%%%%%%%%%%%%%%%%%%%
	%%%%%%%%%%%%%%%%%%%%%%%%%%%%%%%%%%%%%%%%%%%%%%%%%%%%%%%%%%%%%%%%%%%%%%%%%%%%%%%%%%
	%%%%%%%%%%%%%%%%%%%%%%%%%%%%%%%%%%%%%%%%%%%%%%%%%%%%%%%%%%%%%%%%%%%%%%%%%%%%%%%%%%
	%%%%%%%%%%%%%%%%%%%%%%%%%%%%%%%%%%%%%%%%%%%%%%%%%%%%%%%%%%%%%%%%%%%%%%%%%%%%%%%%%%
	\section{Moduli Space of pl-Simple Translating Ends} \label{Sec_Moduli}
	Hereafter in this paper, we only work in codimension $1$ case, though many of the discussions also work in higher codimensions. 
	\begin{Def} \label{Def_Moduli_Transl ends}
		Let $n\geq 2$, $S\subset \RR^n$ be a self-shrinker, $m\in \ZZ_{\geq 1}$; $\Sigma\subset \RR^n\times \RR_{>z_0}$ be a properly embedded hypersurface.  We call $\Sigma$ a \textbf{translating end} over $m|S\times \RR_+|$ if $\Sigma$ satisfies (\ref{Pre_Transl equ}) and when $R\to \infty$, $|R^{-1/2}(\Sigma - R\partial_z)|$ $\mbfF$-converges to $m|S\times \RR|$.  
		
		We call $\Sigma$ a \textbf{simple end} over $S\times \RR_+$ if $m = 1$.% and $S$ is either closed or asymptotic conic \textcolor{red}{(not necessary to restrict the class to closed or asymptotic conic?)}.
	\end{Def}
	In the present paper, we shall only deal with smooth closed self-shrinkers $S\subset \RR^n$. The abundance of such self-shrinkers has been established by \cite{Angenent92_Doughnut, Moller11_ClosedShrinker, DruganLeeNguyen18_Survey_SymShrinker, KapouleasMcGrath20_DoublingShrinker, Riedler22_ClosedShrinker}.  We may discuss the case of asymptotic conic self-shrinkers in future works.
	
	Given a self-shrinker $S\subset \RR^n$ with unit normal $\nu=\nu_S$, let $\ES\subset \RR^{n+1}$ be the \textbf{auxiliary end} given by
	\begin{align}
		\ES:= \{(x, z)\in \RR^{n+1}: z>0, x/\sqrt{z}\in S\}  \label{Moduli_Auxl end}
	\end{align}
	We shall parametrize $\ES$ by $\Phi: S\times \RR_+\to \ES$, $(x,z)\mapsto (\sqrt{z}x, z)$. More generally, for $z_0>0$ and $u\in C^1_{loc}(S\times \RR_{>z_0})$, define 
	\begin{align}
		\Phi_u: S\times\RR_{>z_0} \to \RR^{n+1},\ \ (x, z)\mapsto (\sqrt{z}(x+u(x, z)\nu_x), z).  \label{Moduli_param simple end Phi_u}
	\end{align}
	For simplicity we denote $\ES[u]:= \Phi_u(S\times \RR_{>z_0})$.  Clearly, $\ES = \ES[\orig]$ and $\Phi=\Phi_\orig$. 
	\begin{Lem} \label{Lem_Moduli_Simple end <=> u to 0}
		Let $S\subset \RR^n$ be a smooth closed self-shrinker.  Then a translating end $\Sigma\subset \RR^n\times \RR_{>z_0}$ is a simple end over $S\times \RR_+$ if and only if there exists $z_0'>1$ and $u\in C^1(S\times\RR_{>z_0'})$ such that $\Sigma\cap \RR^n\times\RR_{>z_0'} = \ES[u]$, and that when $R\to +\infty$, 
		\begin{align}
			% \|u\|_{C^1_\star; S, R} := 
			\sup_{S\times [R, 2R]} \left( |u| + |\nabla u| + R|\partial_z u|% + |\nabla^2 u| + \sqrt{R}|\partial_z \nabla u| + R|\partial^2_z u|  
			\right) \to 0 \,,  \label{Moduli_Simple end <=> u to 0, est}   
		\end{align}
		where $\nabla$ denote the gradient in $S$ direction.
	\end{Lem}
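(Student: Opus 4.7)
The plan is to prove each direction of the equivalence separately.

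For the $\Leftarrow$ direction, I need to verify $\mbfF$-convergence of $R^{-1/2}(\Sigma - R\partial_z)$ to $|S \times \RR|$ given the graph representation with decay. Parametrize: a point $\Phi_u(x,z) = (\sqrt{z}(x + u\nu_x), z)$ of $\Sigma$ is rescaled to $(\sqrt{z/R}(x + u\nu_x), (z-R)/\sqrt{R})$, and substituting $t = (z-R)/\sqrt{R}$ (so $z = R + \sqrt{R}t$, $\sqrt{z/R} = \sqrt{1 + t/\sqrt{R}}$) exhibits the rescaled hypersurface as a normal-graph-type parametrization over $S \times (-\sqrt{R}, \infty)$. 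For $(x,t)$ in any fixed compact subset of $S \times \RR$, the corresponding $z$ lies in $[R/2, 2R]$ once $R$ is large, so by hypothesis $|u|$, $|\nabla u|$, and $\sqrt{R}|\partial_z u| = (R|\partial_z u|)/\sqrt{R}$ all tend uniformly to $0$. Differentiating the parametrization in both $x$- and $t$-directions and collecting terms, the derivatives converge to those of the trivial parametrization $(x,t) \mapsto (x,t)$ of $S \times \RR$. This yields $C^1_{loc}$-convergence of parametrizations, hence $\mbfF$-convergence of the associated varifolds. (The compact piece $\Sigma \cap \{z \leq z_0'\}$ scales to the origin and contributes nothing to the limit.)

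For the $\Rightarrow$ direction, the key input is Brakke-White regularity. Since $\Sigma$ is a simple end, Lemma \ref{Lem_Pre_Transl w simple end has entropy finite} yields finite entropy, and the definition identifies the tangent flow at $-\infty$ as $S \times \RR$ with multiplicity one. Theorem \ref{Thm_Pre_Brakke Reg} then applies to the RMCF $\tilde{\Sigma}(\tau) = e^{\tau/2}(\Sigma - e^{-\tau}\partial_z)$: for any $L > 0$ there exists $\tau_0$ such that for all $\tau < \tau_0$, $\tilde{\Sigma}(\tau)$ is a $C^\infty$ normal graph $(y, s) \mapsto (y + v_\tau(y, s)\nu_y, s)$ over $S \times [-L, L]$ with $\|v_\tau\|_{C^k(S \times [-L, L])} \to 0$ as $\tau \to -\infty$ for every $k$. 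For each $z > z_0' := e^{-\tau_0}$, applying Huisken's rescaling at $\tau = -\log z$ carries a putative graph point $\Phi_u(x,z)$ to $(x + u(x,z)\nu_x, 0)$ lying on $\tilde{\Sigma}(\tau)$ at rescaled height $s = 0$. Matching against $y + v_\tau(y, 0)\nu_y$ and invoking uniqueness in a tubular neighborhood of $S$ forces $y = x$ and yields the pointwise identity $u(x, z) = v_{-\log z}(x, 0)$, so $\Sigma \cap (\RR^n \times \RR_{>z_0'}) = \ES[u]$.

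The bounds $|u|, |\nabla u| \to 0$ follow immediately from $\|v_\tau\|_{C^1} \to 0$. The main obstacle is the estimate $R|\partial_z u| \to 0$ on $S \times [R, 2R]$: this $z$-range corresponds to an unbounded $s$-range $[0, \sqrt{R}]$ under any fixed rescaling, so no single application of Brakke-White suffices. The resolution is to differentiate the identity $u(x, z) = v_{-\log z}(x, 0)$ in $z$: the chain rule gives $\partial_z u(x, z) = -z^{-1}\,\partial_\tau v_\tau(x, 0)\big|_{\tau = -\log z}$, so $z|\partial_z u| = |\partial_\tau v_\tau(x, 0)|$. Either invoking the $C^\infty$-in-spacetime content of Brakke-White directly, or using the linearized RMCF equation $\partial_\tau v = L_S v + \partial_s^2 v - (s/2)\partial_s v + N(v, \nabla v)$ together with $\|v_\tau\|_{C^2} \to 0$, yields $|\partial_\tau v_\tau(x, 0)| \to 0$ uniformly in $x$. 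Therefore $z|\partial_z u| \to 0$ as $z \to \infty$, and a fortiori $R|\partial_z u| \leq z|\partial_z u| \to 0$ uniformly on $S \times [R, 2R]$.
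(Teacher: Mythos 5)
Your proof is correct and follows essentially the same strategy as the paper: Brakke--White $\epsilon$-regularity for the forward direction, explicit computation of the rescaled parametrization for the converse. Two worthwhile points of comparison. In the $\Leftarrow$ direction the paper rewrites $\tilde\Sigma(\tau)$ as a genuine normal graph over $S\times\RR$, which requires the rescaling-of-graphs Lemma \ref{Lem_App_Scal graph} to absorb the $\sqrt{1+e^{\tau/2}\hat z}$ prefactor; you instead verify $C^1_{loc}$-convergence of the parametrization $(x,t)\mapsto(\sqrt{1+t/\sqrt{R}}(x+u\nu_x),t)$ to the identity parametrization of $S\times\RR$ and observe that this alone suffices for $\mbfF$-convergence of varifolds, bypassing the appendix lemma entirely. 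That is a legitimate and slightly leaner route, since for this lemma one never needs the normal-graph representation itself. In the $\Rightarrow$ direction you make precise the paper's terse remark that the estimates ``follow from the $C^1$-estimates on $\tilde u$'': your chain-rule identity $z\,\partial_z u(x,z) = -\partial_\tau v_\tau(x,0)\big|_{\tau=-\log z}$ pinpoints that the bound $R|\partial_z u|\to 0$ is really a $\tau$-derivative estimate, which is supplied by the spacetime $C^\infty_{loc}$ convergence in Theorem \ref{Thm_Pre_Brakke Reg} (or, as you note, by reading off $\partial_\tau v$ from the graphical RMCF equation once $\|v_\tau\|_{C^2}\to 0$). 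This is exactly what the paper uses, just spelled out.
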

	\begin{proof}
		The proof is basically by changing coordinates and applying White's $\epsilon$-regularity \cite{White05_MCFReg}.  
		First recall that since $\Sigma$ satisfies the translator equation (\ref{Pre_Transl equ}), we have $\{\tilde{\Sigma}(\tau):= e^{\tau/2}(\Sigma - e^{-\tau}\partial_z)\}$ is a RMCF. 
		
		Suppose that $\Sigma$ is a simple end over $S\times \RR_+$, then by White's $\epsilon$-regularity \cite{White05_MCFReg}, when $\tau\to -\infty$, $\tilde{\Sigma}(\tau)$ $C^\infty_{loc}$-converges to $S\times \RR$. Hence for $\tau \leq \tau_0\ll -1$, by the same argument as Lemma \ref{Lem_Pre_Transl w simple end has entropy finite}, we can write \[
		\tilde{\Sigma}(\tau) \cap \RR^n\times [-1, 1] = \graph_{S\times \RR
			%		\textcolor{red}{\text{(should be $S\times[-1,1]$?)}}		
		}(\tilde{u}(\cdot, \cdot\ ; \tau)),   \]
		for some $\tilde{u}\in C^2(S\times [-1, 1]\times (-\infty, \tau_0))$ and $\tilde{u}(\cdot, \cdot\ ; \cdot + \tau)$ $C^2$-converges to $\orig$ as $\tau\to -\infty$.  Therefore when $\tau\ll -1$, we have $\Sigma\cap \RR^{n}\times \RR_{\geq e^{-\tau}} = \ES[u]$, where \[
		u(x, e^{-\tau}) := \tilde{u}(x, 0; \tau).   \]
		The desired estimates (\ref{Moduli_Simple end <=> u to 0, est}) then follows from the $C^1$-estimates on $\tilde{u}$ (note that $|R\partial_z u(x,R)|=|\partial_\tau\tilde u(x,0;-\log R)|$ by the chain rule).

		Conversely, suppose $\Sigma = \ES[u]$ for some $u\in C^2(S\times \RR_{>z_0'})$ with estimate (\ref{Moduli_Simple end <=> u to 0, est}), then for each $\tau\leq -\log z_0'$, 
		\begin{align}
			\tilde{\Sigma}(\tau) = \left\{\left( \sqrt{1+ e^{\tau/2}\hat{z}}\cdot(x + u(x, e^{-\tau}+e^{-\tau/2}\hat{z})\nu_x), \hat{z} \right): \hat{z}\in [e^{\tau/2}z_0'-e^{-\tau/2}, +\infty) \right\}.  \label{Moduli_Coord change btwn transl and RMCF}
		\end{align}
		
		To rewrite such $\tilde{\Sigma}(\tau)$ as a graph over $S\times \RR$, we introduce the following notation.  By Lemma \ref{Lem_App_Scal graph}, there exists some $0<\vartheta_S\ll 1$ such that for every $u\in C^1(S\times [0, 1])$ with $\|u\|_{C^1}\leq \vartheta_S$, there's a unique $\check{u}\in C^1(S\times [0, 1]\times (1-\vartheta_S, 1+ \vartheta_S))$ such that for every $z\in [0, 1]$ and every $|a-1|<\vartheta_S$, we have \[
		\graph_S(\check{u}(\cdot, z, a)) = a\cdot \graph_S(u(\cdot, z)),   \]
		and estimates,
		\begin{align}
			\begin{split}
				\|\partial_a \check{u} \|_{C^0,S\times[0,1]} & \leq C_S , \\
				\|\check{u}(\cdot, \cdot, a)\|_{C^1, S\times [0, 1]} & \leq C_S (\|u\|_{C^1, S\times [0, 1]} + |a-1|), \ \ \ \ \ \forall |a-1|<\vartheta_S. 
			\end{split}  \label{Moduli_scaling graphs est}
		\end{align}
		%   Here $D$ is the derivative in $(x, z, a)$. 
		
		With this notation, for each $\tau\ll -\log z_0$ and each $0<L<\vartheta_Se^{-\tau/2}$, \[
		\tilde{\Sigma}(\tau) \cap \RR^n\times [-L, L] = \graph_{S\times \RR
			%		\textcolor{red}{\text{(should be $S\times[-L,L]$?)}}
		}(\tilde{u}(\cdot, \tau)),   \]
		for some \[
		\tilde{u}(x, \hat{z}, \tau) = \check{v}(x, \hat{z}, \sqrt{1+ e^{\tau/2}\hat{z}}),   \]
		where $v(x, \hat{z}):= u(x, e^{-\tau/2}\hat{z}+e^{-\tau})$.  Then (\ref{Moduli_Simple end <=> u to 0, est}) and (\ref{Moduli_scaling graphs est}) together show that $\tilde{u}(\cdot, \cdot, \tau)$ $C^1_{loc}$-converges to $0$ as $\tau\to -\infty$. 
	\end{proof} 
	
	\begin{Def} \label{Def_Moduli_p-simple Transl end}
		We call $\ES[u]\subset \RR^{n+1}$ a \textbf{polynomially asymptotic simple translating end} (or simply a \textbf{pl-simple end}) over $S\times \RR_+$ if it is a simple end over $S\times \RR_+$ and there exists $\epsilon > 0$ such that \[
		\limsup_{R\to +\infty} \|u(\cdot, R)\|_{C^0(S)}\cdot R^\epsilon < +\infty.   \]
	\end{Def}
	\begin{Rem} \label{Rem_Moduli_C^0 vs C^k decay}
		Although we only require $C^0$-polynomial decay in the definition above, by the proof of Lemma \ref{Lem_Moduli_Simple end <=> u to 0}, classical parabolic regularity theory could upgrade this to a polynomial decay in the $C^2_\star$-norm, where \[
		\|u\|_{C^2_\star, S, R} :=  \sup_{S\times [R, 2R]} \left( |u| + |\nabla u| + R|\partial_z u| + |\nabla^2 u| + \sqrt{R}|\partial_z \nabla u| + R|\partial^2_z u| \right) .   \]
	\end{Rem}
	\begin{Rem}
		It's interesting to see whether every simple translating end over a smooth closed self-shrinker $S$ is pl-simple. It is expected so if $S$ is \textbf{integrable}, i.e., every element in $\mathrm{Ker} L_S$ is induced by a continuous family of nearby self-shrinkers.   
	\end{Rem}
	
	\begin{eg} \label{Eg_Moduli_Bowl soliton}
		Recall by \cite{AltschulerWu94_Transl, ClutterbuckSchnurerSchulze07_TranslCatenoid}, the rotationally symmetric bowl soliton in $\RR^{n+1\geq 3}$ with a tip at the origin is given by $\Sigma := \{(x, F(x)): x\in \RR^n\}$, where \[
		F(x) = \frac{|x|^2}{2(n-1)} - \log |x| + O(|x|^{-1}),   \] 
		as $x\to \infty$.
		Let $(a, b)\in \RR^n\times \RR$, then $\Sigma_{a,b}:= \Sigma + (a, b)$ is also a translator in $\RR^{n+1}$, and when $z\gg 1$, $\Sigma_{a, b}\cap \RR_{>z} = \ES[U_{a, b}]\cap \RR_{> z}$, where 
		\begin{align*}
			U_{a, b} (x, z) = \begin{cases}
				O(z^{-1}\log z), &\ \text{ if } a = \orig, \\
				O(z^{-1/2}), &\ \text{ if } a \neq \orig.
			\end{cases}
		\end{align*}
		In particular, the translations of bowl solitons in $\RR^{n+1}$ are all pl-simple ends over $\SSp^{n-1}_{\sqrt{2(n-1)}}\times \RR_+$.
	\end{eg}
	
	The goal for this section is to study the space of pl-simple translating ends over $S\times \RR$, where $S\subset \RR^n$ is a fixed closed self-shrinker.  First recall (\ref{Pre_Gaussian Jac oper}), \[
	L_S:= \Delta_S - \frac{X}{2}\cdot \nabla_S + |A_S|^2 + \frac{1}{2}  \]
	is the Jacobi operator of Gaussian energy on $S$, where $X$ is the position vector. Let $\Gamma(S):= \{\mu_1<\mu_2<\dots\nearrow +\infty\}$ be the set of eigenvalues of $-L_S$; $E_j$ be the corresponding eigenspace of $\mu_j$. 
	Let \[
	\mu^- = \mu^-(S) := \sup \left(\Gamma(S)\cap (-\infty, 0) \right).   \]
	Note that by \cite{ColdingMinicozzi12_generic}, $-1, -1/2 \in \Gamma(S)$, hence $\mu_1\leq -1$ and $\mu^-\in [-1/2, 0)$.
	
	By Appendix \ref{Sec_Append_Aux end}, the EL operator of $\I$-functional at $\ES[u]$ is given by 
	\begin{align}
		\scT(u) = -z(\partial_z^2 + \partial_z + z^{-1}L_S)u + z\scR(u) + \scT(0), \label{Moduli_Transl mean curv oper}
	\end{align}
	where the estimates of error terms $\scR$ and $\scT(0)$ are given by Lemmas \ref{Append_scT(0)} and \ref{Append_Error est II of scT}.
	In particular, the hypersurface $\ES[u]$ is a translator if and only if $\scT(u) = 0$. 
	
	The first goal of this section is to prove the following. 
	(In the following theorem, $\BB^I$ is the unit Euclidean ball in $\RR^I$.)
	\begin{Thm} \label{Thm_Moduli of transl end }
		Given a closed self-shrinker $S\subset \RR^n$, let \[
		I:= \ind_\cF(-L_S) = \sum_{\mu_j\in \Gamma(S)\cap \RR_{<0}} \dim E_j.   \]
		Then there exists a constant $C_0(S)\gg 1$ such that for every $z_0\geq C_0(S)$, there's a continuous family $\{u_{\varphi, z_0}\}_{\varphi\in \BB^I}$ of smooth functions on $S\times \RR_{> z_0}$ such that for every $\varphi\in \BB^I$,
		\begin{enumerate} [(i)]
			\item $\scT(u_{\varphi, z_0}) = 0$ on $S\times \RR_{>z_0}$, and for every $R\geq z_0$, \[
			\|u_{\varphi, z_0}\|_{C^2_\star, S, R} \leq R^{\mu^-/2};  \]
			In particular, $\ES[u_{\varphi, z_0}]$ is a pl-simple translating end over $S\times \RR_+$;
			\item If $\varphi'\neq \varphi\in \BB^I$, then,
			%    \limsup_{z\to \infty} z^{-\mu_1-1/2+\epsilon}\|u_a(\cdot, z) - u_{a'}(\cdot, z)\|_{L^2(S)} = +\infty;   \]
			\begin{align*}
				C(S, z_0)^{-1}\|\varphi-\varphi'\|_{\RR^I} \leq \sup_{R\geq z_0}\|u_{\varphi, z_0} - u_{\varphi', z_0}\|_{C^2_\star, S, R}\cdot R^{-\mu^-/2} \leq C(S, z_0)\|\varphi-\varphi'\|_{\RR^I}
			\end{align*}
			\[ \limsup_{z\to +\infty} z^{-\mu_1}\cdot \|(u_{\varphi, z_0} - u_{\varphi', z_0})(\cdot, z)\|_{C^0(S)} > 0.    \]
			\item If $\Sigma_e = \ES[w]$ is another pl-simple translating end over $S\times \RR_+$, then for every $\beta\in (0, 1)$, there exists $C_1(\Sigma_e, \beta)\geq C_0$ such that $\forall z_0\geq C_1$, there exists $\psi=\psi_{w, z_0}\in \BB^I$ such that \[
			\limsup_{R\to +\infty}\|(u_{\psi, z_0} - w)(\cdot, R)\|_{C^0(S)}\cdot e^{\beta R} <+\infty.   \]
		\end{enumerate}
		Moreover, if there's a Lie group $G\subset O(n)$ acting on $S$, then there exists a $G$-invariant pl-simple end over $S\times\RR_+$.
	\end{Thm}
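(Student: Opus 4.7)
The plan is to construct $\{u_{\varphi, z_0}\}_{\varphi \in \BB^I}$ by a contraction-mapping argument applied to the translator equation $\scT(u) = 0$. By (\ref{Moduli_Transl mean curv oper}) this equation is equivalent to
\begin{equation*}
\mathcal{L} u = \scR(u) + z^{-1}\scT(0), \qquad \mathcal{L} := \partial_z^2 + \partial_z + z^{-1}L_S,
\end{equation*}
on $S\times (z_0,+\infty)$, and the $I$-parameter freedom should arise as the moduli of polynomially decaying solutions of the linear model $\mathcal{L}u=0$ associated with the unstable directions of $-L_S$.

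First I would analyze $\mathcal{L}$ by separation of variables in the eigenbasis of $-L_S$. If $-L_S\phi_j=\mu_j\phi_j$ and $u=\sum_j f_j(z)\phi_j$, each mode satisfies the Euler-type ODE $f_j'' + f_j' - \mu_j z^{-1}f_j = 0$, whose two independent solutions at infinity admit the dominant-balance asymptotics $f_j^P(z)\sim z^{\mu_j}$ and $f_j^E(z)\sim e^{-z}z^{-\mu_j}$. Hence every polynomially decaying solution of $\mathcal{L}u=0$ is encoded, up to exponentially small corrections, by the coefficients $\{a_j\}_{\mu_j<0}$ of its leading part $\sum_{\mu_j<0}a_j f_j^P\phi_j$. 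Using the pair $(f_j^P, f_j^E)$ mode-by-mode, variation of parameters yields a bounded right inverse $\mathcal{G}$ of $\mathcal{L}$ on weighted $C^2_\star$-spaces $X_\beta:=\{v:\sup_{R\geq z_0} R^{-\beta}\|v\|_{C^2_\star, S, R}<\infty\}$ with $\beta=\mu^-/2$, chosen to be orthogonal to the $I$-dimensional kernel spanned by $\{f_j^P\phi_j\}_{\mu_j<0}$.

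Next I would set up the fixed point. Fix $z_0\geq C_0(S)$ large, let $u^{(0)}_\varphi := \sum_{\mu_j<0}a_j(\varphi)f_j^P(z)\phi_j$ be the leading-order approximate solution parametrized by $\varphi \in \BB^I$, and seek $u_{\varphi,z_0} = u^{(0)}_\varphi + v$ with $v\in X_{\mu^-/2}$ orthogonal to the kernel solving
\begin{equation*}
v = \mathcal{G}\!\left(\scR(u^{(0)}_\varphi + v) + z^{-1}\scT(0) - \mathcal{L}u^{(0)}_\varphi\right).
\end{equation*}
The weighted bound on $\scT(0)$ and the quadratic estimate $\scR(u)=O(u^2)$ from Lemmas \ref{Append_scT(0)} and \ref{Append_Error est II of scT}, combined with the bound on $\mathcal{G}$, make the right-hand side a contraction on a small ball in $X_{\mu^-/2}$ once $z_0$ is sufficiently large; the unique fixed point gives (i) and depends continuously on $\varphi$. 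The lower bound in (ii) follows from the linear independence of $\{f_j^P\phi_j\}_{\mu_j<0}$ in the weighted norm together with the fact that $v$ is quadratically small. For (iii), given any pl-simple end $\mathcal{E}[w]$, an asymptotic expansion of $w$ in the $\phi_j$-basis reads off its unstable-mode leading coefficients, and matching them gives a $\psi\in\BB^I$ such that $w - u_{\psi, z_0}$ has no surviving polynomial part; a second application of the linear theory then forces this difference to decay exponentially at any rate $\beta<1$, provided $z_0$ is large enough. Finally, $G$-equivariance is preserved throughout because $L_S$, its eigenspaces, $\scR$, $\scT(0)$, and hence $\mathcal{G}$ all commute with the $G$-action, so restricting the fixed-point argument to the $G$-invariant subspace produces a $G$-invariant $u_{\varphi,z_0}$.

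The main obstacle will be obtaining the uniform weighted estimates for $\mathcal{G}$. The operator $\mathcal{L}$ degenerates at infinity in a nonstandard way — the coefficient of $L_S$ is $z^{-1}$ while $\partial_z^2$ remains of order one — so neither purely elliptic nor purely parabolic estimates suffice on their own. Following the hint in the introduction, I would combine two rescalings. Elliptic Schauder estimates on dyadic slabs $S\times[R,2R]$, rescaled to unit scale, give interior regularity up to lower-order perturbations of size $R^{-1}$. Meanwhile, the substitution $\tau=\log z$ converts (after multiplying by $z$) $\mathcal{L}$ into an approximation of the linear RMCF operator $\partial_\tau + L_S$ near $\tau=+\infty$, enabling parabolic/semigroup estimates that cleanly separate the exponentially decaying stable-mode component from the polynomially decaying unstable-mode component. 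Matching these two families of estimates consistently across all scales, and choosing the boundary data at $z=z_0$ so that $\mathcal{G}$ genuinely lands in the polynomially decaying subspace rather than being polluted by growing stable-mode solutions, is where the technical subtlety lies.
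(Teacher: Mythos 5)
Your plan tracks the paper's strategy closely --- eigendecomposition of $-L_S$, weighted $C^{2,\alpha}_\star$ spaces, a right inverse of the linearized operator, and a fixed-point argument --- with the main presentational difference being that you parametrize by the asymptotic leading coefficients of the unstable modes rather than by the boundary derivative data $\Pi_{<0}\partial_z u(\cdot,z_0/12)=\varphi$ as in Lemma \ref{Lem_Moduli_T_S u = f} and Lemma \ref{Lem_Moduli_contraction map}. However, the ``technical subtlety'' you mention at the end is in fact a genuine obstruction, and you do not propose the device the paper uses to circumvent it. The long-term Schauder estimate that closes the contraction in $C^{2,\alpha}_\star$ (Lemma \ref{Lem_Moduli_C^2,alpha_star est, interior}) is parabolic in nature: to control the $C^{2,\alpha}_\star$-norm on $S\times[3R,6R]$ one must have the equation hold on the larger slab $S\times(R,8R)$. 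This estimate has \emph{no} boundary analogue (Remark \ref{Rem_Moduli_No long term bdy C^k est}), because $S\times\{z_0\}$ plays the role of a terminal time slice for the almost-parabolic operator $\partial_z^2+\partial_z+z^{-1}L_S$, so boundary values at $z_0$ cannot be prescribed with uniform long-term $C^{2,\alpha}_\star$ control. If you run the contraction for $T_S u = \scR(u)+z^{-1}\scT(0)$ directly on $S\times\RR_{>z_0}$ with Dirichlet-type data at $z_0$, the iterates will not come with uniform $C^{2,\alpha}_\star$ bounds up to $z=z_0$, and the argument will not close. The paper's resolution in Lemma \ref{Lem_Moduli_contraction map} is to enlarge the domain to $S\times\RR_{>z_0/12}$ and cut the nonlinearity off with $\eta_{z_0}$: the equation becomes \emph{linear} on $[z_0/12,z_0/2]$, where ordinary boundary elliptic estimates suffice, while the actual translator equation $\scT(u)=0$ is still recovered for $z>z_0$.

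A second, smaller imprecision: the kernel of $\mathcal{L}$ inside $X_{\mu^-/2}$ is not $I$-dimensional. Every exponentially decaying mode $f_j^E\phi_j$, for \emph{every} $j\geq1$ and not just for $\mu_j<0$, also lies in $X_{\mu^-/2}$, so the actual kernel is infinite-dimensional. Your right inverse $\mathcal{G}$ must annihilate this infinite-dimensional exponential part as well, by a choice of Dirichlet data at $z_0$; this is exactly what the condition $u(\cdot,z_0/12)=0$ in Lemma \ref{Lem_Moduli_T_S u = f} accomplishes, mode by mode via Lemma \ref{Lem_Moduli_Sol cL_mu = 0 fast decay}. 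Making this explicit matters, because it is what makes the $\BB^I$-parametrization injective (part (ii)) and what makes the matching step in part (iii) well-posed: without it, two distinct solutions of $\scT(u)=0$ could share the same polynomial asymptotics.
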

	\begin{Rem} \label{Rem_Moduli_Asymp rate for simple ends}
		The asymptotic rate $\mu^-/2$ in Theorem \ref{Thm_Moduli of transl end } (i) \& (ii) can be replaced by any constant $\gamma\in (\mu^-, 0)$, while the constant $C_0(S)$ also depend on $\gamma$.
	\end{Rem}
	
	Sections \ref{Subsec_Moduli_L^2 est}-\ref{Subsec_Moduli_C^2,alpha est} are devoted to the analysis of linear operator $T_S:= \partial_z^2 + \partial_z + z^{-1}L_S$ on $S\times \RR_+$,  based on which a fixed point argument is carried out in Section \ref{Subsec_Moduli_Pf of Main Thm} to prove Theorem \ref{Thm_Moduli of transl end }.  In Section \ref{Subsec_Moduli_RR action}, we introduce a natural one-sided deformation action on the space of pl-simple translating end, which plays an important role in Section \ref{Sec_Pf of Main Thm} to prove the uniqueness of the translator associated with a generic end, as well as in Section \ref{Sec_Fattening} to construct examples such that the uniqueness fails.

	%%%%%%%%%%%%%%%%%%%%%%%%%%%%%%%%%%%%%%%%%%%%%%%%%%%%%%%%%%%%%%%%%%%%%%%%%%%%%%%%%%
	%%%%%%%%%%%%%%%%%%%%%%%%%%%%%     L^2   Estimates     %%%%%%%%%%%%%%%%%%%%%%%%%%%%
	%%%%%%%%%%%%%%%%%%%%%%%%%%%%%%%%%%%%%%%%%%%%%%%%%%%%%%%%%%%%%%%%%%%%%%%%%%%%%%%%%%
	\subsection{$L^2$ estimates} \label{Subsec_Moduli_L^2 est}
	We shall study the $L^2$ analysis of the operator 
	\begin{align}
		T_S :=  \partial_z^2 + \partial_z + z^{-1}L_S,  \label{Moduli_Oper T_S}  
	\end{align}
	on $S\times \RR_+$, which by (\ref{Moduli_Transl mean curv oper}) is the principal part of the translator mean curvature.
	
	Define the Gaussian-$L^2$ norm for $\phi$ on $S$ to be \[
	\|\phi\|_{L^2(S)}^2 := \int_S \phi(x)^2 e^{-|x|^2/4}\ dx;   \] 
	For $\gamma\in \RR$ and $\Omega\subset S\times \RR_+$, define $L^2_\gamma(\Omega)$ be the space of locally $L^2$ functions on $\Omega$ with finite $\|\cdot\|_{L^2_\gamma, \Omega}$ norm, where
	\begin{align}
		\|u\|_{L^2_\gamma, \Omega}^2 := \int_\Omega u(x, z)^2 z^{-2\gamma-1}e^{-|x|^2/4}\ dxdz.
	\end{align}
	And for simplicity, $\|u\|_{L^2_\gamma, S, z_0}:= \|u\|_{L^2_{\gamma}, S\times \RR_{> z_0}}$.  Heuristically, $\|u\|_{L^2_\gamma, S, z_0}<+\infty$ means $|u|\lesssim z^\gamma$.
	
	Recall $\Gamma(S):= \{\mu_1<\mu_2< \dots \nearrow +\infty\}$ is the set of eigenvalues of the Jacobi operator $-L_S$ for Gaussian energy of $S$; $E_j$ is the Gaussian-$L^2$-eigenspace of $\mu_j$.  For $\gamma\in \RR$, we denote \[
	E_{<\gamma}:= \bigoplus_{\mu_j<\gamma} E_j,   \] 
	and $\Pi_{<\gamma}$ to be the $L^2(S)$-orthogonal projection onto $E_{<\gamma}$. Note that when $\gamma\leq \mu_1$, $E_{<\gamma} = \orig$.
	\begin{Lem} \label{Lem_Moduli_T_S u = f}
		For every $\Lambda>1$ and $\sigma>0$, there exists $C(\Lambda, \sigma)>1$ with the following properties.
		Suppose $S\subset \RR^n$ is a closed self-shrinker with $\mu_1(-L_S)\geq -\Lambda$; $\gamma\in \RR_{\geq -\Lambda}$ such that $dist_\RR(\gamma, \Gamma(S))\geq \sigma$; $z_0\geq C(\Lambda, \sigma)$.  Then for every $f\in L^2_{loc}(S\times \RR_{>z_0})$ with $\|f\|_{L^2_{\gamma-1}, S, z_0}<+\infty$, every $\phi\in L^2(S)$ and every $\varphi\in E_{<\gamma}$, there exists a unique solution $u\in L^2_\gamma(S\times \RR_{>z_0})$ to 
		\begin{align}
			\begin{cases}
				(\partial_z^2 + \partial_z + z^{-1}L_S)u = f,\ &\ \text{ on }S\times\RR_{>z_0}, \\
				u(\cdot, z_0) = \phi,\ \ \  \Pi_{<\gamma}\partial_zu(\cdot, z_0) = \varphi & \ \text{ on }S.
			\end{cases}  \label{Moduli_L^2 est_L^tau u = f}
		\end{align}
		and it satisfies, 
		\begin{align}
			\begin{split}
				&\ \|u\|_{L^2_\gamma, S, z_0} \\
				\leq &\ C(\Lambda, \sigma)\left(\|f\|_{L^2_{\gamma - 1}, S, z_0}+ \|\phi\|_{L^2(S)}z_0^{-\gamma-1/2} + \left(\|\varphi\|_{L^2(S)} + (2+ |\gamma|)\|\Pi_{<\gamma}\phi\|_{L^2(S)}\right)\cdot z_0^{-\gamma}\right). 
			\end{split} \label{Moduli_L^2 est_Sol L^tau u = f w est}
		\end{align}
		%   Moreover, the space of solutions $v$ to (\ref{Moduli_L^2 est_L^tau u = f}) with $\|v\|_{S, z_0; \gamma}<+\infty$ forms an affine subspace of \[
		%    \dim = \sum_{\mu_j<\gamma-1/2}\dim E_j,   \]
		%   where $E_j$ is the eigenspace of $-L_S$ with eigenvalue $\mu_j$.
	\end{Lem}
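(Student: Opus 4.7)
The plan is to reduce the PDE (\ref{Moduli_L^2 est_L^tau u = f}) to a countable family of second-order ODEs in $z$ via spectral decomposition on $S$, solve each ODE with the appropriate asymptotic prescription, and reassemble via Parseval. Since $-L_S$ is self-adjoint with compact resolvent on the closed shrinker $S$ with respect to the Gaussian measure, fix a Gaussian-$L^2$ orthonormal eigenbasis $\{\phi_j\}$ with eigenvalues $\mu_j\in\Gamma(S)$, $\mu_j\nearrow\infty$. Expanding $u(x,z)=\sum_j u_j(z)\phi_j(x)$ and analogously $f,\phi,\varphi$ into coefficients $f_j(z),\alpha_j,\beta_j$, the system (\ref{Moduli_L^2 est_L^tau u = f}) separates into
\begin{equation*}
u_j'' + u_j' - \frac{\mu_j}{z}\, u_j = f_j(z), \quad z>z_0, \qquad u_j(z_0)=\alpha_j,
\end{equation*}
with the additional condition $u_j'(z_0)=\beta_j$ imposed exactly when $\mu_j<\gamma$.

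For each $\mu\in\RR_{\geq -\Lambda}$, an ansatz $v=z^\alpha$ in the homogeneous equation forces $\alpha=\mu$ to leading order, and reduction of order (the Wronskian satisfies $W'=-W$) produces a second independent solution of order $z^{-\mu}e^{-z}$ at infinity. A Picard iteration, valid once $z_0\geq C(\Lambda,\sigma)$, yields fundamental solutions $\Phi_\mu^\pm$ with $\Phi_\mu^+(z)=z^\mu(1+O(z^{-1}))$ and $\Phi_\mu^-(z)=z^{-\mu}e^{-z}(1+O(z^{-1}))$, together with analogous control on the first two derivatives. Since the weight $z^{-2\gamma-1}$ defining $\|\cdot\|_{L^2_\gamma}$ renders $z^\mu$ integrable at infinity exactly when $\mu<\gamma$, and since the gap hypothesis $\mathrm{dist}(\gamma,\Gamma(S))\geq\sigma$ prevents any $\mu_j$ from falling into the marginal window, the constraint $u\in L^2_\gamma$ forces the coefficient of $\Phi_{\mu_j}^+$ to vanish for each mode with $\mu_j>\gamma$ while placing no growth restriction for $\mu_j<\gamma$. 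Accordingly, for $\mu_j>\gamma$ one constructs by variation of parameters the unique combination of $\Phi_{\mu_j}^\pm$ plus particular integral matching $u_j(z_0)=\alpha_j$ and omitting $\Phi_{\mu_j}^+$ at infinity; for $\mu_j<\gamma$ the Cauchy data $(\alpha_j,\beta_j)$ determines the solution and it automatically belongs to $L^2_\gamma$. In either case one obtains a Green's-function representation $u_j(z)=\int_{z_0}^{\infty}G_{\mu_j}(z,s)f_j(s)\,ds + H_{\mu_j}(z;\alpha_j,\beta_j)$, and a direct estimation against $z^{-2\gamma-1}$ using the asymptotics of $\Phi_{\mu_j}^\pm$ produces the per-mode bound matching the four weight exponents appearing on the right-hand side of (\ref{Moduli_L^2 est_Sol L^tau u = f w est}) (note the extra factor of $(2+|\gamma|)$ on $\Pi_{<\gamma}\phi$, which originates from converting the Dirichlet datum $\alpha_j$ into the $\beta_j$-type contribution through the Wronskian when $\mu_j<\gamma$).

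Summing over $j$ via Parseval on both sides yields (\ref{Moduli_L^2 est_Sol L^tau u = f w est}); uniqueness is immediate since a homogeneous $L^2_\gamma$ solution with vanishing Cauchy data must, mode by mode, have vanishing coefficient on each $\Phi_{\mu_j}^\pm$. The main technical obstacle is uniformity in $j$: naive Picard iteration for the asymptotic expansion of $\Phi_\mu^\pm$ converges only once $z\gg|\mu|$, so as $\mu_j\nearrow\infty$ one needs a quantitative version in which the remainders are dominated by the leading terms on all of $(z_0,\infty)$, and this is what forces the threshold $z_0\geq C(\Lambda,\sigma)$ to depend on $\Lambda$; simultaneously the gap $\sigma$ ensures that the denominators in the Green's function — essentially the Wronskian of $\Phi_{\mu_j}^\pm$ weighted by $z^{-2\gamma-1}$ — do not degenerate in the near-resonant regime $\mu_j\approx\gamma$.
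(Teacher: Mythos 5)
Your overall strategy — expand in a Gaussian-$L^2$ eigenbasis of $-L_S$, solve the resulting scalar ODEs $\mathcal{L}_{\mu_j}u_j=f_j$ on $\mathbb{R}_{>z_0}$ with $u_j(z_0)$ fixed and $u_j'(z_0)$ prescribed only for $\mu_j<\gamma$, then resum via Parseval — is exactly the paper's structure. However, your per-mode treatment has a genuine gap in the high-frequency regime $\mu_j>\gamma$, and the paper takes a different route there.

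You propose building a Green's function from fundamental solutions $\Phi^{\pm}_{\mu}$ with $\Phi^+_{\mu}(z)=z^{\mu}(1+O(z^{-1}))$ obtained by Picard iteration/asymptotic expansion, and you correctly flag that this iteration only converges (and this asymptotic form only holds) when $z\gg|\mu|$. But your resolution — that this merely forces $z_0\geq C(\Lambda,\sigma)$ to depend on $\Lambda$ — does not work. In the hypotheses, $\Lambda$ bounds $\mu_1=\mu_1(-L_S)$ from \emph{below}; it provides no upper bound on $\mu_j$ for $j\gg 1$, which march to $+\infty$. For $z\in(z_0,\mu_j)$ the growing solution of $u''+u'-\mu_j u/z=0$ behaves like $e^{2\sqrt{\mu_j z}}$ (a WKB regime), not like $z^{\mu_j}$, so the Wronskian-based kernel estimates you sketch do not carry uniform constants with a fixed threshold $z_0=C(\Lambda,\sigma)$. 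You would need $z_0\gtrsim\mu_j$, i.e.\ a threshold diverging with $j$, which is inconsistent with the statement being proved. No quantitative version of Picard on a fixed interval $(z_0,\infty)$ can overcome this, because the obstruction is the actual change of asymptotic regime of the ODE, not an artifact of a crude expansion.

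The paper circumvents this by abandoning any explicit fundamental-solution/Green's-function representation for $\mu>\gamma$. Instead, in Lemma~\ref{Lem_Moduli_Sol cL_mu = f w est}(i) it proves the estimate by pure energy methods: multiply $\mathcal{L}_{\mu}u=f$ by $u\,z^{-2\alpha-2}$, integrate by parts against the Dirichlet problem on $(z_0,R)$, and extract
$\|u\|_{z_0;\alpha+1}\leq \frac{2}{\mu-\alpha-1}\|f\|_{z_0;\alpha}$. This constant is not merely uniform in $\mu$ — it \emph{improves} as $\mu\to\infty$ — and the required lower bound on $z_0$ actually \emph{decreases} as $\mu\to\infty$ (it is $\geq |(2\alpha+2)(2\alpha+3)|/|\alpha+1-\mu|$). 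The Green's-function computation is reserved for the finitely many low modes $\mu_j<\gamma$ (Lemma~\ref{Lem_Moduli_Sol cL_mu = f w est}(ii)), where $|\mu_j|$ is bounded in terms of $\Lambda$ and $\gamma$, and it uses a barrier-constructed fundamental solution $b\sim z^{\mu}$ (Lemma~\ref{Lem_Moduli_Sol cL_mu = 0 w est}) rather than a Picard iterate. To repair your argument you would need to replace the Green's-function step for the high modes by a comparable variational/energy estimate with constants uniform in $\mu_j$.
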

	
	To prove the Lemma, we first need to study $T_S$ restricted to each eigenspace $E_j$.
	For $\mu\in\RR$, let $\cL_\mu:= \partial_z^2 + \partial_z - \mu/z$ be a differential operator on $C^2(\RR_+)$.  An easy computation shows that $\cL_\mu$ is the EL operator of 
	\begin{align}
		\int_0^{+\infty} (\dot{u}^2 + \frac{\mu}{z}u^2)e^z\ dz = \int_0^{+\infty} |(u e^{z/2})'|^2 + (\frac{1}{4}+ \frac{\mu}{z})(u e^{z/2})^2\ dz.  \label{Moduli_L^2 est_Quad form of cL_mu}  
	\end{align}
	The equality follows from the integration by parts. Hence $-\cL_\mu$ is positive on $W^{1,2}_0(\RR_{>z_0})$ provided $z_0> (-4\mu)^+$. Here $a^+:= \sup\{a, 0\}$.
	
	\begin{Lem} \label{Lem_Moduli_Sol cL_mu = 0 w est}
		For every $\Lambda>1$, there exists $C(\Lambda)>4$ such that for every $|\mu|<\Lambda$,  there exists $b\in C^\infty(\RR_{\geq C(\Lambda)})$ solving $\cL_\mu b = 0$ on $\RR_{>C(\Lambda)}$ and satisfying \[
		\frac{1}{2} \leq 1-z^{-1/2} \leq \frac{b(z)}{z^\mu} \leq 1 + z^{-1/2} \leq 2, \ \ \ \text{ on }[C(\Lambda), +\infty).   \]
	\end{Lem}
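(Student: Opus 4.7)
The plan is to substitute $b(z) = z^\mu(1 + h(z))$ and construct $h$ as a fixed point of a Volterra-type integral operator, using variation of parameters against a pair of explicit fundamental solutions of the principal part. A direct expansion shows $\cL_\mu b = 0$ is equivalent to the inhomogeneous linear ODE
\begin{equation*}
L_0 h = -\frac{\mu(\mu-1)}{z^2}\,(1 + h) \quad \text{on } [z_0, +\infty), \qquad L_0 := \partial_z^2 + \Big(1 + \frac{2\mu}{z}\Big)\partial_z,
\end{equation*}
subject to $h(+\infty) = 0$; the bound $|b/z^\mu - 1|\leq z^{-1/2}$ to be proved is the same as $|h(z)|\leq z^{-1/2}$.

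The key observation is that $L_0$ admits the two explicit linearly independent solutions $U_1(z) \equiv 1$ and $U_2(z) := \int_z^{+\infty} e^{-s} s^{-2\mu}\,ds$, with Wronskian $W(z) = -e^{-z} z^{-2\mu}$. A single integration by parts yields $U_2(z) = e^{-z}z^{-2\mu}(1 + O(\Lambda/z))$ as $z \to +\infty$, and in particular $|U_2/W|\leq 2$ and $|U_2(z)|\leq 2 e^{-z} z^{-2\mu}$ for $z\geq C\Lambda$. Variation of parameters, combined with the decay condition $h(+\infty)=0$ and the choice of zero coefficient for the exponentially decaying mode $U_2$, then transforms the ODE into the Volterra-type integral equation
\begin{equation*}
h(z) = \mathcal{K}[h](z) := -\mu(\mu-1) \int_z^{+\infty} \frac{U_2(s)(1+h(s))}{W(s)\, s^2}\,ds + \mu(\mu-1)\, U_2(z) \int_{z_0}^{z} e^s s^{2\mu-2}(1 + h(s))\,ds.
\end{equation*}

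I would then apply Banach's fixed point theorem on the weighted space $\mathcal{X}_{z_0} := \{h \in C^0([z_0, +\infty)) : \|h\|_\star := \sup_{z \geq z_0} z\,|h(z)| < +\infty\}$. Using the estimates above together with the IBP bound $\int_{z_0}^z e^s s^{2\mu-2}\,ds \leq 2 e^z z^{2\mu-2}$ (valid for $z\geq z_0\geq C\Lambda$), a direct estimate yields $\|\mathcal{K}[0]\|_\star \leq C_1(\Lambda)$ and $\|\mathcal{K}[h_1] - \mathcal{K}[h_2]\|_\star \leq C_2(\Lambda)\, z_0^{-1}\, \|h_1 - h_2\|_\star$, with $C_1, C_2$ polynomial in $\Lambda$ since $|\mu(\mu-1)|\leq \Lambda(\Lambda+1)$. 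For $z_0 \geq C(\Lambda)$ large enough $\mathcal{K}$ is a contraction on the ball $\{\|h\|_\star \leq 2 C_1(\Lambda)\}$, producing a unique fixed point $h$ with $|h(z)| \leq 2 C_1(\Lambda)/z$. Enlarging $C(\Lambda)$ further to order $\Lambda^4$ then ensures $|h(z)| \leq z^{-1/2}$ on $[C(\Lambda),+\infty)$, and smoothness of $b=z^\mu(1+h)$ follows by ODE bootstrap applied to $\cL_\mu b = 0$.

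The main technical obstacle is keeping the constants uniform in $|\mu|<\Lambda$: the factors $s^{\pm 2\mu}$ in $U_2$ and $W$ cancel at leading order but introduce $2\mu/s$ corrections through the IBP, which are absorbed by requiring $z_0\gtrsim\Lambda$. All the resulting bounds remain polynomial in $\Lambda$, so the single threshold $C(\Lambda)=O(\Lambda^4)$ suffices.
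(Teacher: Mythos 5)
Your argument is correct, and it takes a genuinely different route from the paper's. The paper's proof is a short barrier argument: from the identity $\cL_\mu z^\alpha = z^{\alpha-1}\left(\alpha-\mu+\frac{\alpha(\alpha-1)}{z}\right)$ one checks that for $z\geq C(\Lambda)$ the functions $z^\mu + z^{\mu-1/2}$ and $z^\mu - z^{\mu-1/2}$ are respectively a super- and a sub-solution of $\cL_\mu b = 0$; then one solves Dirichlet problems on finite intervals $[C(\Lambda), R]$ with boundary data trapped between the barriers, invokes the comparison principle to keep the solution trapped, and passes $R\to\infty$. You instead substitute $b = z^\mu(1+h)$, identify the explicit fundamental system $\left\{1,\ \int_z^{\infty}e^{-s}s^{-2\mu}\,ds\right\}$ of the leading-order operator $\partial_z^2 + (1+2\mu/z)\partial_z$, rewrite the equation as a Volterra-type integral equation by variation of parameters, and run a contraction mapping argument in the weight $z^{-1}$. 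Both are valid. The barrier route is shorter, entirely avoids explicit fundamental solutions, and matches the style of the surrounding lemmas (e.g.\ Lemma \ref{Lem_Moduli_Sol cL_mu = 0 fast decay} is proved the same way). Your route is longer but produces the sharper decay $|b(z)/z^\mu - 1| = O(\Lambda^2/z)$, of which the stated $z^{-1/2}$ bound is a deliberate coarsening, and makes the role of $z^\mu$ as the exact leading-order profile transparent. You also correctly handle the one genuinely delicate point of the statement, namely uniformity in $|\mu|<\Lambda$: the integration-by-parts estimates on $\int_z^{\infty}e^{-s}s^{-2\mu}\,ds$ and $\int_{z_0}^z e^s s^{2\mu-2}\,ds$ absorb the $2\mu/s$ corrections once $z_0\gtrsim\Lambda$, and the contraction constants are polynomial in $\Lambda$, so one threshold $C(\Lambda)=O(\Lambda^4)$ serves all $|\mu|<\Lambda$.
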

	\begin{proof}
		First note that for every $\alpha\in \RR$, \[
		\cL_\mu z^\alpha = z^{\alpha-1}(\alpha - \mu + \frac{\alpha(\alpha-1)}{z}).  \]
		Take $\alpha = \mu$ and $\mu-1/2$ we get \[
		\cL_\mu (z^{\mu-1/2} \pm z^\mu) = z^{\mu-2}(-\frac{\sqrt{z}}{2} \pm \mu(\mu-1) + \frac{(2\mu-1)(2\mu-3)}{4\sqrt{z}}) \leq 0,   \]
		provided $z\geq C(\Lambda)$.  Hence one can solve $\cL_\mu b = 0$ with barriers $z^\mu \pm z^{\mu-1/2}$ on $[C(\Lambda)+4, +\infty)$ and get desired solution. 
	\end{proof}
	
	For $z_0>0$, $\alpha\in \RR$ and $u\in L^2_{loc}(\RR_+)$, let 
	\begin{align}
		\|u\|_{z_0; \alpha} := \left(\int_{z_0}^{+\infty} |u(s)|^2s^{-2\alpha - 1}\ ds \right)^{1/2}.  \label{Moduli_L^2 est_|u|_(z, alpha)}
	\end{align}
	
	\begin{Lem} \label{Lem_Moduli_Sol cL_mu = f w est}
		For every $\Lambda>1$, there exists $C(\Lambda)>0$ such that for every $\alpha\neq \mu-1$ and every \[
		z_0\geq \max\{C(\Lambda), \frac{|(2\alpha+2)(2\alpha +3)|}{|\alpha+1 - \mu|}\},  \]
		\begin{enumerate}[(i)]
			\item If $\mu > \alpha+1$ and $\mu>-\Lambda$, then for every $f\in L^2_{loc}(\RR_{>z_0})$ with $\|f\|_{z_0;\alpha}< +\infty$, there exists a unique solution $u$ to 
			\begin{align}
				\begin{cases}
					\cL_\mu u = f,\ &\ \text{ on }\RR_{>z_0}; \\
					u(z_0) = 0,\ &  \|u\|_{z_0, \alpha+1}<+\infty.
				\end{cases}  \label{Moduli_L^2 est_cL_mu = f}
			\end{align}
			Moreover, it satisfies the estimate 
			\begin{align}
				\|u\|_{z_0; \alpha+1} \leq \frac{2}{\mu - \alpha - 1}\|f\|_{z_0, \alpha}.  \label{Moduli_L^2 est_Sol cL_mu = f, mu>alpha+1}
			\end{align}
			\item If $\mu< \alpha+1$ and $|\mu|<\Lambda$, then for every $\|f\|_{z_0;\alpha}< +\infty$, the unique solution to $\cL_\mu u = f$ with initial data $u(z_0) =  0$, $u'(z_0) = c_0$ satisfies the estimate
			\begin{align}
				\|u\|_{z_0; \alpha+1}^2 \leq \frac{C(\Lambda)}{(\alpha +1-\mu)^2}\|f\|_{z_0, \alpha}^2 + \frac{C(\Lambda)}{\alpha + 1 -\mu}c_0^2z_0^{-2\alpha -2}.  \label{Moduli_L^2 est_Sol cL_mu = f, mu<alpha+1}
			\end{align}
		\end{enumerate}
	\end{Lem}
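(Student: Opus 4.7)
The plan is to prove both cases by combining the two linearly independent homogeneous solutions of $\cL_\mu$ provided by Lemma \ref{Lem_Moduli_Sol cL_mu = 0 w est} with an energy-type estimate obtained by integration by parts against a weighted test function. Lemma \ref{Lem_Moduli_Sol cL_mu = 0 w est} gives $b_1(z)\asymp z^\mu$ solving $\cL_\mu b_1=0$ on $[C(\Lambda),\infty)$, and a standard reduction of order $b_2(z)=b_1(z)\int_z^\infty (b_1(s)^2 e^s)^{-1}\,ds \sim z^{-\mu}e^{-z}$ supplies an exponentially decaying second solution. In case (i) these yield an explicit solution via variation of parameters, which together with the uniform a priori estimate below provides existence and uniqueness in $L^2_{z_0;\alpha+1}$; in case (ii), existence for the initial value problem is automatic from ODE theory, so only the estimate is at issue.

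The central identity is obtained by multiplying $\cL_\mu u=f$ by $uz^{-2\alpha-2}$ and integrating over $(z_0,\infty)$. Tracking boundary terms carefully and using $u(z_0)=0$, integration by parts produces
\[
(\alpha+1-\mu)\|u\|_{z_0;\alpha+1}^2 + \int_{z_0}^\infty (u')^2 z^{-2\alpha-2}\,dz = \tfrac{(2\alpha+2)(2\alpha+3)}{2}\int_{z_0}^\infty u^2 z^{-2\alpha-4}\,dz - \int_{z_0}^\infty fu\, z^{-2\alpha-2}\,dz + \mathrm{Bdry}_\infty,
\]
where the boundary contribution at $z_0$ vanishes because $u(z_0)=0$, and $\mathrm{Bdry}_\infty$ vanishes provided $\|u\|_{z_0;\alpha+1}<\infty$. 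The error term satisfies $\int_{z_0}^\infty u^2 z^{-2\alpha-4}\,dz \leq z_0^{-1}\|u\|_{z_0;\alpha+1}^2$, and the hypothesis $z_0\geq |(2\alpha+2)(2\alpha+3)|/|\alpha+1-\mu|$ is precisely what is needed to absorb this into $(\alpha+1-\mu)\|u\|_{z_0;\alpha+1}^2$ with a loss of at most a factor of two. In case (i), $\mu>\alpha+1$, so this term has a favorable sign; dropping the nonnegative $\int(u')^2\,z^{-2\alpha-2}$, applying Cauchy--Schwarz to bound $|\int fu\,z^{-2\alpha-2}|\leq \|f\|_{z_0;\alpha}\|u\|_{z_0;\alpha+1}$, and dividing gives (\ref{Moduli_L^2 est_Sol cL_mu = f, mu>alpha+1}) with the sharp constant $2/(\mu-\alpha-1)$. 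Uniqueness in $L^2_{z_0;\alpha+1}$ follows by applying the same identity to the difference of two solutions, noting that $b_1\sim z^\mu$ fails to lie in $L^2_{z_0;\alpha+1}$ when $\mu>\alpha+1$.

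The hard part will be case (ii), where the sign of $(\alpha+1-\mu)$ flips and $\int(u')^2\,z^{-2\alpha-2}$ now appears on the wrong side of the identity and cannot simply be discarded; moreover, the initial slope $c_0$ must emerge, and it enters only through the boundary. The plan is to run a companion identity, obtained by testing against $u'\,z^{-2\alpha-1}$, which yields a boundary contribution $-\tfrac{1}{2} c_0^2 z_0^{-2\alpha-1}$ from $[(u')^2 z^{-2\alpha-1}]_{z_0}^\infty$ together with a bona fide positive interior integral of $(u')^2\,z^{-2\alpha-2}$. Combining this second identity—multiplied by a small coefficient tuned to the size of $\alpha+1-\mu$—with the first allows the $\int(u')^2\,z^{-2\alpha-2}$ term to be cancelled, while Young's inequality on $\int fu\,z^{-2\alpha-2}$ leaves $\tfrac{1}{2}(\alpha+1-\mu)\|u\|_{z_0;\alpha+1}^2$ on the left and produces $C(\alpha+1-\mu)^{-2}\|f\|_{z_0;\alpha}^2$ from the $f$-term (quadratic appearance) and $C(\alpha+1-\mu)^{-1}c_0^2 z_0^{-2\alpha-2}$ from the boundary (linear appearance), giving the claimed asymmetric dependence in (\ref{Moduli_L^2 est_Sol cL_mu = f, mu<alpha+1}).

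The main obstacle is verifying the vanishing of the boundary terms at infinity in case (ii): the solution generically behaves like $c z^\mu$ at infinity, so quantities like $u'u\,z^{-2\alpha-2}$ and $(u')^2 z^{-2\alpha-1}$ only decay like $z^{2\mu-2\alpha-2}$ and $z^{2\mu-2\alpha-3}$ respectively, which is borderline integrable precisely when $\mu<\alpha+1$. Justifying these vanishing boundary integrals requires either an approximation argument (truncating at $z=R$, using that each truncated identity is valid, and passing $R\to\infty$ along a sequence where the boundary fluxes die) or a direct pointwise asymptotic derived from the variation-of-parameters representation. Once this is handled, balancing the two multiplier identities to produce the sharp constants in (\ref{Moduli_L^2 est_Sol cL_mu = f, mu<alpha+1}) is bookkeeping.
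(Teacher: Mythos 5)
Your proof of part (i) is essentially the paper's argument: the same multiplier $u\,z^{-2\alpha-2}$, the same integration by parts, and the same absorption of the $(\alpha+1)(2\alpha+3)\int u^2 z^{-2\alpha-4}$ term via the lower bound on $z_0$, with the minor difference that the paper approximates by Dirichlet problems on $(z_0,R)$ and sends $R\to\infty$ rather than manipulating boundary terms at infinity directly.

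Your plan for part (ii) is a genuinely different route from the paper, and it has gaps. The paper does not use an energy identity here at all: it exploits the factorization $\cL_\mu u = (e^{-z}b^{-1})\,\partial_z\!\left(e^z b^2\,\partial_z (b^{-1}u)\right)$, with $b\asymp z^\mu$ from Lemma~\ref{Lem_Moduli_Sol cL_mu = 0 w est}, to write $u$ explicitly by variation of parameters; a pointwise bound $|u(z)|\leq C z^\mu\bigl(\int_{z_0}^z|f|s^{-\mu}ds + |c_0|z_0^{-\mu}\bigr)$ then follows immediately, and the weighted $L^2$ bound (\ref{Moduli_L^2 est_Sol cL_mu = f, mu<alpha+1}) falls out of Cauchy--Schwarz and Fubini, with no boundary terms to worry about at all. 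Your double-multiplier scheme runs into concrete problems that your outline does not address. First, testing against $u'z^{-2\alpha-1}$ produces the interior integral $\tfrac{2\alpha+1}{2}\int(u')^2 z^{-2\alpha-2}$, which is \emph{not} ``bona fide positive'' in general: for $\alpha<-1/2$ it has the wrong sign, and $\alpha=\gamma-1$ with $\gamma\in(\mu^-,0)\subset[-1/2,0)$ is precisely the range needed in Lemma~\ref{Lem_Moduli_T_S u = f} and its applications, so $\alpha\in[-3/2,-1)$ there. Second, the same identity also produces $\int(u')^2 z^{-2\alpha-1}$, which carries a strictly stronger weight than the $\int(u')^2 z^{-2\alpha-2}$ you want to cancel; it does not disappear when you subtract a multiple of the second identity from the first, and you do not say how it is to be controlled. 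Third, the boundary contribution at $z_0$ from this multiplier is $c_0^2 z_0^{-2\alpha-1}$, which is larger by a factor $z_0$ than the $c_0^2 z_0^{-2\alpha-2}$ in (\ref{Moduli_L^2 est_Sol cL_mu = f, mu<alpha+1}); recovering the stated power requires an additional $z_0^{-1}$ to enter somewhere, which your ``tuned small coefficient'' is supposed to supply, but then the same coefficient scales every other term and you must re-examine the whole balance. Finally, the claimed constants depend only on $\Lambda$, whereas the integration by parts with multiplier $u'z^{-2\alpha-1}$ introduces coefficients like $\mu(\alpha+1)$, and verifying that these can be absorbed using \emph{only} the hypothesis $z_0\geq |(2\alpha+2)(2\alpha+3)|/|\alpha+1-\mu|$ is a nontrivial check you have not done (for instance one must handle the regime $\alpha+1$ small, $\alpha+1-\mu$ small, where that hypothesis supplies no extra largeness of $z_0$). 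You have correctly identified the boundary-terms-at-infinity issue as subtle, but it is not the only obstacle, and the explicit representation formula avoids all of them. I would recommend reworking part (ii) along the paper's lines unless you are prepared to carefully account for each of the items above.
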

	We assert that the constant here does NOT depend on $\mu$. 
	\begin{proof}
		(i) Take $C(\Lambda)>4\Lambda$, the by (\ref{Moduli_L^2 est_Quad form of cL_mu}), $-\cL_\mu$ is a positive operator on $W_0^{1,2}(\RR_{\geq z_0})$.  Hence for every $R\gg z_0$, there's a unique solution $u_R$ to 
		\begin{align}
			\begin{cases}
				\cL_\mu u_R = f,\ &\ \text{ on }(z_0, R); \\
				u_R(z_0) = u_R(R) = 0.\ &  {}
			\end{cases}  \label{Moduli_L^2 est_cL_mu = f on (z_0, R)}
		\end{align}
		Thus, 
		\begin{align*}
			\int_{z_0}^R f(s)\cdot u_R(s)s^{-2\alpha -2}\ ds  
			= & \int_{z_0}^R u_R\cdot \cL_\mu u_R \cdot s^{-2\alpha -2}\ ds \\
			= & \int_{z_0}^R s^{-2\alpha-2}(-\dot{u}_R^2 + (\frac{u_R^2}{2})' - \frac{\mu}{s}u_R^2) + (\alpha+1)(u_R^2)'\cdot s^{-2\alpha -3}\ ds \\
			= & \int_{z_0}^R -\dot{u}_R^2\cdot s^{-2\alpha-2} + (\alpha+1-\mu + \frac{(\alpha+1)(2\alpha+3)}{s})u_R^2\cdot s^{-2\alpha -3}\ ds \\
			\leq & \int_{z_0}^R \frac{\alpha+1-\mu}{2}u_R^2\cdot s^{-2\alpha -3}\ ds, 
		\end{align*}
		where we use the choice of $z_0$ and $\mu>\alpha+1$ in the last inequality. Hence by H{\"o}lder's inequality, we have, \[
		\|u_R\|_{z_0, \alpha+1} \leq \frac{2}{\mu-\alpha-1}\|f\|_{z_0, \alpha}.   \] 
		Take $R\to \infty$, elliptic estimates guarantees that $u_R\to u$ in $W^{1,2}_{loc}$ and $u$ solves (\ref{Moduli_L^2 est_cL_mu = f}) with estimate (\ref{Moduli_L^2 est_Sol cL_mu = f, mu>alpha+1}).
		
		To see the uniqueness of the solution to (\ref{Moduli_L^2 est_cL_mu = f}), it suffices to show that when $f=0$, any solution to (\ref{Moduli_L^2 est_cL_mu = f}) must be $0$. This is followed by multiplying the equation with $u(z)z^{-2\alpha-2}\eta$ and doing the integration by parts as above, where $\eta\in C_c^\infty(\RR)$ is a cut-off function approximating $1$.
		
		(ii) Let $b\in C^\infty(\RR_{\geq z_0})$ be the solution of $\cL_\mu b = 0$ given by Lemma \ref{Lem_Moduli_Sol cL_mu = 0 w est}. Then, \[
		\cL_\mu u = (e^{-z}b^{-1})\partial_z(e^zb^2\partial_z)(b^{-1}u).     \]
		Thus if $u$ solves $\cL_\mu u = f$ on $\RR_{>z_0}$ with $u(z_0) = 0$, $u'(z_0) = c_0$, then,
		\begin{align*}
			|u(z)| & = \left|b(z)\cdot\left(\int_{z_0}^z f(s)b(s)e^s\ ds\int_s^z e^{-\tau}b(\tau)^{-2}\ d\tau \ +\ c_0e^{z_0}b(z_0)\int_{z_0}^z e^{-\tau}b(\tau)^{-2}\ d\tau \right)\right| \\
			& \leq C(\Lambda)z^\mu\cdot \left(\int_{z_0}^z |f(s)|s^{-\mu}\ ds \ +\ |c_0|z_0^{-\mu}\right)\,.
		\end{align*}
		
		Hence,
		\begin{align*}
			&\ \int_{z_0}^{+\infty} |u(z)|^2z^{-2\alpha-3}\ dz \\
			\leq &\  C(\Lambda)\int_{z_0}^{+\infty} z^{2(\mu-\alpha-1)-1}(\int_{z_0}^z |f(s)|s^{-\mu}\ ds + |c_0|z_0^{-\mu})^2\ dz \\
			\leq &\ C(\Lambda)\int_{z_0}^{+\infty} z^{2(\mu-\alpha-1)-1}\left((\int_{z_0}^z |f(s)|^2 s^{-\mu-\alpha}\ ds)(\int_{z_0}^z s^{-\mu+\alpha}\ ds) + c_0^2z_0^{-2\mu}\right)\ dz \\
			\leq &\ \frac{C(\Lambda)}{(\alpha+1-\mu)^2}\int_{z_0}^{+\infty} |f(s)|^2 s^{-2\alpha-1}\ ds + \frac{C(\Lambda)}{\alpha + 1 - \mu}c_0^2 z_0^{-2\alpha -2}.   
		\end{align*}
	\end{proof}
	
	\begin{Lem} \label{Lem_Moduli_Sol cL_mu = 0 fast decay}
		For every $\Lambda>0$, there exists $C(\Lambda)>1$ such that the following holds.
		Let $\mu\geq -\Lambda$, $z_0 \geq C(\Lambda)$, then there's a unique solution $w_{z_0}\in C^\infty(\RR_{\geq z_0})$ to 
		\begin{align*}
			\begin{cases}
				\cL_\mu w_{z_0} = 0,\ &\ \text{ on }\RR_{>z_0}, \\
				w_{z_0}(z_0) = 1,\ & \int_{z_0}^{+\infty} w_{z_0}(s)^2e^s\ ds <+\infty.
			\end{cases}
		\end{align*}
		Moreover, $w_{z_0}$ satisfies the estimate, 
		\begin{align*}
			\begin{split}
				0 < w_{z_0}(z) & \leq e^{z_0-z}\cdot \left(z/z_0\right)^{1+ (-\mu)^+},\ \ \ \ \ \forall z\geq z_0; \\
				|w_{z_0}'(z_0)| & \leq 2 + \mu^+ .
			\end{split}
		\end{align*}
	\end{Lem}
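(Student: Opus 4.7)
The plan is to construct $w_{z_0}$ by reduction of order from the bounded/growing solution $b$ produced by Lemma \ref{Lem_Moduli_Sol cL_mu = 0 w est}, then to establish the pointwise and derivative bounds via matched sub- and super-solutions together with a weighted maximum principle for $\cL_\mu$.

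For existence and uniqueness, I first observe that the solution space of $\cL_\mu w = 0$ is two-dimensional, and the solution $b$ from Lemma \ref{Lem_Moduli_Sol cL_mu = 0 w est} satisfies $b(z) \sim z^\mu$, so $\int_{z_0}^{+\infty} b^2 e^s\,ds = +\infty$; this immediately forces uniqueness among solutions satisfying the integrability condition and the normalization $w(z_0) = 1$. For existence, I set
\[
w_{z_0}(z) := \frac{b(z) \int_z^{+\infty} e^{-s} b(s)^{-2}\,ds}{b(z_0) \int_{z_0}^{+\infty} e^{-s} b(s)^{-2}\,ds},
\]
whose integrand is integrable since $b(s)^{-2}$ is only polynomial in $s$. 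Using the factorization $\cL_\mu u = e^{-z} b^{-1} \partial_z\bigl(e^z b^2 \partial_z(b^{-1}u)\bigr)$ already exploited in Lemma \ref{Lem_Moduli_Sol cL_mu = f w est}, one checks directly that $\cL_\mu w_{z_0} = 0$, and positivity of $b$ yields $w_{z_0} > 0$.

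For the upper bound I use the supersolution $v(z) := e^{z_0 - z}(z/z_0)^{1+(-\mu)^+}$. Writing $\alpha := 1+(-\mu)^+$, a direct computation gives
\[
\frac{\cL_\mu v}{v} \ =\ -\frac{\alpha+\mu}{z} + \frac{\alpha(\alpha-1)}{z^2}.
\]
Since $\alpha + \mu \geq 1$ regardless of the sign of $\mu$, for $z_0 \geq C(\Lambda)$ the linear term dominates and $\cL_\mu v < 0$ strictly, so $v$ is a supersolution with $v(z_0) = 1$. To promote this to $w_{z_0} \leq v$ I invoke a weighted maximum principle: the conjugation $u = e^{-z/2}U$ shows the Dirichlet form of $-\cL_\mu$ on $W^{1,2}_0((z_0, +\infty), e^z\,dz)$ equals $\int_{z_0}^{+\infty} \bigl((U')^2 + (\tfrac{1}{4} + \tfrac{\mu}{z})U^2\bigr)dz$, which is coercive once $z_0 \geq 8\Lambda$. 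Both $w_{z_0}$ and $v$ decay faster than $e^{-z/2}$, so $w_{z_0} - v$ lies in the weighted Sobolev space, and the usual energy estimate applied to the positive part $(w_{z_0}-v)_+$ yields $w_{z_0} \leq v$ on $[z_0, +\infty)$.

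For the derivative estimate I mirror the above using the subsolution $\tilde v(z) := e^{-(1+\mu^+)(z-z_0)}$, for which a direct computation gives $\cL_\mu \tilde v/\tilde v = (1+\mu^+)\mu^+ - \mu/z \geq 0$ for $z_0 \geq C(\Lambda)$. The same weighted maximum principle gives $\tilde v \leq w_{z_0}$. Since $\tilde v$, $w_{z_0}$, and $v$ all equal $1$ at $z_0$, comparing one-sided derivatives at the endpoint produces
\[
-(1+\mu^+) \ =\ \tilde v'(z_0)\ \leq\ w_{z_0}'(z_0)\ \leq\ v'(z_0)\ =\ -1 + \frac{1 + (-\mu)^+}{z_0},
\]
which for $z_0 \geq C(\Lambda)$ gives $|w_{z_0}'(z_0)| \leq 2 + \mu^+$, completing the estimate. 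The main technical hurdle is the case $\mu < 0$, where the zeroth-order coefficient $-\mu/z$ in $\cL_\mu$ has the wrong sign for a naive maximum principle; the $e^{-z/2}$-conjugation is precisely what shifts the effective potential by $1/4$ and restores coercivity for $z_0$ large, making both comparison steps go through uniformly for $\mu \in [-\Lambda, +\infty)$.
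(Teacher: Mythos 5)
Your proposal is correct, and it differs from the paper's argument in a few identifiable ways. For existence, you produce $w_{z_0}$ explicitly by reduction of order from the growing solution $b$ of Lemma \ref{Lem_Moduli_Sol cL_mu = 0 w est} via the Wronskian-type integral $b(z)\int_z^{+\infty} e^{-s}b(s)^{-2}\,ds$, whereas the paper produces it by trapping between two explicit barrier functions of the form $e^{z_0-z}(z/z_0)^{\alpha}$. Your construction has the advantage of making the sharp decay rate $w_{z_0}(z)\sim e^{-z}z^{-\mu}$ immediately visible from the formula, and it makes the positivity $w_{z_0}>0$ trivial. For uniqueness, you observe that the solution space is $2$-dimensional and $b$ is not integrable against $e^s$, so any integrable normalized solution must be proportional to the reduction-of-order solution; the paper instead runs an energy argument (multiplying by $we^s\eta^2$ and using the coercivity in (\ref{Moduli_L^2 est_Quad form of cL_mu})). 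The comparison step to get the pointwise and derivative bounds is conceptually the same in both treatments (sub/super-solutions combined with the coercivity of $-\cL_\mu$ on the weighted Sobolev space), but your choice of subsolution $\tilde v(z)=e^{-(1+\mu^+)(z-z_0)}$ is a pure exponential, different from the paper's $e^{z_0-z}(z/z_0)^{-\mu\pm1}$ family, and lets you unify the cases $\mu\le 0$ and $\mu>0$ via $\mu^+$ and $(-\mu)^+$ rather than splitting into two cases as the paper does; in fact your argument yields the slightly sharper bound $|w_{z_0}'(z_0)|\le 1+\mu^+$. The one point you should state explicitly when invoking the weighted energy comparison is that the boundary term at infinity vanishes, which follows from the exponential decay of $w_{z_0}$, $v$, $\tilde v$ and their derivatives (or, as the paper does, by inserting a cut-off $\eta$ and passing to the limit); this is routine but worth a sentence.
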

	\begin{proof}
		For the uniqueness, consider multiplying the equation by $w e^s\eta^2$ for some cut-off $\eta$ approximating $1$ and do the integration by parts. Use (\ref{Moduli_L^2 est_Quad form of cL_mu}) to conclude the uniqueness.
		
		For the existence and derivative estimate, first note that $\forall \alpha\in \RR$,
		\begin{align*}
			\cL_\mu (e^{-z}z^\alpha) = e^{-z}z^{\alpha-1}(-\alpha -\mu + \frac{\alpha(\alpha-1)}{z}).
		\end{align*}
		Hence when $\mu\leq 0$, let $a_\pm(z):= e^{-z+z_0}(z/z_0)^{-\mu \pm 1}$, we have for $z_0\geq 100(1+\Lambda^2)$, \[
		\pm \cL_\mu a_\pm \leq 0,   \]
		on $\RR_{\geq z_0}$.  That means $a_\pm$ serve as barriers for equation $\cL_\mu w = 0$, and thus there exists a solution $a_- \leq w_{z_0} \leq a_+$ on $\RR_{\geq z_0}$ with $w_{z_0}(z_0) = 1$ and $a_-'(z_0) \leq w'_{z_0}(z_0) \leq a_+'(z_0)$.
		
		When $\mu>0$, consider instead $a_+(z):= e^{-z + z_0}$ and $a_-(z):= e^{-z+z_0}(z/z_0)^{-\mu}$, which are still super- and sub-solutions of $\cL_\mu w = 0$. The same proof as above gives the desired decaying estimate and derivative estimate.
	\end{proof}
	
	\begin{proof}[Proof of Lemma \ref{Lem_Moduli_T_S u = f}.]
		Let $\psi_1, \psi_2, \psi_3, \dots$ be a family of $L^2(S)$-orthonormal eigenfunctions of $-L_S$, with corresponding eigenvalues $\bar{\mu}_1 < \bar{\mu}_2 \leq \bar{\mu}_3\leq \dots$. Let 
		\begin{align*}
			u_j(z) & := \int_S u(x, z)\psi_j(x)e^{-|x|^2/4}\ dx, &\ 
			f_j(z) & := \int_S f(x, z)\psi_j(x)e^{-|x|^2/4}\ dx, \\
			\phi_j & :=  \int_S \phi(x)\psi_j(x)e^{-|x|^2/4}\ dx, &\ 
			\varphi_j & :=  \int_S \varphi(x)\psi_j(x)e^{-|x|^2/4}\ dx .
		\end{align*} 
		Then equation (\ref{Moduli_L^2 est_L^tau u = f}) reduces to the following for each $j\geq 1$,
		\begin{align*}
			\begin{cases}
				\cL_{\bar{\mu}_j}u_j = f_j,\ &\ \text{ on }\RR_{>z_0}, \\
				u_j(z_0) = \phi_j;\ \ \  u_j'(z_0) = \varphi_j & \ \text{ provided }\bar{\mu}_j<\gamma.
			\end{cases}
		\end{align*}
		By Lemmas \ref{Lem_Moduli_Sol cL_mu = f w est} and \ref{Lem_Moduli_Sol cL_mu = 0 fast decay}, when $z_0$ is large, there's a unique solution $u_j$ to this equation, which satisfies 
		\begin{align*}
			\|u_j - \phi_j\cdot w_{z_0, j}\|_{z_0; \gamma}^2 \leq 
			C(\sigma, \Lambda)\left(\|f_j\|_{z_0; \gamma-1}^2 + \chi_j\cdot (\varphi_j - \phi_jw_{z_0, j}'(z_0))^2\cdot z_0^{-2\gamma}\right);  
		\end{align*}
		where $w_{z_0, j}$ is given by Lemma \ref{Lem_Moduli_Sol cL_mu = 0 fast decay}; $\chi_j = 1$ if $\bar{\mu}_j < \gamma$ and $\chi_j = 0$ if $\bar{\mu}_j > \gamma$.
		Also $|w_{z_0, j}'(z_0)|\leq 2+ \bar{\mu}_j^+$, and by the decay estimate on $w_{z_0, j}$, we have \[
		\|w_{z_0, j}\|_{z_0; \gamma}^2 \leq \int_{z_0}^\infty e^{2(z_0-z)}(z/z_0)^{2+ 2(-\bar{\mu}_j)^+}\cdot z^{-2\gamma -1}\ dz \leq C(\Lambda)z_0^{-2\gamma - 1}.   \]
		These altogether gives desired estimates on $\|u_j\|_{z_0; \gamma}$, and taking sum of which over $j$ proves Lemma \ref{Lem_Moduli_T_S u = f}.
	\end{proof}

	We finish this subsection by pointing out the following quantization of the asymptotic rate for solutions to $T_S u = 0$.
	\begin{Cor} \label{Cor_Moduli_Sharp asymp for T_S u = 0}
		Let $S, z_0$ be the same as in Lemma \ref{Lem_Moduli_T_S u = f}.  Let $u\in L^2_{loc}(S\times \RR_{\geq z_0-1})$ be the solution of $(\partial_z^2 + \partial_z + z^{-1}L_S) u = 0$ such that the asymptotic rate at infinity satisfies, \[
		\cA\cR_\infty(u) := \inf\{\gamma\in \RR: \|u\|_{L^2_\gamma, S, z_0}< +\infty\} \in (-\infty, +\infty).   \]
		Here, we use the convention that $\inf \emptyset = +\infty$ and $\inf \RR = -\infty$.  Then there exists $l\geq 1$ such that $\cA\cR_\infty(u) = \mu_l\in \Gamma(S)$, and there exist an eigenfunction $\orig \neq \psi\in E_l$ and $\epsilon>0$ such that \[
		\|u-z^{\mu_l}\psi(x)\|_{L^2_{\mu_l-\epsilon}, S, z_0} < +\infty \,.   \]
		%		where $b_l(z)\sim z^{\mu_l}$ is given by Lemma \ref{Lem_Moduli_Sol cL_mu = 0 w est}.
	\end{Cor}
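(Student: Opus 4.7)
The plan is to carry out a Fourier decomposition in $x$ along the Gaussian $L^2(S)$-eigenbasis of $-L_S$, reducing the PDE to a family of decoupled ODEs already treated in this section. Fix an $L^2(S)$-orthonormal basis $\{\psi_j\}$ of eigenfunctions of $-L_S$ with eigenvalues $\bar\mu_1 \leq \bar\mu_2 \leq \cdots \nearrow +\infty$, and write $u(x,z) = \sum_j u_j(z)\psi_j(x)$. Each $u_j$ satisfies $\cL_{\bar\mu_j} u_j = 0$, whose two-dimensional solution space is spanned by the polynomial solution $b_{\bar\mu_j}(z) = z^{\bar\mu_j}(1 + O(z^{-1/2}))$ from Lemma \ref{Lem_Moduli_Sol cL_mu = 0 w est} and the exponentially decaying solution $w_{\bar\mu_j}$ from Lemma \ref{Lem_Moduli_Sol cL_mu = 0 fast decay}. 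Write $u_j = a_j b_{\bar\mu_j} + c_j w_{\bar\mu_j}$; Parseval yields $\|u\|_{L^2_\gamma, S, z_0}^2 = \sum_j \|u_j\|_{z_0;\gamma}^2$.

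The next step is to identify $\mu_l$. Whenever $\|u\|_{L^2_\gamma, S, z_0} < +\infty$, divergence of $\int_{z_0}^\infty z^{2\bar\mu_j - 2\gamma - 1}\,dz$ at infinity for $\bar\mu_j \geq \gamma$ forces $a_j = 0$ for all such $j$. Since $\bar\mu_j \nearrow \infty$, the set $N_a := \{j : a_j \neq 0\}$ is therefore finite. I claim $N_a \neq \emptyset$: otherwise the last paragraph below will show $u = \sum_j c_j w_{\bar\mu_j}\psi_j$ lies in $L^2_\gamma$ for every $\gamma$ avoiding $\Gamma(S)$, contradicting $\cA\cR_\infty(u) > -\infty$. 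Put $\mu_l := \max\{\bar\mu_j : j \in N_a\} \in \Gamma(S)$ and $\psi := \sum_{\bar\mu_j = \mu_l} a_j \psi_j \in E_l \setminus \{\orig\}$. For $\gamma \leq \mu_l$, the $z^{\mu_l}$-leading term in any $u_j$ with $j \in N_a$ and $\bar\mu_j = \mu_l$ gives $\|u_j\|_{z_0;\gamma} = +\infty$; for $\gamma > \mu_l$, the finitely many polynomial modes and the exponential remainder are all in $L^2_\gamma$. Hence $\cA\cR_\infty(u) = \mu_l$.

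For the fast-decay refinement, pick $\epsilon \in (0, 1/2)$ such that $(\mu_l - \epsilon, \mu_l) \cap \Gamma(S) = \emptyset$, $\mathrm{dist}(\mu_l - \epsilon, \Gamma(S)) \geq \sigma$ for some fixed $\sigma > 0$, and $\mu_l - \epsilon$ exceeds every $\bar\mu_j$ with $j \in N_a$ and $\bar\mu_j < \mu_l$. Decompose
\[
u - z^{\mu_l}\psi = \sum_{\bar\mu_j = \mu_l} a_j\bigl(b_{\mu_l}(z) - z^{\mu_l}\bigr)\psi_j + \sum_{\substack{j \in N_a \\ \bar\mu_j < \mu_l}} a_j b_{\bar\mu_j}\psi_j + v,
\]
where $v := u - \sum_{j \in N_a} a_j b_{\bar\mu_j}\psi_j = \sum_j c_j w_{\bar\mu_j}\psi_j$. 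The first sum is $O(z^{\mu_l - 1/2})$ by Lemma \ref{Lem_Moduli_Sol cL_mu = 0 w est} and the second is a finite sum of polynomial modes of order strictly less than $\mu_l - \epsilon$; both lie in $L^2_{\mu_l - \epsilon}$. The remaining obstacle is to show $v \in L^2_{\mu_l - \epsilon}$. Since a direct pointwise bound on the infinite sum via Lemma \ref{Lem_Moduli_Sol cL_mu = 0 fast decay} is too crude to extract uniform summability across high modes, I would instead invoke the uniqueness in Lemma \ref{Lem_Moduli_T_S u = f} at exponent $\mu_l - \epsilon$: the lemma produces a unique $L^2_{\mu_l - \epsilon}$-solution $V$ to $T_S V = 0$ with $V(\cdot, z_0) = v(\cdot, z_0)$ and $\Pi_{<\mu_l - \epsilon}\partial_z V(\cdot, z_0) = \Pi_{<\mu_l - \epsilon}\partial_z v(\cdot, z_0)$. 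The explicit mode-wise construction of $V$ in the proof of that lemma gives $V_j = c_j w_{\bar\mu_j} = v_j$ for every $j$, so $V - v$ solves $T_S(V - v) = 0$ with vanishing boundary data at any exponent $\gamma_0 > \mu_l$ where both functions lie in $L^2_{\gamma_0}$; uniqueness at $\gamma_0$ then gives $V = v$, so $v \in L^2_{\mu_l - \epsilon}$, completing the proof.
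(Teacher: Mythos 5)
Your proof is correct, and it shares the key mechanism of the paper's proof --- a Fourier decomposition along the $L^2(S)$-eigenbasis of $-L_S$, with each mode $u_j$ decomposed into its polynomial part $a_j b_{\bar\mu_j}$ and exponential part $c_j w_{\bar\mu_j}$ --- but it differs in one step and adds an unnecessary detour in another.

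For identifying $\cA\cR_\infty(u)\in\Gamma(S)$, the paper avoids touching individual modes and instead runs a dimension count: by Lemma~\ref{Lem_Moduli_T_S u = f}, whenever $\gamma<\gamma'$ and $[\gamma,\gamma']\cap\Gamma(S)=\emptyset$, the two affine spaces $\{v\in L^2_\gamma: T_Sv=0,\ v(\cdot,z_0)=u(\cdot,z_0)\}\subset\{v\in L^2_{\gamma'}: \dots\}$ are finite dimensional of the same dimension $\dim E_{<\gamma}=\dim E_{<\gamma'}$, hence equal; it follows directly that the asymptotic rate cannot sit strictly between two consecutive eigenvalues. Your explicit Fourier argument (forcing $a_j=0$ for $\bar\mu_j\geq\gamma$, concluding $N_a$ is finite and nonempty, setting $\mu_l=\max\{\bar\mu_j:j\in N_a\}$) reaches the same conclusion by hand; it is more concrete but buys nothing over the dimension count, which the paper already needed for the last paragraph anyway.

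For the refinement $u-z^{\mu_l}\psi\in L^2_{\mu_l-\epsilon}$, your decomposition into the correction term $\sum_{\bar\mu_j=\mu_l}a_j(b_{\mu_l}-z^{\mu_l})\psi_j$, the strictly lower polynomial modes, and the exponential remainder $v$ matches the paper's intent exactly. However, your stated worry that the pointwise bound from Lemma~\ref{Lem_Moduli_Sol cL_mu = 0 fast decay} is ``too crude to extract uniform summability across high modes'' is unfounded: that lemma gives $0<w_{z_0,j}(z)\leq e^{z_0-z}(z/z_0)^{1+(-\bar\mu_j)^+}$, and since $(-\bar\mu_j)^+\leq\Lambda$ with $\Lambda$ a bound on $-\mu_1$, the exponent is uniform in $j$. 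Consequently $\|v(\cdot,z)\|_{L^2(S)}^2=\sum_j c_j^2 w_{z_0,j}(z)^2\leq e^{2(z_0-z)}(z/z_0)^{2(1+\Lambda)}\|v(\cdot,z_0)\|_{L^2(S)}^2$, which already places $v$ in $L^2_\gamma$ for every $\gamma$. Your workaround via Lemma~\ref{Lem_Moduli_T_S u = f} is not wrong, but the final ``uniqueness at some $\gamma_0>\mu_l$'' phrasing is circular: Lemma~\ref{Lem_Moduli_T_S u = f} uniqueness requires matching both $\phi$ and $\varphi$, and your only grounds for matching $\Pi_{<\gamma_0}\partial_z$-data is precisely the mode-wise identity $V_j=v_j$ you already established; once you have $V_j=v_j$ for all $j$, completeness of $\{\psi_j\}$ in $L^2(S)$ gives $V=v$ directly, and the uniqueness invocation is redundant.
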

	\begin{proof}
		First note that for every $\gamma<\gamma'\in \RR$ with $[\gamma, \gamma']\cap \Gamma(S) = \emptyset$, by Lemma \ref{Lem_Moduli_T_S u = f} we know that 
		\begin{align*}
			&\ \{v\in L^2_\gamma(S\times \RR_{\geq z_0}): T_S v = 0, v(\cdot, z_0)=u(\cdot, z_0)\} \\
			\subset &\ \{v\in L^2_{\gamma'}(S\times \RR_{\geq z_0}): T_S v = 0, v(\cdot, z_0)=u(\cdot, z_0)\}     
		\end{align*}
		are both finite dimensional affine subspace of $L^2_{loc}(S\times \RR_{\geq z_0})$ of the same dimension.  Hence, they are the same.  This immediately implies $\cA\cR_\infty(u) = \mu_l\in \Gamma(S)$ for some $l\geq 1$.
		
		To prove the decay estimate, notice that for every $j\geq 1$ and every $\psi\in E_j$, $u_j(z):= \langle u(\cdot, z), \psi\rangle_{L^2(S)}$ satisfies $\cL_{\mu_j}u_j = 0$ on $\RR_{\geq z_0}$.  The space of all solutions to this ODE is a two-dimensional vector space generated by $b_j$ in Lemma \ref{Lem_Moduli_Sol cL_mu = 0 w est} and $w_{z_0, j}$ in Lemma \ref{Lem_Moduli_Sol cL_mu = 0 fast decay}.  That means there exist some $c_j(\psi)\in \RR$ such that $|u_j - c_jb_j| = O(e^{-z/2})$.  And since $b_j\sim z^{\mu_j}$, by definition of $\cA\cR_\infty(u)$, we must have $c_j = 0$ if $\mu_j>\cA\cR_\infty(u)$ and $c_l(\psi) \neq 0$ for some $\orig \neq\psi\in E_l$.  Then the desired estimate follows from Lemmas \ref{Lem_Moduli_Sol cL_mu = 0 w est} and \ref{Lem_Moduli_Sol cL_mu = 0 fast decay}.
	\end{proof}

	%%%%%%%%%%%%%%%%%%%%%%%%%%%%%%%%%%%%%%%%%%%%%%%%%%%%%%%%%%%%%%%%%%%%%%%%%%%%%%%%%%
	%%%%%%%%%%%%%%%%%%%%%%%%%%%%%     C^0   Estimates     %%%%%%%%%%%%%%%%%%%%%%%%%%%%
	%%%%%%%%%%%%%%%%%%%%%%%%%%%%%%%%%%%%%%%%%%%%%%%%%%%%%%%%%%%%%%%%%%%%%%%%%%%%%%%%%%  
	\subsection{$C^0$ estimate} \label{Subsec_Moduli_C^0 est}
	Consider the operator of the following general form in this section. For $u\in S\times (a_1, a_2)$, let 
	\begin{align}
		\scL u := -\text{div}_S \left(B_1[u]\right) - \partial_z(B_2[u]) - z\partial_z u + B_3[u],  \label{Moduli_C^0 est_general oper div form}
	\end{align}
	where $B_j[u]:= B_j(x, z, u, \nabla_S u, z\partial_z u)$, and $B_j(x, z, p, \xi, \eta)$ satisfies the following,
	\begin{align}
		\begin{split}
			\xi\cdot B_1 + z^{-1}\eta\cdot B_2 & \geq \lambda(|\xi|^2 + z^{-1}\eta^2) -\Lambda p^2, \\
			|B_1| + z^{-1/2}|B_2| + |B_3| & \leq \Lambda(|\xi|+z^{-1/2}|\eta|+ |p|),
		\end{split} \label{Moduli_C^0 est_Coef condition}
	\end{align}
	for some constant $0<\lambda<\Lambda<+\infty$.  
	
	Note that $-zT_S = -(z\partial_z^2 + z\partial_z + L_S)$ satisfies the conditions (\ref{Moduli_C^0 est_Coef condition}) on the coefficients. By Lemma \ref{Append_Error est III of scT}, for $u_\pm$ satisfying \[
	|u_\pm| + |\nabla u_\pm| + |z\partial_z u_\pm| + |\nabla^2 u_\pm| + |\sqrt{z}\nabla u_\pm| + |z\partial_z^2 u_\pm| \leq \delta,   \]
	the operator $\tilde{\scT}(u_+-u_-):= (1+b)^{-1}(\scT(u_+)-\scT(u_-))$ ($b$ is a small function specified in Lemma \ref{Append_Error est III of scT}) also satisfies (\ref{Moduli_C^0 est_Coef condition}), provided $\delta\ll1$ and $z\gg 1$, where the constants $\lambda$, $\Lambda$ both depend only on $S$.  Therefore, the following several Lemmas apply to both cases.
	
	\begin{Lem} \label{Lem_Moduli_C^0 est}
		Let $u$ be the solution of $\scL u = f$ on $S\times (R, 4R)$, $R\geq 1$, where $\scL$ satisfies (\ref{Moduli_C^0 est_general oper div form}) and (\ref{Moduli_C^0 est_Coef condition}).  Then we have\[
		\|u\|_{L^\infty, S\times (2R, 3R)}\leq C(S, \lambda, \Lambda)\left( R^{-1/2}\|u\|_{L^2, S\times (R, 4R)} + \|f\|_{L^\infty, S\times (R, 4R)} \right) .   \]
	\end{Lem}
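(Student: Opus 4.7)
The plan is to reduce this degenerate $L^\infty$ estimate to a standard De Giorgi--Nash--Moser argument on a uniformly elliptic divergence-form equation by changing variables $\zeta := 2\sqrt{z}$, which straightens out the $z$-direction degeneracy. Setting $v(x,\zeta):=u(x,\zeta^2/4)$, one has $\partial_z u=(2/\zeta)\partial_\zeta v$ and $z\partial_z u=(\zeta/2)\partial_\zeta v$, so the bounds (\ref{Moduli_C^0 est_Coef condition}) rewrite, after the natural rescaling $\widehat B_2:=(2/\zeta)B_2$, as the uniform pairing
\[
    \nabla_S v\cdot B_1 + \partial_\zeta v\cdot \widehat B_2 \;\geq\; \lambda\bigl(|\nabla_S v|^2 + (\partial_\zeta v)^2\bigr) - \Lambda v^2,
\]
with $|B_1|+|\widehat B_2|+|B_3|\leq C(|\nabla_S v|+|\partial_\zeta v|+|v|)$. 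On the tube $S\times(2\sqrt R,4\sqrt R)$ the equation $\scL u=f$ becomes
\[
    -\mathrm{div}_S B_1 - \partial_\zeta \widehat B_2 - \zeta^{-1}\widehat B_2 - (\zeta/2)\partial_\zeta v + B_3 \;=\; f,
\]
uniformly elliptic in the product metric $g_S\oplus d\zeta^2$, but carrying a \emph{large} first-order drift $-(\zeta/2)\partial_\zeta v$ of size $\sim\sqrt R$; the term $\zeta^{-1}\widehat B_2$ is a benign lower-order piece since $1/\zeta\leq 1$ on the relevant region.

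Next I would run a Moser iteration tailored to this drift. Choose a cutoff $\phi=\phi(\zeta)$ equal to $1$ on $[2\sqrt{2R},2\sqrt{3R}]$ and supported in $[2\sqrt R,4\sqrt R]$, with $|\phi'|\leq C/\sqrt R$, so that $\zeta|\phi'|$ is bounded uniformly in $R$. Testing the equation against $v\,|v|^{2k-2}\phi^2$ and integrating by parts, the dangerous drift term contributes
\[
    \int -\tfrac{\zeta}{2}\partial_\zeta v\cdot v^{2k-1}\phi^2 \;=\; \frac{1}{4k}\int v^{2k}\bigl(\phi^2 + 2\zeta\phi\phi'\bigr),
\]
a favorable positive leading term plus an error that is $R$-uniformly bounded by the cutoff choice. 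Combined with the uniform ellipticity of $(B_1,\widehat B_2)$ and Young-type absorption of the $\widehat B_2$-flux and $B_3$-term, one obtains a Caccioppoli inequality of the form
\[
    \int \bigl(|\nabla_S v^k|^2 + (\partial_\zeta v^k)^2\bigr)\phi^2 \;\leq\; Ck^2\int v^{2k}\bigl(\phi^2+(\phi')^2\bigr) + Ck\|f\|_{L^\infty}^{2k}\int \phi^2,
\]
with $C=C(S,\lambda,\Lambda)$ independent of $R$ and $k$.

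Combining with the scale-invariant Sobolev inequality on $(S\times\RR,\,g_S\oplus d\zeta^2)$ applied to $v^k\phi$, and iterating in the standard Moser fashion with nested cutoffs supported in shells of width proportional to $\sqrt R/2^m$, one obtains the tube estimate
\[
    \|v\|_{L^\infty(S\times(2\sqrt{2R},2\sqrt{3R}))} \;\leq\; C\,R^{-1/4}\|v\|_{L^2(S\times(2\sqrt R,4\sqrt R))} + C\|f\|_{L^\infty},
\]
in which the factor $R^{-1/4}$ arises as the natural $|\Omega|^{-1/2}$ for a tube of $\zeta$-length $\sim\sqrt R$. Converting back via $dz=\sqrt z\,d\zeta$ gives $\|v\|_{L^2(dx\,d\zeta)}^2 \leq R^{-1/2}\|u\|_{L^2(dx\,dz)}^2$, whence
\[
    \|u\|_{L^\infty(S\times(2R,3R))} \;\leq\; C\,R^{-1/2}\|u\|_{L^2(S\times(R,4R))} + C\|f\|_{L^\infty},
\]
as claimed.

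The main obstacle is the drift coefficient $\sim\sqrt R$: a naive Moser treatment that lumps it into a lower-order term would yield a constant blowing up like $e^{\sqrt R}$. The key is the favorable sign of this drift against the test function $v^{2k-1}\phi^2$, together with choosing the cutoff at the drift-matched scale $\sqrt R$ so that the induced error $\zeta\phi\phi'$ remains $O(1)$ uniformly in $R$. Verifying that this delicate balance survives every step of the iteration, with constants uniform in both $k$ and $R$, is the heart of the argument.
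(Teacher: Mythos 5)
Your reduction to the $\zeta=2\sqrt z$ variable is clean and the observation that the drift must be handled by the favorable-sign integration-by-parts (rather than brute-forced as a bounded lower-order term) is the correct key insight. However, the claimed factor $R^{-1/4}$ in the tube estimate is not justified, and this is where the argument breaks.

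After the Caccioppoli inequality, you have $\int\bigl(|\nabla_S(v^k\phi)|^2+(\partial_\zeta(v^k\phi))^2\bigr)\le Ck^2\int v^{2k}(\phi^2+(\phi')^2)$. The crucial point is the presence of the \emph{zeroth-order} term $\int v^{2k}\phi^2$ on the right, which is unavoidable: it comes both from the $-\Lambda p^2$ term in the coercivity (\ref{Moduli_C^0 est_Coef condition}) and from the boundary term $\tfrac{1}{4k}\int v^{2k}\phi^2$ produced by the drift. Because of this term, the scale-invariant Sobolev on $S\times\RR$ followed by Moser iteration with your nested cutoffs $\phi_m$ (of widths $\sim\sqrt R/2^m$, hence $|\phi_m'|\lesssim 2^m/\sqrt R\ll 1$) yields $\|v\|_{L^\infty}\le C(S,\lambda,\Lambda)\,\|v\|_{L^2}$ with a constant \emph{independent of $R$}—there is no additional decaying factor. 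The ``$|\Omega|^{-1/2}$'' heuristic would only apply if the Caccioppoli inequality had no zeroth-order term and one could normalize $\|v\|_{L^2}=1$; here the $\int v^{2k}\phi^2$ term caps the gain at an $R$-independent constant. Converting back, your change of variables contributes a factor $R^{-1/4}$ from the Jacobian $d\zeta=dz/\sqrt z$, so the final estimate you can actually extract is $\|u\|_{L^\infty}\le CR^{-1/4}\|u\|_{L^2}+C\|f\|_{L^\infty}$, short of the required $R^{-1/2}$.

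The paper avoids this issue by a different rescaling: set $u_R(x,\hat z):=u(x,R\hat z)$ on the \emph{fixed} domain $S\times(1,4)$. The $\hat z$-direction ellipticity then degenerates like $R^{-1}$, but that does not matter because the Moser iteration is run using only the $S$-direction Sobolev inequality together with the (now $O(1)$) drift term; this gives $\|u_R\|_{L^\infty(S\times(2,3))}\le C(S,\lambda,\Lambda)\bigl(\|u_R\|_{L^2(S\times(1,4))}+\|f_R\|_{L^\infty}\bigr)$ with $C$ genuinely independent of $R$ because the domain is $R$-independent. The entire $R^{-1/2}$ gain is then automatic from the Jacobian $\|u_R\|_{L^2(S\times(1,4))}=R^{-1/2}\|u\|_{L^2(S\times(R,4R))}$. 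The moral: one wants a rescaling that makes the domain $R$-independent, not one that makes the equation nicer; your $\zeta$-substitution trades a fixed domain for uniform ellipticity, which in this problem is the losing trade.
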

	\begin{proof}
		It suffices to prove the upper bound for $u$; then the lower bound follows from the upper bound for $-u$. Consider $u_R(x, \hat{z}):= u(x, R\hat{z})$ defined on $S\times (1,4)$ solving \[
		-\text{div}_S(\tilde{B}_{1,R}[u_R]) - \frac{1}{R}\partial_{\hat{z}}(\tilde{B}_{2, R}[u_R]) - \hat{z}\partial_{\hat{z}} u_R + \tilde{B}_{3, R}[u_R] = f_R,   \]
		where $\tilde{B}_{j, R}[u_R](x, \hat{z})=\tilde{B}_{j, R}(x, \hat{z}, u_R, \nabla u_R, \hat{z}\partial_{\hat{z}} u_R) := B_j(x, z, u, \nabla u, z\partial_z u)|_{z=R\hat{z}}$, and by (\ref{Moduli_C^0 est_Coef condition}), $\tilde{B}_{j,R}(x, \hat{z}, p, \xi, \eta)$ satisfies the estimates, 
		\begin{align*}
			\begin{split}
				\xi\cdot \tilde{B}_{1,R} + R^{-1}\hat{z}^{-1}\eta\cdot \tilde{B}_{2,R} & \geq \frac{\lambda}{4}(|\xi|^2 + R^{-1}\eta^2) - \Lambda p^2, \\
				|\tilde{B}_{1,R}| + R^{-1/2}|\tilde{B}_{2,R}| + |\tilde{B}_{3,R}| & \leq 4\Lambda(|\xi|+R^{-1/2}|\eta|+ |p|).
			\end{split} %\label{Moduli_C^0 est_Coef condition at scale R}
		\end{align*}
		
		Let $p\geq 1$, $\phi\in C^\infty_c(1,4)$ be a non-negative cut-off function; $k:=\|f\|_{L^\infty, S\times (R, 4R)}$, $\bar{u}:= \sup\{u, 0\} +k$. Multiplying the equation by $(\bar{u}^p - k^p) \cdot \phi^2$ and integration by part provide
		\begin{align*}
			\int_{S\times (1,4)} |\nabla_S (\bar{u}^{(p+1)/2} \phi)|^2 & \leq \int_{S\times (1,4)} C(\lambda, \Lambda)(1+\beta^2)(\phi^2 + \dot{\phi}^2)\bar{u}^p(\bar{u}+|f_R|) \\
			& \leq \int_{S\times (1,4)} C(\lambda, \Lambda)(1+\beta^2)(\phi^2 + \dot{\phi}^2)\bar{u}^{p+1} .
			%    \int_{S\times (1, 4)} -f_R\cdot \eta^2 \bar{u}^p \geq 
			%    \int_{S\times (1, 4)} \frac{p}{2(p+1)^2}|\nabla_S( |u_R|^{(p+1)/2} \eta)|^2 - C_S\cdot (\eta^2 + \eta'^2)\cdot |u_R|^{p+1}
		\end{align*}
		Then we apply the Sobolev inequality on $S$ and run the Moser iteration \cite{GilbargTrudinger01} to get $C^0$ estimate \[
		\sup_{S\times (2,3)} u_R \leq C(S, \lambda, \Lambda)\left(\|u_R\|_{L^2, S\times (1,4)} + \|f_R\|_{L^\infty, S\times (1,4)} \right) .    \]
		Then scaling back gives the upper bound for $u$.
	\end{proof}
	\begin{Rem} \label{Rem_Moduli_C^0 est, bdy case}
		A similar argument shows that if $u(\cdot, R) = 0$ and $u$ solves $\scL u = f$ on $S\times [R, 4R)$, $R\geq 1$ with $\scL$ satisfying (\ref{Moduli_C^0 est_general oper div form}) and (\ref{Moduli_C^0 est_Coef condition}), then we have the boundary estimate \[
		\|u\|_{L^\infty, S\times [R, 3R)}\leq C(S, \lambda, \Lambda)\left( R^{-1/2}\|u\|_{L^2, S\times (R, 4R)} + \|f\|_{L^\infty, S\times (R, 4R)} \right) .   \] 
	\end{Rem}
	
	We shall also need the following Harnack inequality for $\scL$: 
	\begin{Lem} \label{Lem_Moduli_Harnack Ineq}
		Let $u>0$ be a positive solution to $\scL u = 0$ on $S\times (R, 8R)$, $R\geq 1$, where $\scL$ satisfies (\ref{Moduli_C^0 est_general oper div form}) and (\ref{Moduli_C^0 est_Coef condition}). Then \[
		\sup_{S\times [5R, 6R]} u \leq C(S, \lambda, \Lambda) \inf_{S\times [2R, 3R]} u .   \]
	\end{Lem}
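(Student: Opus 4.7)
The plan is to recognize the order-one drift $-z\partial_z u$ in $\scL$ as a time derivative in disguise and convert the statement into Moser's parabolic Harnack inequality on a compact manifold–time cylinder.

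First, rescale as in the proof of Lemma \ref{Lem_Moduli_C^0 est}: set $u_R(x, \hat z) := u(x, R\hat z)$ on $S\times(1, 8)$, so that $u_R$ satisfies
\[
-\text{div}_S(\tilde B_{1,R}[u_R]) - R^{-1}\partial_{\hat z}(\tilde B_{2,R}[u_R]) - \hat z\partial_{\hat z} u_R + \tilde B_{3,R}[u_R] = 0,
\]
with coefficients $\tilde B_{j,R}$ satisfying the rescaled structural bounds of the same type as (\ref{Moduli_C^0 est_Coef condition}). Then substitute $s := -\log\hat z$, so that $\partial_s = -\hat z\partial_{\hat z}$ and $s$ ranges over $(-\log 8, 0)$. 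In $(x, s)$-coordinates the equation takes the divergence-form parabolic shape
\[
\partial_s u_R = \text{div}_S(\tilde B_{1,R}) - R^{-1}\hat z^{-1}\partial_s(\tilde B_{2,R}) - \tilde B_{3,R},
\]
whose leading spatial part is a uniformly elliptic divergence-form operator on the closed manifold $S$ with constants depending only on $\lambda,\Lambda$. The extra $s$-diffusion carries the small prefactor $R^{-1}$, and since $\tilde B_{2,R}$ depends on $-\partial_s u_R$, it actually contributes a non-negative coefficient to $\partial_s^2 u_R$.

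Under this change of variable the target subregions transform as $\{\hat z \in [5,6]\} \leftrightarrow \{s \in [-\log 6, -\log 5]\}$ (earlier in $s$) and $\{\hat z \in [2,3]\} \leftrightarrow \{s \in [-\log 3, -\log 2]\}$ (later in $s$). Invoking Moser's parabolic Harnack inequality on $S\times(-\log 8, 0)$ therefore yields
\[
\sup_{S\times[-\log 6, -\log 5]} u_R \leq C(S, \lambda, \Lambda)\, \inf_{S\times[-\log 3, -\log 2]} u_R,
\]
and undoing the rescaling $\hat z \mapsto z = R\hat z$ gives the claim.

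The main obstacle is to execute the parabolic Harnack \emph{uniformly in $R$}. This consists of a Moser iteration on positive powers $u_R^p$ (a parabolic refinement of the argument in Lemma \ref{Lem_Moduli_C^0 est}), a parallel iteration on negative powers $u_R^{-p}$ (valid since $u>0$), and a logarithmic Caccioppoli estimate for $\log u_R$ to bridge the two via the Moser--Bombieri (equivalently parabolic John--Nirenberg) lemma applied to parabolic cubes in $(x,s)$. In each energy inequality the $O(R^{-1})$ extra $s$-diffusion is non-negative and small, so it can be absorbed into the lower-order terms controlled by (\ref{Moduli_C^0 est_Coef condition}); the resulting constant $C$ depends only on $S, \lambda, \Lambda$ and not on $R$.
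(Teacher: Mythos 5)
Your reinterpretation of $-z\partial_z u$ as a first-order drift in a ``time'' variable is the right intuition, and it is indeed what drives the paper's proof, but the single-scale substitution $s = -\log(z/R)$ does not produce a parabolic equation. After that substitution the $z\partial_z^2 u$ part of the operator becomes $R^{-1}e^s\bigl(\partial_s^2 u_R + \partial_s u_R\bigr)$, so on $S\times(-\log 8,0)$ you obtain an equation of the schematic form
\[
\partial_s u_R = \mathrm{div}_S(B_1) + \epsilon(s)\,\partial_s^2 u_R + \text{lower order}, \qquad \epsilon(s)\sim R^{-1}e^s.
\]
The residual term is of \emph{higher} order in the time direction; it is the principal part of the operator in $s$, not a lower-order perturbation, and cannot be ``absorbed'' as you claim. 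There is no off-the-shelf Harnack inequality for such degenerate-elliptic equations with constants uniform in $\epsilon\in(0,1]$, and the Moser--Bombieri machinery you invoke stalls at the first step: the parabolic energy estimate multiplies the equation by $u_R^{2\beta-1}\phi^2$ and integrates over $S$ \emph{only} to obtain a differential inequality in $s$, but then the $\partial_s(B_2)$ contribution cannot be integrated by parts; integrating over a full space-time slab instead generates boundary terms at the $s$-endpoints involving $\nabla u_R$ and $\partial_s u_R$ that are not controlled pointwise in $s$. (A secondary imprecision: (\ref{Moduli_C^0 est_Coef condition}) controls only the \emph{coupled} quadratic form $\xi\cdot B_1 + z^{-1}\eta\cdot B_2$ and explicitly allows $B_1$ to depend on $\eta$ at scale $z^{-1/2}|\eta|$, so $\mathrm{div}_S(B_1)$ need not be separately elliptic.)

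The paper uses a two-scale rescaling that removes the obstruction entirely. It sets $U_R(\tau, x, \hat z) := u(x, R\tau + \sqrt{R}\,\hat z)$, splitting the single variable $z$ into a time variable $\tau$ at the drift scale $R$ and a genuine \emph{spatial} variable $\hat z$ at the parabolic scale $\sqrt{R}$. Under this change, $z\partial_z^2$ becomes $(\tau + \hat z/\sqrt{R})\,\partial_{\hat z}^2 U_R$, a bona fide second-order spatial derivative with order-one ellipticity, the drift becomes $-(\tau + \hat z/\sqrt{R})\,\partial_\tau U_R$, and the coupled condition (\ref{Moduli_C^0 est_Coef condition}) turns into uniform ellipticity of the full spatial operator in $(\xi, \hat\eta)$ on the $(n+1)$-dimensional cylinder $S\times(0,4)$. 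The result is a genuine quasilinear uniformly parabolic equation to which the Aronson--Serrin Harnack inequality applies with constants independent of $R$. This is the essential step your proposal is missing.
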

	\begin{proof}
		Let $U_R(\tau, x, \hat{z}):= u(x, R\tau + \sqrt{R}\hat{z})$, defined on $(1, 7)\times S\times (0, 4)$, satisfying 
		\begin{align}
			-\text{div}_S(\cB_{1,R}[U_R]) - \frac{1}{\sqrt{R}}\partial_{\hat{z}}(\cB_{2,R}[U_R]) - (\tau+\frac{\hat{z}}{\sqrt{R}})\cdot \partial_\tau U_R + \cB_{3, R}[U_R] = 0,  \label{Moduli_C^0 est_Parab equ U_R}   
		\end{align}
		where $\cB_{j, R}[U_R] (\tau, x, \hat{z}) = \cB_{j, R}(\tau,x, \hat{z}, U_R, \nabla U_R, \partial_{\hat{z}}U_R)$ satisfies,
		\begin{align*}
			\begin{split}
				\cB_{j, R}(\tau, x, \hat{z}, p, \xi, \hat{\eta}) & := B_j(x, R\tau+\sqrt{R}\hat{z}, p, \xi, (\sqrt{R}\tau + \hat{z})\hat{\eta}), \\
				\xi\cdot \cB_{1, R} + \hat{\eta}\cdot \frac{1}{\sqrt{R}}\cB_{2, R} & \geq \lambda(|\xi|^2 + |\hat{\eta}|^2) -\Lambda |p|^2, \\
				|\cB_{1, R}| + \frac{1}{\sqrt{R}}|\cB_{2, R}| + |\cB_{3, R}| &\leq 16\Lambda(|\xi|+ |\hat{\eta}| + |p|).
			\end{split}
		\end{align*}
		This indicates that (\ref{Moduli_C^0 est_Parab equ U_R}) is a uniform parabolic equation in $-\tau$ direction. Hence by \cite{AronsonSerrin67_QuasilinearParabEqu}, $U_R$ satisfies the parabolic Harnack inequality, \[
		\sup_{(4, 6)\times S\times (1, 3)} U_R \leq C(S, \lambda, \Lambda) \inf_{(1,3)\times S\times (1,3)} U_R.   \]
		This directly implies the Harnack inequality of $u$.
	\end{proof}

	%%%%%%%%%%%%%%%%%%%%%%%%%%%%%%%%%%%%%%%%%%%%%%%%%%%%%%%%%%%%%%%%%%%%%%%%%%%%%%%%%%
	%%%%%%%%%%%%%%%%%%%%%%%%%     C^{2, alpha}   Estimates     %%%%%%%%%%%%%%%%%%%%%%%
	%%%%%%%%%%%%%%%%%%%%%%%%%%%%%%%%%%%%%%%%%%%%%%%%%%%%%%%%%%%%%%%%%%%%%%%%%%%%%%%%%%  
	\subsection{$C^{2,\alpha}$ estimate} \label{Subsec_Moduli_C^2,alpha est}
	We first derive a long-term $C^{2,\alpha}$ estimate, which is governed by almost parabolicity of $T_S$ when $z$ is large.  For $R>1$ and $\alpha\in (0, 1)$, we define the $C^{k,\alpha}_{\star}$ norm for functions on $S\times [R, 2R]$ by,
	\begin{align}
		\begin{split}
			[f]_{\alpha; S, R}^\star := &\ \sup\left\{\frac{|f(x, z)- f(x', z')|}{|x- x'|^\alpha + R^{-\alpha/2}|z-z'|^\alpha}: (x, z), (x', z')\in S\times [R, 2R]\right\}; \\
			\|f\|_{C^\alpha_\star; S, R} := &\ \sup_{S\times [R, 2R]} |f| + [f]_{\alpha; S, R}^\star; \\
			\|u\|_{C^{2,\alpha}_\star, S, R}:= &\ \Big(\|u\|_{C^\alpha_\star; S, R} + \|\nabla u\|_{C^\alpha_\star; S, R} + R\|\partial_z u\|_{C^\alpha_\star; S, R} \\
			&\ + \|\nabla^2 u\|_{C^\alpha_\star; S, R} + \sqrt{R}\|\partial_z \nabla u\|_{C^\alpha_\star; S, R} + R\|\partial_z^2 u\|_{C^\alpha_\star; S, R} \Big).
		\end{split} \label{Moduli_C^k,alpha_star norms}
	\end{align}
	This norm is chosen to be compatible with the spacetime $C^{2,\alpha}$ norm of a transformation of $u$ (specified in the proof of Lemma \ref{Lem_Moduli_C^2,alpha_star est, interior}) that satisfies a parabolic equation with uniform bounds on the coefficients, which enables us to prove the long-term estimate.
	
	Since $S$ is a fixed self-shrinker, we may omit the subscript $S$ for simplicity.

	\begin{Lem} \label{Lem_Moduli_C^2,alpha_star est, interior}
		Let $u$ be the solution of $(\partial_z^2 + \partial_z + z^{-1}L_S)u = f$ on $S\times (R, 8R)$, $R>16$.  Then 
		\begin{align*}   
			\|u\|_{C^{2,\alpha}_\star, S, 3R} \leq  C(S, \alpha)\left(\|u\|_{C^0, S\times (R,8R)} + \sup_{R<\rho< 4R}\rho\|f\|_{C^\alpha_\star, S, \rho} \right). 
		\end{align*}
	\end{Lem}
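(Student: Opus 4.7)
The plan is to rescale the $z$-variable so that the anisotropic $C^{2,\alpha}_\star$-norm becomes the standard $C^{2,\alpha}$-norm, and then apply interior elliptic Schauder theory. For each $z_0 \in [3R,6R]$, set $\hat{z} := (z-z_0)/\sqrt{z_0}$ and $w(x,\hat{z}) := u(x, z_0 + \sqrt{z_0}\,\hat{z})$ on $S\times(-1,1) \subset S\times(R,8R)$ for $R$ large. Multiplying the equation by $z_0$ gives
\begin{equation*}
\partial_{\hat{z}}^2 w + \sqrt{z_0}\,\partial_{\hat{z}} w + \tfrac{z_0}{z}L_S w = z_0 f,
\end{equation*}
and the Jacobian factors of $\sqrt{z_0}$ precisely match the weights $R$ and $\sqrt{R}$ in $\|\cdot\|_{C^{2,\alpha}_\star, S, R}$. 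More precisely, $\|u\|_{C^{2,\alpha}_\star, S, R}$ corresponds (up to constants depending only on $S$) to the standard $\|w\|_{C^{2,\alpha}(S\times[-1/2,1/2])}$ plus the extra term $\sqrt{z_0}\,\|\partial_{\hat{z}} w\|_{C^\alpha}$.

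Treating the drift $\sqrt{z_0}\partial_{\hat{z}} w$ and the $O(z_0^{-1/2})$ perturbation $(z_0/z - 1)L_S w$ as sources, standard interior Schauder for the uniformly elliptic operator $\partial_{\hat{z}}^2 + L_S$ on $S\times(-1,1)$ yields
\begin{equation*}
\|w\|_{C^{2,\alpha}(S\times[-1/2,1/2])} \leq C(S,\alpha)\left(\|w\|_{C^0} + z_0\|f\|_{C^\alpha} + \sqrt{z_0}\,\|\partial_{\hat{z}} w\|_{C^\alpha}\right).
\end{equation*}
The extra $\sqrt{z_0}\|\partial_{\hat{z}} w\|_{C^\alpha}$ needed to recover the $\star$-norm is controlled directly from the equation via $\sqrt{z_0}\partial_{\hat{z}} w = z_0 f - \partial_{\hat{z}}^2 w - (z_0/z)L_S w$, which gives $\sqrt{z_0}\|\partial_{\hat{z}} w\|_{C^\alpha} \leq z_0\|f\|_{C^\alpha} + \|w\|_{C^{2,\alpha}}$ up to absorbable $O(z_0^{-1/2})$ errors. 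Combining these inequalities, unrescaling, and covering $S\times[3R,6R]$ by the local patches $S\times[z_0 - \sqrt{z_0}, z_0 + \sqrt{z_0}]$ will yield the claimed estimate.

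The main obstacle is the large drift coefficient $\sqrt{z_0}$ in the rescaled equation: substituting the equation-derived bound for $\sqrt{z_0}\|\partial_{\hat{z}} w\|_{C^\alpha}$ into the Schauder inequality produces a self-referential estimate of the form $\|w\|_{C^{2,\alpha}} \leq C_1(\|w\|_{C^0} + z_0\|f\|_{C^\alpha}) + C_1\|w\|_{C^{2,\alpha}}$ that cannot be absorbed outright. The resolution is to first break the loop by an intermediate $C^{1,\alpha}$-bound on $w$ with constant uniform in $R$, obtained by a Campanato-type (or $L^p$ Calder\'on--Zygmund) iteration applied to the rescaled equation, starting from the Hölder regularity encoded in Lemma~\ref{Lem_Moduli_C^0 est} and the Harnack inequality of Lemma~\ref{Lem_Moduli_Harnack Ineq}, and using standard interpolation between $C^0$ and $C^{2,\alpha}$. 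What makes the whole scheme work is the built-in anisotropy of the $\star$-norm, which weights $\partial_z u$ by $R$ rather than $\sqrt{R}$, reflecting the smallness $|\partial_z u|=O(R^{-1})$ that the equation itself enforces.
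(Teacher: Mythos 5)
Your one-variable rescaling $w(x,\hat z)=u(x,z_0+\sqrt{z_0}\,\hat z)$ does not work, and the difficulty you flag is fatal, not a repairable loose end. After rescaling you obtain
\[
\partial_{\hat z}^2 w+\sqrt{z_0}\,\partial_{\hat z} w+\tfrac{z_0}{z}L_S w = z_0 f ,
\]
and the $\sqrt{z_0}$-drift is a lower-order coefficient of the \emph{elliptic} operator; interior elliptic Schauder constants genuinely blow up with the size of lower-order coefficients, so the inequality $\|w\|_{C^{2,\alpha}}\le C(\|w\|_{C^0}+z_0\|f\|+\sqrt{z_0}\|\partial_{\hat z} w\|_{C^\alpha})$ does not hold with $C$ independent of $z_0$ in the first place unless one first moves the drift to the right-hand side, which is where your loop comes from. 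The proposed Campanato/interpolation fix does not break this loop: a uniform-in-$R$ bound $\|w\|_{C^{1,\alpha}}\le C$ only gives $\sqrt{z_0}\|\partial_{\hat z} w\|_{C^\alpha}=O(\sqrt{z_0})$, while to absorb the drift you would need $\sqrt{z_0}\|\partial_{\hat z} w\|_{C^\alpha}=O(1)$ --- i.e.\ the quantitative smallness $|\partial_z u|=O(1/R)$ --- and that is precisely the conclusion, not something available from $C^0$-bounds, the Harnack inequality, or interpolation.

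The paper avoids this entirely with a different rescaling: set $u_R(\tau,x,\hat z):=u(x,R\tau+\sqrt{R}\hat z)$, so that $z$ is written redundantly at two scales, $R\tau+\sqrt{R}\hat z$. Then $R\partial_z u=\partial_\tau u_R$ and $R\partial_z^2u=\partial_{\hat z}^2u_R$, and the equation becomes the genuinely parabolic
\[
\partial_\tau u_R+\partial_{\hat z}^2 u_R+\tfrac{1}{\tau+\hat z/\sqrt{R}}L_S u_R = Rf_R ,
\]
whose first-order coefficient in $\tau$ is $1$, with no large drift. Interior parabolic (spatial) Schauder estimates \cite{Knerr80_SpacialParabSchauder} then give $\|u_R\|^*_{2+\alpha}$, which unwinds directly into the $C^{2,\alpha}_\star$-norm with the weights $R$, $\sqrt{R}$ coming from $\partial_\tau$, $\partial_{\hat z}$ respectively. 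This is consistent with Remark~\ref{Rem_Moduli_No long term bdy C^k est}: the long-term estimate comes from the parabolicity of the operator at large $z$, and there is no boundary analogue precisely because $S\times\{R\}$ is a terminal time slice for the parabolic problem --- a phenomenon that would be inexplicable if the estimate were simply elliptic with a managed drift, as your scheme envisions. So the gap here is a wrong choice of model: the operator must be treated as a (degenerate) parabolic operator in $z$ at two scales, not as an elliptic operator with a large first-order term.
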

	\begin{proof}
		Let $u_R(\tau, x, \hat{z}):= u(x, R\tau + \sqrt{R}\hat{z})$ be defined on $\Omega:= (1,7)\times S \times (0, 4)$.  Then $u_R$ solves \[
		\partial_\tau u_R + \partial_{\hat{z}}^2 u_R + \frac{1}{\tau + \hat{z}/\sqrt{R}}L_S u_R = R f_R,   \]
		on $\Omega$.  Let $\Omega':= (1, 6)\times S\times (1, 3)$. By interior parabolic Schauder estimate \cite{Knerr80_SpacialParabSchauder} we have, 
		\begin{align}
			\|u_R\|_{2+\alpha, \Omega'}^* \leq C(S)(\|u_R\|_{C^0, \Omega} + \|R f_R\|^*_{\alpha, \Omega}),  \label{Moduli_Parab spacial Schauder est}   
		\end{align}
		where the H\"older norm $\|\cdot\|^*_{\alpha, \Omega'}$ is taking difference only in space, i.e.,
		\begin{align*}
			\|v\|^*_{\alpha, \Omega'} & := \|v\|_{C^0, \Omega'} + \sup_{(\tau, x, \hat{z})\neq (\tau, x', \hat{z}') \in \Omega'} \frac{|v(\tau, x, \hat{z}) - v(\tau, x', \hat{z}')|}{|x-x'|^{\alpha} + |z - z'|^\alpha} ; \\
			\|v\|^*_{2+\alpha, \Omega'} & := \|v\|^*_{\alpha, \Omega'} + \|\nabla_{S\times \RR} v\|^*_{\alpha, \Omega'} + \|\nabla^2_{S\times \RR} v\|^*_{\alpha, \Omega'} + \|\partial_\tau v\|^*_{\alpha, \Omega'}.
		\end{align*}
		It's easy to check that,
		\begin{align*}
			\|u_R\|^*_{2+\alpha, \Omega'} \geq C'(S)\|u\|_{C^{2,\alpha}_\star, S, 3R}\,, &\ & \|f_R\|^*_{\alpha, \Omega} \leq C''(S)\sup_{\rho\in(R,4R)}\|f\|_{C^\alpha_\star, S,\rho}.
		\end{align*}
		Therefore, the Lemma follows immediately from (\ref{Moduli_Parab spacial Schauder est}).
	\end{proof}
	
	We shall also estimate the following short-term $C^{2,\alpha}$ norm, governed by the ellipticity of the equation.
	\begin{align}
		\begin{split}
			\|u\|_{C^d_\sharp, S, R} & := \sum_{0\leq k+l\leq d}\sup_{S\times [R, R+1]} R^{-l/2}|\partial_z^k \nabla^l_S u|; \\
			\|u\|_{C^{d, \alpha}_\sharp, S, R} & := \|u\|_{C^d_\sharp, S, R}\ + \\ &\    \sup_{\substack{(x, z)\neq (x', z')\in S\times [R, R+1]\\ |x-x'|\leq R^{-1/2}}} \sum_{0\leq k+l \leq d} \frac{R^{-l/2}|\partial_z^k\nabla^l_S u(x, z) - \partial_z^k\nabla^l_S u(x', z')|}{R^{\alpha/2}|x-x'|^\alpha + |z-z'|^\alpha}.
		\end{split} \label{Moduli_C^k,alpha_sharp norms}
	\end{align}
	It's not hard to check that for $R\geq 1$ and $\alpha\in (0,1)$, we have
	\begin{align}
		\|u\|_{C^{2, \alpha}_\sharp, S, R} \leq \|u\|_{C^{2,\alpha}_\star, S, R}.   \label{Moduli_C^k_sharp < C^k_star}
	\end{align}
	
	We have the short-term Schauder estimates,
	\begin{Lem} \label{Lem_Moduli_short term Schauder est, int}
		Let $u$ be the solution of $(\partial_z^2 + \partial_z + z^{-1}L_S)u = f$ on $S\times (R-1, R+2)$, $R\geq 2$.  Then \[
		\|u\|_{C^{2,\alpha}_\sharp, S, R}\leq  C(S, \alpha)\left(\|u\|_{C^0, S\times (R-1, R+2)} + \sup_{\rho\in (R-1, R+1)}\|f\|_{C^\alpha_\sharp, S, \rho}\right).   \]
	\end{Lem}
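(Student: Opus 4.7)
The plan is to reduce $T_S u = f$ to a uniformly elliptic PDE with bounded $C^\alpha$ coefficients on a domain of unit size by rescaling the $S$-coordinate by $\sqrt{R}$, and then apply the standard interior elliptic Schauder estimate. The picture is the opposite of Lemma \ref{Lem_Moduli_C^2,alpha_star est, interior}: there the long time window $(R, 8R)$ makes $T_S$ behave parabolically in $z$, whereas here the short window $(R-1, R+2)$ of length $3$ together with the restriction $|x-x'| \leq R^{-1/2}$ built into the $C^{2,\alpha}_\sharp$ norm make the natural picture purely elliptic.

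First I would fix a point $x_0 \in S$, pick a geodesic normal coordinate chart $\phi$ at $x_0$ of size $\rho_0(S)$, and define $\tilde u(\tilde x, z) := u(\phi(\tilde x/\sqrt R), z)$ on $\tilde B_1(0) \times (R-1, R+2)$. Under this rescaling $\nabla_S u$ corresponds to $\sqrt R\, \tilde\nabla \tilde u$, so $R^{-l/2}|\partial_z^k \nabla_S^l u| = |\partial_z^k \tilde\nabla^l \tilde u|$ and $R^{\alpha/2}|x-x'|^\alpha = |\tilde x - \tilde x'|^\alpha$. Consequently the $C^{2,\alpha}_\sharp$ norm (resp.\ $C^\alpha_\sharp$) of $u$ (resp.\ $f$) localized to the image of the chart is equivalent, up to constants depending only on $S$, to the standard $C^{2,\alpha}(\tilde B_{1/2}\times [R, R+1])$ norm of $\tilde u$ (resp.\ $C^\alpha$ norm of $\tilde f$ on $\tilde B_1 \times (R-1, R+2)$). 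Writing $\Delta_S$ in the chart as $g^{ij}\partial_i\partial_j - g^{ij}\Gamma^k_{ij}\partial_k$ and using $\partial_i = \sqrt R\, \tilde\partial_i$, the equation becomes
\[
\partial_z^2 \tilde u + \partial_z \tilde u + \tfrac{R}{z}\,\tilde g^{ij}(\tilde x/\sqrt R)\,\tilde\partial_i\tilde\partial_j \tilde u + \tfrac{1}{\sqrt R\, z}\,\tilde b^k\,\tilde\partial_k \tilde u + \tfrac{c}{z}\tilde u \;=\; \tilde f,
\]
where $\tilde b^k$, $c$ are $C^\alpha$-bounded in terms of $S$ alone. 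Since $z \in (R-1, R+2)$ and $R \geq 2$, the ratio $R/z$ lies between two positive constants, so the principal part is uniformly elliptic in $(\tilde x, z)$ with $C^\alpha$ coefficients bounded independently of $R$.

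Next, I would apply the standard interior elliptic Schauder estimate (e.g.\ Gilbarg--Trudinger Thm.\ 6.2) to $\tilde u$ on $\tilde B_1(0) \times (R-1, R+2)$ with interior region $\tilde B_{1/2}(0) \times [R, R+1]$, yielding
\[
\|\tilde u\|_{C^{2,\alpha}(\tilde B_{1/2}\times [R, R+1])} \;\leq\; C(S, \alpha)\bigl(\|\tilde u\|_{C^0(\tilde B_1\times (R-1, R+2))} + \|\tilde f\|_{C^\alpha(\tilde B_1\times (R-1, R+2))}\bigr).
\]
Converting back to original coordinates via the norm equivalences above gives the claimed estimate on a neighborhood of $x_0$. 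Covering $S$ by finitely many such charts (by compactness) and taking the supremum concludes the proof.

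The main obstacle, such as it is, is bookkeeping: verifying that the anisotropic weights $R^{-l/2}$ on $\nabla_S^l$ derivatives and $R^{\alpha/2}|x-x'|^\alpha$ in the Hölder seminorm of $C^{k,\alpha}_\sharp$ are precisely matched by the $\tilde x = \sqrt R\,x$ rescaling, and checking uniform $C^\alpha$ control of the rescaled coefficients $\tilde g^{ij}(\tilde x/\sqrt R)$, $\tilde b^k$, $c$ independently of $R$. Both points follow routinely once $\rho_0(S)$ is fixed small enough that the normal-coordinate expansion of the metric on $S$ is controlled, after which the short-term estimate reduces to a textbook Schauder application.
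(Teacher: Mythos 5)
Your proposal is correct and takes essentially the same route as the paper: the paper's proof is a one-liner, ``apply classical interior elliptic estimates to $\hat u_R(x,z) := u(x/\sqrt R,\,z+R)$ on $\sqrt R\cdot S\times(-1,2)$,'' which is precisely the $\sqrt R$-rescaling in the $S$-direction that you carry out, with your chart-by-chart computation filling in the bookkeeping that the paper leaves implicit (uniform-in-$R$ ellipticity, $C^\alpha$ control of $\tilde g^{ij}(\tilde x/\sqrt R)$ and of $R/z$, matching of the $C^{k,\alpha}_\sharp$ weights to the rescaled Euclidean Hölder norms).
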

	\begin{Lem} \label{Lem_Moduli_short term Schauder est, bdy}
		Let $u$ be the solution of $(\partial_z^2 + \partial_z + z^{-1}L_S)u = f$ on $S\times [R, R+2)$, $R\geq 2$. Suppose $u(\cdot, R) \equiv 0$.  Then \[
		\|u\|_{C^{2,\alpha}_\sharp, S, R}\leq  C(S, \alpha)\left(\|u\|_{C^0, S\times [R, R+2)} + \sup_{\rho\in [R, R+1)}\|f\|_{C^\alpha_\sharp, S, \rho}\right).   \]
	\end{Lem}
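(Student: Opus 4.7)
The plan is to reduce this boundary short-term Schauder estimate to a standard boundary Schauder estimate for uniformly elliptic equations by rescaling the $S$-direction by $\sqrt{R}$. Fix a point $x_0 \in S$ and introduce local normal coordinates $y \in \BB^{n-1}_r(0) \subset T_{x_0}S$ parametrizing a neighborhood of $x_0$. Define the rescaled function
\[
\hat{u}(\hat{y}, \hat{z}) := u(\exp_{x_0}(R^{-1/2}\hat{y}),\, R + \hat{z})
\]
for $\hat{y}$ in a fixed ball (say $\hat{y}\in \BB^{n-1}_1(0)$, noting $R^{-1/2} \leq 1$) and $\hat{z}\in [0,2)$. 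Under this rescaling, a tangential derivative contributes a factor $R^{-1/2}$ and a tangential second derivative contributes $R^{-1}$, so the operator $z^{-1}L_S$ transforms (up to bounded smooth coefficients coming from the metric of $S$ in normal coordinates) into $z^{-1}\cdot R \cdot (\hat{\Delta} + \text{lower order})$, which is bounded since $z \in [R, R+1]$. Similarly, $(\partial_z^2 + \partial_z)u$ survives as $\partial_{\hat{z}}^2 \hat{u} + \partial_{\hat{z}} \hat{u}$.

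Thus $\hat{u}$ satisfies an equation $\hat{\scL}\hat{u} = \hat{f}$ on $\BB^{n-1}_1(0) \times [0, 2)$, where $\hat{\scL}$ is uniformly elliptic with $C^\alpha$ coefficients whose norms are bounded by a constant depending only on $S$ (the $C^\alpha$ bounds on the metric of $S$ pulled back by the rescaling are uniform precisely because we measure $\hat{y}$-Hölder seminorms at unit scale, corresponding to $|x-x'|\leq R^{-1/2}$ in the original variables). The boundary condition $u(\cdot, R) \equiv 0$ translates to the Dirichlet boundary condition $\hat{u}(\hat{y}, 0) = 0$.

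Now apply the standard boundary Schauder estimate (see e.g.\ Gilbarg-Trudinger, Theorem 6.6) on the half-domain $\BB^{n-1}_{1/2}(0)\times [0, 1)$, using the flat boundary $\{\hat{z}=0\}$, to obtain
\[
\|\hat{u}\|_{C^{2,\alpha}(\BB^{n-1}_{1/2}\times[0,1))} \leq C(S, \alpha)\bigl(\|\hat{u}\|_{C^0(\BB^{n-1}_1\times [0,2))} + \|\hat{f}\|_{C^\alpha(\BB^{n-1}_1\times [0,2))}\bigr).
\]
Translating this back to the original coordinates via the definitions (\ref{Moduli_C^k,alpha_sharp norms}), the left-hand side controls $\|u\|_{C^{2,\alpha}_\sharp, S, R}$ after covering the region $S\times [R, R+1]$ with finitely many such local patches (the number depending only on $S$), while the right-hand side is bounded by the desired quantity.

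The main subtlety—as opposed to a pure obstacle—is verifying that the $C^\alpha$-norms of the coefficients of $\hat{\scL}$ are uniform in $R$, which is automatic since $R\cdot z^{-1} \in [1/2, 1]$ on $[R, R+1]$ for $R\geq 2$ and the geometric coefficients arising from $L_S$ pulled back by $y \mapsto \exp_{x_0}(R^{-1/2}y)$ have $C^\alpha$-seminorms dominated by their $C^1$-norms on $S$ times $R^{-1/2}$, hence bounded uniformly. The boundary case adds nothing beyond applying the Dirichlet version of the Schauder estimate in place of the interior one used for Lemma \ref{Lem_Moduli_short term Schauder est, int}.
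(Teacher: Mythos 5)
Your proof is correct and takes essentially the same approach as the paper: rescale the $S$-direction by $\sqrt{R}$ (the paper writes this as $\hat{u}_R(x,z) := u(x/\sqrt{R}, z+R)$ on $\sqrt{R}\cdot S \times [0,2)$) and apply the classical boundary Schauder estimate from Gilbarg--Trudinger. You work in local normal coordinate patches and then cover $S$, whereas the paper's one-line proof works globally on the dilated surface $\sqrt{R}\cdot S$; this is just a more explicit way of verifying the same key point, namely that the rescaled operator is uniformly elliptic with $C^\alpha$ coefficients bounded independently of $R$.
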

	
	\begin{proof}[Proof of Lemmas \ref{Lem_Moduli_short term Schauder est, int} and \ref{Lem_Moduli_short term Schauder est, bdy}]
		Apply classical interior and boundary elliptic estimates \cite{GilbargTrudinger01} to the function $\hat{u}_R(x, z):= u(x/\sqrt{R}, z+R)$ that is defined on $(x, z)\in \sqrt{R}\cdot S \times (-1,2)$ or $\sqrt{R}\cdot S \times [0,2)$. 
	\end{proof}
	
	\begin{Rem} \label{Rem_Moduli_No long term bdy C^k est}
		There's no analogous boundary estimate as Lemma \ref{Lem_Moduli_C^2,alpha_star est, interior}.  This is because such long term estimate in Lemma \ref{Lem_Moduli_C^2,alpha_star est, interior} originates from the almost parabolicity of $\partial_z^2 + \partial_z + z^{-1}L_S$ when $z$ is large. But $S\times \{R\}$ is the terminal time slice for the parabolic equation on $S\times (R, 8R]$, and hence the value of the solution can not be arbitrarily prescribed.
	\end{Rem}
	
	\begin{Rem} \label{Rem_Moduli_Schauder est work for general oper}
		By Lemma \ref{Append_Error est II of scT} and (\ref{Moduli_C^k_sharp < C^k_star}), the same proof above also gives long-term and short-term Schauder estimates of the same form for the operator $\tilde{\scT}(u_+-u_-):= -z^{-1}\cdot(\scT(u_+)-\scT(u_-))$, provided $\|u_\pm\|_{C^{2,\alpha, S, R}}\leq \delta_S\ll1$ and $R\geq z_0(S)\gg 1$.
	\end{Rem}

	%%%%%%%%%%%%%%%%%%%%%%%%%%%%%%%%%%%%%%%%%%%%%%%%%%%%%%%%%%%%%%%%%%%%%%%%%%%%%%%%%%
	%%%%%%%%%%%%%%%%%%%%%%%     Proof  of  End - Moduli  Thm     %%%%%%%%%%%%%%%%%%%%%
	%%%%%%%%%%%%%%%%%%%%%%%%%%%%%%%%%%%%%%%%%%%%%%%%%%%%%%%%%%%%%%%%%%%%%%%%%%%%%%%%%%  
	\subsection{Proof of Theorem \ref{Thm_Moduli of transl end }} \label{Subsec_Moduli_Pf of Main Thm}
	For $\alpha\in (0, 1)$, $\gamma\in \RR$ and $z_0\geq 1$, we define $\mathfrak{X}_{\gamma}^\alpha(S, z_0)$ to be the space of $C^{2,\alpha}_{loc}$ functions $u$ on $S\times \RR_{\geq z_0/12}$ such that
	\begin{align*}
		\|u\|_{\mathfrak{X}_\gamma^\alpha, S, z_0} := \sup_{R\geq z_0/2} R^{-\gamma}\|u\|_{C^{2,\alpha}_\star, S, R} \leq 1,
	\end{align*}
	where $\|\cdot\|_{C^{2,\alpha}_\star}$ norm is defined in (\ref{Moduli_C^k,alpha_star norms}).
	
	By Lemmas \ref{Append_scT(0)} and \ref{Append_Error est II of scT}, the translating equation (\ref{Pre_Transl equ}) for $\ES[u]$ is equivalent to \[
	\scT(u) =: -zT_S u + z\scR(u) + \scT(0) = 0;   \]
	where $T_S := \partial_z^2 + \partial_z + z^{-1}L_S$, $\scR(0) = 0$ and there exists some geometric constant $\delta_S\in (0, 1)$ such that if $\|v_\pm\|_{C^2_\star, R} \leq \delta_S$. Then
	\begin{align}
		\begin{split}
			\|\scT(0)\|_{C^\alpha_\star, R} & \leq C(S, \alpha)R^{-1}; \\
			\|\scR(v_+) - \scR(v_-)\|_{C^\alpha_\star, R} & \leq C(S, \alpha)R^{-1}\left(R^{-1}+\|v_\pm\|_{C^{2,\alpha}_\star, R}\right) \cdot \|v_+ - v_-\|_{C^{2,\alpha}_\star, R}\,.
		\end{split} \label{Moduli_C^alpha_star error est scR^+-scR^-}
	\end{align}
	
	Throughout this and next subsections, let $\eta\in C^\infty(\RR)$ be a non-decreasing cut off function such that $\eta(t) = 0$ for $t\leq 1/2$, $\eta(t) = 1$ for $t\geq 3/4$ and $|\eta'|\leq 8$. For $a>0$, let $\eta_a(t):= \eta(t/a)$. 
	Recall $E_{<0}$, $\Pi_{<0}$ are introduced at the beginning of Section \ref{Subsec_Moduli_L^2 est}.
	\begin{Lem} \label{Lem_Moduli_contraction map}
		For every $\alpha\in (0,1)$, $\gamma \in (\mu^-, 0)$, there exists $C(S, \gamma, \alpha)>1$ and $\delta_0(S, \gamma, \alpha)\in (0, 1)$ such that for every $z_0\geq C(S, \gamma, \alpha)$ and every $\varphi\in E_{<0}$ with 
		\begin{align}
			\|\varphi\|_{L^2(S)}\leq \delta_0(S, \gamma, \alpha)\cdot z_0^{\gamma},  \label{Moduli_smallness of u_z(z_0)}
		\end{align}
		There's a unique solution $u_\varphi \in \mathfrak{X}^\alpha_\gamma(S, z_0)$ to the equation,
		\begin{align}
			\begin{cases}
				(\partial_z^2 + \partial_z + z^{-1}L_S )u = \eta_{z_0}(z)\cdot\left(\scR(u) + z^{-1}\scT(0)\right), \ &\ \text{ on }S\times \RR_{>z_0/12};\\
				u(\cdot, z_0/12) = 0, \ \ \  \Pi_{<0}\partial_z u(\cdot , z_0/12) = \varphi, \ &\ \text{ on }S.
			\end{cases} \label{Moduli_Transl equ w cut-off and prescib bdy deriv}
		\end{align}
		Furthermore, it has the estimate
		\begin{align}
			\|u_\varphi\|_{\mathfrak{X}^\alpha_\gamma, S, z_0}\leq C(S,\alpha, \gamma)\left( z_0^{\gamma} + \|\varphi\|_{L^2(S)}z_0^{-\gamma} \right)\leq \frac{1}{2}\,. \label{Moduli_Sol |u_varphi|_mathfr < 1/2}
		\end{align}
		In particular, such $u_\varphi$ satisfies $\scT(u_\varphi) = 0$ on $S\times \RR_{> z_0}$.

		Moreover, if $\varphi_+ \neq \varphi_- \in E_{<0}$ both satisfy (\ref{Moduli_smallness of u_z(z_0)}), then for every $\bar{z}\geq z_0$,
		\begin{align}
			&\ C(S,\alpha,\gamma, \bar{z})^{-1} \leq \frac{\sup_{R\geq \bar{z}}\|u_{\varphi_+}-u_{\varphi_-}\|_{C^2_\star, S, R}\cdot R^{-\gamma}}{\|\varphi_+-\varphi_-\|_{L^2(S)} } \leq C(S,\alpha,\gamma, \bar{z}) ; \label{Moduli_varphi <-> u_varphi biLip} \\ 
			&\ \limsup_{R\to +\infty}\|(u_{\varphi_+} - u_{\varphi_-})(\cdot, R)\|_{C^0, S}\cdot R^{-\mu_1} >0. \label{Moduli_L^2_mu_1 Diff of u_varphi w different initial}
		\end{align}
	\end{Lem}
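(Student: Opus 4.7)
The plan is a contraction mapping argument in the closed ball $\mathfrak{X}_\gamma^\alpha(S, z_0)$, treating $\scR(\cdot)+z^{-1}\scT(0)$ as a perturbation of the linear operator $T_S$ studied in Sections~\ref{Subsec_Moduli_L^2 est}--\ref{Subsec_Moduli_C^2,alpha est}. For each $v\in\mathfrak{X}_\gamma^\alpha(S,z_0)$, let $\mathcal{F}(v):=u$ be the unique $L^2_\gamma$-solution produced by Lemma~\ref{Lem_Moduli_T_S u = f} of
\[
T_S u = \eta_{z_0}(z)\bigl(\scR(v)+z^{-1}\scT(0)\bigr),\qquad u(\cdot,z_0/12)=0,\qquad \Pi_{<0}\partial_z u(\cdot,z_0/12)=\varphi.
\]
The hypothesis $\mathrm{dist}(\gamma,\Gamma(S))\geq\sigma$ is secured by $\gamma\in(\mu^-,0)$ with $\sigma:=\min\{|\gamma|,\gamma-\mu^-\}$; moreover, since $\eta_{z_0}\equiv1$ on $\RR_{\geq 3z_0/4}$, any fixed point $u_\varphi$ of $\mathcal{F}$ automatically satisfies $\scT(u_\varphi)=0$ on $S\times\RR_{>z_0}$ as required.

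To show $\mathcal{F}$ is a self-map with the sharper bound \eqref{Moduli_Sol |u_varphi|_mathfr < 1/2}, I would chain three linear estimates on each dyadic annulus $S\times[R,2R]$, $R\geq z_0/2$. The error bounds \eqref{Moduli_C^alpha_star error est scR^+-scR^-} applied to $v\in\mathfrak{X}_\gamma^\alpha$ give $\|\scR(v)+z^{-1}\scT(0)\|_{C^\alpha_\star,S,R}\lesssim R^{-2}+R^{\gamma-1}\|v\|_{\mathfrak{X}_\gamma^\alpha}(R^{-1}+R^\gamma)$, integrating to an $L^2_{\gamma-1}$-bound; Lemma~\ref{Lem_Moduli_T_S u = f} then controls $\|u\|_{L^2_\gamma,S,z_0/12}$; Lemma~\ref{Lem_Moduli_C^0 est} converts this to an $L^\infty$-bound of order $R^\gamma$; and the long-term Schauder estimate Lemma~\ref{Lem_Moduli_C^2,alpha_star est, interior}, together with the short-term estimates of Lemmas~\ref{Lem_Moduli_short term Schauder est, int}--\ref{Lem_Moduli_short term Schauder est, bdy} near the inner boundary $z=z_0/12$ (cf.\ Remark~\ref{Rem_Moduli_No long term bdy C^k est}), upgrade to the full $C^{2,\alpha}_\star$-norm. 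Choosing $z_0$ large and $\delta_0$ small (and using $\gamma<0$) makes the right-hand side at most $1/2$. The contraction property follows identically: $w:=\mathcal{F}(v_1)-\mathcal{F}(v_2)$ solves the linear equation with right-hand side $\eta_{z_0}(\scR(v_1)-\scR(v_2))$ and zero boundary data, and the Lipschitz half of \eqref{Moduli_C^alpha_star error est scR^+-scR^-} feeds the same chain to give $\|w\|_{\mathfrak{X}_\gamma^\alpha}\leq Cz_0^\gamma\|v_1-v_2\|_{\mathfrak{X}_\gamma^\alpha}\leq\tfrac12\|v_1-v_2\|_{\mathfrak{X}_\gamma^\alpha}$. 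Banach's theorem produces the unique $u_\varphi$.

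For the bi-Lipschitz bound \eqref{Moduli_varphi <-> u_varphi biLip}, I would treat $w:=u_{\varphi_+}-u_{\varphi_-}$ as solving a linear equation whose inhomogeneity is dominated by $Cz_0^\gamma\cdot w$ in $\mathfrak{X}_\gamma^\alpha$, so Lemma~\ref{Lem_Moduli_T_S u = f} with initial data $\varphi_+-\varphi_-$ absorbs the nonlinear correction and yields both the upper and the lower bound on $\varphi_+-\varphi_-$ in terms of $w$. For the leading-order asymptotic \eqref{Moduli_L^2_mu_1 Diff of u_varphi w different initial}, I would apply Corollary~\ref{Cor_Moduli_Sharp asymp for T_S u = 0} to $w$ on $S\times\RR_{>z_0}$: after absorbing the $\scR$-perturbation (which is lower order by a factor $z_0^\gamma$ than the linear part), $w$ matches to leading order a solution of $T_S w=0$, so $\cA\cR_\infty(w)=\mu_l$ for some $l\geq 1$; tracking the prescribed polynomial eigenmode of $\varphi_+-\varphi_-\in E_{<0}$ through the two-solution ODE analysis of Lemmas~\ref{Lem_Moduli_Sol cL_mu = 0 w est} and~\ref{Lem_Moduli_Sol cL_mu = 0 fast decay} shows that the polynomial profile $b\sim z^{\mu_l}$ is excited with nonzero coefficient, giving the required positive limsup (since $\mu_l\geq\mu_1$ implies $R^{\mu_l-\mu_1}\geq 1$). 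The main obstacle I anticipate is keeping constants in the chain $L^2_\gamma\to L^\infty\to C^{2,\alpha}_\star$ uniform in the scale $R$, and patching the short-term elliptic Schauder estimates near $z=z_0/12$ continuously with the long-term almost-parabolic ones valid for $R\geq z_0/2$; a secondary subtlety is isolating the polynomial leading eigenmode for \eqref{Moduli_L^2_mu_1 Diff of u_varphi w different initial} from the faster $e^{-z}$-decaying second-type solutions produced by Lemma~\ref{Lem_Moduli_Sol cL_mu = 0 fast decay}, which is done by using that all $\mu\in\Gamma(S)\cap\RR_{<0}$ lie below $\gamma$ so the polynomial mode is never killed by the $L^2_\gamma$-constraint.
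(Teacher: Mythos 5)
Your reduction to a fixed-point argument for $\scU_\varphi$, the three-stage chain $L^2_\gamma\to L^\infty\to C^{2,\alpha}_\star$ via Lemmas~\ref{Lem_Moduli_T_S u = f}, \ref{Lem_Moduli_C^0 est}, and \ref{Lem_Moduli_C^2,alpha_star est, interior}, and the resulting self-map/contraction bounds are all correct and match the paper's Steps~1--2, including the right-hand inequality of (\ref{Moduli_varphi <-> u_varphi biLip}). But the left-hand inequality of (\ref{Moduli_varphi <-> u_varphi biLip}) is \emph{not} a consequence of Lemma~\ref{Lem_Moduli_T_S u = f}: that lemma only bounds $\|u\|$ from above by the boundary data, never the reverse, and the bound you need, $\|\varphi_+-\varphi_-\|_{L^2(S)}\leq C(\bar z)\sup_{R\geq\bar z}\|w\|_{C^2_\star,S,R}R^{-\gamma}$, requires propagating a smallness estimate on $w$ \emph{backwards} from $\{z\geq\bar z\}$ to the inner boundary $\{z=z_0/12\}$ where $\varphi_+-\varphi_-=\Pi_{<0}\partial_z w$ lives. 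The paper does this with a barrier $U=z^{\mu_1+\gamma/4}\psi_1$, showing it is a supersolution of the perturbed operator, then applies the maximum principle on $S\times[z_0/12,\bar z]$ together with a boundary elliptic estimate near $z_0/12$. This step is missing from your proposal.

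The argument you sketch for (\ref{Moduli_L^2_mu_1 Diff of u_varphi w different initial}) has a more serious gap: you assert that after absorbing $\scR$, ``$\cA\cR_\infty(w)=\mu_l$ for some $l\geq 1$,'' and then track the eigenmode to conclude the polynomial profile is excited. But Corollary~\ref{Cor_Moduli_Sharp asymp for T_S u = 0} allows $\cA\cR_\infty=-\infty$, i.e.\ exponential decay, and ruling this out is precisely the content of (\ref{Moduli_L^2_mu_1 Diff of u_varphi w different initial}) — you cannot assume it. Moreover, ``tracking the prescribed eigenmode'' from $z=z_0/12$ to $z\to\infty$ through the nonlinear perturbation is not a one-step matter: the prescribed data is $\Pi_{<0}\partial_z w(\cdot,z_0/12)=\varphi_+-\varphi_-$, a mixture of the polynomial ($b_j$) and exponentially decaying ($w_{z_0,j}$) ODE modes, and the nonlinearity couples all eigenmodes. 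The paper instead argues by contradiction: assuming $\limsup R^{-\mu_1}\|w\|_{C^0}=0$, it constructs a particular solution of $T_S(\cdot)=\eta_{z_0}\tilde\scR(w)$ with explicit decay $R^{\mu_1+\gamma/2}$, subtracts it to obtain a $T_S$-homogeneous piece whose $\cA\cR_\infty$ is then forced to be $-\infty$ by Corollary~\ref{Cor_Moduli_Sharp asymp for T_S u = 0}, uses the uniqueness statement of Lemma~\ref{Lem_Moduli_T_S u = f} to conclude the homogeneous piece vanishes, and then feeds the improved decay of $w$ into the barrier/maximum-principle to conclude $w\equiv 0$, hence $\varphi_+=\varphi_-$. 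Without this contradiction structure (or an equivalent two-sided asymptotic analysis), your sketch of (\ref{Moduli_L^2_mu_1 Diff of u_varphi w different initial}) does not close.
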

	
	The reason that we add a cut-off term in the equation (\ref{Moduli_Transl equ w cut-off and prescib bdy deriv}) is that we don't have a good long-term boundary estimate, see Remark \ref{Rem_Moduli_No long term bdy C^k est}.
	
	\begin{proof}
		\noindent  \textbf{Step 1.} Consider the map from $\mathfrak{X}_\gamma^\alpha(S, z_0)$, $\scU_\varphi: v \mapsto u:= \scU_\varphi (v)$ solving 
		\begin{align*}
			\begin{cases}
				(\partial_z^2 + \partial_z + z^{-1}L_S )u = \eta_{z_0}(z)\cdot\left(\scR(v) + z^{-1}\scT(0)\right), \ &\ \text{ on }S\times \RR_{>z_0/12};\\
				u(\cdot, z_0/12) = 0, \ \ \  \Pi_{<0}\partial_z u(\cdot , z_0/12) = \varphi, \ &\ \text{ on }S.
			\end{cases}
		\end{align*}
		We first verify that $\scU_\varphi$ maps $\mathfrak{X}_\gamma^\alpha(S, z_0)$ to itself, provided $z_0\gg 1$ and $\delta_0\ll1$.  For $v\in \mathfrak{X}^\alpha_\gamma(S, z_0)$, by (\ref{Moduli_C^alpha_star error est scR^+-scR^-}) and definition of $\mathfrak{X}^\alpha_\gamma(S, z_0)$, for every $R\geq z_0/2$,
		\begin{align*}
			\|\scR (v)\|_{C_\star^\alpha, R} + \|z^{-1}\scT(0)\|_{C_\star^\alpha, R} \leq C(S, \alpha)R^{-1+2\gamma}.
		\end{align*}
		Hence by Lemma \ref{Lem_Moduli_T_S u = f}, $u:= \scU_\varphi (v)$ satisfies \[
		\|u\|_{L^2_\gamma; S,z_0/12} \leq C(S, \gamma)(\|\eta_{z_0}\cdot\left(\scR(v) + z^{-1}\scT(0)\right)\|_{L^2_{\gamma -1}; S, z_0/12} + \|\varphi\|_{L^2(S)}z_0^{-\gamma}) \leq C(S, \alpha, \gamma)(z_0^\gamma + \delta_0);   \]
		By Lemma \ref{Lem_Moduli_C^0 est}, for every $R\geq z_0/6$, 
		\begin{align*}  
			\|u\|_{C^0, S, R} & \leq C(S, \gamma)(R^{-1/2}\|u\|_{L^2, S\times [R/2, 4R]} + R\|\eta_{z_0}\cdot\left(\scR(v) + z^{-1}\scT(0)\right)\|_{C^0, S\times[R/2, 4R]} ) \\
			& \leq C(S, \alpha, \gamma)(z_0^\gamma + \delta_0)R^\gamma. 
		\end{align*}
		By Lemma \ref{Lem_Moduli_C^2,alpha_star est, interior},  for every $R\geq z_0/2$,
		\begin{align*}
			\|u\|_{C^{2,\alpha}_\star, S, R} & \leq C(S, \alpha, \gamma)\left(\|u\|_{C^0, S\times [R/3, 3R]} + \sup_{R/3 \leq \rho\leq 4R/3} \rho\|\eta_{z_0}\cdot\left(\scR(v) + z^{-1}\scT(0)\right)\|_{C^\alpha_\star, \rho} \right) \\
			& \leq C(S, \alpha, \gamma)(z_0^\gamma + \delta_0)R^\gamma.
		\end{align*}
		In other words, \[
		\|u\|_{\mathfrak{X}_\gamma^\alpha, S, z_0} \leq C(S, \alpha, \gamma)(z_0^\gamma + \delta_0).   \]
		Hence by taking $z_0\gg 1$ and $\delta_0\ll1$, we have $\|\scU_\varphi (v)\|_{\mathfrak{X}_\gamma^\alpha, S, z_0} <1$, i.e. $\scU_\varphi$ maps $\mathfrak{X}^\alpha_\gamma(S, z_0)$ to itself. \\
		\textbf{Step 2.} By a similar approach as above, we see that for every $v_\pm \in \mathfrak{X}^\alpha_\gamma(S, z_0)$ and every $\varphi_\pm \in E_{<0}$ satisfying (\ref{Moduli_smallness of u_z(z_0)}), we have,
		\begin{align}
			\begin{split}
				&\ \|\scU_{\varphi_+}(v_+) - \scU_{\varphi_-}(v_-)\|_{\mathfrak{X}^\alpha_\gamma, S, z_0} \\
				\leq &\ C(S, \alpha, \gamma)\left( z_0^\gamma\cdot \|v_+ - v_-\|_{\mathfrak{X}^\alpha_\gamma, S, z_0} + z_0^{-\gamma}\|\varphi_+ - \varphi_-\|_{L^2(S)} \right).  
			\end{split}  \label{Moduli_scU_varphi(v)|^+_- contraction}
		\end{align}
		Hence for fixed $\varphi$, when $z_0\gg 1$,  $\scU_\varphi$ contracts $\|\cdot\|_{\mathfrak{X}_\gamma^\alpha, S, z_0}$ norm. Define the iterating sequence $u^{(l)}:= \scU_\varphi{ }^l(\orig)$, $\{u^{(l)}\}_{l\geq 1}$ is a Cauchy sequence in $\|\cdot\|_{\mathfrak{X}^\alpha_\gamma, S, z_0}$ norm.  Thus when $l\to \infty$, $u^{(l)}$ locally $C^{2,\alpha}_\star$-converges to some $u_\varphi$ on $S\times \RR_{\geq z_0/2}$, which solves the equation, \[
		(\partial_z^2 + \partial_z + z^{-1}L_S )u_\varphi = \eta_{z_0}(z)\cdot\left(\scR(u_\varphi) + z^{-1}\scT(0)\right)   \]
		on $S\times \RR_{\geq z_0/2}$.  
		By the classical elliptic interior and boundary estimates \cite{GilbargTrudinger01}, \[
		\|u^{(l)}\|_{C^{2,\alpha}_\star, S\times [z_0/12, 3z_0/4]} \leq C(S, \gamma, \alpha, z_0)(\|u^{(l)}\|_{L^2(S\times [z_0/12, z_0])} + \|u^{(l-1)}\|_{C^{2,\alpha}_\star, z_0/2} + \|\varphi\|_{L^2(S)}),   \]
		which is uniformly bounded. Hence, up to a subsequence, $u^{(l)}$ $C^2(S\times [z_0/12, 3z_0/4])$-converges to some $u^{(\infty)}$ which agrees with $u_\varphi$ on $S\times [z_0/2, 3z_0/4]$, and solves, 
		\begin{align*}
			\begin{cases}
				(\partial_z^2 + \partial_z + z^{-1}L_S )u = \eta_{z_0}(z)\cdot\left(\scR(u) + z^{-1}\scT(0)\right), \ &\ \text{ on }S\times (z_0/12, 3z_0/4];\\
				u(\cdot, z_0/12) = 0, \ \ \  \Pi_{<0}\partial_z u(\cdot , z_0/12) = \varphi, \ &\ \text{ on }S.
			\end{cases}
		\end{align*}
		This proves the existence of a solution. To see the uniqueness, notice that if $u_\varphi^\pm \in \mathfrak{X}^\alpha_\gamma(S, z_0)$ are both fixed point of $\scU_\varphi$, then $\scU_\varphi$ contracting $\|\cdot\|_{\mathfrak{X}^\alpha_\gamma(S, z_0)}$ norm guarantees that $u_\varphi^+ = u_\varphi^-$ on $S\times \RR_{\geq z_0/2}$.  Then since $(\partial_z^2 + \partial_z + z^{-1}L_S )u_\varphi^\pm = 0$ on $S\times [z_0/12, z_0/2]$ both with $0$-boundary value, we know that $u_\varphi^+ = u_\varphi^-$ by the positivity of $-T_S$ when $z_0\geq C(S)$. Also by taking $z_0\gg 1$, the RHS of (\ref{Moduli_varphi <-> u_varphi biLip}) follows directly from (\ref{Moduli_scU_varphi(v)|^+_- contraction}).\\
		\textbf{Step 3.} To prove LHS of (\ref{Moduli_varphi <-> u_varphi biLip}), let $\varphi_+\neq \varphi_- \in E_{<0}$. Notice that $v:= u_{\varphi_+} - u_{\varphi_-}$ satisfies an equation of form,
		\begin{align}
			\begin{cases}
				(\partial_z^2 + \partial_z + z^{-1}L_S )v = \eta_{z_0}(z)\cdot\tilde{\scR}(v), \ &\ \text{ on }S\times \RR_{>z_0/12};\\
				v(\cdot, z_0/12) = 0, \ &\ \text{ on }S.
			\end{cases} \label{Moduli_Equ of u_varphi_pm}
		\end{align}
		where by Lemma \ref{Append_Error est II of scT}, $\tilde{\scR}(v)$ is a second order linear operator in $v$ of the following form, \[
		z\tilde{\scR}(v) = \bar{\cE_1}\cdot z\partial_z^2v + (\bar{\cE_2} + x^S)\cdot \partial_z\nabla v + \bar{\cE_3}\cdot \nabla^2v + \bar{\cE_4}\cdot z\partial_z v + \bar{\cE_5}\cdot \nabla v + \bar{\cE_6}v,   \]
		where for $1\leq l\leq 6$, $\bar{\cE_l}$ satisfies the $C^\alpha_\star$-estimates when $R\gg 1$, 
		\begin{align*}
			\|\bar{\cE_l}\|_{C^\alpha_\star, S, R} \leq C(S, \alpha)(R^{-1} + \|u_{\varphi_\pm}\|_{C^{2,\alpha}_\star, S, R}) \leq C(S, \gamma, \alpha)\cdot R^\gamma.
			%     |\bar{\cE_l}| & \leq C_S(z^{-1}+ |u_{\varphi_\pm}|+ |\nabla u_{\varphi_\pm}|+ |z\partial_z u_{\varphi_\pm}| + |z\partial_z^2 u_{\varphi_\pm}| + |\partial_z\nabla u_{\varphi_\pm}| + |\nabla^2 u_{\varphi_\pm}|) \\
			%     & \leq C(S, \gamma)\cdot z^\gamma. 
		\end{align*}
		Consider the barrier function $U(x, z):= z^{\mu_1+\gamma/4}\psi_1(x)$, and we recall that $\psi_1>0$ is the first eigenfunction of $-L_S$. A direct computation shows that 
		\begin{align}
			\begin{split}
				&\ (\partial_z^2 + \partial_z + z^{-1}L_S )U - \eta_{z_0}(z)\cdot\tilde{\scR}(U) \\
				\leq &\ z^{\mu_1+\gamma/4 -1}\psi(x)\left(\gamma/4 + \frac{(\mu_1+\gamma/4)(\mu_1+\gamma/4 -1)}{z} + C(S, \gamma, \alpha)z^\gamma\right) \leq 0, 
			\end{split} \label{Moduli_Barrier U = z^(mu_1+gamma/4)}
		\end{align}
		provided $z_0\gg 1$. Hence by maximum principle, 
		\begin{align*}
			\|u_{\varphi_+}-u_{\varphi_-}\|_{C^0, S\times [z_0/12, \bar{z}]} \leq C(S, \bar{z})\|u_{\varphi_+}-u_{\varphi_-}\|_{C^0, S\times \{\bar{z}\}}.
		\end{align*}
		Since $\varphi_\pm = \Pi_{<0}\partial_z u_{\varphi_\pm}(\cdot, z_0/12)$, by classical boundary elliptic estimates \cite{GilbargTrudinger01}, we have \[
		\|\varphi_+-\varphi_-\|_{L^2(S)}\leq C(S, \bar{z})\|u_{\varphi_+}-u_{\varphi_-}\|_{C^1, S\times [z_0/12, z_0/4]} \leq C(S, \bar{z})\|u_{\varphi_+}-u_{\varphi_-}\|_{C^0, S\times [z_0/12, \bar{z}]}.   \]
		These two together prove the LHS of (\ref{Moduli_varphi <-> u_varphi biLip}). \\
		\textbf{Step 4.} To prove (\ref{Moduli_L^2_mu_1 Diff of u_varphi w different initial}), suppose $\varphi_+, \varphi_- \in E_{<0}$ such that \[     
		\limsup_{R\to +\infty} R^{-\mu_1}\|(u_{\varphi_+} - u_{\varphi_-})(\cdot, R)\|_{C^0, S} = 0.   \] 
		%$M:= \|u_{\varphi_+} - u_{\varphi_-}\|_{S, z_0/12; \mu_1+\gamma}<+\infty$.  
		As in Step 3, $v:= u_{\varphi_+} - u_{\varphi_-}$ still satisfies an equation of the form (\ref{Moduli_Equ of u_varphi_pm}). Therefore, by the similar proof of Lemma \ref{Lem_Moduli_C^2,alpha_star est, interior}, we have, \[
		\limsup_{R\to +\infty} R^{-\mu_1}\|u_{\varphi_+} - u_{\varphi_-}\|_{C^{2,\alpha}_\star, S, R} = 0.   \]
		Hence there exists $M(u_{\varphi_\pm})>1$ such that $\forall R\geq z_0/2$,  \[
		\|\tilde{\scR}(v)\|_{C^\alpha_\star, S, R}\leq M\cdot R^{\mu_1+\gamma - 1}.  \]
		
		Let $w\in L^2_{\mu_1+\gamma/2}$ be the unique solution to 
		\begin{align*}
			\begin{cases}
				(\partial_z^2 + \partial_z + z^{-1}L_S) w = \eta_{z_0}(z)\cdot \tilde{\scR}(v), &\  \text{ on } S\times \RR_{> z_0/12}; \\
				w(\cdot, z_0/12) = 0, &\ \text{ on }S,
			\end{cases}
		\end{align*}
		given by Lemma \ref{Lem_Moduli_T_S u = f}.  Then together with the $C^0$ estimate Lemma \ref{Lem_Moduli_C^0 est}, for every $R\geq z_0/6$, \[
		\|w\|_{C^0, S, R} \leq C(S, \gamma, \alpha)M\cdot R^{\mu_1+\gamma/2}.   \] 
		Thus $\|(v-w)(\cdot, R)\|_{C^0, S}\cdot R^{-\mu_1} \to 0$ as $R\to +\infty$.  
		
		On the other hand, since 
		\begin{align*}
			\begin{cases}
				(\partial_z^2 + \partial_z + z^{-1}L_S)(v- w) \equiv 0, &\  \text{ on } S\times \RR_{> z_0/12}; \\
				(v-w)(\cdot, z_0/12) = 0, &\ \text{ on }S. 
			\end{cases}
		\end{align*}
		Thus by Corollary \ref{Cor_Moduli_Sharp asymp for T_S u = 0}, $\cA\cR_\infty(v-w)=-\infty$. Then the uniqueness assertion in Lemma \ref{Lem_Moduli_T_S u = f} forces $v-w \equiv 0$, which gives an improved $C^0$-decaying estimate for $v$, \[
		\|v\|_{C^0, S, R} \leq C(S, \gamma, \alpha)M\cdot R^{\mu_1+\gamma/2}, \ \ \ \ \ \forall R\geq z_0.   \] 
		
		Consider the barrier function $U(x, z):= z^{\mu_1+\gamma/4}\psi_1(x)$ which satisfies (\ref{Moduli_Barrier U = z^(mu_1+gamma/4)}). Since \[
		\lim_{z\to \infty} v(\cdot, z)/U(\cdot, z) = 0,  \] 
		by (\ref{Moduli_Equ of u_varphi_pm}) and maximum principle, we have $v \equiv 0$ on $S\times \RR_{\geq z_0/12}$, and thus $\varphi_+ - \varphi_- = 0$. This finishes the proof of (\ref{Moduli_L^2_mu_1 Diff of u_varphi w different initial}).
	\end{proof}
	\begin{Rem} \label{Rem_Moduli_|u_(varphi^pm)| lower bd}
		By Step 3 and 4 in the proof above, we see that the LHS of (\ref{Moduli_varphi <-> u_varphi biLip}) and (\ref{Moduli_L^2_mu_1 Diff of u_varphi w different initial}) do not rely on the smallness assumption (\ref{Moduli_smallness of u_z(z_0)}) of $\varphi_\pm$. In fact, it holds for every pair $u_\pm\in \mathfrak{X}^\alpha_\gamma(S, z_0)$ solving (\ref{Moduli_Transl equ w cut-off and prescib bdy deriv}), with $\varphi_\pm:= \Pi_{<0} \partial_z u_\pm(\cdot, z_0/12)$.  This will be used to deduce the continuity of $\RR$-action in Section \ref{Subsec_Moduli_RR action}.
	\end{Rem}
	
	We now assert that any pl-simple translating end over $S\times \RR_+$ has a uniform asymptotic rate bound.
	\begin{Lem} \label{Lem_Moduli_Sharp asymp rate for pl-simple end}
		Let $\ES[u] \subset \RR^n\times \RR_{>z_0}$ be a pl-simple translating end over $S\times \RR_+$.  Then \[
		\limsup_{R\to +\infty} \|u\|_{C^{2,\alpha}_\star, S, R}\cdot R^{-\mu^-} < +\infty.   \]
	\end{Lem}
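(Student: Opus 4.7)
The plan is to bootstrap the polynomial decay rate of $u$ in finitely many steps, combining the linear $L^2$-theory of Lemma \ref{Lem_Moduli_T_S u = f} with the asymptotic-rate quantization for $\ker T_S$ in Corollary \ref{Cor_Moduli_Sharp asymp for T_S u = 0}. Writing the equation $\scT(u) = 0$ as $T_S u = F(u)$ where $F(u) := \scR(u) + z^{-1}\scT(0)$, the estimates (\ref{Moduli_C^alpha_star error est scR^+-scR^-}) furnish the key structural fact that $F$ is strictly lower-order: if $\|u\|_{C^{2,\alpha}_\star, S, R} \lesssim R^{\gamma}$ with $\gamma < 0$, then $\|F(u)\|_{C^{\alpha}_\star, S, R} \lesssim R^{\max(2\gamma - 1,\, -2)}$, which is faster than $R^{\gamma - 1}$. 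By the pl-simple assumption, Remark \ref{Rem_Moduli_C^0 vs C^k decay}, and Lemma \ref{Lem_Moduli_C^2,alpha_star est, interior}, we may start with $\|u\|_{C^{2,\alpha}_\star, S, R} \lesssim R^{\gamma_0}$ for some $\gamma_0 \in (\mu^-, 0) \subset (-1/2, 0)$ (recall $\mu^- \in [-1/2, 0)$).

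For the inductive step, suppose $\|u\|_{C^{2,\alpha}_\star, S, R} \lesssim R^{\gamma_{k-1}}$ with $\gamma_{k-1} < 0$. Choose $\delta_k := \max(2\gamma_{k-1},\, -1) + \epsilon_k$ with small $\epsilon_k > 0$ so that $\delta_k \notin \Gamma(S)$. Applying Lemma \ref{Lem_Moduli_T_S u = f} with trivial boundary data (i.e., $\phi = 0$, $\varphi = 0$) produces a unique $\tilde u_k \in L^2_{\delta_k}(S \times \RR_{>z_0})$ solving $T_S \tilde u_k = F(u)$; bootstrapping this via Lemma \ref{Lem_Moduli_C^0 est} and then Lemma \ref{Lem_Moduli_C^2,alpha_star est, interior} yields $\|\tilde u_k\|_{C^{2,\alpha}_\star, S, R} \lesssim R^{\delta_k}$. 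The difference $v_k := u - \tilde u_k$ solves $T_S v_k = 0$ and belongs to $L^2_\gamma$ for every $\gamma > \max(\gamma_{k-1}, \delta_k)$. By Corollary \ref{Cor_Moduli_Sharp asymp for T_S u = 0}, either $\cA\cR_\infty(v_k) = -\infty$, or $\cA\cR_\infty(v_k) = \mu_l \in \Gamma(S)$ for some $l$; in either case, being strictly negative forces this rate to be at most $\mu^-$. A final Schauder bootstrap gives $\|v_k\|_{C^{2,\alpha}_\star, S, R} \lesssim R^{\mu^-}$, so $\|u\|_{C^{2,\alpha}_\star, S, R} \lesssim R^{\gamma_k}$ with $\gamma_k := \max(\mu^-, \delta_k)$.

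Since $\gamma_{k-1} \in (\mu^-, 0) \subset (-1/2, 0)$ implies $2\gamma_{k-1} > -1$, we have $\delta_k = 2\gamma_{k-1} + \epsilon_k$, strictly below $\gamma_{k-1}$; iterating, $\gamma_k$ essentially doubles at each stage until it first drops below $(\mu^- - \epsilon_k)/2$, at which point $\gamma_k$ collapses to $\mu^-$. Hence after $O(\log_2(|\mu^-|/|\gamma_0|))$ iterations we reach the claimed bound. The main technical point to handle is the coordination of the spectral gaps $\epsilon_k$ (chosen so that $\delta_k \notin \Gamma(S)$) with the $z_0$-threshold $C(\Lambda, \sigma)$ appearing in Lemma \ref{Lem_Moduli_T_S u = f}: since only finitely many iterations are required, all $\epsilon_k$'s can be prescribed in advance and $z_0$ enlarged once accordingly, with the implicit constants accumulating only a bounded multiplicative factor.
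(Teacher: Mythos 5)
Your proof is correct and rests on the same core mechanism as the paper's: decompose $u = \tilde u_k + v_k$ via Lemma \ref{Lem_Moduli_T_S u = f}, with $\tilde u_k$ absorbing the source $\scR(u) + z^{-1}\scT(0)$ (which decays roughly at twice the rate of $u$ by (\ref{Moduli_C^alpha_star error est scR^+-scR^-})), and then invoke Corollary \ref{Cor_Moduli_Sharp asymp for T_S u = 0} to force the $T_S$-harmonic remainder $v_k$ to have rate in $\Gamma(S)\cap\RR_{<0}$, hence at most $\mu^-$. The only real difference is bookkeeping: you iterate from the a priori polynomial rate furnished by the pl-simple hypothesis, doubling the exponent at each stage (with the understanding that one is done immediately should that initial rate already lie below $\mu^-$), whereas the paper sidesteps the iteration by working with the sharp infimum rate $\gamma_0(u)$, so that the remainder $u-w$ has asymptotic rate exactly $\gamma_0$ and a single application of the quantization corollary yields $\gamma_0\le\mu^-$.
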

	\begin{proof}
		Let \[
		\gamma_0(u):= \inf\{\gamma<0: \limsup_{R\to +\infty} \|u\|_{C^{2,\alpha}_\star, S, R}\cdot R^{-\gamma} < +\infty\}.   \]
		Note that by Definition \ref{Def_Moduli_p-simple Transl end} and Remark \ref{Rem_Moduli_C^0 vs C^k decay}, $0> \gamma_0\geq -\infty$.  And if $\gamma_0 < \mu^-$ then we are immediately done.  Now assume $\gamma_0\in [\mu^-, 0)$.  Let $\gamma_1 \in (2\gamma_0, \gamma_0)\setminus \Gamma(S)$.  
		%   Since $u$ solves,
		%   \begin{align*}
			%    (\partial_z^2 + \partial_z + z^{-1}L_S) u = \scR(u) + z^{-1}\scT(0),    
			%   \end{align*}
		%   on $S\times \RR_{\geq z_0}$, 
		Then by (\ref{Moduli_C^alpha_star error est scR^+-scR^-}) for $R\geq z_1(u)\gg z_0$, \[
		\|\scR(u) + z^{-1}\scT(0)\|_{C^\alpha_\star, S, R}\leq R^{-1+\gamma_1}.   \]
		By Lemma \ref{Lem_Moduli_T_S u = f}, there exists $w\in L^2_{\gamma_1}(S\times \RR_{\geq z_1})$ solving,
		\begin{align*}
			\begin{cases}
				(\partial_z^2 + \partial_z + z^{-1}L_S) w = \scR(u) + z^{-1}\scT(0), &\ \text{ on }S\times \RR_{>z_1}, \\
				w(\cdot, z_1) = 0, &\ \text{ on }S.
			\end{cases}
		\end{align*}
		And as before, the elliptic estimates Lemmas \ref{Lem_Moduli_T_S u = f}, \ref{Lem_Moduli_C^0 est} and \ref{Lem_Moduli_C^2,alpha_star est, interior} gives for $R\geq 6z_1$,
		\begin{align}
			\|w\|_{C^{2,\alpha}_\star, S, R} \leq C(S, \gamma_1, \alpha)R^{\gamma_1}.  \label{Moduli_error, rate|w|<gamma_1}   
		\end{align}
		Therefore, by the choice of $\gamma_0$ and $\gamma_1$, we have,
		\begin{align}
			\limsup_{R\to +\infty} \|u-w\|_{C^{2,\alpha}_\star, S, R}\cdot R^{-\gamma} & = +\infty, &\ & \forall \gamma<\gamma_0 ; \label{Moduli_main, rate|u-w|>gamma_0-} \\
			\limsup_{R\to +\infty} \|u-w\|_{C^{2,\alpha}_\star, S, R}\cdot R^{-\gamma} & = 0, &\ & \forall \gamma>\gamma_0 . \label{Moduli_main, rate|u-w|<gamma_0+}
		\end{align}
		Since the choice of $w$ guarantees $T_S(u-w) = 0$, by (\ref{Moduli_main, rate|u-w|<gamma_0+}), Corollary \ref{Cor_Moduli_Sharp asymp for T_S u = 0}, Lemmas \ref{Lem_Moduli_C^0 est} and \ref{Lem_Moduli_C^2,alpha_star est, interior} we have, 
		\begin{align}
			\limsup_{R\to +\infty} \|u-w\|_{C^{2,\alpha}_\star, S, R} \cdot R^{-\mu^-} < +\infty.  \label{Moduli_main, rate|u-w|=mu^-}   
		\end{align}
		Then combined with (\ref{Moduli_main, rate|u-w|>gamma_0-}), we know that $\gamma_1< \gamma_0\leq \mu^-$.  And therefore the Lemma follows from (\ref{Moduli_error, rate|w|<gamma_1}) and (\ref{Moduli_main, rate|u-w|=mu^-}).
	\end{proof}
	\begin{Rem}
		A similar argument of this gives that for every pl-simple translating end $\ES[u]$ over $S\times \RR_+$, we have the asymptotic rate of $u$ satisfies, \[
		\cA\cR_\infty(u):= \inf\{\gamma<0: \limsup_{R\to +\infty} \|u\|_{C^{2,\alpha}_\star, S, R}\cdot R^{-\gamma} < +\infty\} \in [-\infty, -1]\cup \Gamma(S).  \]
		%		$ \textcolor{red}{\text{should be $[-\infty,\mu^-]\cup\Gamma(S)$?}}$
	\end{Rem}

	The following Lemma guarantees that heuristically, those $u_\varphi$ constructed in Lemma \ref{Lem_Moduli_contraction map} cover all pl-simple translating ends over $S\times \RR_+$, up to an exponentially decaying error.
	\begin{Lem} \label{Lem_Moduli_general transl end is one of the constructed}
		For every $\alpha\in (0,1)$, $\beta\in (1/2, 1)$ and $\gamma \in (\mu^-, 0)$, there exists $C(S, \gamma, \alpha, \beta)>1$ and $\delta_1(S, \gamma, \alpha, \beta)\in (0,\delta_0)$, where $\delta_0$ is given by Lemma \ref{Lem_Moduli_contraction map}, such that for every $z_0\geq C(S, \gamma, \alpha, \beta)$, if $w\in C^{2,\alpha}_{loc}(S\times \RR_{\geq z_0/12})$ satisfies $\scT(w) = 0$ and, \[
		\sup_{R\geq z_0/12}  R^{-\gamma}\cdot\|w\|_{C^{2,\alpha}_\star, R} \leq \delta_1,   \]
		Then there exists a unique $u\in \mathfrak{X}^\alpha_\gamma(S, z_0)$ solving,
		\begin{align*}
			\begin{cases}
				(\partial_z^2 + \partial_z + z^{-1}L_S )u = \eta_{z_0}(z)\cdot\left(\scR(u) + z^{-1}\scT(0)\right), \ &\ \text{ on }S\times \RR_{>z_0/12};\\
				u(\cdot, z_0/12) = 0, \ \ \  \|\Pi_{<0}\partial_z u(\cdot , z_0/12)\|_{L^2(S)} \leq \delta_0 z_0^\gamma, \ &\ \text{ on }S.
			\end{cases} %\label{Moduli_Transl equ w cut-off and prescib infty}
		\end{align*}
		and satisfying for every $z\geq z_0$, 
		\begin{align}
			\|(u-w)(\cdot, z)\|_{C^0(S)}\leq  z_0^\gamma e^{-\beta (z-z_0)} . \label{Moduli_sol u exp asymp to w}
		\end{align}
	\end{Lem}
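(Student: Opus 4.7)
The plan is to leverage the $I$-parameter family $\{u_\varphi\}_{\varphi \in E_{<0}}$ produced by Lemma~\ref{Lem_Moduli_contraction map} and isolate the unique $\varphi$ for which the polynomial-rate modes of $u_\varphi - w$ vanish at infinity, forcing the remainder to decay exponentially. Since $\scT(u_\varphi) = 0$ on $S \times \RR_{>z_0}$ and $\scT(w) = 0$ by hypothesis, in the region $\{z \geq 3z_0/4\}$ (where the cutoff satisfies $\eta_{z_0} \equiv 1$) the difference $v_\varphi := u_\varphi - w$ satisfies a linear equation of the form
\[ T_S v_\varphi \;=\; \tilde{\scR}_\varphi[v_\varphi], \]
where, by Lemma~\ref{Append_Error est III of scT} and Remark~\ref{Rem_Moduli_Schauder est work for general oper}, $\tilde{\scR}_\varphi$ is a lower-order linear operator whose $C^\alpha_\star$-coefficients are bounded by $C(\|u_\varphi\|_{C^{2,\alpha}_\star, R} + \|w\|_{C^{2,\alpha}_\star, R}) \leq CR^{\gamma - 1}$---in particular, a small perturbation of $T_S$ as $R \to \infty$.

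I would first extend Lemma~\ref{Lem_Moduli_T_S u = f} and Corollary~\ref{Cor_Moduli_Sharp asymp for T_S u = 0} to this perturbed operator, obtaining a canonical splitting of every solution $v \in \mathfrak{X}^\alpha_\gamma(S, z_0)$ of $T_S v = \tilde{\scR}_\varphi[v]$ into an $I$-dimensional polynomial-rate piece (one component for each eigenmode in $E_{<0}$) plus an exponentially decaying remainder. Fixing an $L^2(S)$-orthonormal basis $\{\psi_j\}_{j=1}^I$ of $E_{<0}$ with $-L_S\psi_j = \bar{\mu}_j\psi_j$, I would define
\[ P_\infty(v) \;:=\; \Bigl(\,\lim_{z\to\infty} z^{-\bar\mu_j}\,\langle v(\cdot, z),\, \psi_j\rangle_{L^2(S)}\,\Bigr)_{j=1}^I \;\in\; \RR^I, \]
and set $\Psi(\varphi) := P_\infty(v_\varphi)$. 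By \eqref{Moduli_varphi <-> u_varphi biLip}, \eqref{Moduli_L^2_mu_1 Diff of u_varphi w different initial} and Remark~\ref{Rem_Moduli_|u_(varphi^pm)| lower bd}, the associated map $\varphi_+ - \varphi_- \mapsto P_\infty(u_{\varphi_+} - u_{\varphi_-})$ is biLipschitz between $E_{<0}$ and its image; hence when $\delta_1$ is chosen sufficiently small, $\Psi$ is a small Lipschitz perturbation of an affine isomorphism on $\{\|\varphi\|_{L^2} \leq \delta_0 z_0^\gamma\}$, and the Banach fixed-point theorem produces a unique $\varphi^*$ with $\Psi(\varphi^*) = 0$.

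For this $\varphi^*$, all polynomial coefficients of $v_{\varphi^*}$ vanish at infinity, so $v_{\varphi^*}$ decays exponentially by the perturbed classification. To obtain the quantitative rate~\eqref{Moduli_sol u exp asymp to w}, I would construct a barrier analogous to Steps~3--4 in the proof of Lemma~\ref{Lem_Moduli_contraction map}: the fast-decay solutions $w_{z_0, j}$ from Lemma~\ref{Lem_Moduli_Sol cL_mu = 0 fast decay} satisfy $w_{z_0, j}(z) \leq e^{z_0 - z}(z/z_0)^{1 + (-\bar\mu_j)^+}$, so for any $\beta \in (1/2, 1)$ a function of the form $U(z) := K(S,\beta)\, z_0^\gamma e^{-\beta(z - z_0)}$ serves as a supersolution of the perturbed equation on $S \times [z_0, +\infty)$ whenever $z_0 \geq C(S, \beta)$; an application of the maximum principle together with the boundary control on $[z_0/12, z_0]$ then yields \eqref{Moduli_sol u exp asymp to w}. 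The principal obstacle is establishing the perturbed asymptotic splitting of the middle paragraph with enough uniformity in $\varphi$ to guarantee that $P_\infty$ is well-defined and Lipschitz-close to its unperturbed counterpart; this amounts to propagating the small-coefficient bound $CR^{\gamma - 1}$ through the $L^2$ machinery of Lemma~\ref{Lem_Moduli_T_S u = f} and then verifying the corresponding sharp-asymptotics statement analogous to Corollary~\ref{Cor_Moduli_Sharp asymp for T_S u = 0}.
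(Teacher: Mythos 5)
Your proposal is a genuinely different route from the paper's. The paper constructs the solution by a direct contraction in the exponentially-weighted space
$\mathfrak{Y}^\alpha_{\gamma,\beta}(S, z_0) := \{v \in \mathfrak{X}^\alpha_\gamma : \sup_{R \geq z_0} e^{\beta(R - z_0)}\|v - w\|_{C^{2,\alpha}_\sharp, S, R} \leq z_0^\gamma\}$: the iterated map $\scV_w$ solves the linearized equation with source $\eta_{z_0}\cdot(\scR(v)+z^{-1}\scT(0))$, zero boundary data at $z_0/12$, and, crucially, the decay constraint $\|u - w\|_{L^2_{\mu_1-1}, S, z_0/12}<+\infty$. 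Since $E_{<\mu_1-1}=\{0\}$, Lemma~\ref{Lem_Moduli_T_S u = f} with $\gamma = \mu_1-1$ furnishes a \emph{unique} solution with no free shooting parameter, and the exponential estimate follows from the bound $\|\scR(v)-\scR(w)\|_{C^\alpha_\sharp, S, R}\lesssim z_0^{2\gamma}e^{-\beta(R-z_0)}$ via a barrier of the form $e^{-\beta(z-z_0)}\psi_1$. This completely bypasses any asymptotic-coefficient analysis at $z=\infty$; the admissible boundary parameter $\Pi_{<0}\partial_z u(\cdot, z_0/12)$ and its size bound fall out of (\ref{Moduli_L^2_(mu_1-1) bd for u-v in frakY}) and the short-term boundary estimates, rather than being chosen by a degree argument.

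The obstacle you flag at the end is more serious than a matter of ``enough uniformity'': as written, $P_\infty(v)=\bigl(\lim_{z\to\infty} z^{-\bar\mu_j}\langle v,\psi_j\rangle_{L^2(S)}\bigr)_j$ is in general not a finite vector, so $\Psi$ is not well-defined. For a solution with leading behavior $\sim c\,z^{\mu^-}\psi_I$, the error operator $\tilde\scR_\varphi$ (coefficients only $O(z^\gamma)$ with $\gamma\in(\mu^-,0)$) injects a source of order $z^{\mu^-+\gamma-1}$ into \emph{every} eigenmode, and inverting $T_S$ produces a contribution of size $\sim z^{\mu^-+\gamma}$ in $\langle v,\psi_j\rangle$. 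Since $\mu^-+\gamma\in(2\mu^-,\mu^-)\subset(-1,0)$ while $\bar\mu_1=\mu_1\leq -1$, we have $\mu^-+\gamma>\bar\mu_1$, so $z^{-\bar\mu_1}\langle v,\psi_1\rangle$ diverges; the same can happen for any $j<I$. Fixing this would require an iterative peeling-off of modes (a genuine perturbed asymptotic expansion, not a single application of Corollary~\ref{Cor_Moduli_Sharp asymp for T_S u = 0}), and the estimates you cite do not supply it: (\ref{Moduli_varphi <-> u_varphi biLip}) controls only the $C^2_\star$ sup of differences, and (\ref{Moduli_L^2_mu_1 Diff of u_varphi w different initial}) gives nonvanishing only of the leading coefficient, not bi-Lipschitz control of the full vector. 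The paper's choice of contraction space makes this machinery unnecessary.
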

	
	To prove the Lemma, recall the $C^{d,\alpha}_\sharp$ norms are defined in (\ref{Moduli_C^k,alpha_sharp norms}). By Lemma \ref{Append_Error est II of scT}, we have
	\begin{align}
		\|\scR(u^+) -\scR(u^-)\|_{C^\alpha_\sharp; S, R} \leq C(S,\alpha)\left(R^{-1}+\|u^\pm\|_{C^{2,\alpha}_\star, S, R}\right)\cdot\|u^+ - u^-\|_{C^{2,\alpha}_\sharp; S, R}  \label{Moduli_C^alpha_sharp error est scR^+-scR^-}
	\end{align}
	
	\begin{proof}[Proof of Lemma \ref{Lem_Moduli_general transl end is one of the constructed}] 
		Define \[
		\mathfrak{Y}^\alpha_{\gamma, \beta}(S, z_0):= \left\{v\in \mathfrak{X}^\alpha_\gamma(S, z_0): \sup_{R\geq z_0} e^{\beta(R-z_0)}\|v- w\|_{C^{2,\alpha}_\sharp, S, R}\leq z_0^\gamma \right\}.    \]
		Consider the map $\scV_w$ defined on $\mathfrak{Y}^\alpha_{\gamma, \beta}(S, z_0)$, $v\mapsto u:= \scV_w(v)$ solving,
		\begin{align}
			\begin{cases}
				(\partial_z^2 + \partial_z + z^{-1}L_S )u = \eta_{z_0}(z)\cdot\left(\scR(v) + z^{-1}\scT(0)\right), \ &\ \text{ on }S\times \RR_{>z_0/12};\\
				u(\cdot, z_0/12) = 0, \ &\ \text{ on }S; \\
				\|u-w\|_{S, z_0/12; \mu_1-1} <+\infty.
			\end{cases}  \label{Moduli_Def scV_w by solving equ}
		\end{align}
		\textbf{Step 1.} Let's first verify that $\scV_w$ is a well-defined map from $\mathfrak{Y}^\alpha_{\gamma, \beta}(S, z_0)$ to itself.
		For every $v\in \mathfrak{Y}^\alpha_{\gamma, \beta}(S, z_0)$, by (\ref{Moduli_C^alpha_sharp error est scR^+-scR^-}), for every $R\geq z_0$,
		\begin{align}
			\|\scR(v) - \scR(w)\|_{C^\alpha_\sharp, S, R} \leq C(S, \gamma, \alpha)\cdot R^\gamma \|v-w\|_{C^{2,\alpha}_\sharp, S, R} \leq C(S, \gamma, \alpha)\cdot z_0^{2\gamma} e^{-\beta(R-z_0)}.  \label{Moduli_|scR(v)-scR(w)|^sharp_alpha < e^-R}
		\end{align}
		Also, by (\ref{Moduli_C^alpha_star error est scR^+-scR^-}) and assumption on $v$ and $w$, for every $R\in [z_0/12, z_0]$ we have 
		\begin{align}
			\|\eta_{z_0}\cdot\scR(v)\|_{C^\alpha_\star, S, R} + \|\scR(w)+z^{-1}\scT(0)\|_{C^\alpha_\star, S, R} \leq C(S, \alpha, \gamma)R^{-1+2\gamma}.  \label{Moduli_|scR(v) + scR(w)|^star_alpha < R^-1+2gamma}
		\end{align}
		And notice that equation (\ref{Moduli_Def scV_w by solving equ}) is equivalent to,
		\begin{align*}
			\begin{cases}
				T_S (u - w) = \eta_{z_0}\cdot\left(\scR(v) - \scR(w)\right) - (1-\eta_{z_0})(\scR(w)+z^{-1}\scT(0)), \ &\ \text{ on }S\times \RR_{>z_0/12};\\
				(u - w)(\cdot, z_0/12) = -w(z_0/12), \ &\ \text{ on }S; \\
				\|u-w\|_{S, z_0/12; \mu_1-1} <+\infty.
			\end{cases} %  \label{Moduli_Def scV_w by solving equ}
		\end{align*}   
		Thus by Lemma \ref{Lem_Moduli_T_S u = f}, equation (\ref{Moduli_Def scV_w by solving equ}) has a unique solution $u$ satisfying 
		\begin{align}
			\begin{split}
				&\ \|u-w\|_{L^2_{\mu_1-1}, S, z_0/12}^2 \\
				\leq &\ C(S)\Big(\|w(\cdot, z_0/12)\|^2_{C^0(S)}\cdot z_0^{-2\mu_1+1} + \|\eta_{z_0}(\scR(v)-\scR(w))\|^2_{L^2_{\mu_1-2}, S, z_0/12} \\
				&\ \ \ \ \ \ +\|(1-\eta_{z_0})(\scR(w)+z^{-1}\scT(0))\|^2_{L^2_{\mu_1-2}, S, z_0/12}\Big) \\
				\leq &\ C(S, \gamma, \alpha, \beta)\Big(\delta_1^2 z_0^{2\gamma-2\mu_1 +1} + \int_{S\times \RR_{>2z_0}} |\scR(v) - \scR(w)|^2 z^{-2(\mu_1-2)-1}\ dxdz \\
				&\ \ \ \ \ \  + \|\eta_{z_0}|\scR(v)|+|\scR(w)|+|z^{-1}\scT(0)|\|^2_{L^2, S\times (z_0/12, 2z_0)}z_0^{-2(\mu_1 -2)-1} \Big) \\
				\leq &\ C(S, \gamma, \alpha, \beta)\left( \delta_1^2 z_0^{-1} + e^{-\beta z_0}z_0^{-2\gamma +1} + z_0^{2\gamma} \right)\cdot z_0^{2\gamma-2\mu_1+2}.
			\end{split} \label{Moduli_L^2_(mu_1-1) bd for u-v in frakY}
		\end{align}
		Thus by $C^0$-estimate Lemma \ref{Lem_Moduli_C^0 est}, for every $R\geq z_0/6$,
		\begin{align*}
			\|u\|_{C^0, S, R} \leq &\ C(S)\left(R^{-1/2}\||u-w| + |w|\|_{L^2, S\times (R/2, 4R)} + R\|\eta_{z_0}(\scR(v)+z^{-1}\scT(0))\|_{C^0, S\times (R/2, 4R)}\right) \\
			\leq &\ C(S, \gamma, \alpha, \beta) (\delta_1 + z_0^\gamma)R^\gamma,
		\end{align*}
		where the last inequality hold if $z_0\gg 1$.  Then as before, by Lemma \ref{Lem_Moduli_C^2,alpha_star est, interior}, we derive for $R\geq z_0/2$, 
		\begin{align*}
			\|u\|_{C^{2,\alpha}_\star, S, R} \leq C(S, \gamma, \alpha, \beta)(\delta_1+z_0^\gamma)R^\gamma < R^\gamma, 
		\end{align*}
		provided $z_0\gg 1$ and $\delta_1\ll1$.  This means $u\in \mathfrak{X}^\alpha_\gamma(S, z_0)$.  
		
		Also, when $z\geq 3z_0/4\gg 1$, by (\ref{Moduli_C^alpha_star error est scR^+-scR^-}), (\ref{Moduli_|scR(v)-scR(w)|^sharp_alpha < e^-R}) and the definition of $\mathfrak{X}^\alpha_\gamma(S, z_0)$, 
		\begin{align*}
			|T_S (u - w)| & = |\eta_{z_0}\cdot (\scR(v) - \scR(w))| \\
			& \leq C(S, \gamma, \alpha)z_0^{2\gamma}e^{-\beta(z- z_0)} \leq -C(S, \gamma, \alpha, \beta)(\delta_1 + z_0^\gamma)z_0^\gamma \cdot T_S(e^{-\beta(z-z_0)}\psi_1).  
		\end{align*}
		By maximum principle and the uniqueness assertion in Lemma \ref{Lem_Moduli_T_S u = f}, we thus have the following refined $C^0$ estimate for $z\geq 3z_0/4$,
		\begin{align*}
			|u-w|(x, z) \leq C(S, \gamma, \alpha, \beta)(\delta_1 + z_0^\gamma)z_0^\gamma\cdot e^{-\beta(z-z_0)}\psi_1(x).
		\end{align*}
		Then by (\ref{Moduli_|scR(v)-scR(w)|^sharp_alpha < e^-R}) and the short term $C^{2,\alpha}_\sharp$ estimate Lemma \ref{Lem_Moduli_short term Schauder est, int}, we have for every $R\geq z_0$,
		\begin{align*}
			\|u-w\|_{C^{2,\alpha}_\sharp, S, R}\leq C(S, \gamma, \alpha, \beta)(\delta_1+z_0^\gamma) z_0^\gamma \cdot e^{-\beta(R-z_0)}.
		\end{align*}
		By taking $z_0\gg 1$, this means $u\in \mathfrak{Y}^\alpha_{\gamma, \beta}(S, z_0)$. In other words, $\scV_w$ maps $\mathfrak{Y}^\alpha_{\gamma, \beta}(S, z_0)$ to itself. \\
		\textbf{Step 2.} Repeat the Step 2 in the proof of Lemma \ref{Lem_Moduli_contraction map}, one can show that if $z_0\gg 1$ and $\delta_1\ll1$, then $\scV_w$ contracts $\|\cdot\|_{\mathfrak{X}^\alpha_{\mu_1-1}(S, z_0)}$-distance.  Hence by iteration starting from $w$ and taking limit, we shall find a unique fixed point $u_w$ of $\scV_w$, which is the desired solution to the equation in Lemma \ref{Lem_Moduli_general transl end is one of the constructed}; the desired estimate on $\Pi_{<0}\partial_z u(\cdot, z_0/12)$ follows from (\ref{Moduli_L^2_(mu_1-1) bd for u-v in frakY}), the $C^0$-boundary estimate Remark \ref{Rem_Moduli_C^0 est, bdy case} and the short term boundary estimate Lemma \ref{Lem_Moduli_short term Schauder est, bdy}. 
	\end{proof}

	\begin{proof}[Proof of Theorem \ref{Thm_Moduli of transl end }]
		Recall $\mu^- := \sup(\Gamma(S)\cap \RR_{<0})$.  Fix the choice $\gamma:= \mu^-/2$ in Lemma \ref{Lem_Moduli_contraction map}; Identify $\BB^I$ as of ball of radius $\delta_0 z_0^{\gamma}/2$ centered at $\orig$ in $(E_{<0}, \|\cdot\|_{L^2(S)})$.  For every $\varphi\in \BB^I$, let $u_{\varphi, z_0}$ be the unique solution to (\ref{Moduli_Transl equ w cut-off and prescib bdy deriv}). (i), (ii) in Theorem \ref{Thm_Moduli of transl end } follow from Lemma \ref{Lem_Moduli_contraction map}. Note that (ii) guarantees that $u_{\varphi, z_0}$ varies continuously with respect to $\varphi$. 
		(iii) follows from Lemma \ref{Lem_Moduli_Sharp asymp rate for pl-simple end} and \ref{Lem_Moduli_general transl end is one of the constructed} by taking $C_1\gg 1$ in Theorem \ref{Thm_Moduli of transl end } (iii).
		
		When $S$ is acted by some Lie group $G\subset O(n)$, then the eigenfunction with the lowest eigenvalue, $\psi_1\in E_{<0}$ is also $G$-invariant. Thus the uniqueness assertion in Lemma \ref{Lem_Moduli_contraction map} implies that $u_{c\psi_1, z_0}$ is $G$-invariant for every small $c\in \RR$.
	\end{proof}

	%%%%%%%%%%%%%%%%%%%%%%%%%%%%%%%%%%%%%%%%%%%%%%%%%%%%%%%%%%%%%%%%%%%%%%%%%%%%%%%%%%
	%%%%%%%%%%%%%%%%%%%%     RR  Action  on  Translating  Ends     %%%%%%%%%%%%%%%%%%%
	%%%%%%%%%%%%%%%%%%%%%%%%%%%%%%%%%%%%%%%%%%%%%%%%%%%%%%%%%%%%%%%%%%%%%%%%%%%%%%%%%%  
	\subsection{$\RR$-action on translating ends} \label{Subsec_Moduli_RR action}
	We finish this Section by introducing an $\RR$-action on the space of pl-simple translating ends. 
	
	\begin{Lem} \label{Lem_Moduli_one-sided transl end}
		Let $\alpha\in (0, 1)$, $\gamma\in (\mu^-, 0)$. Then there exists $\underline{z}(S, \alpha, \gamma)\gg 1$ and $\delta_2(S, \alpha, \gamma)\in (0, 1)$ with the following property.  
		
		Let $\delta\in (0, \delta_2)$, $z_0\geq \delta^{1/(2\gamma)}\cdot \underline{z}$, $a\in [-\delta\cdot z_0^{-\mu_1}, \delta\cdot z_0^{-\mu_1}]$. Let $u_0\in C^\infty(S\times \RR_{>z_0/2})$ be the solution of $\scT(u_0) = 0$ on $S\times \RR_{\geq z_0}$, with $\|u_0\|_{\mathfrak{X}^\alpha_\gamma; S, z_0} \leq 1$.  Then there exists a solution $u_a$ of $\scT(u_a) = 0$ on $S\times \RR_{>z_0}$ with the following estimate: for every $R\geq z_0$, 
		\begin{align}
			\|u_a - u_0 - az^{\mu_1}\psi_1(x)\|_{C^{2,\alpha}_\star, S, R} \leq (1+|a|)\sqrt{\delta}\cdot \left(\frac{R}{z_0}\right)^{\gamma/2} R^{\mu_1}.  \label{Moduli_est one-sided deformtion}
		\end{align}
		%   \[  |u_a(x, z) - u_0(x, z) - az^{\mu_1}\psi_1(x)| \leq (1+|a|)\sqrt{\delta}\cdot(z/z_0)^{\gamma/2}\cdot z^{\mu_1}, \ \ \ \forall (x, z)\in S\times \RR_{> z_0}.   \]
		Here recall that $\psi_1$ is the first $L^2(S)$-unit eigenfunction of $-L_S$. The collection of such solutions is denoted by $\varpi(a)[u_0]$.
		Moreover, if there's a Lie group $G\subset O(n)$ acting on $S$ and $u_0$ is $G$-invariant, then so is $u_a$ for every $a$.	
	\end{Lem}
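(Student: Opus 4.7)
The plan is to seek $u_a$ in the form $u_a = u_0 + az^{\mu_1}\psi_1(x) + v$ and solve for the small remainder $v$ by a contraction argument paralleling Lemmas \ref{Lem_Moduli_contraction map} and \ref{Lem_Moduli_general transl end is one of the constructed}. A direct computation using $L_S\psi_1 = -\mu_1\psi_1$ gives
\begin{align*}
T_S(z^{\mu_1}\psi_1) = \bigl[\mu_1(\mu_1-1)z^{\mu_1-2} + \mu_1 z^{\mu_1-1} - \mu_1 z^{\mu_1-1}\bigr]\psi_1 = \mu_1(\mu_1-1)z^{\mu_1-2}\psi_1,
\end{align*}
so $az^{\mu_1}\psi_1$ is an \emph{approximate} null mode of $T_S$ with error of rate $\mu_1-2$, strictly faster-decaying than the ansatz itself. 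Subtracting the equations $\scT(u_a) = \scT(u_0) = 0$ and applying (\ref{Moduli_Transl mean curv oper}), the equation satisfied by $v$ becomes
\begin{align*}
T_S v = -a\mu_1(\mu_1-1)z^{\mu_1-2}\psi_1 + \bigl(\scR(u_0 + az^{\mu_1}\psi_1 + v) - \scR(u_0)\bigr),
\end{align*}
the first summand being a prescribed source and the second being quadratically small in the perturbation via (\ref{Moduli_C^alpha_star error est scR^+-scR^-}).

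I would work on $S\times \RR_{\geq z_0/12}$ with the same cutoff device as in Lemma \ref{Lem_Moduli_contraction map} to deal with boundary behavior, setting up the fixed point in the Banach space
\begin{align*}
\mathfrak{Z} := \Bigl\{v\in C^{2,\alpha}_{\mathrm{loc}}(S\times \RR_{\geq z_0/12}) \,:\, \sup_{R\geq z_0/2}\frac{\|v\|_{C^{2,\alpha}_\star, S, R}}{(1+|a|)\sqrt{\delta}\,(R/z_0)^{\gamma/2}R^{\mu_1}} \leq 1\Bigr\}.
\end{align*}
The map $\scW: v\mapsto \tilde v$ is defined by solving the above displayed equation for $\tilde v$ (with the right-hand side multiplied by $\eta_{z_0}(z)$) subject to $\tilde v(\cdot, z_0/12) = 0$. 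Since the target asymptotic weight $\gamma' := \mu_1 + \gamma/2$ lies strictly below every eigenvalue of $-L_S$, one has $E_{<\gamma'} = \{0\}$, so Lemma \ref{Lem_Moduli_T_S u = f} produces a unique $\tilde v \in L^2_{\gamma'}(S\times \RR_{\geq z_0/12})$.

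To verify that $\scW$ preserves $\mathfrak{Z}$, I would estimate the right-hand side in $C^\alpha_\star$ and $L^2_{\gamma'-1}$: the source contributes $|a|R^{\mu_1-2}$, while (\ref{Moduli_C^alpha_star error est scR^+-scR^-}) together with $\|u_0\|_{C^{2,\alpha}_\star, S, R}\leq R^\gamma$ bounds the nonlinear term by $CR^{\gamma-1}\bigl(|a|R^{\mu_1} + \|v\|_{C^{2,\alpha}_\star, S, R}\bigr)$. Feeding these through Lemma \ref{Lem_Moduli_T_S u = f}, the $C^0$ bound Lemma \ref{Lem_Moduli_C^0 est}, and the long-term Schauder estimate Lemma \ref{Lem_Moduli_C^2,alpha_star est, interior}, each contribution to $\|\tilde v\|_{C^{2,\alpha}_\star, S, R}$ is dominated by the target $(1+|a|)\sqrt{\delta}(R/z_0)^{\gamma/2}R^{\mu_1}$ precisely under the stated scalings $z_0 \geq \delta^{1/(2\gamma)}\underline{z}$ and $|a|\leq \delta z_0^{-\mu_1}$. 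A parallel computation for $\scW v_1 - \scW v_2$ yields the contraction estimate $\|\scW v_1 - \scW v_2\|_{\mathfrak{Z}} \leq Cz_0^{\gamma}\|v_1 - v_2\|_{\mathfrak{Z}} \leq \tfrac12\|v_1 - v_2\|_{\mathfrak{Z}}$, since $\gamma<0$ and $z_0$ is large. The Banach fixed point theorem then yields $v_\ast$, and $u_a := u_0 + az^{\mu_1}\psi_1 + v_\ast$ is the desired solution. For the $G$-invariance, $\psi_1$ is $G$-invariant (as the unique positive $L^2(S)$-eigenfunction of the $G$-equivariant operator $-L_S$); hence if $u_0$ is $G$-invariant, the operator $\scW$ preserves the $G$-invariant subspace of $\mathfrak{Z}$, and uniqueness of the fixed point forces $v_\ast$, whence $u_a$, to be $G$-invariant. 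I expect the main technical obstacle to be the precise weight-balancing that makes all three sources — the linearization error from $T_S(az^{\mu_1}\psi_1)$, the interaction $\scR(u_0+\cdots)-\scR(u_0)$, and the $L^2_{\gamma'}$-to-pointwise passage — fit simultaneously into the target norm, which is exactly what dictates both the hypothesis $z_0\geq \delta^{1/(2\gamma)}\underline{z}$ and the exponent $\gamma/2$ in (\ref{Moduli_est one-sided deformtion}).
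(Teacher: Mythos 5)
Your proposal is correct and follows essentially the same route as the paper's proof, differing only in bookkeeping. You solve directly for the remainder $v := u_a - u_0 - az^{\mu_1}\psi_1$, making the linearization error $T_S(az^{\mu_1}\psi_1)=a\mu_1(\mu_1-1)z^{\mu_1-2}\psi_1$ an explicit source term and imposing $v(\cdot, z_0/12)=0$; the paper instead works with $w := u - u_0$, sets $w(\cdot, z_0/12)=0$, and encodes the desired $a z^{\mu_1}\psi_1$ profile through the asymptotic condition $\|w - a\Psi_1\|_{L^2_{\mu_1+\gamma/2}} < \infty$ in the uniqueness clause of Lemma~\ref{Lem_Moduli_T_S u = f}, which then surfaces the same $|a|\,\|T_S\Psi_1\|_{L^2_{\mu_1+\gamma/2-1}}$ contribution in the estimate. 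These two formulations are equivalent up to a (harmless, since the lemma prescribes no boundary data) shift of initial value by $a\Psi_1(\cdot, z_0/12)$, and your observation that $E_{<\mu_1+\gamma/2}=\{0\}$ is exactly what renders the derivative projection in Lemma~\ref{Lem_Moduli_T_S u = f} vacuous and the solution unique, matching the role of the third line of the paper's system~(\ref{Moduli_One-sided transl end Equ}). The iteration space, the use of~(\ref{Moduli_C^alpha_star error est scR^+-scR^-}) with $\|u_0\|_{C^{2,\alpha}_\star, S,R}\leq R^\gamma$, the chain Lemma~\ref{Lem_Moduli_T_S u = f} $\to$ Lemma~\ref{Lem_Moduli_C^0 est} $\to$ Lemma~\ref{Lem_Moduli_C^2,alpha_star est, interior}, the contraction factor $Cz_0^\gamma$, and the $G$-invariance argument via uniqueness of $\psi_1$ and of the fixed point all coincide with the paper.
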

	It's worth mentioning that if we restrict \eqref{Moduli_est one-sided deformtion} to $\{z=z_0\}$ slice and take $a=\pm \delta z_0^{-\mu_1}$, then \[
	u_a(\cdot, z_0) = u_0(\cdot, z_0) \pm \delta\psi_1\cdot (1+ O(\sqrt{\delta})).   \]
	Also, if we define the space of pl-simple translating end classes over $S\times \RR_+$ to be
	\begin{align}
		\cE_{pl}(S):= \{u\in C^\infty(S\times \RR_{>z_0}): z_0>1, \ES[u] \text{ is a pl-simple end over }S\times \RR_+\}/\sim, \label{Moduli_Def_Poly asymp transl ends}
	\end{align}
	where $u_1\sim u_2$ if and only if \[
	\limsup_{z\to \infty}\|(u_2 - u_1)(\cdot, z)\|_{C^0(S)}\cdot e^{z/2} = 0.   \]
	Under this notation, Theorem \ref{Thm_Moduli of transl end } (i) and (ii) imply that for $z_0\gg 1$, $\{u_{\varphi, z_0}\}_{\varphi\in \BB^I}$ determines an embedding of $\BB^I$ into $\cE_{pl}(S)$, and (iii) guarantees that whenever $z_j\nearrow +\infty$, $\{[u_{\varphi, z_j}]\}_{\varphi\in \BB^I}$ exhausts $\cE_{pl}(S)$. While Lemmas \ref{Lem_Moduli_one-sided transl end} and \ref{Lem_Moduli_Sharp asymp rate for pl-simple end} guarantee that for every equivalent class $[u]\in \cE_{pl}(S)$ and every $a\in \RR$, there's a unique $\varpi(a)[u] \in \cE_{pl}(S)$ such that for every $v\in \varpi(a)[u]$, \[
	\lim_{z\to +\infty} z^{-\mu_1}(v-u)(\cdot, z) = a\psi_1.   \]
	In particular, since $\psi_1>0$, this well defines an $\RR$-action on $\cE_{pl}(S)$ such that for every $a\neq 0$, if $v \in \varpi(a)[u]$, then $\ES[v]\cap \ES[u]\cap \RR^n\times \RR_{>\bar{z}} = \emptyset$ for some large $\bar{z}(u, v)\gg 1$.

	\begin{proof}[Proof of Lemma \ref{Lem_Moduli_one-sided transl end}.]
		Let $\Psi_1(x, z):= z^{\mu_1}\psi_1(x)$.  Define \[
		\mathfrak{Z}^\alpha_\gamma(S, z_0, a):= \{u\in C^{2,\alpha}_{loc}(S\times \RR_{\geq z_0/12}): \|u - u_0 - a\Psi_1\|_{\mathfrak{X}^\alpha_{\mu_1+\gamma/2}, z_0} \leq (1+|a|)\sqrt{\delta}\cdot z_0^{-\gamma/2}\}.   \]
		%   where \[
		%    \|v\|_{\mathfrak{Z}^\alpha_\gamma, z_0}:= \sup_{R\geq z_0/2} \|v\|_{}   \]
		Note that for every $v\in \mathfrak{Z}^\alpha_\gamma(S, z_0, a)$ and every $R\geq z_0/2$, we have,
		\begin{align}
			\begin{split}
				\|v\|_{C^{2,\alpha}_\star, S, R} & \leq \|v - u_0 - a\Psi_1\|_{\mathfrak{X}^\alpha_{\mu_1+\gamma/2}, z_0} R^{\mu_1+\gamma/2} + \|u_0\|_{C^{2,\alpha}_\star, S, R} + |a|\cdot\|\Psi_1\|_{C^{2,\alpha}_\star, S, R} \\
				& \leq C(S, \alpha)(1 + \delta z_0^{-\gamma})R^\gamma \ll1,
			\end{split}  \label{Moduli_Equ_A prior est for v in mathf(Z)}
		\end{align}
		provided $\delta_2\ll1$ and $\underline{z}\gg 1$.  Thus by Lemma \ref{Append_Error est II of scT}, we have for every $v\in \mathfrak{Z}^\alpha_\gamma(S, z_0, a)$ and every $R\geq z_0/2$, 
		\begin{align}
			\|\scR(v) - \scR(u_0)\|_{C^\alpha_\star, S, R}\leq C(S, \alpha)(1 + \delta z_0^{-\gamma})(1+|a|)\cdot R^{\mu_1+\gamma-1}, \label{Moduli_Equ_est for |scR(v)-scR(u_0)| w v in mathf(Z)}
		\end{align}
		while for every $v_\pm \in\mathfrak{Z}^\alpha_\gamma(S, z_0, a)$ and every $R\geq z_0/2$,
		\begin{align}
			\|\scR(v_+) - \scR(v_-)\|_{C^\alpha_\star, S, R}\leq C(S, \alpha)(1 + \delta z_0^{-\gamma})\|v_+ - v_-\|_{\mathfrak{X}^\alpha_{\mu_1+\gamma/2}, z_0}\cdot R^{\mu_1+3\gamma/2 -1}.\label{Moduli_Equ_est for |scR(v_+)-scR(v_-)| w v_pm in mathf(Z)}
		\end{align}
		
		Let $\scW_a$ be the map defined on $\mathfrak{Z}^\alpha_\gamma(S, z_0, a)$ given by $u:= \scW_a(v)$ solving 
		\begin{align}
			\begin{cases}
				(\partial_z^2 + \partial_z + z^{-1}L_S)(u-u_0) = \eta_{z_0}\cdot (\scR(v)-\scR(u_0))\ &\ \text{ on }S\times \RR_{>z_0/12}; \\
				(u-u_0)(\cdot, z_0/12) = 0, \ &\ \text{ on }S; \\
				\|u-u_0- a\Psi_1\|_{L^2_{\mu_1+\gamma/2}; S, z_0/12} < +\infty.
			\end{cases} \label{Moduli_One-sided transl end Equ}
		\end{align}
		Similar as before, we shall verify that $\scW_a$ maps $\mathfrak{Z}^\alpha_\gamma(S, z_0, a)$ to itself and contracts $\|\cdot\|_{\mathfrak{X}_{\mu_1+\gamma/2}^\alpha}$-distance, provided $\delta_2\ll1$ and $\underline{z}\gg 1$.  In fact, by Lemma \ref{Lem_Moduli_T_S u = f} and (\ref{Moduli_Equ_est for |scR(v)-scR(u_0)| w v in mathf(Z)}), the equation (\ref{Moduli_One-sided transl end Equ}) has a unique solution $u$ satisfying, 
		\begin{align*}
			&\ \|u - u_0 - a\Psi_1\|_{L^2_{\mu_1+\gamma/2}, S, z_0/12} & & \\
			\leq &\ C(S, \gamma, \alpha)\Big( \|\eta_{z_0}\cdot (\scR(v)-\scR(u_0))\|_{L^2_{\mu_1+\gamma/2-1}, S, z_0/12} & & + |a|\cdot\|T_S \Psi_1\|_{L^2_{\mu_1+\gamma/2-1}, S, z_0/12} \\
			&\ & & + \|a\Psi_1(\cdot, z_0/12)\|_{L^2(S)}\cdot z_0^{-\mu_1 - \gamma/2-1/2} \Big) \\
			\leq &\ C(S, \gamma, \alpha)(1+\delta z_0^{-\gamma})(1+|a|)\cdot z_0^{\gamma/2} . & &
		\end{align*}
		Then by Lemma \ref{Lem_Moduli_C^0 est} and \ref{Lem_Moduli_C^2,alpha_star est, interior}, for $R\geq z_0/2$ we have 
		\begin{align*}
			\|u-u_0 - a\Psi_1\|_{C^{2,\alpha}_\star, S, R} & \leq C(S, \gamma, \alpha)(1+\delta z_0^{-\gamma})(1+|a|)z_0^{\gamma/2}\cdot R^{\mu_1 + \gamma/2}  \\
			& < (1+|a|)\sqrt{\delta} z_0^{-\gamma/2}\cdot R^{\mu_1 + \gamma/2},   
		\end{align*}
		where the last inequality holds if $\underline{z}\gg 1$ and $\delta_2\ll1$, since we are taking $z_0\geq \delta^{1/(2\gamma)}\underline{z}$.  This means $\scW_a(v) = u \in \mathfrak{Z}^\alpha_\gamma(S, z_0, a)$.  Similarly using (\ref{Moduli_Equ_est for |scR(v_+)-scR(v_-)| w v_pm in mathf(Z)}), one can show that $\scW_a$ contracts $\|\cdot\|_{\mathfrak{X}_{\mu_1+\gamma/2}^\alpha}$ provided $z_1\gg 1$ and $\delta_1\ll1$.  Then, iterating $\scW_a$ starting from $u_0 + a\Psi_1$ and taking limit produce a fixed point $u_a$ of $\scW_a$, which is what we want in Lemma \ref{Lem_Moduli_one-sided transl end}.
		
		Finally, if there's a Lie group $G\subset O(n)$ acting on $S$, then by the uniqueness of the first eigenfunction up to a constant, $\psi_1$ is $G$-invariant. Therefore, if $u_0$ is also $G$-invariant, then by the uniqueness of fixed points of $\scW_a$, $u_a$ should also be $G$-invariant.
	\end{proof}
	
	The proof of the above lemma also gives the continuity of the $\RR$-action in the following sense.
	\begin{Cor} \label{Cor_Moduli_RR-action conti}
		Let $z_0\geq C(S)\gg 1$, $\{u_{\varphi, z_0}\}_{\varphi\in \BB^I}$ be a fixed family given by Theorem \ref{Thm_Moduli of transl end } (See its proof at the end of Section \ref{Subsec_Moduli_Pf of Main Thm}). Then,
		\begin{enumerate} [(i)]
			\item for every $\varphi\in \BB^I$ and every $|a|\ll1$ (may depend on $\varphi$), $\exists\ \varphi_a\in \BB^I$ such that $u_{\varphi_a, z_0}\in \varpi(a)[\varphi]$;
			\item for $1\leq j\leq \infty$, let $\varphi^j$ be points in $\BB^I$, $a_j$ be real numbers such that $|a_j|\leq 1$, $\varphi^j\to \varphi^\infty$ in $\BB^I$ and $a_j\to a_\infty$. Also suppose that for $1\leq j< \infty$, $\exists\ \varphi^j_a\in \BB^I$ such that $u_{\varphi^j_a, z_0}\in \varpi(a_j)[u_{\varphi^j, z_0}]$. 
			
			Then $\varphi^j_a$ converges to some $\psi$ in $\BB^I$ if and only if there exists some $\varphi^\infty_a\in \BB^I$ such that $u_{\varphi^\infty_a, z_0}\in \varpi(a_\infty)[u_{\varphi^\infty, z_0}]$. When this happens, $\psi = \varphi^\infty_a$.
		\end{enumerate}
	\end{Cor}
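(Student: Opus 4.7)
The plan for part (i) is to combine Lemma \ref{Lem_Moduli_one-sided transl end} with Lemma \ref{Lem_Moduli_general transl end is one of the constructed}. Starting from $u_0 := u_{\varphi, z_0}$, which by Theorem \ref{Thm_Moduli of transl end } (i) satisfies $\scT(u_0)=0$ with $\|u_0\|_{\mathfrak{X}^\alpha_\gamma, S, z_0}\leq 1$ for $\gamma := \mu^-/2$, apply Lemma \ref{Lem_Moduli_one-sided transl end} for $|a|$ sufficiently small to obtain a one-sided deformation $u_a$ with
\[
\|u_a - u_{\varphi,z_0} - a\Psi_1\|_{C^{2,\alpha}_\star, S, R}\ \leq\ C(1+|a|)\sqrt{\delta}\,(R/z_0)^{\gamma/2}R^{\mu_1},
\]
where $\Psi_1(x,z):=z^{\mu_1}\psi_1(x)$. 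In particular $\ES[u_a]$ is a pl-simple translating end, and for $|a|$ small (after enlarging $z_0$ if necessary) $u_a$ meets the smallness hypothesis of Lemma \ref{Lem_Moduli_general transl end is one of the constructed}, which produces $\varphi_a\in\BB^I$ such that $u_{\varphi_a,z_0}$ is exponentially asymptotic to $u_a$. Since $\Psi_1$ only decays polynomially while exponential beats polynomial, the leading asymptotic of $u_{\varphi_a,z_0}-u_{\varphi,z_0}$ at infinity remains $a\Psi_1$, i.e., $u_{\varphi_a,z_0}\in \varpi(a)[u_{\varphi,z_0}]$.

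For part (ii), forward direction, suppose $\varphi^j_a\to\psi\in\BB^I$. Theorem \ref{Thm_Moduli of transl end } (ii) together with standard interior Schauder estimates (Lemma \ref{Lem_Moduli_C^2,alpha_star est, interior}) yields $u_{\varphi^j_a,z_0}\to u_{\psi,z_0}$ and $u_{\varphi^j,z_0}\to u_{\varphi^\infty,z_0}$ locally in $C^{2,\alpha}_\star$. The hypothesis $u_{\varphi^j_a,z_0}\in\varpi(a_j)[u_{\varphi^j,z_0}]$ is, by definition, the asymptotic identity
\[
\lim_{z\to\infty} z^{-\mu_1}\bigl(u_{\varphi^j_a,z_0}-u_{\varphi^j,z_0}\bigr)(\cdot,z)\ =\ a_j\psi_1.
\]
To pass to the limit in $j$, I would reuse the barrier argument from Step 3 of Lemma \ref{Lem_Moduli_contraction map} applied to $v^j:= u_{\varphi^j_a,z_0}-u_{\varphi^j,z_0}-a_j\Psi_1$, which solves a linearized equation of the form \eqref{Moduli_Equ of u_varphi_pm} with coefficients controlled by the uniform $\mathfrak{X}^\alpha_\gamma$-bound on $u_{\varphi^j,z_0}$ and $u_{\varphi^j_a,z_0}$. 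Comparing $v^j$ to the barrier $z^{\mu_1+\gamma/4}\psi_1$ yields $\|v^j\|_{C^{2,\alpha}_\star,S,R}\lesssim R^{\mu_1+\gamma/4}$ with constants independent of $j$, so the asymptotic identity survives in the limit and we conclude $u_{\psi,z_0}\in\varpi(a_\infty)[u_{\varphi^\infty,z_0}]$.

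For the backward direction, assume such $\varphi^\infty_a$ exists; by compactness of $\BB^I$ it suffices to show every subsequential limit $\psi$ of $\{\varphi^j_a\}$ equals $\varphi^\infty_a$. The forward direction applied along the subsequence gives $u_{\psi,z_0}\in\varpi(a_\infty)[u_{\varphi^\infty,z_0}]$, so $u_{\psi,z_0}$ and $u_{\varphi^\infty_a,z_0}$ differ from $u_{\varphi^\infty,z_0}$ by the same asymptotic leading term $a_\infty\Psi_1$; in particular their mutual difference decays strictly faster than $z^{\mu_1}$. But \eqref{Moduli_L^2_mu_1 Diff of u_varphi w different initial} in Lemma \ref{Lem_Moduli_contraction map} (in the form pointed out in Remark \ref{Rem_Moduli_|u_(varphi^pm)| lower bd}) states that for distinct boundary parameters in $E_{<0}$ the difference satisfies $\limsup z^{-\mu_1}\|u_{\varphi_+}-u_{\varphi_-}\|_{C^0}>0$, forcing $\psi=\varphi^\infty_a$. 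The principal technical obstacle is the uniform-in-$j$ remainder estimate in the forward direction; the barrier construction makes this clean, but it relies on the uniform $\mathfrak{X}^\alpha_\gamma$-bound across the family and on the fact that the linearized coefficients degenerate uniformly in $z^{-1}$, so the barrier $z^{\mu_1+\gamma/4}\psi_1$ absorbs all error terms once $z_0$ is taken large enough.
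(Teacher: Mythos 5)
Your overall strategy is reasonable, but there is a genuine gap in part (i), and the argument for part (ii) is more roundabout than necessary because it misses the single observation that drives both parts of the paper's proof.

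The observation you miss is this: the fixed point $u_a = \scW_a(u_a)$ in Lemma \ref{Lem_Moduli_one-sided transl end}, built off the base $u_0 = u_{\varphi,z_0}$, satisfies
\[
(\partial_z^2 + \partial_z + z^{-1}L_S)(u_a - u_0) = \eta_{z_0}\bigl(\scR(u_a)-\scR(u_0)\bigr),\qquad (u_a - u_0)(\cdot, z_0/12)=0,
\]
by equation (\ref{Moduli_One-sided transl end Equ}). Adding the equation (\ref{Moduli_Transl equ w cut-off and prescib bdy deriv}) that $u_0$ itself solves shows that $u_a$ solves (\ref{Moduli_Transl equ w cut-off and prescib bdy deriv}) \emph{exactly}, with $\varphi$ replaced by $\varphi_a := \Pi_{<0}\partial_z u_a(\cdot, z_0/12)$. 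By the LHS of (\ref{Moduli_varphi <-> u_varphi biLip}) (valid for all such solutions by Remark \ref{Rem_Moduli_|u_(varphi^pm)| lower bd}) one gets $\|\varphi_a - \varphi\|_{L^2(S)} \lesssim |a| + \sqrt{\delta}$, so $\varphi_a \in \BB^I$ for $|a|$ small, and then the uniqueness assertion of Lemma \ref{Lem_Moduli_contraction map} forces $u_a = u_{\varphi_a, z_0}$ \emph{identically}, not merely up to an exponentially small error. This gives (i) directly, and the uniform estimate (\ref{Moduli_est one-sided deformtion}) then serves as the uniform-in-$j$ remainder bound that you instead attempt to re-derive via barriers in (ii).

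The gap in your version of (i) is concrete: Lemma \ref{Lem_Moduli_general transl end is one of the constructed} requires $\scT(w)=0$ on all of $S\times\RR_{>z_0/12}$ (its proof explicitly rewrites the equation as $T_S(u-w) = \eta_{z_0}(\scR(v)-\scR(w)) - (1-\eta_{z_0})(\scR(w)+z^{-1}\scT(0))$, which needs $T_S w = \scR(w)+z^{-1}\scT(0)$ on the full strip). But $u_a$ satisfies $\scT(u_a)=0$ only on $S\times\RR_{>z_0}$, where the cut-off $\eta_{z_0}$ equals $1$; on $S\times(z_0/12, z_0)$ it solves the modified equation. So the lemma's hypothesis is not met. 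You cannot fix this by restricting $u_a$ to $S\times\RR_{>z_0}$ and moving the base point, because the resulting $\psi$ would lie in a $\BB^I$ parametrized by a different base height and would not produce a member of the fixed family $\{u_{\varphi,z_0}\}$. Similarly, ``enlarging $z_0$ if necessary'' is not available: $z_0$ is fixed in the statement, and enlarging it changes the family. Your barrier computation for (ii) is plausible in outline but, besides the extra forcing term $a_j T_S\Psi_1$ and the nonzero boundary data $-a_j\Psi_1(\cdot, z_0/12)$ that you don't treat, it is doing work that (\ref{Moduli_est one-sided deformtion}) already does for free once the observation above identifies $u_{\varphi^j_a, z_0}$ with the one-sided deformation of $u_{\varphi^j, z_0}$.
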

	\begin{proof}
		By the proof of Lemma \ref{Lem_Moduli_one-sided transl end} above, we see that as long as $u_{\varphi, z_0}$ solves (\ref{Moduli_Transl equ w cut-off and prescib bdy deriv}), so do their one-sided deformations with probably a different $\varphi$. Therefore, (i) follows from the LHS of (\ref{Moduli_varphi <-> u_varphi biLip}) and Remark \ref{Rem_Moduli_|u_(varphi^pm)| lower bd}, while (ii) follows directly from (\ref{Moduli_L^2_mu_1 Diff of u_varphi w different initial}), (\ref{Moduli_est one-sided deformtion}) and the uniqueness assertion in Lemma \ref{Lem_Moduli_contraction map}.
	\end{proof}
	
	The following lemma guarantees that every simple translating end on one side is given by the $\varpi(a)$-action.
	\begin{Lem} \label{Lem_Moduli_General one-sided transl end}
		Let $\ES[u_0]$ be a pl-simple translating end, $\ES[u_+]$ be a simple translating end, both over $S\times \RR_+$.  Suppose $u_+\geq u_0$ near infinity. Then there exists $a\geq 0$ such that $u_+\in \varpi(a)[u_0]$.
	\end{Lem}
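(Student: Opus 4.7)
\medskip
\noindent\textbf{Proof plan.} The strategy is to first upgrade $\ES[u_+]$ from a simple end (qualitative decay) to a pl-simple end (polynomial decay) using the one-sided comparison $u_+ \geq u_0$, and then analyze the linearization of the translator equation at $u_0$ to extract the leading asymptotic of $v := u_+ - u_0$.

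\emph{Step 1: pl-simple upgrade.} Since $\ES[u_+]$ is a simple end, Remark~\ref{Rem_Moduli_C^0 vs C^k decay} combined with the proof of Lemma~\ref{Lem_Moduli_Simple end <=> u to 0} gives qualitative $C^{2,\alpha}_\star$-decay of $u_+$. The nonnegative function $v = u_+ - u_0$ satisfies a linear equation $T_S v = \tilde\scR(v)$ with coefficients decaying to zero as $z\to\infty$ (Remark~\ref{Rem_Moduli_Schauder est work for general oper}). The key observation is that positivity of $v$ together with the Harnack inequality (Lemma~\ref{Lem_Moduli_Harnack Ineq}) controls the oscillation of $v$ on each dyadic scale, so that $v$ is effectively of the form $c(z)\psi_1(x)+\text{lower order}$, with $c(z)>0$ satisfying (to leading order) the 1D model ODE $\cL_{\mu_1} c = 0$. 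Lemmas~\ref{Lem_Moduli_Sol cL_mu = 0 w est} and~\ref{Lem_Moduli_Sol cL_mu = 0 fast decay} show that positive decaying solutions of $\cL_{\mu_1}c=0$ are either $\sim z^{\mu_1}$ or exponentially decaying; in either case $v$ decays polynomially, so $u_+ = u_0 + v$ is pl-simple. Lemma~\ref{Lem_Moduli_Sharp asymp rate for pl-simple end} then sharpens the rate to $\|v\|_{C^{2,\alpha}_\star, S, R}\leq CR^{\mu^-}$.

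\emph{Steps 2--3: asymptotic expansion and sign argument.} Once polynomial decay of $v$ is in hand, treat $\tilde\scR(v)$ as a known forcing and solve $T_S w = \tilde\scR(v)$ in a weighted $L^2$-space via Lemma~\ref{Lem_Moduli_T_S u = f} with $\gamma$ chosen in $(2\mu^-,\mu^-)\setminus\Gamma(S)$; this strictly improves the rate of $w$ below that of $v$. Then $v-w$ is $T_S$-harmonic, and Corollary~\ref{Cor_Moduli_Sharp asymp for T_S u = 0} gives that either $v-w$ is rapidly decaying (leading, after absorbing all such iterations as in Step~4 of Lemma~\ref{Lem_Moduli_contraction map}, to $a=0$ and $u_+\in\varpi(0)[u_0]$), or $v-w = z^{\mu_l}\phi(x) + o(z^{\mu_l})$ for some $\mu_l\in\Gamma(S)\cap\RR_{<0}$ and $\phi\in E_l\setminus\{0\}$. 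Since $v\geq 0$ near infinity and the correction $w$ decays strictly faster than $z^{\mu_l}$, the leading eigenfunction $\phi$ must satisfy $\phi\geq 0$ on $S$. But $\psi_1>0$ is (up to positive scalar) the unique nonnegative eigenfunction of $-L_S$, while any higher eigenfunction is $L^2(S)$-orthogonal to $\psi_1>0$ and hence must change sign. Therefore $\mu_l=\mu_1$ and $\phi = a\psi_1$ with $a>0$, i.e.\ $v(x,z) = az^{\mu_1}\psi_1(x) + o(z^{\mu_1})$, which is precisely the condition $u_+\in\varpi(a)[u_0]$.

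\emph{Main obstacle.} The critical step is the pl-simple upgrade. As noted after Definition~\ref{Def_Moduli_p-simple Transl end}, it is not known in general whether every simple translating end over a closed self-shrinker is pl-simple. The one-sided assumption $u_+\geq u_0$ is essential: it turns $v$ into a \emph{nonnegative} solution of a linear elliptic equation, enabling the Harnack-plus-barrier argument driven by the positive first eigenfunction $\psi_1$ (a Perron-Frobenius phenomenon). Without this sign condition, qualitative decay of $u_+$ alone does not suffice to extract a polynomial rate; once the rate is established, the remaining steps are a fairly direct application of the $L^2$- and Schauder-estimates for $T_S$ from Sections~\ref{Subsec_Moduli_L^2 est}--\ref{Subsec_Moduli_C^2,alpha est}.
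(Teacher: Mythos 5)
Your overall architecture matches the paper's: first establish polynomial decay of $v := u_+ - u_0 \geq 0$, then use the linear $T_S$-theory to extract a leading asymptotic $az^{\mu_1}\psi_1$, then conclude $u_+\in\varpi(a)[u_0]$. Your Steps~2--3 are correct and in fact take a genuinely different route from the paper to pin down the leading eigenvalue. The paper first establishes, via a barrier, that $\cA\cR_\infty(v)\leq\mu_1-\gamma$, and separately assumes (else $a=0$ by Lemma~\ref{Lem_Pf main thm_fast poly decay => exp decay}) $\cA\cR_\infty(v)\geq\mu_1+\gamma/2$; since $[\mu_1+\gamma/2,\mu_1-\gamma]\cap\Gamma(S)=\{\mu_1\}$ this forces $\mu_l=\mu_1$. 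You instead allow $\mu_l$ to be any element of $\Gamma(S)\cap\RR_{<0}$ and invoke Perron--Frobenius: $v\geq 0$ forces the leading eigenfunction $\phi\geq 0$, and only $E_1$ contains a nonnegative eigenfunction. This is a clean alternative that avoids needing to pre-locate the decay rate in a narrow window. (Two small points: the correction equation is $T_S w = z^{-1}\tilde{\scR}(v)$, not $T_S w=\tilde{\scR}(v)$; and the weight should be chosen in $(\mu^-+\gamma^*,\gamma^*)\setminus\Gamma(S)$ with $\gamma^*=\cA\cR_\infty(v)$, not in $(2\mu^-,\mu^-)$, or the correction may fail to decay strictly faster than $v$ when $\gamma^*<\mu^-$.)

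The genuine gap is in Step~1, and it is the crux of the whole lemma. You assert that ``positivity of $v$ together with the Harnack inequality controls the oscillation of $v$ on each dyadic scale, so that $v$ is effectively of the form $c(z)\psi_1(x)+\text{lower order}$, with $c(z)>0$ satisfying (to leading order) the 1D model ODE $\cL_{\mu_1}c=0$.'' The Harnack inequality (Lemma~\ref{Lem_Moduli_Harnack Ineq}) only says $\sup_S v(\cdot,z)\leq C\inf_S v(\cdot,z')$ on comparable slices --- it bounds the ratio, it does not align the $x$-profile of $v$ with $\psi_1$ nor decouple the first Fourier mode from the higher ones in the forcing $z^{-1}\tilde\scR(v)$ (whose coefficients are only qualitatively small since $u_+$ is, a priori, merely a simple end). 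So the ``reduction to the 1D ODE'' is not justified, and the subsequent appeal to Lemmas~\ref{Lem_Moduli_Sol cL_mu = 0 w est}--\ref{Lem_Moduli_Sol cL_mu = 0 fast decay} does not attach. What actually closes this step in the paper is a \emph{comparison-function} argument (Claim~1 in the paper's proof): one exhibits the explicit subsolution $U(x,z):=\bigl((z/z_0)^{\mu_1-\gamma}-e^{-(z-z_0)/2}\bigr)\psi_1(x)$, which vanishes on $\{z=z_0\}$ and satisfies $-zT_S U+\tilde\scR(U)<0$ once the coefficients of $\tilde\scR$ are small. If $v$ were to grow faster than $z^{\mu_1-\gamma}$ along a sequence $R_j$, Harnack is used \emph{once} to turn a $\sup$ lower bound at height $2R_j$ into an $\inf$ lower bound at $R_j$; then the maximum principle propagates $v\geq c\,j\,U$ downward on $[z_0,R_j]$, and letting $j\to\infty$ at a fixed interior height contradicts boundedness of $v$. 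This is precisely the ``Harnack-plus-barrier'' mechanism you gesture at in your last paragraph, but it is quantitatively a $C^0$-comparison with a supersolution, not a reduction to $\cL_{\mu_1}c=0$. Without constructing the barrier and running the maximum-principle contradiction, you have no polynomial rate, and Lemma~\ref{Lem_Moduli_Sharp asymp rate for pl-simple end} (and hence all of Steps~2--3) is not yet available.
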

	\begin{proof}
		Let $3\gamma\in (\mu^-, 0)$ be fixed such that $(\mu_1, \mu_1-\gamma]\cap \Gamma(S)=\emptyset$, and $\delta(S, \gamma)\ll1$ TBD. By Lemma \ref{Lem_Moduli_Simple end <=> u to 0} and \ref{Lem_Moduli_Sharp asymp rate for pl-simple end} and taking $z_0(u_0, u_+, \delta)\gg 1$, we may assume that $u_+> u_0$ on $S\times \RR_{\geq z_0}$ and for every $R\geq z_0$,
		\begin{align*}
			\|u_0\|_{C^{2,1/2}_\star, S, R}\leq R^{2\gamma} \leq \delta, &\ & \|u_+\|_{C^{2,1/2}_\star, S, R}\leq \delta.
		\end{align*}
		We can impose the further assumption that,
		\begin{align}
			\limsup_{R\to +\infty} \|(u_+-u_0)(\cdot, R)\|_{C^0, S} R^{-\mu_1-\gamma/2} = +\infty.  \label{Moduli_|u_+-u_0| decay not too fast}
		\end{align}
		Otherwise, by the following Lemma \ref{Lem_Pf main thm_fast poly decay => exp decay}, we have $u_+\in [u_0]$.\\
		\textbf{Claim 1.} For $\delta(S, \gamma)\ll1$, \[
		\limsup_{R\to \infty} \|(u_+ - u_0)(\cdot, R)\|_{C^0, S}\cdot R^{-\mu_1+\gamma} <+\infty .   \]
		\begin{proof}[Proof of Claim 1.]
			Recall that $0 = \scT(u_+) - \scT(u_0) = -zT_S(u_+-u_0) + \tilde{\scR}(u_+-u_0)$, where $\tilde{\scR}$ is some second order linear differential operator, \[
			\tilde{\scR}(v) = \tilde{\cE}_1\cdot z\partial^2_z v + \tilde{\cE}_2\cdot\sqrt{z}\partial_z\nabla v + \tilde{\cE}_3 \cdot\nabla^2 v + \tilde{\cE}_4\cdot z\partial_z v + \tilde{\cE}_5\cdot \nabla v + \tilde{\cE}_6 v,   \]
			with pointwise estimates on the coefficients, for every $1\leq l\leq 6$, \[
			|\tilde{\cE}_l| \leq C(S)\delta.   \]
			Consider $U(x, z):= ((z/z_0)^{\mu_1-\gamma}- e^{-(z-z_0)/2})\psi_1$,  where recall that $\psi_1$ is the $L^2$-unit first eigenfunction of $-L_S$. A direct calculation provides $U(\cdot, z_0) = 0$ and,
			\begin{align}
				\begin{split}
					-z T_S(U) + \tilde{\scR}(U) & \leq \left(\frac{z}{z_0}\right)^{\mu_1-\gamma}\cdot \left( \gamma - \frac{(\mu_1-\gamma)(\mu_1-\gamma-1)}{z} +C(S)\delta \right)\psi_1 \\
					&\ + z e^{-(z-z_0)/2}\cdot \left( -\frac{1}{2} + \frac{1}{4} -\frac{\mu_1}{z} + C(S)\delta \right) \\
					& <0,
				\end{split} \label{Moduli_T+error(U)<0 in one-sided deform}
			\end{align}
			where the last inequality holds if $\delta(S, \gamma)\ll1$. 
			
			If Claim 1 fails, then there are $R_j\nearrow +\infty$ such that $\|(u_+-u_0)(\cdot, 2 R_j)\|_{C^0, S} \geq j R_j^{\mu_1-\gamma}$, and by Harnack inequality Lemma \ref{Lem_Moduli_Harnack Ineq}, $(u_+-u_0)(\cdot, R_j)\geq c(S, z_0)j\cdot U(\cdot, R_j)$.  Then by (\ref{Moduli_T+error(U)<0 in one-sided deform}) and maximum principle, \[
			u_+ - u_0 \geq c(S, z_0)j\cdot U, \ \ \ \text{ on }z\in [z_0, R_j].   \]
			This is impossible when $j\to \infty$ since $U>0$ when $z\gg z_0$ and the RHS tends to infinity.
		\end{proof}
		
		By combining Claim 1 with the Parabolic Schauder Estimates in the proof of Lemma \ref{Lem_Moduli_C^2,alpha_star est, interior}, we get that for every $R\gg  4z_0$, \[
		\|u_+-u_0\|_{C^{2,1/2}_\star, S, R}\leq C(u_+, u_0, z_0)R^{\mu_1-\gamma}.   \]
		Therefore by Lemma \ref{Append_Error est II of scT}, \[
		\|\tilde{\scR}(u_+-u_0)\|_{C^{1/2}_\star, S, R} \leq C(S,\gamma)R^{\mu_1+\gamma}.    \]
		By Lemmas \ref{Lem_Moduli_T_S u = f}, \ref{Lem_Moduli_C^0 est} and \ref{Lem_Moduli_C^2,alpha_star est, interior}, there exists $v$ solving $T_S v = z^{-1}\tilde{\scR}(u_+-u_0)$ on $S\times \RR_{\geq 10z_0}$ and satisfying \[
		\|v\|_{C^{2,1/2}_\star, S, R} \leq C(S, u_+, u_0) R^{\mu_1 + \gamma/2}.   \]
		While by the assumption (\ref{Moduli_|u_+-u_0| decay not too fast}) and Claim 1, $w:= u_+-u_0-v$ solves $T_S (w) = 0$ and $\cA\cR_\infty(w)\in [\mu_1+\gamma/2, \mu_1 -\gamma]$.  By Corollary \ref{Cor_Moduli_Sharp asymp for T_S u = 0}, since $[\mu_1+\gamma/2, \mu_1-\gamma]\cap \Gamma(S) = \{\mu_1\}$, we have $\|w-az^{\mu_1}\psi_1\|_{L^2_{\mu_1-\epsilon}, S, 10z_0}<+\infty$ for some $a>0$ and $0<\epsilon<|\gamma|/2$. Let $u_a\in \varpi(a)[u_0]$ be given by Lemma \ref{Lem_Moduli_one-sided transl end}, then the above estimate implies $\|u_+ - u_a\|_{L^2_{\mu_1-\epsilon}}<+\infty$, which could also be upgraded to a $C^0$-decaying estimate by Lemma \ref{Lem_Moduli_C^0 est}.   
		By Theorem \ref{Thm_Moduli of transl end } (ii), we have $u_+\in \varpi(a)[u_0]$. 
	\end{proof}
	
	\begin{Lem} \label{Lem_Pf main thm_fast poly decay => exp decay}
		Let $\beta\in (0, 1)$. There exists $\delta_3(S, \beta)>0 $, and $C(S, \beta)\geq 2$ such that if $z_0\geq C$, $u_\pm \in C^2_{loc}(S\times \RR_{\geq z_0})$ satisfies $\scT(u_\pm) = 0$, $\sup_{R\geq z_0}\|u_\pm\|_{C^2_\star, S, R} \leq \delta <\delta_3$ and 
		\begin{align}
			\liminf_{z\to +\infty} \|(u_+ - u_-)(\cdot, z)\|_{C^0, S} \cdot z^{-\mu_1 + \beta} < +\infty.  \label{Pf main thm_|u_+ - u_-| fast poly decay}
		\end{align}
		Then for every $R\geq 3z_0$ we have, \[
		\|u_+ - u_-\|_{C^{2,1/2}_\sharp, S, R} \leq C(S, \beta)\cdot \delta\cdot e^{-\beta(R-3z_0)}.   \]
	\end{Lem}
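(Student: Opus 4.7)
The plan is to linearize the translator equation for the difference $v:=u_+-u_-$, classify its asymptotic rate using the $L^2$-theory developed earlier, and then pin down the quantitative rate $\beta$ by an exponential substitution that makes the maximum principle accessible. Just as in the proof of Lemma \ref{Lem_Moduli_General one-sided transl end}, Lemma \ref{Append_Error est II of scT} lets me rewrite $\scT(u_+)-\scT(u_-)=0$ as
\[
T_S v = z^{-1}\tilde{\scR}(v),
\]
where $\tilde{\scR}$ is a second-order linear operator whose coefficients are pointwise bounded by $C(S)\delta$. My first task is to promote the liminf hypothesis (\ref{Pf main thm_|u_+ - u_-| fast poly decay}) to a uniform polynomial bound $\|v\|_{C^{2,1/2}_\star, S, R}\leq CR^{\mu_1-\beta/2}$ for all large $R$: the Harnack inequality Lemma \ref{Lem_Moduli_Harnack Ineq} (applied to $\pm v +$ const), together with the $C^0$ and Schauder estimates Lemmas \ref{Lem_Moduli_C^0 est} and \ref{Lem_Moduli_C^2,alpha_star est, interior}, handles this. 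Then an iteration using Lemma \ref{Lem_Moduli_T_S u = f} and Corollary \ref{Cor_Moduli_Sharp asymp for T_S u = 0}, in the spirit of Lemma \ref{Lem_Moduli_Sharp asymp rate for pl-simple end}, should yield $\cA\cR_\infty(v)=-\infty$; in view of Lemma \ref{Lem_Moduli_Sol cL_mu = 0 fast decay} this corresponds in fact to decay of $v$ faster than $e^{-(1-\epsilon(\delta))z}$.

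Next, to extract the precise rate $\beta$, I substitute $v(x,z) = e^{-\beta z}u(x,z)$. A direct computation gives $e^{\beta z}T_S v = \partial_z^2 u + (1-2\beta)\partial_z u + [\beta(\beta-1) + z^{-1}L_S]u$, so the linear equation $\mathcal{M}^{(\beta)}u=0$ obeyed by $u$ (after absorbing the small perturbation $e^{\beta z}z^{-1}\tilde{\scR}(e^{-\beta z}u)$) has zeroth-order coefficient
\[
c(x,z) = \beta(\beta-1) + \frac{|A_S|^2 + 1/2}{z} + O(\delta).
\]
For $\beta\in(0,1)$, $z\geq 3z_0\geq C(S,\beta)$, and $\delta\leq \delta_3(S,\beta)$, this is bounded above by $-\tfrac{1}{2}\beta(1-\beta)<0$. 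The exponential twist has thereby converted $T_S$ into a linear elliptic-parabolic operator with strictly negative zeroth-order coefficient, where the classical weak maximum principle is available.

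With this sign in hand, I apply comparison on the strip $S\times[3z_0,\infty)$ against the barrier $\bar W(x):= K\psi_1(x)$, where $\psi_1>0$ is the first eigenfunction of $-L_S$. Using $L_S\psi_1=-\mu_1\psi_1$, a computation gives $\mathcal{M}^{(\beta)}\bar W\leq c\cdot\bar W<0$, so $\bar W$ is a strict supersolution. Choose $K$ so that $\bar W\geq |u(\cdot,3z_0)|=e^{3\beta z_0}|v(\cdot,3z_0)|$ on the bottom slice; since $\|u_\pm\|_{C^2_\star}\leq\delta$ yields $|v(\cdot,3z_0)|\leq 2\delta$, one may take $K=C(S)\delta\cdot e^{3\beta z_0}$. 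The decay $u(\cdot,z)\to 0$ as $z\to\infty$, needed to close the comparison on an unbounded strip, is provided by the first step (since $v$ decays faster than $e^{-\beta z}$). The maximum principle then forces $|u|\leq\bar W$, equivalently
\[
|v(x,z)|\leq C(S,\beta)\delta\cdot e^{-\beta(z-3z_0)}\psi_1(x),\qquad z\geq 3z_0.
\]
Upgrading to the $C^{2,1/2}_\sharp$ norm is routine using the short-term Schauder estimate Lemma \ref{Lem_Moduli_short term Schauder est, int} applied slab by slab on $S\times[R-1,R+2]$.

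The main obstacle is that the Jacobi potential $z^{-1}(|A_S|^2+\tfrac{1}{2})$ renders the zeroth-order coefficient of $T_S$ strictly positive, obstructing any naive barrier argument via the maximum principle. The essential trick, the exponential substitution $v=e^{-\beta z}u$ valid for $\beta\in(0,1)$, flips the sign to $\beta(\beta-1)+o(1)<0$, and it is precisely this flip that unlocks the quantitative exponential bound; the restriction $\beta<1$ in the statement reflects the necessity $\beta(\beta-1)<0$ for this substitution to work.
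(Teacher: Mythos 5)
Your core barrier — the $\psi_1$-supersolution together with the exponential factor $e^{-\beta z}$ — is exactly the mechanism the paper exploits. In the paper's notation your $\bar W\cdot e^{-\beta z}$ is the term $e^{-\beta(z-z_0)}\bar\psi_1$ in the barrier $W_\epsilon$, and the computation (\ref{Pf main thm_W_epsilon super sol}) shows that $\beta(1-\beta)>0$ plus $L_S\bar\psi_1=-\mu_1\bar\psi_1$ makes this a strict supersolution of the full (perturbed) operator; so the "flip of sign" you describe is already built into the paper's argument and the "main obstacle" you identify is somewhat misdiagnosed. The genuine difference — and the genuine gap — is in how you close the maximum principle on the noncompact strip.

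The gap: your comparison with $\bar W=K\psi_1$ on $S\times[3z_0,\infty)$ requires $u=e^{\beta z}v\to0$ at infinity, and you delegate this to a "first step" in which you would show $v$ decays faster than $e^{-(1-\epsilon(\delta))z}$. This step is not justified. The Harnack inequality in Lemma \ref{Lem_Moduli_Harnack Ineq} requires a \emph{positive solution} of $\scL w=0$; $\pm v+\mathrm{const}$ is not a solution because $\scL$ has nontrivial zeroth-order terms, so the cited application fails. Even granting a uniform polynomial bound, the iteration "in the spirit of Lemma \ref{Lem_Moduli_Sharp asymp rate for pl-simple end}" does not directly yield $\cA\cR_\infty(v)=-\infty$: there the error term $\scR(u)$ gains a factor $R^{\gamma_0}$ over $u$ itself (quadratic nonlinearity), whereas your $\tilde\scR(v)$ is \emph{linear} in $v$ with coefficients only $O(\delta+z_0^{-1})$ — it decays at the \emph{same} polynomial rate as $v$, not a strictly faster one, so the iteration in that lemma does not loop. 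One would have to argue by ruling out $\cA\cR_\infty(v)\in\Gamma(S)$ using the $\liminf$ hypothesis, which requires extra care about $\liminf$ vs.\ $\limsup$ decay. None of this is spelled out, and as stated there is a risk of circularity: you need super-exponential decay to apply the maximum principle, but you have not shown how to obtain that decay without already controlling $v$ on all of $[z_0,\infty)$.

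The paper sidesteps this entirely. It works on the \emph{finite} strips $S\times[z_0,z_j]$ and uses the two-term barrier $W_\epsilon=(e^{-\beta(z-z_0)}+\epsilon(z/z_0)^{\mu_1-\beta/2})\bar\psi_1$: the polynomial term with the small coefficient $\epsilon$ dominates the top boundary values $\|v(\cdot,z_j)\|\le z_j^{\mu_1-\beta/2}/j$ provided by the $\liminf$ hypothesis (for $j>1/\epsilon$), while the exponential term absorbs the bottom boundary $\|v(\cdot,z_0)\|\le\delta$. The comparison on $[z_0,z_j]$ is closed with no information at infinity, and then sending $j\to\infty$ followed by $\epsilon\searrow0$ gives the clean pointwise bound, after which the $C^{2,1/2}_\sharp$ upgrade is by Schauder, exactly as you say. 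If you replace your "first step" and the exponential substitution by this finite-strip two-term barrier, the rest of your write-up goes through; as written, the argument does not.
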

	\begin{proof}
		Let $v:= u_+ - u_-$. By Lemma \ref{Append_Error est II of scT}, for $\delta_3(S, \beta)\ll1$, 
		\begin{align}
			0 = \scT(u_+) - \scT(u_-) = -z T_S (v) + \tilde{\scR}(v),  \label{Pf main thm_T_S(u_+-u_-) +error = 0}   
		\end{align}
		where recall $T_S:= \partial_z^2 + \partial_z + z^{-1}L_S$; $\tilde{\scR}$ is some second order linear differential operator, \[
		\tilde{\scR}(v) = \tilde{\cE}_1\cdot z\partial^2_z v + \tilde{\cE}_2\cdot\sqrt{z}\partial_z\nabla v + \tilde{\cE}_3 \cdot\nabla^2 v + \tilde{\cE}_4\cdot z\partial_z v + \tilde{\cE}_5\cdot \nabla v + \tilde{\cE}_6 v,   \]
		with pointwise estimates on the coefficients, for every $1\leq l\leq 6$, \[
		|\tilde{\cE}_l| \leq C(S)(\delta + z_0^{-1}).   \]
		
		For every $\epsilon\in (0, 1)$, consider the barrier functions $W_\epsilon(x, z):= (e^{-\beta(z-z_0)} + \epsilon \cdot(z/z_0)^{\mu_1-\beta/2})\bar{\psi}_1$, where recall $\bar{\psi}_1:= \psi_1/\inf \psi_1$ is the normalized first eigenfunction of $-L_S$. Then we have,
		\begin{align}
			\begin{split}
				-z T_S (W_\epsilon) + \tilde{\scR}(W_\epsilon) & \geq \delta z e^{-\beta (z-z_0)}\cdot \left( \beta - \beta^2 + \frac{\mu_1}{z} -C(S)(\delta + z_0^{-1}) \right)\bar{\psi}_1 \\
				&\ + \epsilon z^{\mu_1-\beta/2}\left(\frac{\beta}{2} - \frac{(2\mu_1-\beta)(2\mu_1-\beta-2)}{4z} - C(S)(\delta + z_0^{-1}) \right)\bar{\psi}_1 \\
				& \geq 0,
			\end{split} \label{Pf main thm_W_epsilon super sol}
		\end{align}
		where the last inequality follows by taking $\delta <\delta_3(S, \beta)\ll1$ and $z_0\geq C(S, \beta)\gg 1$.
		
		By (\ref{Pf main thm_|u_+ - u_-| fast poly decay}), there are $z_j\to +\infty$ such that $\|(u_+ - u_-)(\cdot, z_j)\|_{C^0, S} \leq z_j^{\mu_1-\beta/2}/j$. Then we have for $j>1/\epsilon$, 
		\begin{align*}
			\|v(\cdot, z_0)\|_{C^0, S} \leq \delta \leq \inf_S W_\epsilon(\cdot, z_0); &\ & \|v(\cdot, z_j)\|_{C^0, S} \leq z_j^{\mu_1-\beta/2}/j \leq \inf_S W_\epsilon(\cdot, z_j).
		\end{align*}
		Then by (\ref{Pf main thm_T_S(u_+-u_-) +error = 0}), (\ref{Pf main thm_W_epsilon super sol}) and maximum principle, first let $j\to \infty$ and then $\epsilon\searrow 0$, we get pointwise estimate, \[
		|v(\cdot, z)| \leq C(S)\delta \cdot e^{-\beta(z-z_0)}, \ \ \ \ \ \forall z\geq z_0.   \]
		
		To get $C^{2,1/2}_\sharp$ estimate, first note that since $\scT(u_\pm) = 0$ and $\|u_\pm\|_{C^2_\star, S\times \RR_{\geq z_0}}\leq \delta$, by the same argument as in Lemma \ref{Lem_Moduli_C^2,alpha_star est, interior}, using the parabolic Schauder Estimate \cite{Knerr80_SpacialParabSchauder}, we know that for every $R\geq 2z_0$, \[
		\|u_\pm\|_{C^{2, 1/2}_\star, S, R}\leq C(S)\delta .  \]
		Then by (\ref{Pf main thm_T_S(u_+-u_-) +error = 0}) and Lemma \ref{Append_Error est II of scT}, the proof of elliptic Schauder estimate Lemma \ref{Lem_Moduli_short term Schauder est, int} works here to give \[
		\|v\|_{C^{2,1/2}_\sharp, S, R} \leq C(S)\|v\|_{C^0, S\times [R-1, R+2]} \leq C(S)\delta\cdot e^{-\beta(R-3z_0)}, \ \ \ \ \forall R\geq 3z_0,   \]
		provided $z_0\geq C(S, \beta)\gg 1$.
	\end{proof}

	%%%%%%%%%%%%%%%%%%%%%%%%%%%%%%%%%%%%%%%%%%%%%%%%%%%%%%%%%%%%%%%%%%%%%%%%%%%%%%%%%%
	%%%%%%%%%%%%%%%%%%%%%%%%%%%%%%%%%%%%%%%%%%%%%%%%%%%%%%%%%%%%%%%%%%%%%%%%%%%%%%%%%%
	%%%%%%%%%%%%%%%%%%%%%%%%%%%%%%%%%%%%%%%%%%%%%%%%%%%%%%%%%%%%%%%%%%%%%%%%%%%%%%%%%%
	%%%%%%%%%%%%%%%%%%%%%%%%%%%%%%%%%%%%%%%%%%%%%%%%%%%%%%%%%%%%%%%%%%%%%%%%%%%%%%%%%%
	%%%%%%%%%%%%%%%%%%%%%%%     Proof  of  Main  Theorem     %%%%%%%%%%%%%%%%%%%%%%%%%
	%%%%%%%%%%%%%%%%%%%%%%%%%%%%%%%%%%%%%%%%%%%%%%%%%%%%%%%%%%%%%%%%%%%%%%%%%%%%%%%%%
	%%%%%%%%%%%%%%%%%%%%%%%%%%%%%%%%%%%%%%%%%%%%%%%%%%%%%%%%%%%%%%%%%%%%%%%%%%%%%%%%%%
	%%%%%%%%%%%%%%%%%%%%%%%%%%%%%%%%%%%%%%%%%%%%%%%%%%%%%%%%%%%%%%%%%%%%%%%%%%%%%%%%%%
	%%%%%%%%%%%%%%%%%%%%%%%%%%%%%%%%%%%%%%%%%%%%%%%%%%%%%%%%%%%%%%%%%%%%%%%%%%%%%%%%%%
	\section{Complete $\I$-minimizing Translators} \label{Sec_Pf of Main Thm}
	Let $S\subset \RR^n$ be an $(n-1)$-dimensional closed self-shrinker. Recall that translators are critical points of Ilmanen's functional \[
	\I[\Sigma] = \I^{1}[\Sigma]:= \int_\Sigma e^{z}\ d\scH^n.    \] 
	The first goal of this Section is to prove:
	\begin{Thm} \label{Thm_Existence of Transl w prescib end}
		Let $\beta\in (0, 1)$, $S\subset \RR^n$ be a closed smooth embedded self-shrinker, $\varepsilon(z)$ be a decreasing positive function on $\RR_+$ such that $\varepsilon(z)\to 0$ when $z\to +\infty$. Then there exists $\Lambda_0(\beta, S, \varepsilon)\gg 1$ with the following significance:
		
		If $z_0\geq 2$, $\Sigma_e = \ES[w]\cap \RR_{\geq z_0}$ is a simple translating end over $S\times \RR_+$, with \[
		\|w\|_{C^2_\star, S, R} \leq \varepsilon(R),   \]
		for every $R\geq z_0$. Then there exists an $n$-dimensional $\I$-minimizing translator $\Sigma \subset \RR^{n+1}$, possibly with $(n-7)$-dimensional singularities, \textbf{exponentially asymptotic} to $\Sigma_e$ near infinity in the following sense: $\Sigma \cap \RR^n\times \RR_{\geq \Lambda_0 z_0} = \ES[u]$ for some $u\in C^2_{loc}(S\times \RR_{\geq\Lambda_0 z_0})$ satisfying 
		\begin{align}
			\|u - w\|_{C^2_\sharp, S, R}  \leq C(\beta, S, \varepsilon) e^{-\beta (R-\Lambda_0 z_0)},\ \ \ \forall R\geq \Lambda_0 z_0.  \label{Pf main thm_transl exp asymp to ends}   
		\end{align}
		In particular, $\{\Sigma + te_{n+1}\}_{t\in \RR}$ has the tangent flow at $-\infty$ to be $S\times \RR$.
		
		Moreover, if $\Sigma_e$ is invariant with respect to some closed subgroup $G$ of the isometry group of $\RR^n$, then $\Sigma$ can be chosen $G$-invariant as well.
	\end{Thm}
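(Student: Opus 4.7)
The plan is to adapt Chan's exterior minimization strategy \cite{Chan97} to the translator setting. For each $R \gg z_0$, the first step is to produce an $\I$-minimizing hypersurface $\Sigma_R$ in $\RR^n \times (-\infty, R]$ whose boundary equals the $(n{-}1)$-cycle $\Sigma_e \cap \{z = R\}$. Existence follows from standard GMT compactness for integral currents minimizing the elliptic integrand $e^z \, d\scH^n$, with horizontal confinement provided by translating $\ES[w]$ by constants in the $\partial_{x_i}$-directions as barriers, and with vertical confinement (keeping $\Sigma_R$ in a fixed slab) forced by the weight $e^z$ which strongly penalizes large $z$. Federer-type regularity for minimizers of smooth elliptic integrands yields that $\Sigma_R$ is smooth outside a set of Hausdorff dimension at most $n-7$.

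Next I would prove the key uniform closeness: there exists $\Lambda_0 = \Lambda_0(\beta, S, \varepsilon)$ such that for every $R \geq \Lambda_0 z_0$, the minimizer $\Sigma_R$ agrees on $\{z \geq \Lambda_0 z_0\}$ with a normal graph $\ES[u_R]$ over $\Sigma_e$ with $\|u_R - w\|_{C^2_\star, S, \rho}$ small for all $\rho \in [\Lambda_0 z_0, R]$. Fixing $\delta > 0$ small, define $R_\ast(R)$ as the smallest radius such that $\Sigma_R \cap \{z \geq R_\ast\}$ lies in a $\delta$-graphical neighborhood of $\Sigma_e$; the goal is to show $R_\ast(R) \leq \Lambda_0 z_0$ uniformly in $R$. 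Granting this, since both $u_R$ and $w$ satisfy $\scT = 0$ and $u_R(\cdot, R) = w(\cdot, R)$ by construction of the Dirichlet data, Lemma \ref{Lem_Pf main thm_fast poly decay => exp decay} upgrades the $\delta$-closeness on $\{z \geq \Lambda_0 z_0\}$ directly to the exponential estimate \eqref{Pf main thm_transl exp asymp to ends}, because the boundary match supplies a sequence of slices on which $(u_R - w)$ decays faster than any polynomial rate.

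The main obstacle is the uniform bound $R_\ast(R) \leq \Lambda_0 z_0$, which I would prove by contradiction. If $R_j \to \infty$ with $R_\ast(R_j) \to \infty$, rescale by $R_\ast(R_j)^{1/2}$ and form the associated ancient rescaled mean curvature flows $\widetilde{\Sigma}_j(\tau) := e^{\tau/2}(\Sigma_{R_j} - e^{-\tau}\partial_z)$, centered at spacetime points selected so that the distance of $\widetilde{\Sigma}_j$ from $S \times \RR$ is exactly $\delta$ on a fixed unit ball at $\tau = 0$. Using Ilmanen's equivalence between $\I$-stationarity and rescaled MCF \cite{Ilmanen94_EllipReg}, Huisken's monotonicity, and the Brakke--White regularity Theorem \ref{Thm_Pre_Brakke Reg}, any Brakke limit is a nontrivial ancient rescaled mean curvature flow whose tangent flow at $-\infty$ is $S \times \RR$ with multiplicity one, is distinct from $S \times \RR$, and lies on one side of $S \times \RR$. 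This contradicts the Liouville-type rigidity theorem of Chodosh--Choi--Mantoulidis--Schulze \cite{CCMS20_GenericMCF}, which forces such a one-sided ancient rescaled flow to coincide with $S \times \RR$.

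With the uniform bound in hand, a diagonal extraction as $R \to \infty$, together with $\I$-minimizer compactness, produces a complete $\I$-minimizing translator $\Sigma \subset \RR^{n+1}$ with singular set of dimension $\leq n-7$ and with $\Sigma \cap \{z \geq \Lambda_0 z_0\} = \ES[u]$ obeying \eqref{Pf main thm_transl exp asymp to ends}; Lemma \ref{Lem_Pre_Transl w simple end has entropy finite} and Corollary \ref{Cor_Pre_Blow down transl MCF split} then identify $S \times \RR$ as the tangent flow at $-\infty$. For the $G$-invariance, when $\Sigma_e$ is $G$-invariant the minimization can be carried out within the class of $G$-invariant competitors (the class is closed under varifold convergence and $\I$ is $G$-invariant, so the infimum is attained), and the same contradiction argument above, performed within the equivariant category, yields a $G$-invariant $\Sigma$.
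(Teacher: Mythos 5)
Your global strategy (minimize $\I$ with Dirichlet data at $\{z=R\}$, establish a uniform height bound via a blow-down contradiction, send $R\to\infty$) matches the proof in this paper, but the contradiction argument for the uniform bound $R_\ast(R)\leq\Lambda_0 z_0$ has a genuine gap. You assert without justification that the Brakke limit of the rescaled minimizers ``lies on one side of $S\times\RR$,'' and then claim that the CCMS20 Liouville theorem forces such a one-sided ancient rescaled flow to coincide with $S\times\RR$. Both assertions are wrong. First, the minimizer $\Sigma_R$ need not lie on one side of $\Sigma_e$: the comparison argument squeezes $\Sigma_R$ between a supergraph and a subgraph of $\Sigma_e$, and no sign constraint on $u_R - w$ holds. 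Second, nontrivial one-sided ancient rescaled flows asymptotic to a compact self-shrinker do exist — they are exactly the flows $S^\kappa_\pm(\tau)$ used in Section~\ref{Sec_Fattening} of this paper, and what \cite{CCMS20_GenericMCF} proves is their existence and uniqueness, not their triviality. So ``one-sided implies static'' is false and your purported contradiction does not arise.

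The missing ingredient is the intermediate barrier step (Claim~2 of the proof): using comparison functions of the form $w\pm\big(\delta+\varepsilon(\bar\rho_R)\big)(z/\rho)^{\mu_1-1}\bar\psi_1$, the $\delta$-closeness at height $\bar\rho_R$ is first upgraded to a quantitative fast polynomial decay $|u_R - w|\lesssim (z/\bar\rho_R)^{\mu_1-1}\bar\psi_1$ on $[\bar\rho_R, R/2]$. In the rescaled-flow time variable this becomes decay at rate $e^{-(\mu_1-1)\tau}$, which is \emph{strictly faster} than the fastest admissible Jacobi mode $e^{-\mu_1\tau}$; it is this excess decay rate, not any one-sidedness, that triggers the CCMS20 rigidity and forces the limiting ancient flow to be static. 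The same polynomial rate, passed to the limit $R\to\infty$, is also what supplies the hypothesis $\liminf_{z\to\infty}\|(u-w)(\cdot,z)\|_{C^0}\,z^{-\mu_1+\beta}<\infty$ required by Lemma~\ref{Lem_Pf main thm_fast poly decay => exp decay}; the boundary match $u_R(\cdot,R)=w(\cdot,R)$ is a finite-$R$ condition that does not survive the limit to give information about the limiting $u$ at $z=\infty$, so it cannot substitute for the polynomial decay (although a finite-domain version of the barrier argument in that lemma could be run on $[\Lambda_0 z_0, R]$ once the uniform closeness is in hand, which is not a literal application of the lemma as you suggest).
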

	
	Recall that the norms we are using here are 
	\begin{align*}
		\|u\|_{C^2_\star, \Omega} & := \sup_{(x,z)\in \Omega} \left( |u| + |\nabla_S u| + |z\partial_z u| + |\nabla^2_S u| + |\sqrt{z}\partial_z\nabla_S u| + |z\partial^2_z u| \right); \\
		\|u\|_{C^2_\star, S, R} & := \|u\|_{C^2_\star, S\times [R, 2R]}; \\
		\|u\|_{C^2_\sharp, \Omega} & := \sum_{0\leq k+l\leq 2} \sup_{(x, z)\in \Omega} z^{-l/2}|\partial_z^k \nabla_S^l u| ; \\
		\|u\|_{C^2_\sharp, S, R} & := \|u\|_{C^2_\sharp, S\times [R, R+1]}.
	\end{align*}
	
	To prove Theorem \ref{Thm_Existence of Transl w prescib end}, we first recall the following convergence, that known as the elliptic regularization of mean curvature flow.
	\begin{Lem} \label{Lem_Pf main thm_ellip reg smooth approx MCF}
		Let $\{S_j\}_{j\geq 1}$ be a family of closed hypersurfaces in $\RR^{n}\times\{0\}$ which $C^2$-converges to $S_\infty$; $\epsilon_j\searrow 0$.  Let $P_j\in \mbfI^n(\RR^{n+1})$ be a minimizer of $\I^{\epsilon_j}$ among \[
		\{T\in \mbfI^n(\RR^{n+1}): \partial T = [S_j]\},   \]
		Let $\cM_j:= \{\|P_j(t)\|:= \|P_j\| + (t/\epsilon_j)\partial_z\}_{t\geq 0}$ be the associated mean curvature flow of $P_j$ in $\RR^{n+1}$, $\{S_\infty(t)\}_{t\in [0, T_m)}$ be the mean curvature flow of $S_\infty$ before first singular time $T_m$.
		
		Then as $j\to \infty$,
		\begin{enumerate} [(i)]
			\item $\cM_j$ converges to $\cM_\infty:= \{S_\infty(t)\times \RR\}_{t\in [0, T_m)}$ in $C^\infty_{loc}(\RR^n\times \RR\times (0, T_m))$;
			\item $\spt(\cM_j)\to \spt(\cM_\infty)$ locally in Hausdorff distance in $\RR^n\times \RR_{\geq 0}\times [0, T_m)$. In other words, there exists a compact exhaustion $K_1\subset K_2\subset \dots \nearrow \RR^n\times \RR_{\geq 0}\times [0, T_m)$ and $\epsilon_j\searrow 0$ such that for every $j\geq 1$, 
			\begin{align*}
				\spt(\cM_j)\cap K_j \subset \BB_{\epsilon_j}(\spt(\cM_\infty)), & \ & \spt(\cM_\infty)\cap K_j \subset \BB_{\epsilon_j}(\spt(\cM_j)). 	
			\end{align*}
		\end{enumerate}
	\end{Lem}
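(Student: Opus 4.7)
The plan is to invoke Ilmanen's elliptic regularization scheme \cite{Ilmanen94_EllipReg}, supplemented by Brakke--White regularity \cite{White05_MCFReg} to upgrade Brakke convergence to smooth convergence wherever the limit is smooth. The starting point is that $P_j$, being $\I^{\epsilon_j}$-minimizing, is a minimal hypersurface for the conformal metric $e^{2z/(n\epsilon_j)} g_{\Euc}$ on $\RR^{n+1}$; consequently $\{\cM_j(t) = \|P_j\| + (t/\epsilon_j)\partial_z\}_{t \geq 0}$ is an integral Brakke flow, smooth in the interior. Uniform local area and entropy bounds on $\cM_j$ follow from comparison with the cylinder $S_j \times \RR$, whose $\I^{\epsilon_j}$-mass in compact regions serves as an a priori upper bound for that of $P_j$, and Brakke's compactness theorem then yields a subsequential Brakke limit $\cM_\infty'$.

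The next step is to identify $\cM_\infty'$ with the expected flow $\{S_\infty(t) \times \RR\}_{t \in [0, T_m)}$. A crucial observation is that spatial translation of $\cM_j$ by $\alpha\partial_z$ coincides with the time shift $t \mapsto t + \alpha\epsilon_j$; since $\epsilon_j \to 0$, the limit $\cM_\infty'$ is invariant under $z$-translations, so it splits as $\{\Sigma_\infty(t) \times \RR\}$ for some integral Brakke flow $\Sigma_\infty$ in $\RR^n$. Ilmanen's initial-value analysis, using the cylinders $S_j \times \RR$ as barriers together with the hypothesis $S_j \to S_\infty$ in $C^2$, identifies the initial slice as $|S_\infty|$ with multiplicity one. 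Uniqueness of smooth MCF from smooth initial data then gives $\Sigma_\infty(t) = S_\infty(t)$ on $[0, T_m)$, and Brakke--White regularity (Theorem \ref{Thm_Pre_Brakke Reg}) upgrades the convergence to $C^\infty_{loc}$ on $\RR^n \times \RR \times (0, T_m)$, proving (i).

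For (ii), upper-semi-continuity of $\spt$ under Brakke convergence gives $\spt(\cM_j) \cap K \subset \BB_{\delta_j}(\spt(\cM_\infty))$ on any fixed compact $K \subset \RR^n \times \RR_{\geq 0} \times [0, T_m)$ for some $\delta_j \searrow 0$, while the reverse inclusion follows from the $C^\infty_{loc}$ convergence in (i) combined with the non-vanishing of mass in any open neighborhood of a smooth multiplicity-one point of $\cM_\infty$; a diagonal extraction produces the compact exhaustion $\{K_j\}$ and error sequence $\{\epsilon_j\}$ in the statement. The principal obstacle I anticipate is the initial-value analysis, namely verifying $\lim_{t \searrow 0} \cM_\infty'(t) = |S_\infty \times \RR|$ with multiplicity one, since a priori $P_j$ could spread into $\{z \ll 0\}$ in nontrivial ways. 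I would appeal directly to Ilmanen's argument rather than reprove it, observing that the uniform $C^2$-convergence $S_j \to S_\infty$ is precisely what is needed to make the barrier construction apply in our setting.
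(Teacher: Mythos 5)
Your proposal follows the same route the paper takes: the paper's own proof is a two-line citation ("(i) follows from \cite{Ilmanen94_EllipReg} and White's $\epsilon$-regularity Theorem \cite{White05_MCFReg}; (ii) follows from avoidance principle"), and you are fleshing out exactly the Ilmanen--White machinery they invoke. Your decomposition of (i) -- uniform mass bounds via comparison with $S_j\times\RR$, Brakke compactness, the $z$-translation/time-shift observation forcing a splitting limit, identification of the initial slice, uniqueness of smooth MCF, and finally Brakke--White to upgrade -- is the standard chain and is what the citation is meant to encode.

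The one place where you deviate is (ii). The paper handles the ``$\spt(\cM_\infty)\subset\BB_\epsilon(\spt(\cM_j))$'' inclusion via the avoidance principle (which works uniformly on all of $[0,T_m)$, including $t=0$), whereas you derive it from the $C^\infty_{loc}$ convergence established in (i). That smooth convergence is only available on $(0,T_m)$, so as stated your argument for the reverse inclusion leaves a gap at $t=0$ and near it, where $\cM_j$ still carries the boundary $[S_j]$ and the $\{z\ll 0\}$ portion of $P_j$. This is not fatal -- the spacetime (rather than slicewise) nature of the support, the $C^2$-convergence $S_j\to S_\infty$, and the relation $\cM_j(t)=\|P_j\|+(t/\epsilon_j)\partial_z$ let you patch in the $t\approx 0$ region by hand -- but the avoidance-principle route the paper uses is cleaner precisely because it does not need to distinguish $t=0$ from $t>0$. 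You would do well to either invoke the general support-convergence statement the paper records in Section \ref{Subsec_Prelim GMT} (Brakke convergence $+$ Huisken monotonicity $+$ avoidance $\Rightarrow$ local Hausdorff convergence of supports) or explicitly treat the initial-time slice. Apart from that, the argument is sound and the one genuine subtlety -- multiplicity-one identification of the initial slice and the fact that the $S_j$ are varying rather than fixed -- is correctly flagged and correctly handled by the uniformity of Ilmanen's barrier construction under $C^2$-closeness.
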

	\begin{proof}
		(i) follows from \cite{Ilmanen94_EllipReg} and White's $\epsilon$-regularity Theorem \cite{White05_MCFReg}. (ii) follows from avoidance principle.
	\end{proof}
	
	\begin{proof}[Proof of Theorem \ref{Thm_Existence of Transl w prescib end}.]
		%   Let $\Omega\subset \RR^{n+1}$ be an smooth open subset such $\partial \Omega \cap \{|z|>2\sqrt{z_0}\} = \Sigma_e\cap \{|z|>2\sqrt{z_0}\}$.  For $R>z_0$, let $\Sigma_R\in \mbfI_n(\RR^{n+1})$ be minimizing $\I$ among \[
		%     \{T\in \mbfI_n(\RR^{n+1}): \partial T = \partial (\partial\Omega \llcorner \{z<R\})\}.   \]
		%   Thus $\Sigma_R$ is a translator in $\{z<R\}$. 
		%   For every $\delta>0$, define %$\rho_R = \rho_R(\delta)$ to be the infimum $\rho>z_1$ such that 
		%   \begin{align*}
			%     &\ \Sigma_R\cap \RR^n\times\{z\} \subset \sqrt{z}\BB_\delta(S), \ \ \ \forall \rho\leq z\leq R; \\
			%     &\ \Sigma_R \cap \RR^n\times [\rho, R/2] = \ES[u_R] \ \text{ with }\  \|u_R\|_{C^2_\star, S\times [\rho, R/2]}\leq \delta. 
			%   \end{align*}
		%   \begin{align*}
			%     \rho_R = \rho_R(\delta) := \inf\big\{\rho>z_1: &\ \Sigma_R\cap \RR^n\times\{z\} \subset \sqrt{z}\BB^n_\delta(S), \ \ \ \forall \rho\leq z\leq R; \\
			%     &\ \Sigma_R\cap \RR^n\times [\rho, R/2] = \ES[u_R] \text{ with }\|u_R\|_{C^2_\star, S\times [\rho, R/2]}\leq \delta \big\}. 
			%    \end{align*} 
		For every $R> 2z_0$, let $\mathfrak{T}_R(S, \varepsilon, z_0)$ be the space of pairs $(\Sigma_e, \Sigma_R)$ such that,
		\begin{enumerate}[(i)]
			\item $\Sigma_e = \ES[w]\cap \RR^n\times \RR_{\geq z_0}$ is a simple translating end over $S\times \RR_+$ with \[
			\|w\|_{C^2_\star, S, R} \leq \varepsilon(R), \ \ \ \ \ \forall R\geq z_0;   \]
			\item $\Sigma_R$ is the support of an integral current $[\Sigma_R]$ with boundary to be the $R$-slice of $\Sigma_e$, i.e. $\partial [\Sigma_R] = [\Sigma_e\cap \RR^n\times \{R\}]$; moreover, such that $[\Sigma_R]$ is $\I$-minimizing among \[
			\left\{ T\in \mbfI_n(\RR^{n+1}): \partial T = [\Sigma_e\cap \RR^n\times \{R\}] \right\}.   \]
		\end{enumerate}
		For each simple translating end $\Sigma_e$ as in the Theorem and each $R>2z_0$, the existence of $(\Sigma_e, \Sigma_R)\in \mathfrak{T}_R(S, \varepsilon, z_0)$ is derived by minimizing $\I$-functional and taking limit using Federer-Fleming Compactness Theorem \cite{Simon83_GMT}. Our principal goal is to show that, when sending $R\nearrow +\infty$, $\Sigma_R$ will stay close to $\Sigma_e$ outside a uniform large compact subset, and therefore the subsequential limit of $\Sigma_R$ will be a desired translator exponentially asymptotic to $\Sigma_e$. The way to achieve this is inspired by the work of Chan \cite{Chan97} who constructed some $I$-parameter family of minimizing hypersurfaces asymptotic to a given strictly minimizing hypercone.
		
		For each pair $(\Sigma_e, \Sigma_R)\in \mathfrak{T}_R(S, \varepsilon, z_0)$ and every $\delta>0$, define 
		\begin{align*}
			\rho(\Sigma_R, z_0, \delta) := \inf\big\{\rho\in (z_0, R]: &\ \Sigma_R\cap \RR^n\times\{z\} \subset \sqrt{z}\BB^n_\delta(S)\times\{z\}, \ \ \ \forall \rho\leq z\leq R; \\
			&\ \Sigma_R\cap \RR^n\times [\rho, R/2] = \ES[u_R] \text{ with }\|u_R\|_{C^2_\star, S\times [\rho, R/2]}\leq \delta \big\}, 
		\end{align*} 
		which is the smallest height above which $\Sigma_R$ is $\delta$-close to $\ES$.
		We define \[
		\rho_R(S, \varepsilon, z_0, \delta):= \sup\{\rho(\Sigma_R, z_0, \delta): (\Sigma_e, \Sigma_R)\in \mathfrak{T}(S, \varepsilon, z_0)\}.   \]
		\textbf{Claim 1.} For every $\varepsilon$, $z_0\geq 2$ and $\delta>0$, \[
		\limsup_{R\to +\infty} \rho_R(S, \varepsilon, z_0, \delta)/R = 0.   \]
		\begin{proof}[Proof of Claim 1.] 
			Suppose for contradiction that there exist some $z_0\geq 2$, $\delta>0$, $R_j\nearrow +\infty$ and $(\Sigma^j_e, \Sigma^j_{R_j})\in \mathfrak{T}(S, \varepsilon, z_0)$ such that $\rho_j/R_j \to c>0$ as $j\to \infty$, where $\rho_j:= \rho(\Sigma^j_{R_j}, z_0, \delta)$.
			
			Consider the rescaled mean curvature flow
			\begin{align*}
				\left\{\tilde{\Sigma}_{R_j}^j(\tau) := \frac{e^{\tau/2}}{\sqrt{R_j}}\left(\Sigma_{R_j}^j - R_je^{-\tau} e_{n+1} \right)\right\}_{\tau \geq 0},  \ \ \text{ defined on }\bigcup_{\tau\geq 0}\RR^n\times \left(-\infty, \sqrt{R_j}(e^{\tau/2}-e^{-\tau/2})\right) \times \{\tau\}; 
			\end{align*}
			Note that $\{\tilde{\Sigma}_{R_j}^j(\tau)\}_{\tau\geq 0}$ is the associated RMCF of $\tilde{\Sigma}_{R_j}^j(0)$, which is a minimizer of $\I^{1/R_j}$.  And since \[
			\partial \tilde{\Sigma}_{R_j}^j(0) = \left\{(x/\sqrt{R_j}, 0): (x, R_j)\in \Sigma_e^j\right\},   \]
			which is $\varepsilon(R_j)$-$C^2$-close to $S\times \{0\}$ by definition. We then know from Lemma \ref{Lem_Pf main thm_ellip reg smooth approx MCF} that when $R_j\to \infty$, $\{\tilde{\Sigma}_{R_j}^j(\tau)\}_{\tau\geq 0}$ $C^\infty_{loc}$-converges to the static rescaled mean curvature flow $\{S\times \RR\}$ on $\tau\geq \log 2$ and locally distance converges to $\{S\times \RR\}$ on $\RR^n\times \RR_{\geq 0}\times [0, +\infty)$. But this in turn implies $\rho_j/R_j \to 0$, which is a contradiction.
		\end{proof}
		\noindent   \textbf{Claim 2.} There exist $\delta_4(S, \varepsilon)\in (0, 1)$ and $\Lambda_1(S, \varepsilon)>1$ such that for every $0<\delta< \delta_4(S, \varepsilon)$, $z_0\geq 2$ and $R > 2\cdot max\{z_0, \rho_R(S, \varepsilon, z_0, \delta), \Lambda_1\}$. Let $\bar{\rho}_R:= max\{\rho_R(S, \varepsilon, z_0, \delta), \Lambda_1\}$ and let $(\Sigma_e, \Sigma_R)\in \mathfrak{T}_R(S, \varepsilon, z_0)$ such that 
		\begin{align*}
			\Sigma_e\cap \RR^n\times \RR_{\geq z_0} =: \ES[w], &\ & \Sigma_R\cap \RR^n\times [\bar{\rho}_R, R/2] =: \ES[u_R].
		\end{align*}
		Let $\bar{\psi}_1(x):= \psi_1(x)/\inf_S \psi_1 > 0$ be the normalized first eigenfunction of $-L_S$.  Then we have,
		\begin{align}
			|(u_R-w)(x, z)| & \leq \left(\delta + \varepsilon(\bar{\rho}_R)\right) \cdot(z/\bar{\rho}_R)^{\mu_1-1}\bar{\psi}_1(x), & \ & \forall (x, z)\in S\times (\bar{\rho}_R, R/2);  \label{Pf main thm_Equ_fast polyn decay est}
			%     \\     |(u_R-w)(x, z)| & \leq \delta\cdot \bar{\rho}_R{}^{-2}\cdot e^{-\beta(z-\bar{\rho}_R^2)}\psi_1(x), & \ & \forall (x, z)\in S\times (\bar{\rho}_R^2, R/2).  \label{Pf main thm_Equ_exp decay est}    
			%%     \|(u_R-w)(\cdot, z)\|_{C^0(S)} & \leq C(S)\delta \cdot(z/\bar{\rho}_R)^{\mu_1-1}, & \ & \forall z\in (\bar{\rho}_R, R/2);  \label{Pf main thm_Equ_fast polyn decay est}\\
			%%     \|(u_R-w)(\cdot, z)\|_{C^0(S)} & \leq C(S)\delta\cdot \bar{\rho}_R{}^{-2}\cdot e^{-\beta(z-\bar{\rho}_R^2)}, & \ & \forall z\in (\bar{\rho}_R^2, R/2).  \label{Pf main thm_Equ_exp decay est}    
		\end{align}
		\begin{proof}[Proof of Claim 2.]
			Consider for every $\rho\in [\bar{\rho}_R, R]$, the barrier functions \[
			W^\pm_{\rho, \delta}(x, z):= w(x, z)\pm \left(\delta + \varepsilon(\bar{\rho}_R)\right)\cdot \left(z/\rho\right)^{\mu_1-1}\cdot\bar{\psi}_1(x).   \]
			Note that we have for every $r\geq \rho$, \[
			\|W^\pm_{\rho, \delta}\|_{C^2_\star, S, r} \leq \|w\|_{C^2_\star, S, r} + C(S)\left(\delta + \varepsilon(\bar{\rho}_R)\right) \leq \kappa(S),   \]
			where $\kappa(S)\ll 1$ is a constant TBD, and since by definition $\|w\|_{C^2_\star, S, r}\leq \varepsilon(r) \to 0$ when $r\to +\infty$, the last inequality follows by taking $r\geq \bar{\rho}_R \geq \Lambda_1(S, \varepsilon, \kappa)\gg 1$ and $\delta<\delta_4(S, \varepsilon, \kappa)\ll1$.   By Lemma \ref{Append_Error est II of scT}, we have pointwise estimate on $S\times \RR_{\geq\rho}$,
			\begin{align*}
				|\scR(W^\pm_{\rho, \delta}) - \scR(w)| \leq C(S)z^{-1}\kappa(S)\cdot |W^\pm_{\rho, \delta} - w|.
			\end{align*}
			Hence, by $\scT(w) \equiv 0$, we have,
			\begin{align}
				\begin{split}
					\pm\scT(W^\pm_{\rho, \delta}) & = - z(\partial_z^2 + \partial_z + z^{-1}L_S)\left( \left(\delta + \varepsilon(\bar{\rho}_R)\right)\cdot \left(z/\rho\right)^{\mu_1-1}\cdot\bar{\psi}_1(x) \right) \pm z\cdot \left(\scR(W^\pm_{\rho,\delta}) - \scR(w)\right) \\
					& \geq \left(\delta + \varepsilon(\bar{\rho}_R)\right)\rho^{1-\mu_1}\left(1 - \frac{(\mu_1-1)(\mu_1-2)}{z} \right)z^{\mu_1-1}\bar{\psi}_1 \\
					&\  - C(S)\kappa(S)\left(\delta + \varepsilon(\bar{\rho}_R)\right)\cdot (z/\rho)^{\mu_1-1}\bar{\psi}_1 \\
					& > 0.
				\end{split}  \label{Pf main thm_Equ_1st barrier functions}
			\end{align}
			where the last inequality follows by taking $\kappa(S)\ll1$ and $z\geq \bar{\rho}_R \geq \Lambda_1(S, \varepsilon, \kappa)\gg 1$.
			Under Fermi coordinates of $S\subset \RR^n$, let 
			\begin{align*}
				\Omega_{\rho, \delta}:= \{(\sqrt{z}x, z): z\geq\rho; &\ x = (x_0, y)\in \RR^n \text{ with } x_0\in S,\\
				&\ |y-w(x_0, z)|\leq \left(\delta + \varepsilon(\bar{\rho}_R)\right) \cdot(z/\rho)^{\mu_1-1}\bar{\psi}_1(x_0)\}     
			\end{align*}
			be the neighborhood of $\Sigma_e$ on $\RR^n\times \RR_{>\rho}$ bounded by $\ES[W^\pm_{\rho,\delta}]$.  By (\ref{Pf main thm_Equ_1st barrier functions}), $\partial\Omega_{\rho,\delta}\cap \RR^n\times \RR_{>\rho}$ has translator-mean-curvature pointing inward on $\RR^n\times \RR_{>\rho}$. By definition, $\Omega_{\rho, \delta}\cap \RR^n\times \{\rho\} \supset \sqrt{\rho}\BB_\delta (S)\times\{\rho\}$.  Thus by the strong maximum principle \cite{SolomonWhite89_Maxim}, \[
			\inf\{\rho\in [\bar{\rho}_R, R]: \Sigma_R\cap \RR^n\times \RR_{>\rho}\subset \Omega_{\rho,\delta}\} = \bar{\rho}_R.   \]
			Therefore, $u_R$ satisfies the decay estimate (\ref{Pf main thm_Equ_fast polyn decay est}).
		\end{proof}
		
		\noindent  \textbf{Claim 3.} For every $\delta\in (0, \delta_4)$, there exists $\Lambda_2(S, \varepsilon, \delta)<+\infty$ such that \[
		\sup_{z_0\geq 2}\limsup_{R\to +\infty} \frac{\rho_R(S, \varepsilon, z_0, \delta)}{z_0} < \Lambda_2(S, \varepsilon, \delta).   \]
		\begin{proof}[Proof of Claim 3.]
			It suffices to show that the LHS is finite.  Suppose for contradiction, there exist $\delta\in (0,\delta_4)$, $z_j\geq 2$ and $R_j\to +\infty$ such that $\rho_{R_j}(S, \varepsilon, z_j, \delta)>j z_j$.  Then there exist pairs $(\Sigma_e^j, \Sigma^j_{R_j})\in \mathfrak{T}_{R_j}(S, \varepsilon, z_j)$ such that $\rho_j:= \rho(\Sigma^j_{R_j}, z_j, \delta)\geq j z_j$. In particular, $\bar{\rho}_j := \max\{\rho_j, \Lambda_1(S, \varepsilon)\} = \rho_j \to +\infty$ as $j\to \infty$.  Denote as before, 
			\begin{align*}
				\Sigma_e^j\cap \RR^n\times \RR_{\geq z_j} =: \ES[w_j], &\ & \Sigma_{R_j}^j\cap \RR^n\times [\rho_j, R_j/2] =: \ES[u_{R_j}].
			\end{align*}
			
			Consider for every $T>1$, the rescaled mean curvature flows 
			\begin{align*}
				\{\bar{\Sigma}^j(\tau) & := \cR_{\tau - \log(\rho_j)} (\Sigma_{R_j}^j)\}_{\tau \in (-T, T]} \ \ \ \text{ defined on }\RR^n\times (-\infty, (R_j\cdot e^{-T/2} - \rho_j\cdot e^{T/2})/\sqrt{\rho_j}); \\
				\{\bar{\Sigma}_e^j(\tau) & := \cR_{\tau - \log(\rho_j)} (\Sigma_e^j)\}_{\tau \in (-T, T]} \ \ \ \ \ \text{ defined on }\RR^n\times (\frac{z_j\cdot e^{T/2}}{\sqrt{\rho_j}} - e^{-T/2}\sqrt{\rho_j}, +\infty),
			\end{align*}
			where $\cR_\tau (\Sigma):= e^{\tau/2}(\Sigma - e^{-\tau} e_{n+1})$. Note that by Claim 1 and the contradiction assumption, $\rho_j/R_j + z_j/\rho_j \to 0$ when $j\to \infty$, thus the region where both flows are defined exhausts $\RR^n\times \RR$.  Also notice that for every $\tau\in (-T, 0]$,
			\begin{align*}
				&\ \bar{\Sigma}^j(\tau) \cap \RR^n \times \left[\sqrt{\rho_j}(e^{\tau/2} -e^{-\tau/2}), (R_j\cdot e^{-T/2}/2 - \rho_j\cdot e^{T/2})/\sqrt{\rho_j} \right)\\
				= &\ \left\{\left(\sqrt{1+ \frac{e^{\tau/2}\hat{z}}{\sqrt{\rho_j}}}\cdot\left( x+ u_{R_j}(x, z) \nu_x \right), \hat{z}\right): \hat{z}\in \left[\sqrt{\rho_j}(e^{\tau/2} -e^{-\tau/2}), \frac{R_j\cdot e^{-T/2} - 2\rho_j\cdot e^{T/2}}{2\sqrt{\rho_j}} \right) \right\}; \\
				&\ \bar{\Sigma}^j_e(\tau) \cap \RR^n \times \left(\frac{z_j\cdot e^{T/2}}{\sqrt{\rho_j}} - e^{-T/2}\sqrt{\rho_j}, +\infty \right) \\
				= &\ \left\{\left(\sqrt{1+ \frac{e^{\tau/2}\hat{z}}{\sqrt{\rho_j}}}\cdot\left( x+ w_j(x, z) \nu_x \right), \hat{z}\right): \hat{z}> \frac{z_j\cdot e^{T/2}}{\sqrt{\rho_j}} - e^{-T/2}\sqrt{\rho_j} \right\},
			\end{align*}
			where $z:= e^{-\tau}\rho_j + e^{-\tau/2}\sqrt{\rho_j}\hat{z}$ and $\nu_x$ is the unit normal field of $S$ at $x$.
			For every $\tau < 0$ and every $\hat{z}\in \RR$, 
			\begin{align*}
				&\ \limsup_{j\to \infty} dist_H(\bar{\Sigma}^j(\tau)\cap \RR^n\times \{\hat{z}\}, \bar{\Sigma}_e^j(\tau)\cap \RR^n\times \{\hat{z}\}) \\
				\leq &\ \limsup_{j\to \infty} C(S)\|(u_{R_j}-w_j)(\cdot, e^{-\tau}\rho_j + e^{-\tau/2}\sqrt{\rho_j}\hat{z})\|_{C^0, S}\\ %\leq C(S,\beta) e^{-\beta(e^{-\tau/2}\sqrt{\rho_j}\hat{z} + \rho_j(e^{-\tau} - 1))}, 
				\leq &\ \limsup_{j\to \infty} C(S, \varepsilon, \beta)\cdot \left(\frac{e^{-\tau}\rho_j + e^{-\tau/2}\sqrt{\rho_j}\hat{z}}{\rho_j}\right)^{\mu_1-1} \leq C(S, \varepsilon, \beta)e^{-(\mu_1-1)\tau},
			\end{align*}
			where the second inequality follows from (\ref{Pf main thm_Equ_fast polyn decay est}).
			Since $w_j$ are uniformly bounded by $\varepsilon$, we know that $\bar{\Sigma}_e^j(\tau) \to S\times \RR$ in $C^\infty_{loc}(\RR^{n+1})$, we thus have by Lemma \ref{Lem_Pre_blow down transl split}, when $j\to \infty$ and $T\nearrow +\infty$, $\{\bar{\Sigma}^j(\tau)\}_{\tau}$ subconverges (in the Brakke sense) to some rescaled mean curvature flow $\{\hat{\Sigma}(\tau)\times \RR\}_{\tau\in \RR}$, where $\hat{\Sigma}(\tau) = \graph_S(\hat{v}(\cdot, \tau))$ satisfies \[
			\|\hat{v}(\cdot, \tau)\|_{C^0(S)} \leq C(S, \varepsilon, \beta)e^{-(\mu_1-1)\tau},\ \ \ \forall \tau\ll 0.   \]
			By analogy with \cite{CCMS20_GenericMCF}, the only such ancient rescaled mean curvature flow is the static $\{S\times \RR\}_\tau$, hence by Brakke regularity, $\{\bar{\Sigma}^j(\tau)\}_{\tau\in [-1,1]}$ $C^\infty_{loc}$-subconverges to $\{S\times \RR\}_\tau$ on $\RR^{n+1}$.  This mean when $j\to \infty$, \[
			\|u_{R_j}\|_{C^2_\star, S\times [\rho_j/2, 2\rho_j]} \to 0,   \] 
			which violates the definition of $\rho_j = \rho(\Sigma_{R_j}^j, z_j, \delta)$.
			%    \begin{align*}
				%      \left{\bar{\Sigma}_j(\tau) := e^{(\tau - \log(\rho_j))/2}\cdot \left(\Sigma_{R_j} - e^{-(\tau - \log(\rho_j))}e_{n+1}\right) \right\}_{\tau \in [-1, 0]}
					%      & = \left\{\left(\sqrt{1+ \frac{e^{\tau/2}\hat{z}}{\sqrt{\rho_j}}}\cdot\left( x+ u_{R_j}(x, z) \nu_x \right), \hat{z}\right) \right\} 
					%    \end{align*}
				%     ..    
				%    \begin{align*}
					%      \bar{\Sigma_e}_j...
					%    \end{align*}   
				%    in $\RR^n\times [\sqrt{\rho_j}(e^{\tau/2} -e^{-\tau/2}), \frac{e^{\tau/2}R_j - e^{-\tau/2}\rho_j}{\sqrt{\rho_j}}]$.
			\end{proof}
			By Claims 1-3, for each $\delta = (0, \delta_4)$, $z_0\geq 2$, and each $\Sigma_e = \ES[w]\cap \RR^n\times \RR_{\geq z_0}$ as in the Theorem and every $R\gg 1$, we can minimize $\I$-functional to find $(\Sigma_e, \Sigma_R)\in \mathfrak{T}_R(S, \varepsilon, z_0)$ such that \[
			\Sigma_R\cap \RR^n\times [\Lambda_2 \cdot z_0, R/2] = \ES[u_R],   \]
			and,
			\begin{align*}
				|(u_R - w)(\cdot, z)| & \leq (\delta + \varepsilon(\Lambda_2 z_0))\cdot\left(\frac{z}{\Lambda_2 z_0}\right)^{\mu_1-1}\cdot\bar{\psi}_1, \ \ \ \ \ \forall z\in [\Lambda_2 z_0, R/2]; \\
				\|u_R\|_{C^2_\star, S\times [\Lambda_2 z_0, R/2]} & \leq \delta.
			\end{align*}
			Take $R\to +\infty$, by Federer-Fleming Compactness Theorem \cite{Simon83_GMT}, $[\Sigma_R]$ flat-converges to some $\I$-minimizing boundary $[\Sigma]$, where $\Sigma \cap \RR^n\times \RR_{\geq \Lambda_2 z_0} = \ES[u]$ for some $u\in C^2_{loc}(S\times \RR_{\geq \Lambda_2 z_0})$ satisfying,
			\begin{align*}
				|(u - w)(\cdot, z)| & \leq (\delta + \varepsilon(\Lambda_2 z_0))\cdot\left(\frac{z}{\Lambda_2 z_0}\right)^{\mu_1-1}\cdot\bar{\psi}_1, \ \ \ \ \ \forall z\in [\Lambda_2 z_0, +\infty); \\
				\|u\|_{C^2_\star, S\times [\Lambda_2 z_0, +\infty)} & \leq \delta.
			\end{align*}
			Then the exponentially decaying estimate (\ref{Pf main thm_transl exp asymp to ends}) is a direct consequence of Lemma \ref{Lem_Pf main thm_fast poly decay => exp decay}, by taking $\Lambda_0(S, \varepsilon, \beta)\gg \Lambda_2$ and fix the choice of $0<\delta(S, \varepsilon, \beta) \ll \delta_4(S, \varepsilon)$. 
			%   Combining Claim 2 \& 3 proves the Theorem.
		\end{proof}

		In general, we call a translator $\Sigma$ \textbf{exponentially asymptotic} to a simple translating end $\ES[w]$ over $S\times \RR_+$ if $\Sigma\cap \RR^n\times \RR_{\geq z_0} = \ES[u]$ for some $z_0\gg 1$, and \[
		\limsup_{R\to +\infty}\|u-w\|_{C^2_\sharp, S, R}\cdot e^{R/2} = 0.   \]
		As a refinement of Theorem \ref{Thm_Existence of Transl w prescib end}, we establish the following dichotomy of translators that are exponentially asymptotic to a pl-simple end.
		\begin{Thm} \label{Thm_Pf main thm_Transl region}
			Let $\Sigma_e\subset \RR^{n+1}$ be a pl-simple translating end over $S\times \RR_+$.  Then there exists a closed subset $T[\Sigma_e]\subset \RR^{n+1}$ which contains all $\I$-minimizing translators exponentially asymptotic to $\Sigma_e$. And one of the following holds,
			\begin{enumerate}[(i)]
				\item $T[\Sigma_e]$ is the support of an $\I$-minimizing translator (and hence is the unique $\I$-minimizing translator exponentially asymptotic to $\Sigma_e$);
				\item $\Int(T[\Sigma_e])\neq \emptyset$ and $\partial T[\Sigma_e] = T^+ \sqcup T^-$, where each of $T^\pm$ is the support of an $\I$-minimizing translator exponentially asymptotic to $\Sigma_e$.
			\end{enumerate}
			
			Also, both $T[\Sigma_e]$ in case (i) and $T^\pm$ in case (ii) satisfy the uniform estimate (\ref{Pf main thm_transl exp asymp to ends}) provided $\Sigma_e$ satisfies the assumption in Theorem \ref{Thm_Existence of Transl w prescib end}. In particular, for a fixed $z_0\gg 1$, if $\{\Sigma_\varphi = \ES[u_{\varphi, z_0}]\}_{\varphi\in \BB^I}$ is the family of pl-simple end constructed in Theorem \ref{Thm_Moduli of transl end }, then $T[\Sigma_\varphi]$ is upper-semi-continuous in $\varphi$ in the following sense: if $\varphi_j\to \varphi_\infty$ in $\BB^I$, then there are $R_j\to +\infty$ such that \[
			\BB_{1/R_j}(T[\Sigma_{\varphi_\infty}]) \supset T[\Sigma_{\varphi_j}]\cap \BB_{R_j}.  \]
			
			Moreover, let $\varpi$ be the $\RR$-action on the space of pl-simple ends introduced in Section \ref{Subsec_Moduli_RR action}, then $\{T[\varpi(a)[\Sigma_e]]\}_{a\in \RR}$ is a decomposition of $\RR^{n+1}$, i.e. 
			\begin{align}
				\RR^{n+1} = \coprod_{a\in \RR} T[\varpi(a)[\Sigma_e]].   \label{Pf main thm_Equ_R^n+1 decomp into T_a}
			\end{align}
			%   And locally in Hausdorff distance sense, $T[\Sigma_e]$ is upper-semi-continuous with respect to smooth convergence of $\Sigma_e$. More precisely, if $\Sigma_e^j = \ES[u_j]\cap \RR_{\geq z_1}$ are simple translating ends polynomial asymptotic to $S\times \RR$, $1\leq j\leq +\infty$, such that $u_j\to u_\infty$ on $C^\infty_{loc}(S\times \RR_{> z_1})$, then for every $\epsilon>0$, for $j\geq j_\epsilon\gg 1$, we have \[
			%     T[\Sigma_e^j]\cap \RR^n\times \RR_{\leq \epsilon^{-1}} \subset \BB_\epsilon^{n+1}(T[\Sigma_e^\infty]).   \]
		\end{Thm}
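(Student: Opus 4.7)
The plan is to construct $T[\Sigma_e]$ as the closed region enclosed by the ``outermost'' $\I$-minimizing translators exponentially asymptotic to $\Sigma_e$. These outermost translators will be obtained by taking $a\to 0^\pm$ limits of the translators associated to the one-sided deformations $\varpi(a)[\Sigma_e]$ via Theorem \ref{Thm_Existence of Transl w prescib end}. Let $\mathfrak{M}[\Sigma_e]$ denote the family of all $\I$-minimizing translators exponentially asymptotic to $\Sigma_e$; it is nonempty by Theorem \ref{Thm_Existence of Transl w prescib end}. Because translators are minimal in the conformal metric $e^{2z/n}g_\Euc$, the strong maximum principle of Solomon--White and Ilmanen forces any two elements of $\mathfrak{M}[\Sigma_e]$ either to coincide or to be disjoint. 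Since both share the same end at infinity and are graphs of exponentially close functions there, they are vertically ordered near infinity, and this order extends globally by the maximum principle, giving $\mathfrak{M}[\Sigma_e]$ a natural total order.

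Next I would build the outermost elements $T^\pm$. For each small $a>0$, Lemma \ref{Lem_Moduli_one-sided transl end} produces a pl-simple end $\varpi(a)[\Sigma_e]$ lying above $\Sigma_e$, and Theorem \ref{Thm_Existence of Transl w prescib end} provides a translator $\Sigma_a^+\in \mathfrak{M}[\varpi(a)[\Sigma_e]]$ satisfying the uniform estimate (\ref{Pf main thm_transl exp asymp to ends}) with constants $\Lambda_0, C$ depending only on $S$, $\beta$, and a common decay envelope $\varepsilon$ valid for all small $a$; this uniformity in $a$ is crucial. By the maximum principle each $\Sigma_a^+$ lies above every element of $\mathfrak{M}[\Sigma_e]$. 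Taking $a\searrow 0$ and applying Federer--Fleming compactness yields a subsequential limit $T^+$, still $\I$-minimizing, which inherits the exponential asymptotic to $\Sigma_e$ from (\ref{Pf main thm_transl exp asymp to ends}); thus $T^+\in\mathfrak{M}[\Sigma_e]$ is the uppermost element. Defining $T^-$ symmetrically via $a\nearrow 0$, I set $T[\Sigma_e]$ to be the closed region bounded between $T^-$ and $T^+$. Either $T^+=T^-$, giving case~(i), or $T^+\neq T^-$, in which case the interior is nonempty and $\partial T[\Sigma_e]=T^+\sqcup T^-$, giving case~(ii).

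Upper-semi-continuity of $T[\Sigma_\varphi]$ in $\varphi$ then follows from Theorem \ref{Thm_Moduli of transl end }(ii), which gives $C^2_\star$-continuity of the ends $\Sigma_\varphi$ with a common decay envelope as $\varphi$ varies, combined again with the uniform estimate (\ref{Pf main thm_transl exp asymp to ends}). Concretely, if $\varphi_j\to\varphi_\infty$ in $\BB^I$, then the translators $T^\pm[\Sigma_{\varphi_j}]$ form a precompact family by the uniform bounds, and any subsequential limit is $\I$-minimizing and exponentially asymptotic to $\Sigma_{\varphi_\infty}$, hence contained in $T[\Sigma_{\varphi_\infty}]$; this yields the claimed neighborhood inclusion $T[\Sigma_{\varphi_j}]\cap \BB_{R_j}\subset \BB_{1/R_j}(T[\Sigma_{\varphi_\infty}])$.

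For the decomposition $\RR^{n+1}=\coprod_{a\in\RR} T[\varpi(a)[\Sigma_e]]$, disjointness is the easy half: by Lemma \ref{Lem_Moduli_one-sided transl end}, translators asymptotic to distinct ends $\varpi(a_1)[\Sigma_e]$ and $\varpi(a_2)[\Sigma_e]$ differ by a nonzero leading term $(a_2-a_1)z^{\mu_1}\psi_1$ at infinity and hence are globally disjoint by the maximum principle, forcing the regions $T[\varpi(a_i)]$ to be pairwise disjoint. The main obstacle will be coverage, which amounts to a one-sided no-gap property for the map $a\mapsto T^\pm[\varpi(a)[\Sigma_e]]$: if at some $a_0$ the limit $\lim_{a\nearrow a_0} T^+[\varpi(a)[\Sigma_e]]$ strictly undershoots $T^-[\varpi(a_0)[\Sigma_e]]$, then Federer--Fleming compactness together with (\ref{Pf main thm_transl exp asymp to ends}) produces an $\I$-minimizing translator asymptotic to $\varpi(a_0)[\Sigma_e]$ strictly below $T^-[\varpi(a_0)[\Sigma_e]]$, contradicting that $T^-[\varpi(a_0)[\Sigma_e]]$ is the lowermost such translator. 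Symmetric closure of gaps from the other side, combined with the observation that $T^\pm[\varpi(a)[\Sigma_e]]$ sweeps to vertical $\pm\infty$ as $a\to\pm\infty$ (again from (\ref{Pf main thm_transl exp asymp to ends}) applied to $\varpi(a)[\Sigma_e]$), yields the decomposition.
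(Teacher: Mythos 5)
Your construction of $T[\Sigma_e]$ as the region between the $a\searrow 0$ and $a\nearrow 0$ limits of translators attached to $\varpi(a)[\Sigma_e]$, the dichotomy between cases (i) and (ii), and the upper-semi-continuity argument all closely parallel the paper's argument (the paper works with $\Omega^+=\bigcap_{a>0}\Omega_a$ and $\Omega^-=\bigcup_{a<0}\Omega_a$, where $\Omega_a$ is the inner region bounded by a translator in $\cT[\varpi(a)[\Sigma_e]]$, and shows these limits have $\I$-minimizing boundaries belonging to $\cT[\Sigma_e]$). The ordering/disjointness half of (\ref{Pf main thm_Equ_R^n+1 decomp into T_a}) is also essentially the paper's Claim~1. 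One small caution: your assertion that two elements of $\mathfrak{M}[\Sigma_e]$ with the same end are globally ordered by the maximum principle implicitly uses a cut-and-paste comparison of $\I$-minimizers, not just the strong maximum principle on its own, but the paper sidesteps proving a total order on $\mathfrak{M}[\Sigma_e]$ altogether and instead only orders translators attached to distinct one-sided deformations.

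The real gap is in the coverage half of (\ref{Pf main thm_Equ_R^n+1 decomp into T_a}): you need $\bigcup_{a>0}\Omega_a = \RR^{n+1}$ and $\bigcap_{a<0}\Omega_a=\emptyset$, and your claim that ``$T^\pm[\varpi(a)[\Sigma_e]]$ sweeps to vertical $\pm\infty$ as $a\to\pm\infty$, from (\ref{Pf main thm_transl exp asymp to ends})'' does not follow. The estimate (\ref{Pf main thm_transl exp asymp to ends}) controls the translator only in the end region $z\ge\Lambda_0 z_0$, and for $\varpi(a)[\Sigma_e]$ the admissible base height $z_0$ must grow with $|a|$: Lemma~\ref{Lem_Moduli_one-sided transl end} requires $|a|\le\delta z_0^{-\mu_1}$ to keep the deformed end graphical with small $C^2_\star$-norm. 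So as $|a|\to\infty$, the region where (\ref{Pf main thm_transl exp asymp to ends}) bites recedes to spatial infinity and the translator is completely uncontrolled on any fixed compact set. There is therefore no a priori obstruction to $\partial\Omega_{+\infty}:=\partial\bigl(\bigcup_{a>0}\Omega_a\bigr)$ being a nontrivial $\I$-minimizing translator, possibly with a higher-multiplicity or otherwise exotic end. The paper closes this with a genuinely different mechanism: it observes $\partial\Omega_{+\infty}$ is $\I$-minimizing by compactness, uses Lemma~\ref{Lem_Pre_blow down transl split} plus avoidance and the strong maximum principle to show the tangent flow at $-\infty$ is $m|S\times\RR|$ for some integer $m\ge 1$, invokes entropy lower semi-continuity (Proposition~\ref{prop:semicontinuity_entropy}) together with Corollary~\ref{Cor_Pre_Blow down transl MCF split} to force $m=1$, then applies Lemma~\ref{Lem_Moduli_General one-sided transl end} to conclude $\partial\Omega_{+\infty}\in\cT[\varpi(a')[\Sigma_e]]$ for some finite $a'\ge 0$, which contradicts the strict monotonicity of Claim~1. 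None of this is replaceable by the exponential asymptotic estimate alone, so you would need to supply this entropy-and-classification argument to complete the decomposition.
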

		
		For simplicity, for every pl-simple translating end $\Sigma_e$ over $S\times \RR_+$, we write $\cT[\Sigma_e]$ to be the set of all $\I$-minimizing translators exponentially asymptotic to $\Sigma_e$.  Note that since $S$ is closed and connected, any such $T\in \cT[\Sigma_e]$ has connected support, and there's a unique connected component $\Omega_T$ of $\RR^{n+1}\setminus \spt(T)$, referred to as the \textbf{inner region} bounded by $T$, such that $\Omega_T\cap \RR^n\times \{z\}$ is bounded for $z\gg 1$. Here we abuse the notation to identify an integral cycle $T$ with its support, since everything is multiplicity one and smooth away from a low-dimensional closed subset.
		
		\begin{proof}[Proof of Theorem \ref{Thm_Pf main thm_Transl region}.] We start with an observation that disjoint ends have disjoint $\I$-minimizing translators exponentially asymptotic to them.\\
			\textbf{Claim 1.} For every pl-simple end $\Sigma_e\in \cE_p(S)$ and every real number $a>0$, if $T_0\in \cT[\Sigma_e]$ and $T_a\in \cT[\varpi(a)[\Sigma_e]]$, then $\Omega_{T_a} \supset \Clos(\Omega_{T_0})$.
			\begin{proof}[Proof of Claim 1.]
				Suppose $\Sigma_e \cap \RR^n\times \RR_{\geq z_0}= \ES[w_0]$ and $w_a$ is given by Lemma \ref{Lem_Moduli_one-sided transl end} so that $\ES[w_a]\in \varpi(a)[\Sigma_e]$. Then by the definition of exponentially asymptotic, $T_0 \cap \RR^n\times \RR_{\geq z_0'} =: \ES[u_0]$ and $T_a \cap \RR^n\times \RR_{\geq z_0'} =: \ES[u_a]$ satisfy 
				\begin{align*}
					&\ |(u_a - u_0)(\cdot, z)\cdot z^{-\mu_1} - a\psi_1| \\
					= &\ |(w_a - w_0)(\cdot, z)\cdot z^{-\mu_1} - a\psi_1| + |(u_0 - w_0)(\cdot, z)\cdot z^{-\mu_1}| + |(u_a - w_a)(\cdot, z)\cdot z^{-\mu_1}| \to  0.          
				\end{align*}
				And since $a>0$ and $\psi_1>0$, this implies $u_a > u_0$ for $z\geq z_0''\gg 1$, which means $\Clos(\Omega_{T_0})\cap \RR^n\times \RR_{\geq z_0''} \subset \Omega_{T_a}$.  But since $\partial \Omega_{T_0}$ and $\partial \Omega_{T_a}$ are both $\I$-minimizing, by comparing $\I$-functional and strong maximum principle \cite{SolomonWhite89_Maxim, Ilmanen96}, this implies $\Clos(\Omega_{T_0})\subset \Omega_{T_a}$.
			\end{proof}
			
			Now let $\Sigma_e\in \cE_p(S)$ be a pl-simple end in the Theorem. For every $a\in \RR$, we pick $T_a\in \cT[\varpi(a)[\Sigma_e]]$ to be an $\I$-minimizing translator constructed in Theorem \ref{Thm_Existence of Transl w prescib end}, and let $\Omega_a:= \Omega_{T_a}$ be the inner region bounded by $T_a$. Recall that Claim 1 guarantees that $\{\Omega_a\}_{a\in \RR}$ is monotone increasing in $a$. 
			
			Define the closed subset \[
			\Omega^+ = \Omega^+[\Sigma_e]:= \bigcap_{a>0} \Clos(\Omega_a) = \bigcap_{a>0} \Omega_a.  \]
			First notice that by Claim 1, $\Omega^+[\Sigma_e]$ is independent of the choice of $T_a$. Also, $\Omega^+$ is the limit of $\Omega_a$ in the measure sense, hence has $\I$-minimizing boundary by the compactness of minimizing boundary \cite{Simon83_GMT}. And since we have $C^2$-estimates for the graphical functions of $\partial \Omega_a = T_a$ over $\ES$ outside a uniform compact subset in Lemma \ref{Lem_Moduli_one-sided transl end} and Theorem \ref{Thm_Existence of Transl w prescib end}, combined with Lemma \ref{Lem_Pf main thm_fast poly decay => exp decay}, we have $\partial \Omega^+\in \cT[\Sigma_e]$. Moreover, if $\Sigma_e$ satisfies the assumption in Theorem \ref{Thm_Existence of Transl w prescib end}, then $\partial\Omega^+$ satisfies the uniform estimate (\ref{Pf main thm_transl exp asymp to ends}).
			Similarly, \[
			\Omega^- = \Omega^-[\Sigma_e]:= \bigcup_{a<0} \Omega_a,  \]
			is an open subset with connected $\I$-minimizing boundary, which falls in $\cT[\Sigma_e]$. 
			
			Let $T[\Sigma_e]:= \Omega^+[\Sigma_e]\setminus \Omega^-[\Sigma_e]$. Since $\Omega^- \subset \Omega^+$, by strong maximum principle \cite{SolomonWhite89_Maxim, Ilmanen96}, either $\partial\Omega^- = \partial\Omega^+$, in which case (i) holds in Theorem \ref{Thm_Pf main thm_Transl region}, or $\Clos(\Omega^-) \subset \Int(\Omega^+)$, which means (ii) holds. Here we used the fact that $\partial\Omega_{\pm}$ are both connected. This is because $\partial\Omega_{\pm}$ are $\mathcal I$-minimizing and asymptotic to a connected end. Again by Claim 1, for every $T\in \cT[\Sigma_e]$, $T\subset \Omega^+$ and $T\cap \Omega^- = \emptyset$, which means $T\subset T[\Sigma_e]$. The upper-semi-continuity of $T[\Sigma_\varphi]$ in $\varphi$ follows directly from (\ref{Pf main thm_transl exp asymp to ends}), Theorem \ref{Thm_Moduli of transl end } (ii) and that $T[\Sigma_\varphi]$ contains all $\I$-minimizing translator exponentially asymptotic to $\Sigma_\varphi$.
			
			To prove (\ref{Pf main thm_Equ_R^n+1 decomp into T_a}), first by Claim 1, $\{T[\varpi(a)[\Sigma_e]]\}_{a\in \RR}$ are pairwise disjoint; And by its definition, \[
			\bigcup_{a\in \RR} T[\varpi(a)[\Sigma_e]] = \bigcup_{a>0} \Omega_a \setminus \bigcap_{a<0} \Omega_a.    \]
			Therefore, it suffices to show that $\Omega_{+\infty} := \bigcup_{a>0} \Omega_a = \RR^{n+1}$ and $\Omega_{-\infty}:= \bigcap_{a<0} \Omega_a = \emptyset$. In the following, we shall only prove one of them, while the other one is similar.
			
			Suppose for contradiction that $\Omega_{+\infty} \neq \RR^{n+1}$, then again by the compactness theorem of minimizing boundary \cite{Simon83_GMT}, $\partial\Omega_{+\infty}$ is an $\I$-minimizing translator in $\RR^{n+1}$.  By Lemma \ref{Lem_Pre_blow down transl split}, suppose $\{\sqrt{-t}V\}_{t<0}$ is a tangent flow of $\partial\Omega_{+\infty}$ at $-\infty$, where $V$ splits in $z$-direction.  Since $\Omega_{+\infty} \supset \Omega_1$, whose boundary has a simple end over $S\times \RR_+$, we then know that the self-shrinking integral varifold $V$ is supported in $\Clos(D)\times \RR$, where $D$ is the unbounded component of $\RR^n\setminus S$. By strong maximum principle \cite{SolomonWhite89_Maxim}, $V = m|S\times \RR| + V_+$ for some $m\in \ZZ_{\geq 1}$ and $V_+$ is another self-shrinking integral varifold supported in $D\times \RR$.  By the avoidance principle of MCF, $V_+ = 0$ (see also \cite[Appendix C]{CCMS20_GenericMCF}).
			Also, by Corollary \ref{Cor_Pre_Blow down transl MCF split} and Lemma \ref{Lem_Pre_Transl w simple end has entropy finite}, the entropy $\lambda[\partial \Omega_a] = \lambda[S\times \RR]$ for every $a\in \RR$.  Then the entropy of $V$ satisfies \[
			m\lambda[S\times\RR] = \lambda[V] = \lambda[\partial \Omega_{+\infty}] \leq \liminf_{a\to +\infty}\lambda[\partial\Omega_a] = \lambda[S\times \RR].   \] 
			This forces $m=1$, and hence $\partial\Omega_{+\infty}$ is also an $\I$-minimizing translator with simple end over $S\times \RR_+$. Then by Lemma \ref{Lem_Moduli_General one-sided transl end}, there exists $a\geq 0$ such that $\partial \Omega_{+\infty} \in \varpi(a)[\Sigma_e]$.  However, by Claim 1, this means $\partial \Omega_{+\infty} \subset \Int(\Omega_{a+1})\subset \Int(\Omega_{+\infty})$, which is impossible.  This finishes the proof of $\Omega_{+\infty} = \RR^{n+1}$.   
		\end{proof}

		\begin{Def}
			Case (ii) in Theorem \ref{Thm_Pf main thm_Transl region} is called \textbf{fattening}. In contrast, case (i) in Theorem \ref{Thm_Pf main thm_Transl region} is called \textbf{non-fattening}
		\end{Def}
		
		In Section \ref{Sec_Fattening}, we will show that fattening indeed happens. Nevertheless, the following Corollary suggests that fattening is a rare phenomenon. %Recall that the moduli space of translating ends in Theorem \ref{Thm_Moduli of transl end } is given by $\BB^\cI$. Suppose $\scH^I$ is the $I$-dimensional Hausdorff measure on $\BB^\cI$.
		
		\begin{Cor} \label{Cor_Pf main thm_generic nonfattening}
			For a fixed $z_0\geq C_0(S)$, let $\{\Sigma_\varphi:=\ES[u_{\varphi, z_0}]\}_{\varphi\in \BB^I}$ be the family of pl-simple end constructed in Theorem \ref{Thm_Moduli of transl end }. Then there exists a meager subset $Z\subset \BB^I$ such that $\forall \varphi\in \BB^I\setminus Z$, $T[\Sigma_\varphi]$ is non-fattening.
		\end{Cor}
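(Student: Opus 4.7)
The plan is to exhibit the fattening set $Z := \{\varphi \in \BB^I : T[\Sigma_\varphi] \text{ fattens}\}$ as a countable union of closed, nowhere dense subsets of $\BB^I$, so that meagerness follows from Baire category. The two key ingredients I will use are the upper-semi-continuity of $\varphi \mapsto T[\Sigma_\varphi]$ together with the decomposition (\ref{Pf main thm_Equ_R^n+1 decomp into T_a}) of $\RR^{n+1}$ from Theorem \ref{Thm_Pf main thm_Transl region}, and the continuity of the $\varpi$-action from Corollary \ref{Cor_Moduli_RR-action conti}. Concretely, for each $R, k \in \NN$, put
\[
Z_{R, k} := \bigl\{\varphi \in \BB^I : \scH^{n+1}(T[\Sigma_\varphi] \cap \overline{\BB}_R) \geq 1/k\bigr\}.
\]
Since $T[\Sigma_\varphi]$ fattens iff $\Int(T[\Sigma_\varphi]) \neq \emptyset$ iff $\scH^{n+1}(T[\Sigma_\varphi]) > 0$ (in case (i) of Theorem \ref{Thm_Pf main thm_Transl region}, $T[\Sigma_\varphi]$ is the support of a hypersurface and so has zero $(n{+}1)$-volume), we have $Z = \bigcup_{R, k \in \NN} Z_{R, k}$, and it suffices to show each $Z_{R, k}$ is closed with empty interior.

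For closedness, let $\varphi_j \to \varphi_\infty$ in $\BB^I$ with each $\varphi_j \in Z_{R, k}$. The upper-semi-continuity from Theorem \ref{Thm_Pf main thm_Transl region} provides, for every $\epsilon > 0$ and every sufficiently large $j$, an inclusion $T[\Sigma_{\varphi_j}] \cap \overline{\BB}_R \subset \BB_\epsilon(T[\Sigma_{\varphi_\infty}]) \cap \overline{\BB}_R$, whence
\[
\tfrac{1}{k} \leq \scH^{n+1}(T[\Sigma_{\varphi_j}] \cap \overline{\BB}_R) \leq \scH^{n+1}\bigl(\BB_\epsilon(T[\Sigma_{\varphi_\infty}]) \cap \overline{\BB}_R\bigr).
\]
Since $T[\Sigma_{\varphi_\infty}]$ is closed, letting $\epsilon \searrow 0$ and invoking continuity of Lebesgue measure on this nested family of finite-measure sets yields $\scH^{n+1}(T[\Sigma_{\varphi_\infty}] \cap \overline{\BB}_R) \geq 1/k$, i.e. $\varphi_\infty \in Z_{R, k}$.

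For nowhere denseness, fix any $\varphi \in \BB^I$. By Corollary \ref{Cor_Moduli_RR-action conti}, for every sufficiently small $|a|$ there is a unique $\varphi_a \in \BB^I$ with $u_{\varphi_a, z_0} \in \varpi(a)[u_{\varphi, z_0}]$, and $\varphi_a \to \varphi$ as $a \to 0$. By the disjoint decomposition (\ref{Pf main thm_Equ_R^n+1 decomp into T_a}),
\[
\sum_{a \in \RR} \scH^{n+1}\bigl(T[\varpi(a)[\Sigma_\varphi]] \cap \overline{\BB}_R\bigr) \leq \scH^{n+1}(\overline{\BB}_R) < \infty,
\]
so only finitely many $a$ satisfy $\scH^{n+1}(T[\varpi(a)[\Sigma_\varphi]] \cap \overline{\BB}_R) \geq 1/k$. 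Hence every neighborhood of $\varphi$ in $\BB^I$ contains some $\varphi_a \notin Z_{R,k}$, so $Z_{R, k}$ has empty interior. Combining the two properties, $Z$ is a countable union of closed nowhere-dense sets, hence meager. The conceptual core is the nowhere-denseness step, where the $\varpi$-action converts the $\sigma$-additivity bound into a density statement; the main technical point is the careful passage from the Hausdorff-neighborhood form of upper-semi-continuity to the measure-theoretic closedness of $Z_{R, k}$.
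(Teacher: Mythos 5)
Your proof is correct and follows essentially the same strategy as the paper: both write $Z$ as a countable union of closed sets that are nowhere dense via the upper-semi-continuity of $\varphi\mapsto T[\Sigma_\varphi]$, the disjoint decomposition (\ref{Pf main thm_Equ_R^n+1 decomp into T_a}), and the continuity of the $\varpi$-action (Corollary \ref{Cor_Moduli_RR-action conti}). The only cosmetic difference is the quantitative fattening criterion: you measure $\scH^{n+1}(T[\Sigma_\varphi]\cap\overline{\BB}_R)\geq 1/k$, while the paper asks that $T[\Sigma_\varphi]\cap\overline{\BB_j}$ contain a ball of radius $1/j$; the two are interchangeable here.
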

		\begin{proof}
			Define \[
			Z_j:= \{\varphi\in \BB^I: T[\Sigma_\varphi]\cap \overline{\BB_j^{n+1}} \text{ contains a ball of radius }1/j \}.  \]
			Then by upper-semi-continuity of $T[\Sigma_\varphi]$, $Z_j\subset \BB^I$ is a closed subset.  Moreover, let $\varpi$ be the restriction on $\BB^I$ of the $\RR$-action introduced in Section \ref{Subsec_Moduli_RR action}.  By (\ref{Pf main thm_Equ_R^n+1 decomp into T_a}), for every $\varphi\in \BB^I$, $\{a\in \RR: \varpi(a)[\varphi]\in Z_j\}$ is a countable subset of $\RR$. Hence by Corollary \ref{Cor_Moduli_RR-action conti}, $Z_j$ has no interior, and $Z:= \bigcup_{j\geq 1}Z_j$ is the desired meager subset.
		\end{proof}
		
		Since the complete translators constructed in Theorem \ref{Thm_Pf main thm_Transl region} have finite entropy, they can possibly arise as the type-II blow-up limit of mean curvature flows of closed hypersurfaces. We propose the following conjecture.
		
		\begin{Con}
			The complete translators constructed in Theorem \ref{Thm_Pf main thm_Transl region} are the type-II blow-up limit of mean curvature flows of closed hypersurfaces.
		\end{Con}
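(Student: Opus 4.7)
The plan is, for each complete $\I$-minimizing translator $\Sigma$ produced by Theorem \ref{Thm_Pf main thm_Transl region}, to exhibit a sequence of closed embedded initial hypersurfaces $M_T$ so that the MCFs $\{M_T(t)\}_{t\geq 0}$ develop a type-II singularity whose parabolic blow-up converges to $\Sigma$. The model for $M_T$ is the eternal flow $\{\Sigma + t\partial_z\}_{t\in\RR}$ itself: for large $T$, translate $\Sigma$ by $T\partial_z$, truncate above some height $Z_T$ with $\sqrt{T} \ll Z_T \ll T$, and attach a smooth convex cap. The resulting closed $M_T$ has entropy close to $\lambda[\Sigma] = \cF[S]$, and near its base it agrees exactly with $\Sigma + T\partial_z$.

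Under MCF, the soliton tip of $M_T$ evolves approximately as $\Sigma + (T-t)\partial_z$ for $t \in [0, T-O(1)]$, by avoidance-principle comparison with the exact translator and its one-sided deformations from Lemma \ref{Lem_Moduli_one-sided transl end}. As $t \to T$, the cylindrical portion scaled by $\sqrt{T-t}$ shrinks toward the origin, producing a singularity at the spacetime point $(0, T)$. Huisken monotonicity together with the entropy bound $\lambda[M_T]\to \cF[S]$ forces every tangent flow at $(0, T)$ to be $S \times \RR$ with multiplicity one. Near this singularity, one selects a sequence of spacetime points $(p_T, t_T)$ with $t_T \nearrow T$ at which the soliton tip has unit curvature at the appropriate parabolic gauge; parabolic rescaling by the curvature scale produces, along a subsequence, an eternal limit flow $\cM_\infty$ of finite entropy $\cF[S]$. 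Using that the MCF of $M_T$ is a small perturbation of the translating motion of $\Sigma$ in the relevant spacetime region, $\cM_\infty$ is a unit-speed translator in the $\partial_z$-direction with tangent flow $S\times\RR$ at $-\infty$.

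The principal obstacle is identifying $\cM_\infty$ as $\Sigma$ itself, rather than as some other member of the family $\{\Sigma_\varphi\}_{\varphi\in\BB^I}$ in Theorem \ref{Thm_Intro_Moduli of Transl end}. The $I$-dimensional freedom in the translator family must be matched by corresponding freedom in the construction of $M_T$. One natural strategy is to vary the cap profile in the $I$ directions spanned by the unstable eigenfunctions of $-L_S$, which also parametrize the translator family via Lemma \ref{Lem_Moduli_contraction map}, and then invoke a continuity/degree argument together with Corollary \ref{Cor_Moduli_RR-action conti} and the upper-semi-continuity from Theorem \ref{Thm_Pf main thm_Transl region} to conclude that every $\Sigma_\varphi$ in the non-fattening regime is realized. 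The fattening regime of Theorem \ref{Thm_Intro_Fattening or not} introduces an additional difficulty: when $T[\Sigma_\varphi]$ fattens, only $T^\pm$ are individual $\I$-minimizing translators, and selecting between them likely requires approximation by nearby non-fattening ends together with a limiting argument. Finally, promoting the statement ``some tangent flow is $S\times\RR$'' to ``the tangent flow is $S\times\RR$'' requires a uniqueness result for cylindrical tangent flows modeled on a closed self-shrinker, which is available only in special cases (cf.\ \cite{CM15_Lojasiewicz, Zhu20_Lojasiewicz}); for general $S$ one would either need integrability of $S$ or settle for the natural subsequential version of the conjecture.
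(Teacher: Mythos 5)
This statement is a \emph{Conjecture} in the paper, not a theorem: the authors propose it immediately after Theorem~\ref{Thm_Pf main thm_Transl region} and offer no proof, only the remark that the case $S = \SSp^2_{\sqrt{2}}$ is known via \cite{AAG95_RotationMCF, AV97_DegenerateNeckpinches} and the bowl soliton. There is therefore no argument in the paper for you to be measured against, and anything purporting to settle it would be a new result beyond the paper's scope.

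Your sketch (translate $\Sigma$, cap it off, run MCF, and parabolically blow up at the singular time) is the natural and standard plan, and you are candid that the hard parts are unresolved. Three of them are genuinely serious. First, identifying the blow-up limit as $\Sigma$ rather than some other member of the $I$-parameter family $\{\Sigma_\varphi\}$: the degree/continuity argument you gesture at would need a well-defined map from cap data to translator limits, and this map is not obviously continuous or even well-defined when subsequential limits of rescalings are involved, especially across the fattening locus. Second, the fattening case is not a side issue you can quarantine: in case (ii) of Theorem~\ref{Thm_Pf main thm_Transl region} the candidate translators $T^\pm$ bound a fat region, and a limit of closed flows could in principle produce the entire fat region (cf.\ level-set-flow fattening), so the claim that every such $T^\pm$ is realized requires a selection mechanism you do not supply. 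Third, and most fundamentally, your argument needs uniqueness of the cylindrical tangent flow $S\times\RR$ at the singular point of $M_T(t)$ to conclude that the type-II blow-up is translator-like; as the paper itself notes in Section~\ref{Subsec_Prelim Transl}, this uniqueness is open for general closed self-shrinkers $S$, and retreating to a ``subsequential'' version weakens the conjecture. So this should be read as a reasonable program outline with its obstacles correctly named, not a proof.
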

		
		In \cite{AAG95_RotationMCF, AV97_DegenerateNeckpinches}, there are explicit examples of mean curvature flow of closed surfaces that have type-II singularities, and the type-II blow-up limit is the bowl soliton. This implies that if the self-shrinker $S$ is the round sphere $\SSp^2_2$, then the translators in Theorem \ref{Thm_Pf main thm_Transl region} can arise as the type-II blow-up limit of mean curvature flows of closed hypersurfaces.

		%%%%%%%%%%%%%%%%%%%%%%%%%%%%%%%%%%%%%%%%%%%%%%%%%%%%%%%%%%%%%%%%%%%%%%%%%%%%%%%%%%
		%%%%%%%%%%%%%%%%%%%%%%%%%%%%%%%%%%%%%%%%%%%%%%%%%%%%%%%%%%%%%%%%%%%%%%%%%%%%%%%%%%
		%%%%%%%%%%%%%%%%%%%%%%%%%%%%%%%%%%%%%%%%%%%%%%%%%%%%%%%%%%%%%%%%%%%%%%%%%%%%%%%%%%
		%%%%%%%%%%%%%%%%%%%%%%%%%%%%%%%%%%%%%%%%%%%%%%%%%%%%%%%%%%%%%%%%%%%%%%%%%%%%%%%%%%
		%%%%%%%%%%%%%%%%%%%     Topology  Change  and  Fattening      %%%%%%%%%%%%%%%%%%%%
		%%%%%%%%%%%%%%%%%%%%%%%%%%%%%%%%%%%%%%%%%%%%%%%%%%%%%%%%%%%%%%%%%%%%%%%%%%%%%%%%%%
		%%%%%%%%%%%%%%%%%%%%%%%%%%%%%%%%%%%%%%%%%%%%%%%%%%%%%%%%%%%%%%%%%%%%%%%%%%%%%%%%%%
		%%%%%%%%%%%%%%%%%%%%%%%%%%%%%%%%%%%%%%%%%%%%%%%%%%%%%%%%%%%%%%%%%%%%%%%%%%%%%%%%%%
		%%%%%%%%%%%%%%%%%%%%%%%%%%%%%%%%%%%%%%%%%%%%%%%%%%%%%%%%%%%%%%%%%%%%%%%%%%%%%%%%%%	
		\section{Topology Change and Fattening}\label{Sec_Fattening}
		
		Theorem \ref{Thm_Existence of Transl w prescib end} establishes the existence of a complete embedded translator that is asymptotic to a given simple translating end. Meanwhile, Theorem \ref{Thm_Pf main thm_Transl region} suggests a possible phenomenon that is called fattening, and the fattening implies non-uniqueness of the complete embedded translators that are exponentially asymptotic to the same simple end.
		
		In this section, we show that fattening can indeed happen, and it is due to the topology change of a family of mean curvature flows. Recall that the Angenent torus $S_A\subset \RR^3$ introduced in \cite{Angenent92_Doughnut} is a closed rotationally symmetric self-shrinker, and topologically it is a torus.  From Theorem \ref{Thm_Moduli of transl end } and Lemma \ref{Lem_Moduli_one-sided transl end}, there exists $\Sigma_{e}\subset \RR^4$ that is a rotationally symmetric simple translating end over $S_A\times \RR_+$, and all its one-sided deformations $T[\varpi(a)[\Sigma_e]]$ are also rotationally symmetric. In the following, we fix such an end.
		
		\begin{Thm}\label{Thm_Angenent torus fattens}
			There exists $a\in\RR$ such that $T[\varpi(a)[\Sigma_{e}]]$ fattens. In other words, case (ii) in Theorem \ref{Thm_Pf main thm_Transl region} happens when $S = S_A$.
		\end{Thm}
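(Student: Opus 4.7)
The plan is to exhibit two parameter values $a_\pm$ at which the translators $\partial\Omega_{a_\pm} = T[\Sigma_{a_\pm}]$ have distinct topological types, and to conclude that the family $\{T[\Sigma_a]\}_{a\in\RR}$ from Theorem \ref{Thm_Pf main thm_Transl region} must fatten somewhere in between. By the moreover-clauses of Theorem \ref{Thm_Moduli of transl end } and Lemma \ref{Lem_Moduli_one-sided transl end} applied with $G = SO(2)$ the rotation group about the axis of $S_A$, we can arrange every $\Sigma_a = \varpi(a)[\Sigma_e]$ to be $SO(2)$-invariant, and by the uniqueness assertions in Theorem \ref{Thm_Pf main thm_Transl region} each $T[\Sigma_a]$ inherits this invariance. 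In the non-fattening case, the smooth translator $\partial\Omega_a$ is then determined by its $2$-dimensional profile $\Pi_a$ in the reduced half $3$-space $H = \{(r, x_3, z) : r \geq 0\}$, asymptotic as $z\to +\infty$ to the Angenent profile curve $\gamma_A \subset \{r>0\}$.

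The key topological dichotomy is: if $\Pi_a \cap \{r=0\} = \emptyset$, then $\partial\Omega_a \cong S^1 \times \Pi_a$, and the prescribed asymptote forces $\Pi_a \cong S^1\times \RR$, so $\partial\Omega_a$ is homeomorphic to $T^2\times\RR$ with $\pi_1 = \ZZ^2$; if instead $\Pi_a$ meets $\{r=0\}$ along a curve, the $SO(2)$-orbits collapse there and $\partial\Omega_a$ has strictly smaller first Betti number. I would then realize both cases. For some $a_+ \gg 0$: since $\bigcup_{a>0}\Omega_a = \RR^4$ by Theorem \ref{Thm_Pf main thm_Transl region}, the origin lies in $\Omega_{a_+}$; but $\Omega_{a_+}\cap \RR^3\times\{z\}$ approaches the scaled solid-torus interior (which is disjoint from the axis) as $z\to +\infty$, so the axis line must exit $\Omega_{a_+}$ at some finite $z$, forcing $\Pi_{a_+}$ to meet $\{r=0\}$. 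For some $a_- \ll 0$: the goal is $\Pi_{a_-}\cap \{r=0\} = \emptyset$; here I would construct an $SO(2)$-invariant barrier hypersurface $B$ (for example a long thin rotational tube) separating the axis from the Angenent-torus region near infinity, and show that for $a$ sufficiently negative the monotonicity of $\{\Omega_a\}$ together with the strong maximum principle force $\partial\Omega_{a_-}$ to lie on the axis-free side of $B$.

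Assuming for contradiction that no $T[\Sigma_a]$ fattens, each $\partial\Omega_a$ is a smooth $\I$-minimizing embedded hypersurface, and the upper semicontinuity in Theorem \ref{Thm_Pf main thm_Transl region} upgrades via Allard-type regularity to local $C^\infty$-convergence on compact subsets. A standard isotopy-extension argument then shows the diffeomorphism type of $\partial\Omega_a$ is locally constant in $a$, contradicting the distinct topologies exhibited at $a_\pm$; hence some $a_\star\in (a_-, a_+)$ must yield fattening, which is case (ii) of Theorem \ref{Thm_Pf main thm_Transl region}. The main obstacle is the barrier construction for $a \ll 0$: producing an $SO(2)$-invariant barrier that both separates the axis from $\Sigma_e$ and interacts correctly with the monotone $\varpi$-action requires care, since the natural candidates (rotational cylinders or rescaled auxiliary ends) are not themselves in the family and their comparison with $\partial\Omega_{a_-}$ at the $z\to+\infty$ end must be established uniformly, exploiting the fact that $\gamma_A$ is bounded away from $\{r=0\}$.
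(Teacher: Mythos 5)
Your overall strategy---two extreme members of the family $\{T[\varpi(a)[\Sigma_e]]\}_a$ with different topological types (one avoiding the rotation axis, one crossing it), plus a continuity/isotopy argument to derive a contradiction with non-fattening---is exactly the strategy the paper uses.  Your argument for the direction where $T_a$ meets the axis (your $a_+$) is correct and is actually a pleasant alternative to what the paper does: you invoke the decomposition $\bigcup_{a>0}\Omega_a=\RR^4$ to put the origin inside $\Omega_{a_+}$, then note that the slices $\Omega_{a_+}\cap\RR^3\times\{z\}$ leave the axis for $z\gg 1$, and conclude by connectedness that the axis-plane must cross $\partial\Omega_{a_+}$.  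The paper instead goes through Lemma~\ref{Lem_Fattening_Rescaled lim of one-sided deform} and the star-shapedness of the outer one-sided rescaled flow (Lemma~\ref{Lm: RMCF from A-torus s>0}); either route works.

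The genuine gap is in the $a_-$ direction, which you yourself flag as ``the main obstacle,'' and the barrier idea you sketch there does not work as stated.  A rotationally symmetric cylinder around the axis has translator-mean-curvature vector pointing \emph{toward} the axis (since $\partial_z$ is tangent to it), so the strong maximum principle makes it a barrier that traps surfaces on the \emph{inside} of the cylinder, not a barrier that keeps a translator on the axis-free side.  A ``long thin rotational tube'' in the sense of a strictly mean-convex thin solid torus is the right object, but it is useless until you know where to place it and why some slice of $T_{a_-}$ lies inside it.  That is precisely the content the paper supplies and which is missing from your proposal: Lemma~\ref{Lem_Fattening_Rescaled lim of one-sided deform} shows that the parabolic rescalings $\cR_{\tau-\log z}(T[\ES[u_{a_-(z)}]])$ converge to the \emph{inner} one-sided ancient rescaled flow $\{S^\kappa_-(\tau)\times\RR\}$; Lemma~\ref{Lm: RMCF from A-torus s<0} shows this inner flow is self-shrinker mean convex, stays in the bounded component $E_-$ (hence away from the axis by a fixed gap $\overline\kappa$), and collapses to a round circle so that just before extinction it sits inside a strictly mean-convex thin solid torus $\BB^3_\eta(S_-(\tau_-))$; and Lemma~\ref{Lem_Fattening_Ellip Reg of mean convex domain} then caps off the remaining compact piece of $T_{a_-}$ inside the rescaled copy of that thin tube.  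Without the dynamical input that the rescaled limits are the one-sided ancient flows (a uniqueness statement from \cite{CCMS20_GenericMCF}) and the fact that the inner flow collapses to a \emph{circle} away from the axis rather than a point on it, there is no obvious way to place a barrier, and the monotonicity of $\{\Omega_a\}$ together with $\bigcap_{a<0}\Omega_a=\emptyset$ only controls finitely many axis points at a time.  Your final isotopy step is also somewhat coarser than the paper's; to make ``the diffeomorphism type is locally constant'' rigorous for these non-compact hypersurfaces you need the end-trivialization uniform in $a$ over a compact parameter interval, which the paper gets from Theorem~\ref{Thm_Moduli of transl end }(ii) and (\ref{Pf main thm_transl exp asymp to ends}); the paper then phrases the contradiction via contractibility of the $SO(2)$-orbit curve in $\pi_1$, which is a more robust invariant than raw diffeomorphism type.
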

		
		To prove the theorem, we need the following lemma concerning certain rescaled limits of one-sided deformations of translators.   Let $S\subset \RR^n$ be an arbitrary closed embedded smooth self-shrinker.  $\RR^n\setminus S = E_+ \sqcup E_-$, where $E_+$ is unbounded.  For every constant $0<\kappa\leq \kappa_S\ll1$, the argument in \cite{CCMS20_GenericMCF} showed the existence of a unique smooth ancient RMCF $\{S^\kappa_\pm(\tau)\}_{\tau\leq 0}$ asymptotic to $S$ near $-\infty$ such that $S^\kappa_\pm (\tau) \subset E_\pm$, $\forall \tau \leq 0$, and the Hausdorff distance $dist_{\RR^n}(S, S^\kappa_\pm (0)) = \kappa$.

		\begin{Lem} \label{Lem_Fattening_Rescaled lim of one-sided deform}
			Let $\ES[u_0]\subset \RR^{n+1}$ be a pl-simple translating end over $S\times \RR_+$, $\kappa\in (0, \kappa_S]$.  For $a\in \RR$, $u_a\in \varpi(a)[u_0]$ be the one-sided deformations introduced in Section \ref{Subsec_Moduli_RR action}. Suppose that $T[\ES[u_a]]$ is nonfattening for $|a|\gg 1$ (and hence is an $\I$-minimizing translator by Theorem \ref{Thm_Pf main thm_Transl region}).  Then there exist functions $a_\pm(z)$ defined on $z\in \RR_+$ such that when $z\to +\infty$,
			\begin{enumerate}[(i)]
				\item $\pm a_\pm(z)\to +\infty$;
				\item The ancient RMCF $\{\cR_{\tau-\log z}(T[\ES[u_{a_\pm(z)}]])\}_{\tau\leq 0}$ $C^\infty_{loc}$-converges to the one-sided flow $\{S^\kappa_\pm(\tau)\times \RR\}_{\tau\leq 0}$ above with Hausdorff distance $\kappa$ to $S$ at $\tau=0$, where recall \[
				\cR_\tau(T):= e^{\tau/2}\cdot (T - e^{-\tau}\partial_z).  \]
			\end{enumerate}
		\end{Lem}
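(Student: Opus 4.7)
The plan is to define $a_\pm(z)$ implicitly by requiring that the rescaled translator $\cR_{-\log z}(T[\ES[u_{a_\pm(z)}]])$ have Hausdorff distance exactly $\kappa$ from $S\times\RR$ inside a fixed large ball $\BB_R\subset \RR^{n+1}$. The heuristic for the scaling is a direct computation: writing $u_a(x,w)\approx u_0(x,w)+a\,w^{\mu_1}\psi_1(x)$ via the asymptotic expansion from Lemma \ref{Lem_Moduli_one-sided transl end}, a slice of $\cR_{\tau-\log z}(\ES[u_a])$ at rescaled height $\hat z$ corresponds in original coordinates to $w=ze^{-\tau}+\sqrt z\,e^{-\tau/2}\hat z$ and, after unwinding the factor $e^{\tau/2}\sqrt{w/z}\to 1$, limits to $S+a\,z^{\mu_1}e^{-\mu_1\tau}\psi_1\nu$ as $z\to\infty$. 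Since $\mu_1\leq -1$, forcing a nondegenerate limit $c\,e^{-\mu_1\tau}\psi_1\nu$ requires $a\sim c\,z^{-\mu_1}$, which drives $|a_\pm(z)|\to +\infty$.

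Set $F(z,a):=$ Hausdorff distance between $\cR_{-\log z}(T[\ES[u_a]])\cap \BB_R$ and $(S\times\RR)\cap \BB_R$. For each fixed $a$ with $|a|$ large (so that non-fattening applies and $T[\ES[u_a]]$ is a single $\I$-minimizing translator exponentially asymptotic to $\ES[u_a]$ by Theorem \ref{Thm_Existence of Transl w prescib end}), Lemma \ref{Lem_Pre_blow down transl split} together with the exponential asymptotic estimate yields $F(z,a)\to 0$ as $z\to\infty$. On the other hand, Claim 1 in the proof of Theorem \ref{Thm_Pf main thm_Transl region} orders the family $\{T[\ES[u_a]]\}_a$ monotonically in the $\psi_1$-direction, and the leading-order expansion above produces the quantitative lower bound $F(z,Cz^{-\mu_1})\geq c_0\,C$ for $C$ sufficiently large and $c_0=c_0(S,R)>0$. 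Upper-semi-continuity of $T[\ES[u_a]]$ in $a$ from Theorem \ref{Thm_Pf main thm_Transl region} and Corollary \ref{Cor_Moduli_RR-action conti}, together with non-fattening, give continuity of $F(z,\cdot)$ on $\{|a|\gg 1\}$, so the intermediate value theorem yields $a_+(z)>0$ (resp.\ $a_-(z)<0$) with $F(z,a_\pm(z))=\kappa$, and the decay of $F(z,a)$ on compact $a$-intervals forces $\pm a_\pm(z)\to +\infty$.

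For the convergence, fix $z_j\to +\infty$ and set $a_j:=a_+(z_j)$. The time-shifted RMCF $\{\cR_{\tau-\log z_j}(T[\ES[u_{a_j}]])\}_\tau$ is defined on intervals exhausting $\RR$, and Brakke compactness yields a subsequential limit $\{\mu^\infty_\tau\}_{\tau\leq 0}$. Lemma \ref{Lem_Pre_blow down transl split} splits the limit as $m|\hat\Sigma(\tau)\times\RR|$; the entropy identity $\lambda[T[\ES[u_{a_j}]]]=\lambda[S\times \RR]$ from Corollary \ref{Cor_Pre_Blow down transl MCF split}, combined with lower semi-continuity in Proposition \ref{prop:semicontinuity_entropy}, forces $m=1$. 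The monotonicity places $\hat\Sigma(\tau)$ on the $+$ side of $S$ for all $\tau$, and the normalization $F(z_j,a_j)=\kappa$ pins down the Hausdorff distance between $\hat\Sigma(0)$ and $S$ to be $\kappa$. Brakke regularity (Theorem \ref{Thm_Pre_Brakke Reg}) upgrades the convergence to $C^\infty_{loc}$, and the heuristic computation shows $\hat\Sigma(\tau)\to S$ as $\tau\to -\infty$. By the uniqueness classification of smooth ancient one-sided RMCF asymptotic to $S$ from \cite{CCMS20_GenericMCF}, $\hat\Sigma(\tau)=S^{\kappa}_+(\tau)$. The $-$ case is symmetric.

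The main obstacle is the quantitative lower bound $F(z,Cz^{-\mu_1})\geq c_0\,C$, because it requires verifying that the rescaling genuinely extracts the leading eigenmode $a\,w^{\mu_1}\psi_1$ without being drowned by lower-order terms in the expansion of $u_a$, nor by the exponentially small correction between $T[\ES[u_a]]$ and $\ES[u_a]$ provided by Theorem \ref{Thm_Existence of Transl w prescib end}. This is addressed by rerunning the elliptic and parabolic estimates of Section \ref{Sec_Moduli} in the rescaled frame, using that $az^{\mu_1}\to c$ keeps the principal term at order $1$ while all correction terms remain $o(1)$ uniformly on compact $\tau$-intervals.
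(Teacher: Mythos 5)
Your route differs from the paper's in a substantive way, and the divergence matters. The paper defines $a_+(z)$ as a supremum, namely $a_+(z):=\sup\{a: T[\ES[u_a]]\cap\RR^n\times\{z\}\subset\sqrt{z}\,\BB_\kappa(S)\times\{z\}\}$; finiteness of this sup then follows \emph{immediately} from the global decomposition $\RR^{n+1}=\coprod_a T[\varpi(a)[\Sigma_e]]$ of Theorem \ref{Thm_Pf main thm_Transl region}, and the touching property at the $z$-slice comes from upper-semi-continuity with no intermediate value argument needed. The paper's real technical work is then a Claim: the rescaled translator $T_s^+$ sits in $E_+\times\RR$ on $z$-intervals $[s,\tilde\rho(s)s]$ with $\tilde\rho(s)\to\infty$, proved by constructing an explicit barrier end $v^{(s)}$ via Lemma \ref{Lem_Moduli_one-sided transl end} with parameters $\delta(s)=(\log s)^{-1}$, $z_0(s)=s$, $a(s)=\delta(s)s^{-\mu_1}$, using the exponential asymptotics from Theorem \ref{Thm_Existence of Transl w prescib end}, and then invoking monotonicity of the $\varpi$-family. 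Your argument for one-sidedness of the limit (comparing against the blow-down of a fixed $T_{a'}$ via the nesting of inner regions) is actually somewhat slicker than the paper's Claim.

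The genuine gap is in your quantitative lower bound $F(z,Cz^{-\mu_1})\geq c_0\,C$ ``for $C$ sufficiently large,'' which you rightly flag as the main obstacle. As stated it cannot hold: $F$ is a Hausdorff distance inside a fixed ball $\BB_R$ and is therefore bounded above, independent of $C$. Beyond the cosmetic issue of the inequality's form, the substantive problem is that the expansion $u_a\approx u_0+aw^{\mu_1}\psi_1$ from Lemma \ref{Lem_Moduli_one-sided transl end} is only established when $|a|\leq\delta z_0^{-\mu_1}$ with $\delta$ a \emph{small} geometric constant and $z_0$ the base height; pushing $a$ to $Cz^{-\mu_1}$ with $C$ arbitrary leaves the regime where the fixed-point/contraction estimates of Section \ref{Sec_Moduli} have been proved, and iterating the deformation does not obviously produce a bound that remains linear in $C$ once the accumulated error is accounted for. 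Your proposal to ``rerun the elliptic and parabolic estimates in the rescaled frame'' does not say what replaces the smallness hypothesis, so the IVT cannot yet be invoked. To close the gap you would need either a non-perturbative statement (like the paper's decomposition of $\RR^{n+1}$, which immediately furnishes an $a$ whose $z$-slice escapes $\sqrt{z}\,\BB_\kappa(S)$) or a genuine extension of the one-sided deformation estimates to the large-$a$ regime. It is worth noticing that even the paper does \emph{not} need $a_+(z)\sim c\,z^{-\mu_1}$ with $c>0$ fixed; the proof works with the weaker $a(s)=(\log s)^{-1}s^{-\mu_1}=o(s^{-\mu_1})$, precisely because that keeps $\delta(s)\to 0$ and stays inside the regime where Lemma \ref{Lem_Moduli_one-sided transl end} is available.
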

		\begin{proof}
			WLOG, we work in $E_+$.  Define \[
			a_+(z):= \sup\left\{a\in \RR: T[\ES[u_a]]\cap \RR^n\times \{z\}\subset \sqrt{z}\cdot \BB_\kappa(S) \times \{z\}\right\}.  \]
			Note that by (\ref{Pf main thm_Equ_R^n+1 decomp into T_a}), for every $z\gg 1$, $a_+(z)<+\infty$, and $T_z^+:= T[\ES[u_{a_+(z)}]]$ satisfies
			\begin{align}
				\{x\in \RR^n: (\sqrt{z}x, z)\in T_z^+\} \subset \Clos(\BB_\kappa(S)), &\ & \{x\in \RR^n: (\sqrt{z}x, z)\in T_z^+\} \cap \partial\BB_\kappa(S) \neq \emptyset. \label{Fattening_z slice of T_z^+ intersects bdy of S kappa neighb}
			\end{align}
			Also by the uniform estimate in Lemma \ref{Lem_Moduli_one-sided transl end} and Theorem \ref{Thm_Existence of Transl w prescib end}, we know that $a_+(z)\to +\infty$ as $z\to \infty$. \\
			\textbf{Claim.} There exists a function $\tilde{\rho}(s)\to +\infty$ as $s\to +\infty$ such that for every $z\in [s, \tilde{\rho}(s)\cdot s]$, \[
			\{x\in \RR^n: (\sqrt{z}x, z)\in T_s^+\} \subset E_+ .   \]
			
			Note that once the Claim is proved, we immediately see that in $\RR^n\times \{0\}$ slice, the RMCF $\{\cR_{\tau-\log z}(T_z^+)\}_{\tau\leq 0}$ sits inside $E_+$ in a larger and larger time interval in $(-\infty, 0]$, and hence by Lemma \ref{Lem_Pre_blow down transl split} converges to a splitting one-sided RMCF of $S\times\RR$ and has Hausdorff distance $\kappa$ to $S$ at $\tau=0$ slice by (\ref{Fattening_z slice of T_z^+ intersects bdy of S kappa neighb}), which finishes the proof of (ii) of the Lemma.
			\begin{proof}[Proof of the Claim.]
				We fix $\gamma:= \mu^-/2$. By Lemma \ref{Lem_Moduli_Sharp asymp rate for pl-simple end}, there exists $\bar{z}(S, u_0)\gg 1$ such that \[
				\sup_{R\geq \bar{z}}R^{-\gamma}\cdot\|u_0\|_{C^{2, 1/2}_\star, S, R} <1 .  \]
				For every $s\geq s_0(\bar{z}, S)\gg \bar{z}$, Lemma \ref{Lem_Moduli_one-sided transl end} applies by taking $\delta(s):= (\log s)^{-1}$, $z_0(s):= s$ and $a(s):= \delta(s)s^{-\mu_1}$, which produces $v^{(s)}:= u_{a(s)}$ solving $\scT(v^{(s)})=0$ on $S\times \RR_{\geq s}$, with the following asymptotic estimate, for every $R\geq s$,
				\begin{align}
					\|v^{(s)} - u_0 - \delta(s)\cdot \left(\frac{z}{s}\right)^{\mu_1}\psi_1\|_{C^2_\star, S, R} \leq 2\cdot\delta(s)^{3/2}\left(\frac{R}{s}\right)^{\gamma/2 + \mu_1}.  %\label{Fattening_one-sided ends v^(s) asymp}
					\label{eq:v(z) compare with u_0}
				\end{align}
				In particular, we have for every $R\geq s$,
				\begin{align}
					\|v^{(s)}\|_{C^2_\star, S, R} \leq \|u_0\|_{C^2_\star, S, R} + C(S)(\log s)^{-1}\cdot \left(\frac{R}{s}\right)^{\mu_1} \leq \|u_0\|_{C^2_\star, S, R} + C(S)(\log R)^{-1}. \label{Fattening_v^(s) uniformly (in s) tend to 0}
				\end{align}
				We assert that the RHS above is independent of $s$ and tends to $0$ as $R\to+\infty$;
				And since $|u_0(\cdot, z)|\leq z^\gamma$, there exists some $\rho(s)$ tending to $+\infty$ as $s\to +\infty$ such that for every $s<z<\rho(s)s$,
				\begin{align}
					v^{(s)}(\cdot, z) \geq -\|u_0(\cdot, z)\|_{C^0, S} + C'(S)(\log s)^{-1}\cdot \left(\frac{z}{s}\right)^{\mu_1} \geq C''(S)(\log s)^{-1}\cdot \left(\frac{z}{s}\right)^{\mu_1}. \label{Fattening_v^(s) uniformly positive}    
				\end{align}
				By (\ref{Fattening_v^(s) uniformly (in s) tend to 0}) and Theorem \ref{Thm_Existence of Transl w prescib end}, when $s\gg 1$, there exists constant $\Lambda(S, u_0)\gg 1$ such that $T[\ES[v^{(s)}]]\cap \RR^n\times [\Lambda s, +\infty) = \ES[u^{(s)}]\cap \RR^n\times [\Lambda s, +\infty)$ and that
				\begin{align}
					|(u^{(s)}-v^{(s)})(\cdot, z)| \leq C(S, u_0) e^{-(z-\Lambda s)/2}. \label{Fattening_complete transl u^(s) exp close to end v^(s)}    
				\end{align}
				Combine this with (\ref{Fattening_v^(s) uniformly positive}) and take $s\gg 1$, \[
				u^{(s)} >0\ \ \ \ \text{ on }S\times [\Lambda s, \rho(s)s].   \]
				In other words, for every $z\in [\Lambda s, \rho(s)s]$, we have, \[
				\{x\in \RR^n: (\sqrt{z}x, z)\in T[\ES[v^{(s)}]]\} \subset E_+.  \]
				
				On the other hand, note that $T[\ES[v^{(s)}]] \in \varpi(a(s))[u_0]$, and by (\ref{Fattening_v^(s) uniformly (in s) tend to 0}) and (\ref{Fattening_complete transl u^(s) exp close to end v^(s)}), \[
				|u^{(s)}(\cdot, z)| \leq C(S, u_0)(\log z)^{-1}\ \ \ \ \text{ on }S\times \RR_{\geq \Lambda s},   \]
				which implies that for $s\gg 1$, $T[\ES[v^{(s)}]]\cap \RR^n\times \{\Lambda s\} \subset \sqrt{\Lambda s}\cdot \BB_{\kappa/2}(S) \times \{\Lambda s\}$, and hence by its definition, $a_+(\Lambda s)>a(s)$ and $T^+_{\Lambda s}$ lies outside $T[\ES[v^{(s)}]]$. In particular for every $z\in [\Lambda s, \rho(s)s]$, \[
				\{x\in \RR^n: (\sqrt{z}x, z)\in T^+_{\Lambda s}\} \subset E_+.  \]
				This finishes the proof of the claim.
			\end{proof}
			As explained after the statement of the claim, this completes the proof of the Lemma.
		\end{proof}
		
		\begin{Lem} \label{Lem_Fattening_Ellip Reg of mean convex domain}
			Let $n\geq 2$, $O\subset \RR^n$ be a bounded smooth strictly mean convex domain, in other words, the mean curvature vector $\vec{H}_{\partial O}$ points inward.  Then there exists a constant $\rho_O\gg 1$ such that for every $\rho\geq \rho_O$, if $T\subset \RR^{n+1}$ is a compact $\I$-minimizing hypersurface with boundary $\partial T \subset \rho\cdot O\times \{0\}$, then $T\subset \rho\cdot \Clos(O)\times \RR_{\leq 0}$.
		\end{Lem}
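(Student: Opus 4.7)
The plan is to establish the two containments $T\subset\RR^n\times\RR_{\leq0}$ and $T\subset\rho\Clos(O)\times\RR$ separately, by very different techniques.

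For the vertical bound, a reflection argument works at every $\rho$. Let $R\colon(x,z)\mapsto(x,-z)$. Since $\{z=0\}$ has translator-mean-curvature $-\partial_z\neq 0$, it is not itself a translator, and the strong maximum principle of \cite{SolomonWhite89_Maxim,Ilmanen96} then forces $\scH^n(T\cap\{z=0\})=0$. If $\scH^n(T\cap\{z>0\})>0$, the competitor $T':=(T\cap\{z\leq0\})\cup R(T\cap\{z>0\})$ satisfies $\partial T'=\partial T$ and
\[
\I[T]-\I[T']=\int_{T\cap\{z>0\}}(e^{z}-e^{-z})\,d\scH^n>0,
\]
contradicting $\I$-minimality; hence $T\subset\{z\leq 0\}$.

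For the horizontal containment I would argue by contradiction via elliptic regularization. Assume there exist $\rho_j\to+\infty$ and compact $\I$-minimizing $T_j$ with $\partial T_j\subset\rho_j O\times\{0\}$ and a point $q_j\in T_j$ achieving $d_j:=\max_{p\in T_j}\mathrm{dist}(p^{(n)},\rho_j\Clos(O))>0$. Rescaling, $\tilde T_j:=T_j/\rho_j$ is compact $\I^{1/\rho_j}$-minimizing with $\partial\tilde T_j\subset O\times\{0\}$; since it satisfies $\vec H=\rho_j\partial_z^\perp$, the family $\mathcal M_j:=\{\|\tilde T_j\|+t\rho_j\partial_z\}_{t\geq0}$ is a genuine mean curvature flow. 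After extracting a subsequence with $\partial\tilde T_j\to S_\infty\subset\Clos(O)\times\{0\}$ in $C^2$ (invoking smoothness of $\partial T_j$ together with boundary regularity for $\I^\epsilon$-minimizers from \cite{Ilmanen94_EllipReg}), Lemma \ref{Lem_Pf main thm_ellip reg smooth approx MCF} yields $\mathcal M_j\to\mathcal M_\infty:=\{S_\infty(t)\times\RR\}_{t\in[0,T_m)}$, where $\{S_\infty(t)\}$ is the classical MCF of $S_\infty$ in $\RR^n$. Strict mean convexity of $\partial O$, together with the strong maximum principle, gives the strict interior containment $S_\infty(t)\cap\partial O=\emptyset$ for every $t\in(0,T_m)$.

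The contradiction would come from tracking $\tilde q_j:=q_j/\rho_j$: by the vertical bound applied to $\tilde T_j$, its height $\tilde z_j\leq 0$, so $t_j:=-\tilde z_j/\rho_j\geq 0$ locates $\tilde q_j$ in the cross-section $\tilde T_j\cap\{z=-t_j\rho_j\}$ of $\mathcal M_j$. Any subsequential limit $\tilde q_\infty^{(n)}\in S_\infty(t_\infty)\subset\Clos(O)$ must lie on $\partial O$ (since the $\tilde q_j^{(n)}$ stay outside $\Clos(O)$), which contradicts strict avoidance whenever $t_\infty\in(0,T_m)$. The main obstacle, and the point requiring the most care, is the borderline regime $t_\infty=0$, where $\tilde q_j$ accumulates on $\{z=0\}$ near $\partial\tilde T_j$. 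I would resolve this through a secondary rescaling about $\tilde q_j$ at scale $1/\tilde d_j$ with $\tilde d_j:=d_j/\rho_j$: depending on whether $\rho_j\tilde d_j$ stays bounded, diverges, or vanishes, the blow-up produces either a translator, a fast-translator reducing to the previous case, or a minimal hypersurface, each ruled out by the strong maximum principle against a blow-up copy of $\partial O$, which remains strictly mean convex in the limit. Combined with the vertical bound, this yields $T\subset\rho\Clos(O)\times\RR_{\leq 0}$ for all $\rho\geq\rho_O$.
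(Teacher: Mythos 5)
Your reflection argument for the vertical containment is a clean alternative to the paper's barrier argument (the paper instead uses that horizontal hyperplanes $\RR^n\times\{c\}$ have translator-mean-curvature pointing downward, and paraboloids have it pointing inward, to trap $T$ in a half-space and a ball). Provided one is working with flat chains mod $2$ or checks the current-theoretic bookkeeping on the slicing boundary at $\{z=0\}$, your competitor $T'$ does have $\partial T' = \partial T$ and strictly smaller $\I$, so that piece is fine.

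The horizontal containment is where the proposal has a genuine gap, and it is precisely the place where the paper introduces an idea that you do not. The lemma only assumes $\partial T \subset \rho\cdot O\times\{0\}$; it does not assume $\partial T$ is a smooth hypersurface, let alone that the rescaled boundaries $\partial \tilde T_j$ subconverge in $C^2$. Your proposal invokes Lemma \ref{Lem_Pf main thm_ellip reg smooth approx MCF}, whose hypothesis is exactly a $C^2$-converging family of closed hypersurfaces $S_j\to S_\infty$, and this hypothesis is simply not available for $\partial\tilde T_j$. Worse, without controlling the boundary you cannot assert that $\cM_j$ has a Brakke limit of the product form $\{S_\infty(t)\times\RR\}$, which is what drives the avoidance contradiction. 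The paper gets around this by never blowing down $T$ itself: it constructs an auxiliary $\I$-minimizer $T_\rho$ with the \emph{fixed} smooth boundary $\rho\cdot\partial O\times\{0\}$, shows $T\subset\Clos(U_\rho)$ by a cut-and-paste $\I$-comparison with $T_\rho$, and then applies elliptic regularization to $T_\rho$ only. Since $\rho^{-1}T_{\rho}$ always has the same boundary $\partial O\times\{0\}$, the blowdown limit is the classical mean curvature flow of $\partial O$, which by strict mean convexity stays in $\Clos(O)$ for all $t\ge 0$; the contradiction point $x_\infty\in\Clos(\BB_\Lambda\setminus\BB_{\eta_0}(O))$ then immediately cannot lie in $\spt(\mu_{t_\infty})$, even when $t_\infty=0$. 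In particular the $t_\infty=0$ borderline case that you flag as the main difficulty does not arise at all once the comparison with $T_\rho$ is in place. Finally, the paper still needs one more step you omit: once $T\subset\Clos(U_\rho)\subset\rho\,\BB_{\eta_0}(O)\times\RR_{\le0}$, it slides the foliation of mean-convex cylinders $\rho\,\partial\BB_\eta(O)\times\RR$ for $\eta\in(0,\eta_0]$ inward, using the strong maximum principle of Solomon--White, to contract the containment from $\rho\,\Clos(\BB_{\eta_0}(O))$ down to $\rho\,\Clos(O)$.

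Your proposed resolution of the $t_\infty=0$ regime by a ``secondary rescaling at scale $1/\tilde d_j$'' is too vague to assess: it is unclear why a blow-up centered at $\tilde q_j$ would see a mean-convex copy of $\partial O$ (the rescaled $\partial O$ sits at unit distance from the blow-up center, so it need not confine the limit), and the trichotomy ``translator / fast-translator / minimal hypersurface'' is asserted rather than derived. I recommend you adopt the auxiliary-minimizer comparison: fix $T_\rho$ with $\partial T_\rho = \rho\cdot\partial O\times\{0\}$, show $T\subset\Clos(U_\rho)$, then run your elliptic-regularization argument on $T_\rho$, and close with the cylinder-foliation maximum principle step.
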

		\begin{proof}
			Suppose $\Lambda\gg 1$ such that $O\subset \BB^n_\Lambda$.  For every $\rho>1$, let $T_\rho\subset \RR^{n+1}$ be an $\I$-minimizing hypersurface with boundary $\partial T_\rho = \rho\cdot\partial O\times \{0\}$. Since the translator-mean-curvature vector of every hyperplane $\RR^n\times \{z\}$ points downward and the translator-mean-curvature vector of every paraboloid $\{(x, |x|^2-M): x\in \RR^n\}$ points inward, by strong maximum principle \cite{SolomonWhite89_Maxim}, we know that $T\setminus \partial T, T_\rho\setminus \partial T_\rho \subset \BB_{\rho\Lambda}^n\times [-\rho^2\Lambda^2, 0)$. Let $U_\rho\subset \RR^{n+1}$ be the open region enclosed by $T_\rho \cup (\rho\cdot O\times \{0\})$.  Then by comparing $\I$-functional, we have $T\subset \Clos(U_\rho)$.
			
			Since $O$ is strictly mean convex, $\exists\ \eta_0\in (0, 1)$ such that for every $\eta\in (0, \eta_0]$, $\BB_\eta(O)$ is also a smooth strictly mean convex domain.  Since the translator-mean-curvature vector of every $\rho\cdot \partial \BB_\eta(O)\times \RR$ points inward, again by strong maximum principle \cite{SolomonWhite89_Maxim}, we know that if 
			\begin{align}
				U_\rho\subset \rho\cdot\BB_{\eta_0}(O)\times\RR_{\leq 0}, \label{Fattening_I minzr in B_eta(O)}
			\end{align}
			then $T\subset \Clos(U_\rho)\subset\rho\cdot \Clos(O)\times \RR_{\leq 0}$, which finish the proof of the Lemma.
			
			To show that (\ref{Fattening_I minzr in B_eta(O)}) holds for all sufficiently large $\rho$, suppose for contradiction that there exist $\rho_j\nearrow +\infty$ and 
			\begin{align}
				(x_j, z_j)\in (\rho_j^{-1}T_{\rho_j})\setminus (\BB_{\eta_0}(O)\times \RR_{\leq 0}) \subset \left(\BB_\Lambda^n \setminus \BB_{\eta_0}(O)\right) \times [-\rho_j\Lambda^2, 0].  \label{Fattening_Points not in B_eta(O)*R}	
			\end{align}
			By Ilmanen's elliptic regularization \cite{Ilmanen94_EllipReg}, $\{T_j(t):= \rho_j^{-1}T_{\rho_j} + \rho_jt\cdot\partial_z\}_{t\geq 0}$ are mean curvature flows which converges in the Brakke sense in $\RR^n\times \RR_{\leq 0}$ to some Brakke motion $\{\mu_t \times \RR\}_{t\geq 0}$ starts at $O\times \RR$.  In particular, $(x_j, 0)\in \spt(T_j(-z_j/\rho_j))$ will subconverge in $\RR^n\times \RR$ to some point in the support of some $\mu_t\times \RR$, which by (\ref{Fattening_Points not in B_eta(O)*R}) lies in $(\BB_\Lambda^n\setminus \BB_{\eta_0}(O)) \times \{0\}$. This implies for some $t\geq 0$, \[
			\spt(\mu_t)\setminus \BB_{\eta_0}(O)\neq \emptyset.  \]
			On the other hand, since $O$ is mean convex, any Brakke flow starting from $O$ moves inward, which means $\spt(\mu_t)\subset \Clos(O)$ for every $t\geq 0$. This becomes a contradiction.
		\end{proof}
		
		Now we go back to Theorem \ref{Thm_Angenent torus fattens}.  Let us recall some basic facts of the Angenent torus $S_A$. The Angenent torus is rotationally symmetric in $\RR^3$, which is homeomorphic to $\SSp^1\times \SSp^1$. In the following, when we say something is rotationally symmetric in $\RR^3$, we mean it has the same rotational axis as the Angenent torus. From now on, we fix a $\kappa\in (0, \kappa_S)$.  Since the one-sided RMCF $\{S_{\pm}(\tau):= S^\kappa_\pm(\tau)\}$ is unique and asymptotic to $S_A$ as $\tau\to-\infty$, when $\tau$ is sufficiently negative, $S_{\pm}(\tau)$ are rotationally symmetric and isotopic to $S_A$.
		
		Next, we derive further topological properties of $S_\pm(\tau)$. The following two lemmas are essentially proved in \cite[Corollary 8.8]{CCMS20_GenericMCF}.
		
		\begin{Lem}\label{Lm: RMCF from A-torus s<0}
			There exists $\tau_->0$, and for any $\eta>0$ there exists $\epsilon_-(\eta)>0$ with the following significance:
			\begin{enumerate}
				\item for $\tau\in(-\infty,\tau_-)$, $S_-(\tau)$ is a regular rotational invariant torus, and $S_-(\tau_-)$ is a rotational invariant $\SSp^1$ curve;
				\item for $\tau\in[\tau_--\epsilon_-,\tau_-)$, $S_-(\tau)$ is a strictly mean convex surface sitting inside an $\eta$-neighbourhood of the curve $S_-(\tau_-)$;
				\item after $\tau_-$, $S_-(\tau)$ is empty.
			\end{enumerate}
		\end{Lem}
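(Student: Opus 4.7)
The plan is to leverage the rotational symmetry of $S_A$ together with the uniqueness of one-sided ancient RMCFs proved in \cite{CCMS20_GenericMCF} to reduce everything to a rotationally symmetric profile curve evolution. First I would show that $S_-(\tau)$ is rotationally symmetric about the axis of $S_A$ for every $\tau$ where it exists: for any rotation $\rho\in SO(2)$ fixing this axis, $\rho\cdot S_-(\tau)$ is another ancient RMCF lying in $E_-$, asymptotic to $\rho\cdot S_A=S_A$ near $\tau=-\infty$, and with Hausdorff distance $\kappa$ to $S_A$ at $\tau=0$; uniqueness then forces $\rho\cdot S_-(\tau)=S_-(\tau)$. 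Consequently $S_-(\tau)$ is parametrized by a closed embedded profile curve $\sigma(\tau)$ in the meridian half-plane $\{(r,z):r>0\}$.

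Second, I would identify the extinction time $\tau_-$ and the shape of $S_-(\tau_-)$. The strong maximum principle applied in the rescaled setting shows that for every admissible $\tau$, $\sigma(\tau)$ lies strictly inside the planar region enclosed by the Angenent profile $\sigma_A$; since $\sigma_A$ is bounded away from the axis $\{r=0\}$, so is $\sigma(\tau)$ by a uniform amount. Translating back to MCF via $M_t:=\sqrt{-t}\,S_-(-\log(-t))$, the compact surface $M_{-1}=S_-(0)$ must extinct in finite MCF time $T_f$, and avoidance with a slightly larger homothetic copy of $S_A$ rules out $T_f=0$, so the RMCF extincts at some finite $\tau_-<+\infty$. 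Since $\sigma(\tau)$ never touches the axis and remains rotationally symmetric and embedded, the only admissible extinction scenario is the collapse of $\sigma(\tau)$ to a single off-axis point $p_-$, giving $S_-(\tau_-)$ as the rotationally invariant circle through $p_-$ and yielding items (1) and (3).

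For item (2), I would analyze tangent flows at points of the extinction circle $S_-(\tau_-)$. By Huisken's monotonicity, any tangent flow is a self-shrinker; rotational symmetry together with the off-axis location of $S_-(\tau_-)$ forces it to split off an $\RR$ factor along the tangent direction of the circle, leaving a closed rotationally symmetric embedded self-shrinking curve in the normal half-plane. The only such curve is the round circle, so the tangent flow is uniquely the round shrinking cylinder $\SSp^1\times\RR$. Brakke-White regularity (Theorem \ref{Thm_Pre_Brakke Reg}) then gives that for $\tau\in[\tau_--\epsilon_-(\eta),\tau_-)$, $S_-(\tau)$ is $C^\infty$-close to a thin round tube around $S_-(\tau_-)$; such a tube is strictly mean convex and contained in $\BB_\eta(S_-(\tau_-))$ once $\epsilon_-(\eta)$ is chosen small.

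The main obstacle will be ruling out pathological behavior of $\sigma(\tau)$ before the clean collapse at $\tau_-$, such as self-intersections, pinching off toward the axis, or type-II singularities forming elsewhere on the curve. Rotational symmetry and the maximum principle keep $\sigma(\tau)$ embedded and uniformly away from the axis; and the one-dimensional character of the rotationally symmetric profile evolution, combined with the well-understood neckpinch and tube collapse analysis for rotationally symmetric MCF (cf.\ \cite{AAG95_RotationMCF}), forces a single clean round collapse onto the core circle at $\tau_-$.
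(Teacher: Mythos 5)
Your proposal departs substantially from the paper's route, and the departure creates genuine gaps. The paper's proof is compact precisely because it rests on a single structural fact, imported from \cite{CIMW13_EntropyMinmzer,CCMS20_GenericMCF}: the one-sided ancient RMCF $\{S_-(\tau)\}$ is \emph{shrinker-mean-convex}, so it moves strictly inward, cannot stall at another self-shrinker (a self-shrinker has identically vanishing shrinker mean curvature, contradicting a strict sign), and, by the classification for shrinker-mean-convex flows, every tangent flow at a singularity is a round sphere or a round cylinder. That fact simultaneously gives finite-time extinction, the circle extinction set, and the roundness needed for item~(2). Your proposal never invokes shrinker-mean-convexity, and the substitutes you offer do not fill the gap.

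Two concrete problems. First, finite extinction: the comparison argument ``avoidance with a slightly larger homothetic copy of $S_A$ rules out $T_f=0$'' does not close. In the associated MCF $M_t=\sqrt{-t}\,S_-(-\log(-t))$, homotheties of $S_A$ under MCF extinct exactly at $t=0$, so being trapped inside one does not push the extinction time of $M_t$ strictly before $t=0$; and an enclosing sphere of radius comparable to $\mathrm{diam}(E_-)$ extincts only well after $t=0$ (for $\RR^3$ this is roughly $t\approx 3$). Confinement inside $E_-$ gives $\sigma(\tau)$ a uniform distance from the axis, but by itself says nothing about whether the RMCF becomes singular at a finite $\tau_-$ or survives for all $\tau$, converging to some other self-shrinker inside $E_-$. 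Ruling this out is exactly what shrinker-mean-convexity buys. Second, the tangent-flow step is circular as stated: you deduce that the tangent flow splits off $\RR$ ``along the tangent direction of the circle $S_-(\tau_-)$'' — but that $S_-(\tau_-)$ is a circle is the content of item~(1), which is what you are trying to prove. Citing \cite{AAG95_RotationMCF} for ``a single clean round collapse'' of a closed, rotationally invariant profile curve is not enough: that reference treats graphical, axis-touching configurations, and without a monotonicity/convexity structure on the profile evolution (which here comes from shrinker-mean-convexity) nothing prevents the profile from first forming a non-round or multi-point singularity. I would recommend replacing both steps by the shrinker-mean-convexity argument the paper uses; the rest of your outline (rotational symmetry by uniqueness, off-axis confinement, Brakke--White regularity plus roundness for item~(2)) is then sound.
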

		
		\begin{proof}
			From \cite{CIMW13_EntropyMinmzer, CCMS20_GenericMCF},  $\{S_-(\tau)\}_{\tau \in\RR}$ is self-shrinker-mean-convex and it moves inwards. \cite{CIMW13_EntropyMinmzer} implies that $S_-$ has a finite time singularity. Then we can consider the profile curves $\gamma^-(\tau)$ of the flow $S_-(\tau)$, and $\gamma^-(\tau)$ is a circle in the upper half plane $\RR^2$ which moves inwards. By \cite{CIMW13_EntropyMinmzer, CCMS20_GenericMCF}, the tangent flow of a self-shrinker mean convex RMCF has to be either a cylinder or a sphere, which implies that the profile curve must shrink to a point in finite time $\tau_-$, and this is the only singular time. Because by our scaling, $S_-(\tau)$ does not vanish at $\tau=0$, so $\tau_->0$. This shows items (1) and (3).
			
			Item (2) follows from the fact that the tangent flows of $S_-(\tau)$ at the singular time are cylinders. Because $S_-(\tau)$ is rotationally symmetric, as $\tau\to \tau_-$, the profile curve becomes round, which implies that the profile curve is convex when $\tau$ is sufficiently close to $\tau_-$. This allows us to choose $\epsilon_-$ that satisfies item (2).
		\end{proof}

		\begin{Lem}\label{Lm: RMCF from A-torus s>0}
			There exists $\tau_+\in(0, +\infty)$ with the following significance: for $\tau\in[\tau_+,+\infty)$, $S_+(\tau)$ is a regular star shape sphere enclosing the origin, i.e. $\langle X, \nu_{S_+(\tau)}\rangle >0$.
		\end{Lem}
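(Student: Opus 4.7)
The plan is to show that the rescaled mean curvature flow $S_+(\tau)$ converges to the round shrinker sphere $\SSp^2_{2}$ in $C^\infty_{loc}$ as $\tau\to+\infty$, from which the conclusion follows immediately. First, by the uniqueness characterization of the shrinker-mean-concave one-sided ancient RMCF in $E_+$ asymptotic to $S_A$ (from \cite{CCMS20_GenericMCF}), together with the rotational invariance of $S_A$ and $E_+$ about the vertical axis, $S_+(\tau)$ inherits this rotational symmetry for every $\tau$ where it is defined. Moreover, strict shrinker-mean-concavity ensures that $S_+(\tau)$ moves with strictly positive outward normal velocity under the RMCF, so the bounded region enclosed by $S_+(\tau)$ strictly grows with $\tau$ and contains the bounded component $\RR^3\setminus E_+$ for all sufficiently large $\tau$.

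Next, by Huisken's monotonicity formula $\cF[S_+(\tau)]$ is non-increasing and bounded above by $\lambda[S_A]$, so any Brakke limit $V_\infty$ of a subsequence $\{S_+(\tau_j)\}$ with $\tau_j\to+\infty$ is a rotationally symmetric self-shrinking integral varifold with $\cF[V_\infty]\leq\lambda[S_A]$. The monotone containment from the first paragraph forces $V_\infty$ to enclose a region of volume bounded below by a positive constant, ruling out trivial limits such as a flat plane or the zero varifold. Combining this with the rotational symmetry and the low-entropy classification of rotationally symmetric self-shrinkers in $\RR^3$ (using \cite{CIMW13_EntropyMinmzer} and the ideas around \cite{CCMS20_GenericMCF}), one identifies $V_\infty=|\SSp^2_{2}|$. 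Brakke regularity (Theorem \ref{Thm_Pre_Brakke Reg}) then upgrades the convergence to $C^\infty_{loc}$, and uniqueness of the smooth limit promotes subsequential convergence to full convergence as $\tau\to+\infty$.

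Finally, for $\tau_+$ chosen large enough so that $S_+(\tau_+)$ is $C^1$-close to $\SSp^2_{2}$---which is a smooth rotationally symmetric star-shaped sphere enclosing the origin---the conclusion $\langle X,\nu_{S_+(\tau)}\rangle>0$ for all $\tau\geq\tau_+$ is immediate from openness of the star-shaped condition under $C^1$-small perturbations. The main obstacle is the identification of the forward limit as $|\SSp^2_{2}|$: beyond the entropy bound and rotational symmetry, one must exploit the ancient asymptotic profile of $S_+(\tau)$ at $-\infty$ together with the monotone outward motion in the RMCF to exclude other rotationally symmetric candidate limits, in particular scaled cylinders aligned with the rotation axis and possibly non-compact limits. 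This ancient-to-forward interplay, handled by combining the one-sided perturbation theory of \cite{CCMS20_GenericMCF} with the rotational structure specific to the Angenent torus, is the technical heart of the argument.
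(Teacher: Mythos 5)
Your proposed approach has a fundamental gap at the step where you identify the forward limit of $S_+(\tau)$ as $|\SSp^2_2|$. In fact, $S_+(\tau)$ does \emph{not} converge to a round shrinker sphere as $\tau\to+\infty$; it escapes to spatial infinity and the Brakke limit is the zero varifold. To see this, recall the correspondence $S_+(\tau) = e^{\tau/2}M_{-e^{-\tau}}$ with the associated ancient MCF $\{M_t\}_{t<0}$. The relevant result from \cite{CCMS20_GenericMCF} (Theorem~9.1(10), which the paper cites) says that $M_t$ remains \emph{regular} on a time interval $(-\delta,\delta)$ and that $M_0$ is a smooth compact star-shaped sphere enclosing the origin. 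Since $\orig\notin M_0$ and $M_{-e^{-\tau}}\to M_0$ smoothly, the RMCF $S_+(\tau)\approx e^{\tau/2}M_0$ dilates off to infinity as $\tau\to+\infty$. Consequently $\cF[S_+(\tau)]\to 0$, the subsequential Brakke limit is the zero varifold, and there is no $C^\infty_{loc}$ convergence to $\SSp^2_2$. This also invalidates the step where you argue that ``monotone containment forces $V_\infty$ to enclose a region of positive volume, ruling out the zero varifold'': a family of spheres of radii tending to infinity encloses regions of unbounded volume, yet still has Gaussian area tending to $0$, so enclosing a fixed ball gives no lower bound on $\cF$.

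The paper's actual argument is much shorter and avoids any forward-limit analysis. It uses the fact that the star-shapedness condition $\langle X,\nu\rangle>0$ is \emph{scale-invariant} under dilations centered at the origin, while the passage from MCF to RMCF is exactly such a dilation. Since \cite[Theorem~9.1~(10)]{CCMS20_GenericMCF} gives that $M_t$ is regular and star-shaped for $t\in(-\delta,\delta)$, and $t\in(-\delta,0)$ corresponds to $\tau\in(-\log\delta,+\infty)$, one takes $\tau_+:=-\log\delta$ and concludes that $S_+(\tau)$ is a regular star-shaped sphere for all $\tau\ge\tau_+$. You invoked this openness/scale-invariance only locally (``openness under $C^1$-small perturbations''), but it is in fact the whole proof once combined with the cited CCMS20 result; the convergence-to-a-shrinker machinery you built is both unnecessary and, as noted, unfounded in this setting.
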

		Here we say a surface $\Sigma$ enclosing a point if the point is in the bounded connected component of $\RR^3\backslash \Sigma$.
		
		\begin{proof}
			In \cite[Theorem 9.1 (10)]{CCMS20_GenericMCF}, it was proved that the one-sided ancient MCF is regular and star-shaped in the time interval $(-\delta,\delta)$ for some $\delta>0$. Notice that star-shapedness is preserved by rescaling, and for the time $t$ slice of a MCF as $t\nearrow 0$ corresponds to the time $\tau$ slice of the corresponding RMCF as $\tau\nearrow +\infty$, hence rescaling to RMCF gives the desired property. 
		\end{proof}

		The next lemma shows how the topology of $\{S_\pm(\tau)\}_{\tau\in\RR}$ implies the topology of $T[\ES[u_{a_\pm(z)}]]$. From now on, we use $(x,z)=(x_1,x_2,y,z)$ to denote the coordinate of $\RR^3\times\RR=\RR^4$, where $y$ denotes the direction of the rotation axis of the Angenent torus. Because the Angenent torus is a rotationally symmetric torus, there must be a constant $\overline{\kappa}>0$, such that $\{(x_1,x_2,y)|\sqrt{x_1^2+x_2^2}<\overline{\kappa}\}$ is disjoint from the bounded region $E_-$ enclosed by Angenent torus.
		\begin{Lem}\label{Lem_topology change}
			Suppose the assumptions in Lemma \ref{Lem_Fattening_Rescaled lim of one-sided deform}. Then for $z$ sufficiently large,
			\begin{enumerate}
				\item $T[\ES[u_{a_-(z)}]]$ does not intersect the generalized cylinder $$\cC_{\overline{\kappa}}:=\{(x_1,x_2,y,z)|y,z\in\RR,\sqrt{x_1^2+x_2^2}\leq \overline{\kappa}\};$$			
				\item $T[\ES[u_{a_+(z)}]]$ must intersect the subspace $\{(0,0,y,z)|y,z\in\RR\}$.
			\end{enumerate}
		\end{Lem}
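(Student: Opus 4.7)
The plan is to pull back the topological features of the one-sided RMCFs $\{S_\pm(\tau)\}$ (Lemmas \ref{Lm: RMCF from A-torus s<0}--\ref{Lm: RMCF from A-torus s>0}) to the translators $T[\ES[u_{a_\pm(z)}]]$ through the local smooth convergence in Lemma \ref{Lem_Fattening_Rescaled lim of one-sided deform}, extended from $\tau\leq 0$ to $\tau$ up to the first singular time of $S_\pm$ by parabolic regularity combined with Brakke--White regularity (Theorem \ref{Thm_Pre_Brakke Reg}).

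For part (1), I would assume (without loss, since $\overline{\kappa}$ is our choice) that $\overline{\kappa}$ is strictly less than the distance $r_*$ from the collapsing circle $S_-(\tau_-)$ to the rotation axis; fix a small $\eta\in (0,\min\{r_*-\overline{\kappa},\overline{\kappa}\}/10)$; and set $\tau_0:=\tau_- - \epsilon_-(\eta)/2$. By Lemma \ref{Lm: RMCF from A-torus s<0}, $S_-(\tau_0)$ is strictly mean convex and bounds a strictly mean convex domain $\tilde O\subset \BB_\eta(S_-(\tau_-))$, which is at distance $>\overline{\kappa}$ from the axis; I would enlarge $\tilde O$ slightly to a still-mean-convex $\tilde O^+$ also at distance $>\overline{\kappa}$ from the axis. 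Setting $z_0':=e^{-\tau_0}z$ and $\rho:=\sqrt{z_0'}$, Lemma \ref{Lem_Fattening_Rescaled lim of one-sided deform} yields for $z$ large that the translator's slice at height $z_0'$ sits inside $\rho\tilde O^+\times\{z_0'\}$. After a vertical translation aligning $\{z=z_0'\}$ with $\{z=0\}$, Lemma \ref{Lem_Fattening_Ellip Reg of mean convex domain} applied to the $\I$-minimizing lower part $T[\ES[u_{a_-(z)}]]\cap\{z\leq z_0'\}$ confines it to $\rho\Clos(\tilde O^+)\times\RR_{\leq z_0'}$, which is disjoint from $\cC_{\overline{\kappa}}$. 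For the upper part $\{z'\geq z_0'\}$, writing $z'=e^{-\tau}z$ with $\tau\in(-\infty,\tau_0]$, Lemma \ref{Lem_Fattening_Rescaled lim of one-sided deform} says the slice at $z'$ is close to $\sqrt{z'}\,S_-(\tau)$; since $S_-(\tau)\subset E_-\subset\{r\geq\overline{\kappa}\}$, the slice lies at distance $\gtrsim\sqrt{z'}\,\overline{\kappa}>\overline{\kappa}$ from the axis for $z_0'$ large. Uniformity as $\tau\to-\infty$ is handled via the simple-end asymptotics $S_-(\tau)\to S_A$ together with Definition \ref{Def_Moduli_Transl ends}.

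For part (2), I would pick $\tau^*\geq\tau_+$ at which, by Lemma \ref{Lm: RMCF from A-torus s>0}, $S_+(\tau^*)$ is a smooth rotationally symmetric star-shaped sphere enclosing the origin in $\RR^3$. By smoothness and star-shapedness (plus rotational symmetry), its profile meets $\{r=0\}$ transversally at exactly the two poles $(0,y_1)$ and $(0,y_2)$, so $S_+(\tau^*)\times\RR$ meets the axis plane $A:=\{(0,0,y,z):y,z\in\RR\}\subset\RR^4$ transversally along two parallel lines. The extended Lemma \ref{Lem_Fattening_Rescaled lim of one-sided deform} at $\tau=\tau^*$ gives $C^\infty_{loc}$-convergence of $\cR_{\tau^*-\log z}(T[\ES[u_{a_+(z)}]])$ to $S_+(\tau^*)\times\RR$; on a compact neighborhood of a point in one of these lines, transversality and the implicit function theorem force the rescaled translator to meet $A$ for every $z$ large. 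Since $A$ is $\cR_\sigma$-invariant as a linear subspace, the original translator $T[\ES[u_{a_+(z)}]]$ also meets $A$.

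The main obstacle is extending Lemma \ref{Lem_Fattening_Rescaled lim of one-sided deform} beyond $\tau=0$, especially for part (2) where $\tau^*$ may lie past a topology-changing singular time of the forward one-sided flow $S_+(\tau)$. I would address this by taking a subsequential Brakke limit of the rescaled RMCFs (compactness via the entropy bound from Corollary \ref{Cor_Pre_Blow down transl MCF split}), observing that the limit must agree with $S_+(\tau)\times\RR$ on $\tau\leq 0$, and then identifying the limit on a neighborhood of $\tau^*$ through uniqueness of smooth forward RMCF from smooth initial data together with Brakke--White regularity at the regular spacetime point corresponding to $S_+(\tau^*)\times\RR$. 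A minor but necessary care-point in part (1) is that the slice of the translator at $z_0'$ is only approximately contained in $\rho\,\partial\tilde O\times\{z_0'\}$; this is precisely what the enlargement $\tilde O\subsetneq\tilde O^+$ absorbs, using stability of strict mean convexity under small $C^2$-perturbation.
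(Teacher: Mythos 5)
Your overall structure matches the paper's, and part (1) is essentially correct, but there is a real gap in part (2).

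\textbf{Part (1).} The lower tail (below $z_0'$) is handled exactly as the paper does: locate the slice inside a slightly enlarged strictly mean-convex neighbourhood $\tilde O^+\subset\BB_\eta(S_-(\tau_-))$ at distance $>\overline\kappa$ from the axis, then apply Lemma \ref{Lem_Fattening_Ellip Reg of mean convex domain}. For the upper tail you go a different route: you push the pointwise statement ``slice at $z'$ is close to $\sqrt{z'}S_-(\tau)$'' all the way as $\tau\to-\infty$, invoking Definition \ref{Def_Moduli_Transl ends} for uniformity. This can be made to work, but Definition \ref{Def_Moduli_Transl ends} alone does not give the needed uniformity in the end-parameter $a_-(z)$ (which depends on $z$); you must also invoke the quantitative $C^2_\star$ estimates on $u_{a}$ and on the $\I$-minimizer from Lemma \ref{Lem_Moduli_one-sided transl end} and Theorem \ref{Thm_Existence of Transl w prescib end}, as is done inside the Claim of Lemma \ref{Lem_Fattening_Rescaled lim of one-sided deform}. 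The paper avoids the $\tau\to-\infty$ uniformity issue altogether by a cleaner argument: since $a_-(z)\to-\infty$, $T^-_z\subset\Omega_0$ for some fixed $\I$-minimizing translator $T_0=\partial\Omega_0$, and $\Omega_0$ is disjoint from $\cC_{\overline\kappa}$ at heights $\geq\tilde z$. This buys a short proof at the cost of invoking the foliation monotonicity once more.

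\textbf{Part (2).} The star-shapedness-implies-intersection step is fine (the paper does not even need your transversality/IFT refinement---stability of ``star-shaped, enclosing the origin'' under $C^\infty_{loc}$ convergence already gives the intersection). The gap is in your extension of the convergence past $\tau=0$. You correctly identify this as the main obstacle, but your proposed remedy --- ``uniqueness of smooth forward RMCF from smooth initial data together with Brakke--White regularity'' --- does not close it. As you yourself observe, $S_+(\tau)$ changes topology from a torus (near $-\infty$) to a star-shaped sphere (for $\tau\geq\tau_+$), so it necessarily passes through a singular time between $\tau=0$ and $\tau^*$. Smooth forward uniqueness cannot be run across that singular time, and Brakke--White regularity at $\tau^*$ presupposes you already know the Brakke limit sits multiplicity-one near $S_+(\tau^*)\times\RR$, which is precisely what is in question. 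What is needed is a \emph{weak} uniqueness statement for the one-sided Brakke flow, and this is what the paper supplies by citing \cite[Theorem 9.2]{CCMS20_GenericMCF}: the subsequential Brakke limit of $\cR_{\tau-\log z}(T^+_z)$ agrees with $\{S_+(\tau)\times\RR\}_{\tau\leq 0}$, and CCMS uniqueness for one-sided (shrinker-mean-convex, noncollapsed) flows forces it to equal $\{S_+(\tau)\times\RR\}$ for \emph{all} $\tau$, including past the singular time. Only then does Brakke--White regularity at the regular slice $\tau=\tau^*$ upgrade to $C^\infty_{loc}$ convergence. Without a weak uniqueness input your extension argument fails.
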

		
		\begin{proof}
			Lemma \ref{Lem_Fattening_Rescaled lim of one-sided deform} shows that the RMCF $\{\cR_{\tau-\log z}(T[\ES[u_{a_\pm(z)}]])\}_{\tau\leq 0}$ $C_{loc}^\infty$-converges to the one-sided flow $\{S_{\pm}(\tau)\times\RR\}_{\tau\leq 0}$. Moreover, by the uniqueness of this one-sided RMCF proved in \cite[Theorem 9.2]{CCMS20_GenericMCF}, the RMCF $\{\cR_{\tau-\log z}(T[\ES[u_{a_\pm(z)}]])\}_{\tau\in\RR}$ converges to the one-sided flow $\{S_{\pm}(\tau)\times\RR\}_{\tau\in\RR}$ in the Brakke sense, and by Brakke's regularity theorem, $\cR_{\tau-\log z}(T[\ES[u_{a_\pm(z)}]])$ would $C_{\text{loc}}^\infty$-converge to $S_{\pm}(\tau)\times\RR$ whenever $S_{\pm}(\tau)$ is regular. In the following proof we use $T^\pm_z$ to denote the translators $T[\ES[u_{a_\pm(z)}]]$.
			
			Let us first prove item (1). From Lemma \ref{Lm: RMCF from A-torus s<0}, fix an $\eta\in (0, 1)$ such that $\BB^3_\eta(S_-(\tau_-))$ is strictly mean convex and disjoint from $\{(x_1, x_2, y): x_1^2+x_2^2 \leq \bar{\kappa}^2\}$. Then we can choose $\epsilon_- = \epsilon_-(\eta)$ such that item (2) of Lemma \ref{Lm: RMCF from A-torus s<0} is satisfied. $S_-(\tau)$ is regular for all $\tau< \tau_-$, and $S_-(\tau)\subset E_-$ for all $\tau\in(-\infty,\tau_-]$. By the compactness of RMCF, for any $\delta\in (0, 1)$ and sufficiently large $z$ depending on $\delta$, \[
			\cR_{\tau-\log z}(T^-_z)\cap \RR^3\times \{0\} \subset E_-\times\{0\},  \] 
			for $\tau\in[-\delta^{-1},\tau_- - \delta]$. In particular, it does not intersect with a cylinder $\{(x_1,x_2,y, 0)|x_1^2+x_2^2\leq \overline{\kappa}^2\}$ that is disjoint from $E_-\times\{0\}$.  Moreover, if we rescale the flow back to get the translator $T^-_z$, $T^-_z\cap\RR^3\times [c_1(z), c_2(z)]$ does not intersect with $\cC_{\overline{\kappa}}$ provided $z\gg 1$, where $c_1(z):= ze^{-(\tau_--\delta)}$, $c_2(z):= ze^{1/\delta}$.
			
			It suffices to show that $T^-_z\cap\RR^3\times\RR_{<c_1(z)}$ and $T^-_z\cap\RR^3\times\RR_{>c_2(z)}$ also do not intersect $\cC_{\overline{\kappa}}$. For the first set, notice that by Lemma \ref{Lm: RMCF from A-torus s<0} item (2), when $\delta = \epsilon_-$ and $z\gg 1$, we have \[
			T^-_z\cap\RR^3\times \{c_1(z)\} \subset \sqrt{c_1(z)}\BB^3_\eta(S_-(\tau_-)) \times \{c_1(z)\}.  \]
			Because $T^-_z$ is an $\I$-minimizing translator, Lemma \ref{Lem_Fattening_Ellip Reg of mean convex domain} implies that when $z\gg 1$, \[
			T^-_z\cap \RR^3\times \RR_{<c_1(z)} \subset \sqrt{c_1(z)}\BB^3_\eta(S_-(\tau_-)) \times\RR.  \]
			In particular by the choice of $\eta$, $T^-_z\cap \RR^3\times \RR_{<c_1(z)}$ is disjoint from $\cC_{\overline{\kappa}}$.
			
			For the second set, first let $T_0$ be an $\I$-minimizing translator exponentially asymptotic to $\ES[u_0]$, and let $\Omega_0\subset \RR^{n+1}$ be the inner region bounded by $T_0$.  Then since $a_-(z)\to -\infty$ as $z\nearrow +\infty$, by Theorem \ref{Thm_Pf main thm_Transl region}, we have $T^-_z\subset \Omega_0$ when $z$ is sufficiently large.  Because $T_0 = \partial\Omega_0$ has a simple translating end over $S_A\times \RR_+$, we know that $\Omega_0\cap \RR^3\times \RR_{\geq \tilde{z}}\cap \cC_{\overline{\kappa}} = \emptyset$ for some sufficiently large $\tilde{z}>1$. Therefore, $T^-_z \cap \RR^3\times \RR_{\geq c_2(z)}$ doesn't intersect $\cC_{\overline{\kappa}}$ when $z\gg 1$.
			%		from the proof of Claim in Lemma \ref{Lem_Fattening_Rescaled lim of one-sided deform}, in particular \eqref{eq:v(z) compare with u_0} and \eqref{Fattening_complete transl u^(s) exp close to end v^(s)}, there exists $\Lambda>0$ such that when $z'>\Lambda z$,
			%		\[
			%		u_{a_-(z)}(\cdot,z')\geq u_0(\cdot,z')+(\log z)^{-1}
			%		\left(\frac{z'}{z}\right)^{\mu_1}\psi -2(\log z)^{3/2}\left(\frac{z'}{z}\right)^{\gamma/2+\mu_1}-C(S,u_0)e^{-(z'-\Lambda z)/2}.
			%		\]
			%		In particular, when $\delta$ is chosen such that $e^{\delta}\gg \Lambda$, we have $\liminf_{z\to\infty}\sup_{z'>ze^{\delta}} u_{a_-(z)}(\cdot,z')=0$. Finally, by the definition of $\ES[u_{a_-(z)}]$, we have  $T^-_z\cap\{(x,z'):z'>\sqrt{z}e^{\delta/2}+ze^{\delta}\}$ is disjoint from $\{(x_1,x_2,y,z'):\sqrt{x_1^2+x_2^2}\leq \sqrt{z'}\kappa,z'>e^{\delta}z$. In particular, this implies that $T^-_z\cap\{(x,z'):z'>		\sqrt{z}e^{\delta/2}+ze^{\delta}\}$ does not intersect $\cC_\kappa$.

			Next, we prove item (2). Because $\cR_{\tau-\log z}(T_z^+)$ $C_{loc}^\infty$-converges to the time slice of the one-sided flow $S_{+}(\tau)\times\RR$ whenever $S_{+}(\tau)\times\RR$ is smooth, $\cR_{\tau_+-\log z}(T_z^+) \cap\RR^3\times \{0\}$ smoothly converges to $S_+(\tau_+)\times\{0\}$ as $z\to +\infty$. By Lemma \ref{Lm: RMCF from A-torus s>0}, this implies that when $z$ is sufficiently large, $\cR_{\tau_+-\log z}(T_z^+)\cap\{(x,z'):z'=0\}$ is a smooth star-shaped surface enclosing the origin in $\{(x,z'):z'=0\}$, in particular it does intersect with the line $\{(0,0,y,0)|y\in\RR\}$. This implies that a slice of $T_z^+$ intersects with the subspace $\{(0,0,y,z)|y,z\in\RR\}$.
		\end{proof}
		
		Now we are ready to prove Theorem \ref{Thm_Angenent torus fattens}.
		
		\begin{proof}[Proof of Theorem \ref{Thm_Angenent torus fattens}]
			We prove this by contradiction. Let $\ES[u_0]$ be a rotationally symmetric simple translating end. Suppose no fattening happens, then for any $a\in\RR$, $T[\varpi(a)[u_0]]$ is the support of a complete embedded $\I$-minimizing translator, and we denote it by $T_a$. Then by the compactness of minimizing surface in any compact region, as well as the convergence of the ends, as $a_i\to a$, $T_{a_i}\to T_a$ smoothly. 
			
			Because by Theorems \ref{Thm_Moduli of transl end }, \ref{Thm_Pf main thm_Transl region} and Lemma \ref{Lem_Moduli_one-sided transl end}, all $T_a$ are rotationally symmetric, we can consider the rotation profile surface $\Theta_a$ of $T_a$, where $\Theta_a$ is a surface in the upper half space $\{(w,y,z)|w\geq 0\}\subset\RR^3$. The item (1) of Lemma \ref{Lem_topology change} shows that when $A>0$ is sufficiently large, $\Theta_A$ is disjoint from  $\{(w,y,z)|w\leq \kappa\}$. Therefore, any rotationally symmetric closed curve generated by a given point on $\Theta_A$ is non-contractible in $T_A$, since it's the generator of the fundamental group $\pi_1(\RR^4\setminus \cC_{\overline{\kappa}}) = \ZZ$.
			
			On the other hand, when $A>0$ is sufficiently large, the item (2) of Lemma \ref{Lem_topology change} shows that $T_{-A}$ intersects with $\{(w,y,z)|w=0\}$. In particular, for any rotationally symmetric closed curve $\sigma_p$ generated by a given point $p\in \Theta_{-A}$, the path on $\Theta_{-A}$ connecting $p$ to a point on $\{(w,y,z)|w=0\}$ generates a disc on $T_{-A}$ bounded by $\sigma_p$, which means $\sigma_p$ is contractible in $T_{-A}$. 
			
			From the item (2) of Theorem \ref{Thm_Moduli of transl end } and Theorem \ref{Thm_Existence of Transl w prescib end}, for each $a\in[-A,A]$, there exists $z_a>0$ and $\delta_a>0$ such that for $a'\in(a-\delta,a+\delta)\cap[-A,A]$, $T_{a'}\cap \RR^3\times\RR_{>z_a}$ is a graph over $T_a\cap\RR^3\times\RR_{>z_a}$, and they are all diffeomorphic to $S\times[0,+\infty)$, and $T_{a}\cap \RR^3\times\{z_a\}$ varies smoothly as $a$ changes. Then by the compactness of $[-A,A]$, there exists a uniform $\Bar{z}>0$, such that for every $a\in[-A,A]$, $T_a\cap\RR^3\times\RR_{\geq\bar{z}}$ is diffeomorphic to $S\times[0,+\infty)$, and when $a$ varies, $T_a\cap\RR^3\times\{\bar{z}\}$ varies smoothly as hypersurfaces in $\RR^3\times\{\bar{z}\}$. Therefore, $\{T_{a}\cap \RR^3\times\RR_{\leq \bar{z}}\}_{a\in [-A, A]}$ is an isotopy in $\RR^3\times\RR_{\leq \bar{z}}$ relative to the boundary in $\RR^3\times\{ \bar{z}\}$, and $\{\partial[T_{a}\cap \RR^3\times\RR_{\leq \bar{z}}]\}_{a\in [-A, A]}$ gives an isotopy of smooth tori in $\RR^3\times\{\bar{z}\}$. It is natural to identify a rotationally symmetric closed curve on the boundary via this boundary isotopy. However, such a closed curve is contractible in $T_{-A}\cap \RR^3\times\RR_{\leq \bar{z}}$ , but non-contractible in $T_{A}\cap \RR^3\times\RR_{\leq \bar{z}}$, which yields a contradiction.
		\end{proof}
		
		\begin{Rem}
			The main reason for the fattening in our construction of translators is the topology of the shrinkers. In contrast, it is not completely clear what the main reason for the fattening is in the flow setting, especially in higher dimensions. It is known that if certain conical singularities show up, then the flow fattens, see \cite{ChodoshDanielsHolgateSchulze24_MCFConical}, but it is not quite clear what the topological types of such conical singularities are. On the other hand, Hershkovits-White \cite{HershkovitsWhite20_nonfattening} proved that if all the blow-ups around a singularity are mean convex, then the flow does not fatten.
		\end{Rem}
		
		We conclude this section by proposing the following conjecture. 
		\begin{Con}
			If $T[\Sigma_0]$ fattens, then either there exists an unstable complete translator that is in the interior of $T[\Sigma_0]$, or $T[\Sigma_0]$ admits a foliation by complete $\I$-stable translators.
		\end{Con}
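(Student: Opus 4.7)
The plan is to set up a monotone family of stable minimizing translators inside $\Omega := \Int(T[\Sigma_0])$ and a mountain-pass min-max that activates exactly when the monotone family fails to foliate. Write $\partial T[\Sigma_0] = T^+\sqcup T^-$. For each point $p\in\Omega$ I would construct the ``lowest stable translator above $p$'' by adapting the construction in Theorem \ref{Thm_Existence of Transl w prescib end}: minimize $\I$ among integer $n$-currents in $\RR^n\times [-M,R]$ with boundary equal to $\Sigma_0 \cap \RR^n\times\{R\}$ (and some fixed large ``floor'' on the bottom) subject to the constraint that the current lies weakly above $p$, then send $R\to +\infty$, $M\to +\infty$. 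The barrier argument of Claim 2 in Theorem \ref{Thm_Existence of Transl w prescib end}, combined with Lemma \ref{Lem_Pf main thm_fast poly decay => exp decay}, should pin down the asymptotic behavior and produce a complete $\I$-minimizing translator $\Sigma^{\min}_p \subset \Omega$ exponentially asymptotic to $\Sigma_0$. By construction $\Sigma^{\min}_p$ is $\I$-stable, and by strong maximum principle \cite{SolomonWhite89_Maxim, Ilmanen96} and monotonicity of the minimization under the height of $p$, the family $\{\Sigma^{\min}_p\}_{p\in\Omega}$ is totally ordered and upper-semi-continuous.

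If $p\mapsto \Sigma^{\min}_p$ varies continuously and the leaves sweep out $\Omega$ between $T^-$ and $T^+$, we are in case (b): a foliation by $\I$-stable translators. Otherwise, there is a jump: by upper-semi-continuity there exists a non-empty open gap region $U\subset \Omega$ bounded by two stable translators $\Sigma_-,\Sigma_+$ (both exponentially asymptotic to $\Sigma_0$) such that \emph{no} $\I$-minimizing translator asymptotic to $\Sigma_0$ lies strictly between them. In this case I would run a mountain-pass/min-max in $U$: fix a large truncation $R$, consider one-parameter sweepouts $\{\Gamma_t\}_{t\in[0,1]}$ of integer currents in the cylindrical region bounded by $\Sigma_\pm$ and $\{z=R\}$, with $\Gamma_0$, $\Gamma_1$ the truncations of $\Sigma_-$, $\Sigma_+$, and all $\Gamma_t$ having boundary $\Sigma_0\cap\{z=R\}$. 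The absence of intermediate minimizers forces the min-max width $W_R > \max\{\I[\Sigma_-\cap\{z\leq R\}], \I[\Sigma_+\cap\{z\leq R\}]\}$, so Almgren--Pitts min-max (applied in the conformal metric $e^{2z/n}g_\Euc$ in which $\I$ is the area) produces a stationary integral varifold $\Sigma_\sharp^R$ strictly between $\Sigma_\pm$, smooth away from a closed set of dimension $\leq n-7$, with Morse index $\geq 1$. Sending $R\to \infty$ and re-applying the barrier/Liouville argument of Section \ref{Sec_Pf of Main Thm} (confinement between $\Sigma_\pm$ gives a uniform $C^0$-smallness at infinity, which the proof of Claim 3 in Theorem \ref{Thm_Existence of Transl w prescib end} upgrades via Lemma \ref{Lem_Moduli_Sharp asymp rate for pl-simple end} and the no-nontrivial-ancient-flow result of \cite{CCMS20_GenericMCF}) yields the desired complete unstable translator $\Sigma_\sharp\subset \Omega$ exponentially asymptotic to $\Sigma_0$, giving case (a).

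The main obstacles are twofold. First, the min-max width estimate $W_R > \I[\Sigma_\pm\cap\{z\leq R\}]$ must be genuine after truncation; this requires that any ``limit-of-endpoints'' sweepout (where $\Gamma_t$ concentrates near $\Sigma_+$ or $\Sigma_-$) has energy strictly larger than the endpoint energy for small $t$, which is delicate because the gap between the leaves is a purely interior phenomenon that might not see the boundary. The remedy is to use a catenoid-style neck attached along $\{z=R\}$ in the sweepout so that near the boundary all $\Gamma_t$ agree with a fixed interpolating current, reducing the min-max to a compactly supported perturbation problem where standard Almgren--Pitts theory applies. Second, preserving the pl-simple asymptotic behavior when taking $R\to \infty$ is subtle because a priori $\Sigma_\sharp^R$ could escape to infinity or fail to be confined between $\Sigma_\pm$ uniformly; here the key is that the confinement by $\Sigma_\pm$ at each $R$, together with the exponential decay of $\Sigma_+-\Sigma_-$ (by Lemma \ref{Lem_Pf main thm_fast poly decay => exp decay} applied to $\Sigma_\pm$), forces $\Sigma_\sharp^R$ to lie in a neighborhood of $\Sigma_0$ of thickness $O(e^{-R/4})$ outside a uniform compact set, so the limit $\Sigma_\sharp$ is automatically exponentially asymptotic to $\Sigma_0$. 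The regularity and index lower bound for $\Sigma_\sharp$ in the limit follow from the standard Allard--White theory together with $L^2$-convergence of the second fundamental forms on compact subsets.
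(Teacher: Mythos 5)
This statement is a \emph{Conjecture} in the paper; the authors give no proof and only remark that the analogous result for self-expanders was proved by Bernstein--Wang via min-max. So there is no paper proof to compare against, and your outline is essentially the argument the authors themselves had in mind. That said, as written it is a plan with several genuine gaps, and the most serious ones are in the min-max half, which is exactly where the conjecture's difficulty lies.

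The first substantive gap is your width estimate $W_R > \max\{\I[\Sigma_\pm\cap\{z\le R\}]\}$. This would follow if $\Sigma_\pm$ were \emph{strictly} $\I$-stable, since one could then produce a positive mountain-pass barrier in a collar neighborhood (this is the mechanism in Bernstein--Wang, where strict stability of the two endpoints is a standing hypothesis). Here $\Sigma_\pm$ are only $\I$-minimizing, hence stable but a priori with nontrivial Jacobi kernel, and in a degenerate situation a sweepout can slide across $\Sigma_\pm$ with vanishing energy gain, making $W_R$ equal to the endpoint energy. The ``catenoid-style neck'' device you describe only freezes the sweepout near $\{z=R\}$; it does not address this degeneracy, which is an interior phenomenon. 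The second gap is the passage $R\to\infty$: even if $W_R$ is strictly larger at every finite $R$, the gap could close as $R\to\infty$, in which case the min-max varifolds $\Sigma_\sharp^R$ converge (as varifolds, possibly with multiplicity) to $\Sigma_+$ or $\Sigma_-$ and the ``unstable translator'' you produce is vacuous. Your confinement argument controls escape towards spatial infinity, but it does not rule out collapse onto a boundary leaf, nor does it rule out that the limit occurs with multiplicity $\ge 2$. Third, the Morse index lower bound is not automatic in the limit: index lower semicontinuity can fail under varifold convergence without extra structure (this is a major theme in the min-max literature), and ``Allard--White theory plus $L^2$-convergence of second fundamental forms'' is not the relevant machinery for an index bound; that would come from a Schoen--Simon-type curvature estimate together with Marques--Neves index control, neither of which is established here in this non-complete weighted metric. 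Finally, your constrained-minimization construction of $\Sigma_p^{\min}$ needs a short argument that a one-sided point obstacle of zero capacity neither obstructs regularity nor destroys two-sided stability (a logarithmic cut-off argument does handle this when $n\ge 2$, but it should be said), and one should verify that the resulting family is actually totally ordered rather than merely nested with $T^\pm$. In short, the plan is the right one and correctly reduces the conjecture to a Bernstein--Wang-type min-max, but the hard analytic points --- strict width inequality without strict stability, non-collapse and multiplicity one as $R\to\infty$, and index persistence --- are precisely what remain open.
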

		
		If $T[\Sigma_0]$ fattens, then the two connected components of $\partial[T[\Sigma_0]]$ are $\I$-minimizing complete translators. In the minimal surface theory, once we have two area-minimizing hypersurfaces, it is natural to use the min-max method to produce the third one, and the third one is usually unstable. Such an idea was adapted by Bernstein-Wang \cite{BernsteinWang2020_Minmax_SelfExpander} to construct unstable self-expanders between two strictly stable self-expanders. Self-expanders form a class of mean curvature flow solitons, and we conjecture that a similar result holds for translators.
		
		\bigskip

		\appendix
		%%%%%%%%%%%%%%%%%%%%%%%%%%%%%%%%%%%%%%%%%%%%%%%%%%%%%%%%%%%%%%%%%%%%%%%%%%%%%%%%%%
		%%%%%%%%%%%%%%%%%%%%%%%%%%%%%%%%%%%%%%%%%%%%%%%%%%%%%%%%%%%%%%%%%%%%%%%%%%%%%%%%%%
		%%%%%%%%%%%%%%%%%%%%%%%%%%%%%%%%%%%%%%%%%%%%%%%%%%%%%%%%%%%%%%%%%%%%%%%%%%%%%%%%%%
		%%%%%%%%%%%%%%%%%%%%%%%%%%%%%%%%%%%%%%%%%%%%%%%%%%%%%%%%%%%%%%%%%%%%%%%%%%%%%%%%%%
		%%%%%%%%%%%%%%%%%%%%%     Geometry  of  Auxiliary  Ends     %%%%%%%%%%%%%%%%%%%%%%
		%%%%%%%%%%%%%%%%%%%%%%%%%%%%%%%%%%%%%%%%%%%%%%%%%%%%%%%%%%%%%%%%%%%%%%%%%%%%%%%%%%
		%%%%%%%%%%%%%%%%%%%%%%%%%%%%%%%%%%%%%%%%%%%%%%%%%%%%%%%%%%%%%%%%%%%%%%%%%%%%%%%%%%
		%%%%%%%%%%%%%%%%%%%%%%%%%%%%%%%%%%%%%%%%%%%%%%%%%%%%%%%%%%%%%%%%%%%%%%%%%%%%%%%%%%
		%%%%%%%%%%%%%%%%%%%%%%%%%%%%%%%%%%%%%%%%%%%%%%%%%%%%%%%%%%%%%%%%%%%%%%%%%%%%%%%%%%
		\section{Geometry of Auxiliary Ends} \label{Sec_Append_Aux end}
		Let $S\subset (\RR^n, g)$ be a closed self-shrinker, $\{x^1, \dots, x^{n-1}, y\}$ be local Fermi coordinates at $p\in S$ (hence $S = \{y = 0\}$), and let $g_{ij}$ be the induced Riemannian metrics on $S$, $[g^{ij}] := [g_{ij}]^{-1}$ be the inverse matrix, $\nu:= \partial_y$ be the unit normal field,  $A_{ij} = A_S(\partial_i, \partial_j)$ be the second fundamental form of $S\subset \RR^n$.
		
		Let $u\in C^2(S\times \RR_+)$ be such that for every $z>0$, \[
		\|u(\cdot, z)\|_{C^1, S} \leq \delta_S,    \]
		where $\delta_S\in (0, 1)$ be a small geometric constant.  Consider the parametrization \[
		\Phi_u: S\times \RR_+ \to \RR^{n+1},\ \ \ (x, z)\mapsto (\sqrt{z}(x+u(x,z)\nu_x), z).  \]
		We denote $\ES_u:= \Phi_u(S\times \RR_+)$ to be the image. Note that $\ES = \ES_0$.
		Let $\bar{g}^u:= \Phi_u^* g_{\Euc}$. Then under coordinates $\{x^1,, \dots, x^{n-1}, z\}$, 
		\begin{align}
			\begin{split}
				\bar{g}^u_{ij} & = z(g_{ij} -2uA_{ij} + u^2 A_{ik}A^k_j + u_iu_j) =: zg^u_{ij}, \\
				\bar{g}_{iz} & = \frac{1}{2}(X_i - uA_i^k X_k + u_i(X^\perp + u + 2zu_z)) =: \frac{X_i - \beta^u_i}{2}, \\
				\bar{g}_{zz} & =  1+ \frac{1}{4z}|x + (u+2zu_z)\nu|^2. 
			\end{split}  \label{Append_Transl end_g_ij}    
			%   \bar{g}^{ij} & = z^{-1} g^{ij} + \frac{W X^i X^j}{4z^2} ,&  \bar{g}^{iz} &= -\frac{W X^i}{2z}, &  \bar{g}^{zz} &=: W = \frac{1}{1+ |x^\perp|^2/4z}. &  \label{Append_Transl end_g^ij}
		\end{align}
		where $X^\perp := \langle x, \nu \rangle$, $X_i:= \langle x, \partial_i \rangle$; Also let $[g_u^{ij}]:= [g^u_{ij}]^{-1}$.  The volume density is given by 
		\begin{align}
			\bar{G}^u := \det[\bar{g}^u] = z^{n-1}\det[g^u]\cdot \left( 1+ \frac{1}{4z}\left(|x + (u+2zu_z)\nu|^2 - g_u^{ij}(X_i-\beta_i^u)(X_j - \beta_j^u) \right) \right). \label{Append_Transl end_G}
		\end{align}
		Define functions $F\in C^\infty(S_x\times \RR_p\times \RR^{n-1}_\xi)$, $E\in C^\infty(S_x\times \RR_z \times \RR_p \times \RR^{n-1}_\xi\times \RR_\eta)$ by
		\begin{align}
			\begin{split}
				F(x, p, \xi) & := \sqrt{\det[g_{ij}^{p, \xi}]\det[g_{ij}]^{-1}}; \\
				E(x, z, p, \xi, \eta) & := \left( 1+ \frac{1}{4z}\left(|x + (p+2\eta)\nu|^2 - g_{p, \xi}^{ij}(X_i-\beta_i^{p, \xi, \eta})(X_j - \beta_j^{p, \xi, \eta}) \right) \right)^{1/2}; \\
				\tilde{F}(x, z, p, \xi, \eta) & := F(x, p, \xi)\cdot E(x, z, p, \xi, \eta).
			\end{split} \label{Append_Transl end_F, E, tilde(F)}
		\end{align}
		where
		\begin{align*}
			g_{ij}^{p, \xi} & := g_{ij} - 2pA_{ij}+ p^2A_{ik}A^k_j + \xi_i\xi_j;& \     & [g^{ij}_{p, \xi}] := [g_{ij}^{p,\xi}]^{-1}; \\
			\beta_i^{p, \xi, \eta} & := pA_i^k X_k - \xi_i(X^\perp + p + 2\eta).
		\end{align*}
		Note that under this, \[
		\bar{G}^u = z^{n-1} \tilde{F}(x, z, u, \nabla u, zu_z)^2\cdot \det[g_{ij}].   \]
		Here $\nabla$ is derivative in $x$.  Moreover, for every $k\geq 0$,  $\nabla^k_x F$ and $\nabla_x^k E$  are power series in $p, \xi, 1/z, \eta$, whose convergence radius depends only on $k, S$, provided $S$ is compact.
		
		The Ilmanen's area functional for $\ES_u$ is,
		\begin{align*}
			\cI[u] & := \I[\ES_u] = \int_{S\times \RR_+} e^z \sqrt{\det[\bar{G}^u]} \\
			& = \int_{S\times \RR_+} e^z z^{(n-1)/2}\tilde{F}(x, z, u, \nabla u, zu_z)\ d\vol_{S\times \RR_+}.   
		\end{align*}
		Hence we define $\scT$ to be the E-L operator of $\cI$, i.e. given by 
		\begin{align}
			\begin{split}
				\scT (u)  = &\ -\text{div}_S \left(\tilde{F}_\xi(x, z, u, \nabla u, zu_z)\right) - z\partial_z \left(\tilde{F}_\eta(x, z, u, \nabla u, zu_z)\right)\\
				&\ - (z+\frac{n+1}{2})\tilde{F}_\eta(x, z, u, \nabla u, zu_z) + \tilde{F}_p(x, z, u, \nabla u, zu_z).
			\end{split}  \label{Append_EL oper of transl}
		\end{align}
		\begin{Lem} \label{Append_scT(0)}
			Let $S$ be a closed self-shrinker. Then for every $j, k\geq 0$, \[
			z^{k+1}|\nabla^j \partial_z^k \scT(0)| \leq C(S, j, k).  \]
		\end{Lem}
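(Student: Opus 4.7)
My plan is to prove the lemma by direct computation: expand $\scT(0)$, considered as a function of $(x,z)\in S\times\RR_+$, as a power series in $1/z$ with smooth coefficients on $S$, and exhibit the cancellation of the leading-order ($z$-independent) term via the self-shrinker equation.

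First I would note that by the explicit formulas (\ref{Append_Transl end_F, E, tilde(F)}), the functions $F(x,p,\xi)$ and $E(x,z,p,\xi,\eta)$ are smooth in $(x,p,\xi,\eta)$ and real-analytic in $1/z$ on $S\times\{z\geq z_0(S)\}\times\{|p|,|\xi|,|\eta|\leq \delta_0\}$; this is a consequence of $g^{p,\xi}_{ij}|_0=g_{ij}$ being positive-definite on the closed $S$, together with the algebraic structure of (\ref{Append_Transl end_F, E, tilde(F)}). Their partial derivatives at $(p,\xi,\eta)=(0,0,0)$ therefore admit convergent power-series expansions in $1/z$ whose coefficients are smooth functions on $S$. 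Direct differentiation gives
\[
\tilde F|_0=\sqrt{1+(X^\perp)^2/(4z)},\quad \tilde F_p|_0=-H+O(z^{-1}),
\]
\[
\tilde F_\eta|_0=\frac{X^\perp}{2z}+O(z^{-2}),\quad \tilde F_{\xi_k}|_0=-\frac{X^k X^\perp}{4z}+O(z^{-2}),
\]
where the error terms are explicit polynomial expressions in $X^\perp$, $X^k$, $A$, $H$ divided by powers of $z$. A key algebraic point already at this stage: computing $\partial_p K|_0$, the two contributions $\partial_p g^{ij}_{p,\xi}|_0\,X_iX_j$ and $2g^{ij}X_i\partial_p(-\beta_j)|_0$ produce identical terms of the form $A^{ij}X_iX_j$ with opposite signs, and cancel exactly, leaving only the $2X^\perp$ piece; this is the first algebraic cancellation the proof relies on.

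Substituting these expansions into
\[
\scT(0)=-\mathrm{div}_S\tilde F_\xi|_0-z\partial_z\tilde F_\eta|_0-(z+(n+1)/2)\tilde F_\eta|_0+\tilde F_p|_0,
\]
the divergence and $z\partial_z$ terms are visibly $O(1/z)$, while the pair $-(z+(n+1)/2)\tilde F_\eta|_0+\tilde F_p|_0$ contributes $-X^\perp/2-H$ at order $z^0$. The main step, and the principal obstacle, is to show that this order-$z^0$ contribution vanishes thanks to the self-shrinker equation $\vec H_S+X^\perp\nu/2=0$, i.e.\ $H=X^\perp/2$ in scalar form, and that the remaining $O(1/z)$ pieces — after carefully collecting the next-order corrections from $\tilde F_p|_0$, $-(z+(n+1)/2)\tilde F_\eta|_0$, $-z\partial_z\tilde F_\eta|_0$, and $-\mathrm{div}_S\tilde F_\xi|_0$ (the last of which produces $-\nabla_k(X^kX^\perp)/(4z)+O(z^{-2})$ with $\nabla_k(X^kX^\perp)$ expressible through the shrinker identities $\nabla_lX^k=\delta^k_l-X^\perp A_l^k$ and $\nabla_lX^\perp=A_{lk}X^k$) — combine into a coefficient that is an explicit smooth function of $(H,X^\perp,A,X^T,\nabla A)$ on $S$, and hence uniformly bounded by compactness of $S$.

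Finally, the higher-derivative estimate follows by differentiating the resulting $1/z$-expansion termwise. Each spatial covariant derivative $\nabla$ on $S$ acts only on the smooth coefficients; since $S$ is closed, these coefficients together with all their covariant derivatives are uniformly bounded. Each $\partial_z$ applied to a term of the form $a_\ell(x)/z^\ell$ produces $-\ell a_\ell(x)/z^{\ell+1}$, gaining a factor of $1/z$. Iterating these two observations and using that the expansion of $\scT(0)$ starts at order $1/z$ yields $z^{k+1}|\nabla^j\partial_z^k\scT(0)|\leq C(S,j,k)$, as claimed. The technical heart of the argument is entirely the verification of the $z^0$-cancellation via the self-shrinker identity in the second paragraph; everything else is routine accounting.
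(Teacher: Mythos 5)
Your approach matches the paper's proof: both compute the zeroth-order Taylor coefficients $\tilde F_p|_0$, $\tilde F_\xi|_0$, $\tilde F_\eta|_0$ from (\ref{Append_Transl end_F, E, tilde(F)}), observe that $\tilde F$ and its $(p,\xi,\eta)$-derivatives are real-analytic in $1/z$ with smooth coefficients on the compact $S$, and use the self-shrinker equation to kill the order-$z^0$ term so that $\scT(0)=z^{-1}h(x,z^{-1})$; the desired bounds then follow by differentiating the expansion. Your write-up is in fact more explicit than the paper's, which only lists the first derivatives and then asserts the form of $h$. One bookkeeping point to reconcile: with $\tilde F_p|_0=-H_S E_0+O(z^{-1})$ and $-z\tilde F_\eta|_0\to -X^\perp/2$, the leading contribution to $\scT(0)$ is $-H_S-X^\perp/2$, which vanishes if and only if $H_S=-X^\perp/2$ (this is exactly what the paper's proof asserts). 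Your scalar form $H=X^\perp/2$ then gives $-X^\perp\neq 0$, not a cancellation. Given the conventions declared in Section 2 ($\vec H=-H\nu$, $A=\nabla\nu$, shrinker equation $\vec H=-X^\perp\nu/2$), one does get $H_S=+\langle X,\nu\rangle/2$, so the opposite sign must be hidden in (\ref{Append_Transl end_g_ij}) and hence in $\tilde F_p$; either way the two signs need to be made consistent with each other. This does not affect the validity of the method, but as written your second paragraph asserts a cancellation that does not hold under the equations you quote.
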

		\begin{proof}
			Notice that $H_S = -X^\perp/2$ and,
			\begin{align*}
				\tilde{F}(x, z, 0, \orig, 0) & = \sqrt{1+\frac{|X^\perp|^2}{4z}} =: E_0, &\  \tilde{F}_p(x, z, 0, \orig, 0) & = -H_S E_0 + \frac{X^\perp}{4zE_0},\\
				\tilde{F}_\xi(x, z, 0, \orig, 0) & = \frac{-1}{4zE_0} g^{ij}X_i X^\perp, &\  \tilde{F}_\eta(x, z, 0, \orig, 0) & = \frac{X^\perp}{2z E_0}.   
			\end{align*}
			Therefore, there exists a function $h(x, \hat{z})$ that is $C^\infty$ in $x\in S$ and real-analytic in $\hat{z}$ near $0$ such that $\scT(0)(x, z) = z^{-1}h(x, z^{-1})$, which directly implies the lemma.
		\end{proof}
		We also need the following error estimates for $\scT$. Recall that we are working with the following norms (see (\ref{Moduli_C^k,alpha_star norms}) and (\ref{Moduli_C^k,alpha_sharp norms})), for $\Omega\subset S\times \RR_{>1}$ and $R>1$,
		\begin{align*}
			[f]_{\alpha; \Omega}^\star & := \sup\left\{\frac{|f(x, z)- f(x', z')|}{|x- x'|^\alpha + R^{-\alpha/2}|z-z'|^\alpha}: (x, z), (x', z')\in S\times [R, 2R]\right\}, \\
			\|f\|_{C^\alpha_\star; S, R} & = \|f\|_{C^\alpha_\star; S\times[R, 2R]} := \sup_{S\times [R, 2R]} |f| + [f]_{\alpha; S\times [R, 2R]}^\star, \\
			\|u\|_{C^{2,\alpha}_\star, S, R} & := \Big(\|u\|_{C^\alpha_\star; S, R} + \|\nabla u\|_{C^\alpha_\star; S, R} + R\|\partial_z u\|_{C^\alpha_\star; S, R} \\
			&\ \ \ + \|\nabla^2 u\|_{C^\alpha_\star; S, R} + \sqrt{R}\|\partial_z \nabla u\|_{C^\alpha_\star; S, R} + R\|\partial_z^2 u\|_{C^\alpha_\star; S, R} \Big), \\
			\|u\|_{C^d_\sharp, S, R} & := \sum_{0\leq k+l\leq d}\sup_{S\times [R, R+1]} R^{-l/2}|\partial_z^k \nabla^l_S u|, \\
			\|u\|_{C^{d, \alpha}_\sharp, S, R} & := \|u\|_{C^d_\sharp, S, R}\ + \\ &\    \sup_{\substack{(x, z)\neq (x', z')\in S\times [R, R+1]\\ |x-x'|\leq R^{-1/2}}} \sum_{0\leq k+l \leq d} \frac{R^{-l/2}|\partial_z^k\nabla^l_S u(x, z) - \partial_z^k\nabla^l_S u(x', z')|}{R^{\alpha/2}|x-x'|^\alpha + |z-z'|^\alpha}.	
		\end{align*}

		\begin{Lem} \label{Append_Error est II of scT}
			Let $S$ be a closed self-shrinker.  Then there exists $\delta_S\in (0, 1)$ and $C_S>0$ such that if $u_\pm\in C^2(S\times \RR_{>1})$ satisfies \[
			\sup_{S\times \RR_{>1}} (|u_\pm| + |\nabla u_\pm| + |z\partial_zu_\pm| + |z\partial_z^2 u_\pm| + |\partial_z\nabla u_\pm| + |\nabla^2 u_\pm|) \leq \delta_S.    \]
			Then $v:= u_+ - u_-$ satisfies
			\begin{align*}
				\scT (u_+) - \scT (u_-) = &\ - \Big(z\partial^2_z v + z\partial_z v + \underbrace{(\Delta_S - \frac{x}{2}\cdot \nabla_S + |A_S|^2 + \frac{1}{2})v}_{L_S v} \Big)  \\
				&\ + \bar{\cE_1}\cdot z\partial_z^2v + (\bar{\cE_2} + x^S)\cdot \partial_z\nabla v + \bar{\cE_3}\cdot \nabla^2v + \bar{\cE_4}\cdot z\partial_z v + \bar{\cE_5}\cdot \nabla v + \bar{\cE_6}v.
			\end{align*}
			While for $1\leq l\leq 6$ we have pointwise estimates, \[
			|\bar{\cE_l}|\leq C_S(z^{-1}+ |u_\pm|+ |\nabla u_\pm|+ |z\partial_z u_\pm| + |z\partial_z^2 u_\pm| + |\partial_z\nabla u_\pm| + |\nabla^2 u_\pm|),   \]
			and H\"older estimates, 
			
			\begin{align*}
				\|\bar{\cE_l}\|_{C^\alpha_\star, S, R} & \leq  C(S, \alpha) \left(R^{-1} + \|u_+\|_{C^{2,\alpha}_\star, S, R} + \|u_-\|_{C^{2,\alpha}_\star, S, R} \right), \\
				\|\bar{\cE_l}\|_{C^\alpha_\sharp, S, R} & \leq C(S,\alpha)\left(R^{-1} + R\|u_+\|_{C^{2,\alpha}_\sharp, S, R} + R\|u_-\|_{C^{2,\alpha}_\sharp, S, R} \right).
			\end{align*}
			In particular, if we write $\scT(u)=: \scT(0) -z(\partial_z^2 + \partial_z + z^{-1}L_S)u + z\scR(u)$, then the error term $\scR$ satisfies
			\begin{align*}
				\|\scR(u_+)-\scR(u_-)\|_{C^\alpha_\star, S, R} & \leq C(S, \alpha)R^{-1}\left(R^{-1} + \|u_\pm\|_{C^{2,\alpha}_\star, S, R}\right)\|u_+-u_-\|_{C^{2,\alpha}_\star, S, R} ; \\
				\|\scR(u_+)-\scR(u_-)\|_{C^\alpha_\sharp, S, R} & \leq C(S, \alpha)\left(R^{-1} + \|u_\pm\|_{C^{2,\alpha}_\star, S, R}\right)\|u_+-u_-\|_{C^{2,\alpha}_\sharp, S, R}.	
			\end{align*}	 
		\end{Lem}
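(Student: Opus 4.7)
The strategy is to expand $\scT(u_+) - \scT(u_-)$ along the one-parameter path $u_- + tv$ and isolate the principal linear operator from the quadratically small remainder. Writing
\[
\scT(u_+) - \scT(u_-) = \int_0^1 D\scT(u_- + tv)[v]\, dt = D\scT(0)[v] + \int_0^1 \bigl(D\scT(u_- + tv) - D\scT(0)\bigr)[v]\, dt,
\]
the proof reduces to (a) identifying $D\scT(0)[v]$ with $-zT_S v + x^S\cdot \partial_z\nabla v$ modulo an $O(z^{-1})$ second-order operator, and (b) showing that $D\scT(u_0)[v] - D\scT(0)[v]$ is a second-order linear operator in $v$ whose coefficients vanish pointwise at $u_0 = 0$ and obey the claimed $C^0$ and H\"older bounds.

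For step (a), linearizing the EL formula \eqref{Append_EL oper of transl} at $u=0$ produces a second-order operator in $v$ whose coefficients are the second partial derivatives $\tilde F_{pp}, \tilde F_{p\xi}, \tilde F_{p\eta}, \tilde F_{\xi\xi}, \tilde F_{\xi\eta}, \tilde F_{\eta\eta}$ evaluated at $(x, z, 0, 0, 0)$. Using the factorization $\tilde F = F\cdot E$ in \eqref{Append_Transl end_F, E, tilde(F)}, the fact that $F(x, 0, 0) = 1$, and the self-shrinker identity $H_S = X^\perp/2$, a direct computation shows that the coefficients of $\partial_i\partial_j v$, $z\partial_z^2 v$, $z\partial_z v$, $\partial_i v$, and $v$ collapse respectively to $-g^{ij}$, $-z$, $-z$, the drift $-(X/2)\cdot\nabla_S$, and $-(|A_S|^2 + 1/2)$, i.e.\ to $-z T_S v$. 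The off-diagonal metric coefficient $\bar g_{iz} = X_i/2 + O(u)$ is responsible for a cross-term whose $u=0$ value is $x^S\cdot\partial_z\nabla v$; this does not fit the $O(z^{-1})$ coefficient bound and is therefore pulled out of $\bar{\cE_2}$ and stated explicitly. All remaining contributions — those arising from the explicit $1/z$ in $E$, from the $e^z z^{(n+1)/2}$ weight in $\cI$ producing $(n+1)/2$-type lower-order terms, and from derivatives of the metric determinant — are genuinely $O(z^{-1})$ and are absorbed into the $\bar{\cE_l}$.

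For step (b), the coefficients of $D\scT(u_0)[\cdot]$ are compositions of smooth functions $\Phi$ (coming from partial derivatives of $\tilde F$, which are analytic in $(p, \xi, \eta, 1/z)$ with uniform radius of convergence since $S$ is compact) with the arguments $(u_0, \nabla u_0, z\partial_z u_0, \nabla^2 u_0, \sqrt{z}\partial_z\nabla u_0, z\partial_z^2 u_0)$; the second-derivative arguments appear because carrying out the $\text{div}_S$ and $z\partial_z$ differentiations in $\scT$ hits both the coefficient functions of $v$ (producing first derivatives of $u_0$) and the differentiated coefficients (producing second derivatives of $u_0$). Expanding each $\Phi$ in a Taylor series around $(p, \xi, \eta) = 0$, the difference $D\scT(u_0) - D\scT(0)$ has coefficients bounded pointwise by $C_S(|u_0| + |\nabla u_0| + |z\partial_z u_0| + |\nabla^2 u_0| + |\partial_z\nabla u_0| + |z\partial_z^2 u_0|)$, provided $\|u_0\|_{C^2_\star}\leq \delta_S$ lies inside the convergence radius. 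Substituting $u_0 = u_- + tv$ and integrating in $t\in[0,1]$ produces the desired $\bar{\cE_l}$ with pointwise bound involving $u_\pm$ and its derivatives.

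For the H\"older bounds, I apply the standard composition rule $[\Phi(w_1, \ldots, w_m)]_{C^\alpha} \leq \|D\Phi\|_{C^0}\sum_k [w_k]_{C^\alpha}$ inside the anisotropic norms $C^\alpha_\star$ and $C^\alpha_\sharp$: these norms are tailored to the same parabolic scalings $(x,z) \mapsto (x, R^{-1/2}z)$ on $S\times [R, 2R]$ and $(x,z)\mapsto (R^{1/2}x, z)$ on $S\times [R, R+1]$ as the $C^{2,\alpha}_\star$ and $C^{2,\alpha}_\sharp$ norms on the arguments of $\Phi$, so the composition estimate passes through, and the $z^{-1}$ contribution picks up a H\"older seminorm of size $R^{-1}$ on the relevant annulus. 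The main obstacle is step (a): explicitly collecting the partial derivatives of $\tilde F = F\cdot E$ at $(x, z, 0, 0, 0)$ and verifying via the self-shrinker identity $H_S = X^\perp/2$ that their sum reduces exactly to $-zT_S + x^S\cdot\partial_z\nabla + O(z^{-1})$; once this identification is in hand, the pointwise and H\"older remainder bounds are routine consequences of analyticity of $\tilde F$ and the standard composition rules for anisotropic H\"older norms.
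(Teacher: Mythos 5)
Your proposal is correct and takes essentially the same route as the paper, whose proof is a single sentence: by analyticity of $\tilde{F}$ in $(1/z, p, \xi, \eta)$, it suffices to compute the first and second partials of $\tilde{F}$ at $(x, \infty, 0, \orig, 0)$, followed by a table of those values. Your fundamental-theorem-of-calculus decomposition $\scT(u_+)-\scT(u_-)=D\scT(0)[v]+\int_0^1\bigl(D\scT(u_-+tv)-D\scT(0)\bigr)[v]\,dt$ is the standard formalization of that one-liner, and the identification of the principal symbol, the $x^S\cdot\partial_z\nabla v$ cross-term, and the H\"older remainder estimates all proceed as you describe (note only that the paper's appendix uses the sign convention $H_S=-X^\perp/2$, the opposite of what you wrote, though that identity plays no essential role in this particular lemma).
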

		\begin{proof}
			By analyticity of $\tilde{F}$ with respect to $1/z, p, \xi, \eta$, it suffices to compute the corresponding derivatives of $\tilde{F}$ at $(x, \infty, 0, \orig, 0)$. 
			\begin{align*}
				\tilde{F}_\xi(x, \infty, 0, \orig, 0) & = 0; & (z\tilde{F}_\eta)(x, \infty, 0, \orig, 0) &= \frac{X^\perp}{2}; & \tilde{F}_p(x, \infty, 0, \orig, 0) & = - H_S; \\
				\tilde{F}_{\xi\xi}(x, \infty, 0, \orig, 0) & = g^{ij}; & (z\tilde{F}_{\xi\eta})(x, \infty, 0, \orig, 0) & = -\frac{g^{ij}X_i}{2}; & \tilde{F}_{\xi p}(x, \infty, 0, \orig, 0) & = 0; \\
				(z\tilde{F}_{\eta\eta})(x, \infty, 0, \orig, 0) & = 1; & (z\tilde{F}_{\eta p})(x, \infty, 0, \orig, 0) & = H_S^2 + \frac{1}{2}; & \tilde{F}_{p p}(x, \infty, 0, \orig, 0) & = H_S^2 - |A_S|^2.
				%    & & (z^2\tilde{F}_{z\eta})(x, \infty, 0, \orig, 0) & = -\frac{X^\perp}{2},
			\end{align*}
		\end{proof}

		We may also need the following improved error estimate for the divergence form the equation.
		\begin{Lem} \label{Append_Error est III of scT}
			Let $S$ be a closed self-shrinker.  Then there exists $\delta_S\in (0, 1)$ and $C_S>0$ such that if $u^\pm\in C^2(S\times \RR_{>1})$ satisfies \[
			\sup_{S\times \RR_{>z_0}} (|u^\pm| + |\nabla_S u^\pm| + |zu^\pm_z| + |\nabla^2_S u^\pm| + |\sqrt{z}\partial_z \nabla_S u^\pm| + |z\partial_z^2 u^\pm|) \leq \delta < \delta_S.    \]
			for some $z_0> 1$.  Then let $v := u^+ - u^-$, we have
			\begin{align*}
				\scT (u^+) - \scT (u^-) & = - \partial_z\left( (1+b)z\partial_z v - \frac{X}{2}\cdot \nabla_S v\right) - (1+b)z\partial_z v - (\Delta_S - \frac{X}{2}\cdot \nabla_S + |A_S|^2 + \frac{1}{2})v \\
				&\ - \text{div}_S E_1 - \partial_z E_2 - E_3,
				%    - div_S(\nabla_S v + E_1(v)) - \partial_z((1+b)z\partial_z v + E_2(v)) - (1+b)z\partial_z v + E_3(v), 
			\end{align*}
			where we have pointwise estimates
			\begin{align*}
				|E_j(v)| & \leq C_S\cdot \big((z^{-1}+\delta)\cdot(|\nabla v| +|v|)+|\partial_z v| \big), \\
				|b| + |\nabla_S b| + |\sqrt{z}\partial_z b| & \leq C_S\cdot(z^{-1}+\delta).
			\end{align*}
		\end{Lem}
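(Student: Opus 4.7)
The strategy preserves the divergence structure of $\scT$ through the difference. Using $-z\partial_z\tilde F_\eta=-\partial_z(z\tilde F_\eta)+\tilde F_\eta$, rewrite (\ref{Append_EL oper of transl}) as
\[
\scT(u)=-\mathrm{div}_S(\tilde F_\xi)-\partial_z(z\tilde F_\eta)-\bigl(z+\tfrac{n-1}{2}\bigr)\tilde F_\eta+\tilde F_p.
\]
I would then apply the fundamental theorem of calculus along $u_t:=tu^++(1-t)u^-$ to express, for each $*\in\{\xi,\eta,p\}$,
\[
\tilde F_*(u^+)-\tilde F_*(u^-)=A^v_*\,v+A^\xi_*\!\cdot\!\nabla v+A^\eta_*\,zv_z,
\]
where the coefficients are segment-averaged second partials of $\tilde F$. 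Their leading values at $(x,\infty,0,\mathbf 0,0)$ are exactly those tabulated in the proof of Lemma \ref{Append_Error est II of scT}; most importantly, by symmetry of the Hessian, $zA^\eta_\eta\to1$ and $zA^\xi_\eta=zA^\eta_\xi\to -X^T/2$. The deviations from these leading values are bounded pointwise by $C_S(z^{-1}+\delta)$ via the analyticity of $\tilde F$ in $(1/z,p,\xi,\eta)$ together with the $C^2_\star$-hypothesis on $u^\pm$.

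Setting $b:=zA^\eta_\eta-1$ gives $|b|\leq C_S(z^{-1}+\delta)$; the corresponding bounds on $|\nabla_S b|$ and $|\sqrt z\,\partial_z b|$ follow from differentiating the segment integral, with the $\sqrt z$-weight absorbing the $1/\sqrt z$ cost of a $\partial_z$-derivative hitting $\partial_z\nabla u^\pm$ (for which $|\partial_z\nabla u^\pm|\leq\delta/\sqrt z$ by hypothesis). Substituting the expansions into the displayed divergence form, the $zv_z$-coefficient of $-\partial_z(z\delta\tilde F_\eta)$ assembles exactly into the desired $-\partial_z((1+b)z\partial_z v)$, and the leading $\nabla v$-coefficient $-X^T/2$ of $z\delta\tilde F_\eta$ contributes the inner $+\partial_z(\tfrac{X}{2}\cdot\nabla_S v)$; likewise $-(z+\tfrac{n-1}{2})\delta\tilde F_\eta$ produces the second $-(1+b)z\partial_z v$ along with $+\tfrac{X^T}{2}\cdot\nabla v-(H_S^2+\tfrac12)v$; and the $v,\nabla v,\nabla^2 v$ contributions from $\delta\tilde F_p$ and $-\mathrm{div}_S(\delta\tilde F_\xi)$ combine, using the identity $(H_S^2-|A_S|^2)-(H_S^2+\tfrac12)=-(|A_S|^2+\tfrac12)$, to reconstruct precisely $-L_S v$.

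The main obstacle is a leftover cross term $\tfrac{X^T}{2}\cdot\partial_z\nabla v$: the symmetric coefficient $\tilde F_{\xi\eta}=\tilde F_{\eta\xi}$ contributes to the $\partial_z\nabla v$-coefficient twice, once from $-\partial_z(z\delta\tilde F_\eta)$ (giving $+\tfrac{X^T}{2}\cdot\partial_z\nabla v$) and once from $\mathrm{div}_S(\tfrac{X^T}{2}v_z)$ inside $-\mathrm{div}_S(\delta\tilde F_\xi)$ (giving another $+\tfrac{X^T}{2}\cdot\partial_z\nabla v$), for a total of $X^T\cdot\partial_z\nabla v$, while the principal regrouping accounts for only $\tfrac{X^T}{2}\cdot\partial_z\nabla v$. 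A naive integration by parts in $z$ would force $|E_2|\sim|\nabla v|$, forbidden by the stated bound. The resolution is a dual integration by parts in the $S$-direction,
\[
\tfrac{X^T}{2}\cdot\partial_z\nabla v=\mathrm{div}_S\!\bigl(\tfrac{X^T}{2}\partial_z v\bigr)-\tfrac{1}{2}(\mathrm{div}_S X^T)\partial_z v,
\]
which deposits the offender into $-\mathrm{div}_S E_1$ with $|E_1|\lesssim|\partial_z v|$ and into $-E_3$ with $|E_3|\lesssim|\partial_z v|$, both admissible. All remaining residual terms are products of $(z^{-1}+\delta)$-bounded coefficients with $v,\nabla v,\nabla^2 v,\partial_z\nabla v,\partial_z v,zv_z$; each is routed into one of $E_1,E_2,E_3$ by at most one further routine integration by parts (contributions like $\mathrm{(small)}\cdot\partial_z\nabla v$ become $-\partial_z E_2$ with $E_2\sim(z^{-1}+\delta)|\nabla v|$, and contributions like $\mathrm{(small)}\cdot\nabla^2 v$ become $-\mathrm{div}_S E_1$ with $E_1\sim(z^{-1}+\delta)|\nabla v|$), and the derivatives of the small coefficients that emerge from these integrations are bounded by the same analytic estimates and recycled into $E_3$.
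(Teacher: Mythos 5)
Your proposal follows essentially the same route as the paper's proof: rewrite the Euler--Lagrange operator (\ref{Append_EL oper of transl}) in the divergence form $-\mathrm{div}_S(\tilde F_\xi)-\partial_z(z\tilde F_\eta)-(z+\tfrac{n-1}{2})\tilde F_\eta+\tilde F_p$, apply the fundamental theorem of calculus along $u^s=su^++(1-s)u^-$ to express each $\tilde F_*(u^+)-\tilde F_*(u^-)$ via segment-averaged Hessians, read off the leading values from the table computed in the proof of Lemma \ref{Append_Error est II of scT}, and set $1+b$ to be (essentially) the segment average of $z\tilde F_{\eta\eta}$. The paper peels off $z\tilde F_{\eta\eta}=\frac{F}{E}(1-g_{p,\xi}^{ij}\xi_i\xi_j)+O(z^{-2})$ and averages only the exact factor, placing the $O(z^{-1})\cdot zv_z$ remainder in $E_2$; your $b:=zA^\eta_\eta-1$ includes that $O(z^{-1})$ piece, which still satisfies the stated bounds once one notes that the $\eta$-dependence of $E$ carries an explicit $1/(4z)$ factor, so $\partial_\eta b=O(z^{-1})$ and the $\nabla_S b$, $\sqrt z\,\partial_z b$ bounds close despite $|\nabla_S(z\partial_z u^s)|\lesssim\sqrt z\,\delta$.

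The one place where you make the argument look harder than it is, is the ``leftover cross term'' discussion. Since $\delta\tilde F_\xi$ already contains $-\tfrac{X^T}{2}v_z$ plus small-coefficient corrections, one simply sets $E_1:=\delta\tilde F_\xi-\nabla_S v$, and the second $\tfrac{X^T}{2}\cdot\partial_z\nabla v$ contribution is automatically of the required form $-\mathrm{div}_S E_1$ without any supplementary integration by parts in the $S$-direction: the lemma only requires a pointwise bound on $E_1$, not on $\mathrm{div}_S E_1$, so there is no obstruction in having $E_1\sim\partial_z v$ at order one. Your ``resolution'' is correct and arrives at the same $E_1$, but it frames as a rescue a step that, in the paper's bookkeeping (which never leaves divergence form), is present from the start. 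Apart from this presentational detour, the computation, the identification of the principal part $-\partial_z((1+b)z\partial_z v-\tfrac{X}{2}\cdot\nabla_S v)-(1+b)z\partial_z v-L_S v$, and the routing of errors into $E_1,E_2,E_3$ match the paper's proof.
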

		\begin{proof}
			By (\ref{Append_EL oper of transl}), it suffices to estimate \[
			\tilde{F}_\sigma(x, z, u^+, \nabla u^+, zu_z^+) - \tilde{F}_\sigma(x, z, u^-, \nabla u^-, zu_z^-).   \]
			For a variable $\sigma\in \{\xi, \eta, p\}$. By the proof of Lemma \ref{Append_Error est II of scT}, 
			\begin{align*}
				&\ \tilde{F}_\xi(x, z, u^+, \nabla u^+, zu_z^+) - \tilde{F}_\xi(x, z, u^-, \nabla u^-, zu_z^-) \\
				= &\ (g^{ij} + \bar{o})\cdot (\partial_i u^+ - \partial_i u^-) \partial_j + \bar{o}\cdot(u^+ - u^-) + O(z^{-1})\cdot (zu_z^+ - zu_z^{-1}) \\
				=: &\ \nabla_S v + E_1(v),
			\end{align*}
			where $\bar{o}$ are error terms in Taylor expansion, with estimate \[
			\bar{o} = O(z^{-1}+ |u^\pm| + |\nabla_S u^\pm| + |zu^\pm_z| + |\nabla^2_S u^\pm| + |\sqrt{z}\partial_z \nabla_S u^\pm| + |z\partial_z^2 u^\pm| ) \leq C_S(z^{-1}+\delta).  \]
			Hence, it gives the desired estimate on $E_1$. Similarly, 
			\begin{align*}
				&\ \tilde{F}_p(x, z, u^+, \nabla u^+, zu_z^+) - \tilde{F}_p(x, z, u^-, \nabla u^-, zu_z^-) \\
				= &\ \bar{o}\cdot (\nabla_S u^+ - \nabla_S u^-) + (H_S^2 - |A_S|^2 + \bar{o})\cdot(u^+ - u^-) + O(z^{-1})\cdot (zu_z^+ - zu_z^-) \\
				=: &\ (H_S^2 - |A_S|^2)v + E_{3,1}(v),
			\end{align*}
			where $E_{3,1}$ satisfies the desired estimate. On the other hand,
			\begin{align*}
				&\ z\left(\tilde{F}_\eta(x, z, u^+, \nabla u^+, zu_z^+) - \tilde{F}_\eta(x, z, u^-, \nabla u^-, zu_z^-) \right) \\
				= &\ (-\frac{X}{2} + \bar{o})\cdot (\nabla_S u^+ - \nabla_S u^-) + (H_S^2 +\frac{1}{2} + \bar{o})\cdot(u^+ - u^-) \\
				+ &\ \left(\int_0^1 z\tilde{F}_{\eta\eta}(x, z, u^s, \nabla u^s, z\partial_z u^s)\ ds\right)\cdot (zu_z^+ - zu_z^-),
			\end{align*}
			where $u^s:= su^+ + (1-s)u^-$. Since by its expression (\ref{Append_Transl end_F, E, tilde(F)}), we have 
			\begin{align*}
				\tilde{F}_{\eta\eta}(x, z, p, \xi, \eta)  = \frac{F}{2E}\cdot\left((E^2)_{\eta\eta} - 2E_\eta^2 \right) = \frac{F}{zE}\cdot (1-g_{p,\xi}^{ij} \xi_i\xi_j) + O(z^{-2}) .
			\end{align*}
			Therefore, we have
			\begin{align*}
				&\ z\left(\tilde{F}_\eta(x, z, u^+, \nabla u^+, zu_z^+) - \tilde{F}_\eta(x, z, u^-, \nabla u^-, zu_z^-) \right) \\
				= &\ -\frac{X}{2}\cdot \nabla_S v + (H_S^2 + \frac{1}{2})v + \underbrace{\left(\int_0^1 \frac{F}{E}\cdot(1-g_{p,\xi}^{ij} \xi_i\xi_j)|_{(x, z, u^s, \nabla u^s, z\partial_z u^s)}\ ds\right)}_{=:\ 1+b}\cdot zv_z + E_2(v),  
			\end{align*}
			where $b$ and $E_2$ satisfies the desired estimate. Combining (\ref{Append_EL oper of transl}) proves the Lemma.
		\end{proof}

		\section{Rescaling of Graphs} \label{Sec_Append_Scal Graph}
		Let $S\subset \RR^n$ be a $C^3$ closed hypersurface with normal field $\nu_S$.  The goal for this section is to prove the following:
		\begin{Lem} \label{Lem_App_Scal graph}
			There exists a geometric constant $\vartheta_S\in (0, 1)$ such that for every $u\in C^1(S\times [0, 1])$ with $\|u\|_{C^1, S\times [0, 1]}\leq \vartheta_S$, there exists a unique $\check{u}\in C^1(S\times [0, 1]\times (1-\vartheta_S, 1+\vartheta_S))$ such that for every $z\in [0, 1]$ and $|a-1|<\vartheta$, \[ 
			\graph_S(\check{u}(\cdot, z, a)) = a\cdot \graph_S(u(\cdot, z)).  \]
			Moreover, for every such $a$, $\check{u}$ satisfies the estimate, 
			\begin{align}
				\|\check{u}(\cdot, \cdot, a)\|_{C^1, S\times [0, 1]} & \leq C(S)(\|u\|_{C^1, S\times [0, 1]} + |a-1|) , \label{App_Scal graph_|check(u)|_C^1} \\
				\|\partial_a \check{u}(\cdot, \cdot, a)\|_{C^0, S\times [0, 1]} & \leq C(S) . \label{App_Scal graph_|partial_a check(u)|}
			\end{align}
		\end{Lem}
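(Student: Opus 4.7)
The plan is to realize the rescaled graph as a graph over $S$ using the nearest-point projection to $S$, and then invoke the implicit function theorem. Let $r_0 = r_0(S)>0$ be the normal injectivity radius, so that the Fermi map $\Psi: S\times (-r_0, r_0)\to N_{r_0}(S)$, $(x, t)\mapsto x+t\nu_x$, is a $C^2$-diffeomorphism onto a tubular neighborhood $N_{r_0}(S)$, with inverse giving the nearest-point projection $\pi_S$ and signed distance $d_S$. Let $R_0:= \sup_{y\in S}|y|$. The first step is to check that, if $\vartheta_S \leq c(S)\min\{r_0, r_0/R_0, 1\}$ is small enough, then for every $\|u(\cdot,z)\|_{C^1(S)}\leq \vartheta_S$ and $|a-1|\leq \vartheta_S$, the map
\[
\Theta_{a, u, z}: S\to \RR^n, \ \ \ y\mapsto a\bigl(y+u(y,z)\nu_y\bigr)
\]
takes values in $N_{r_0}(S)$. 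This follows from the bound $|\Theta_{a,u,z}(y)-y|\leq |a-1|R_0 + a|u(y,z)|\leq C(S)\vartheta_S$.

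Next, I would study the map $\Xi_{a, u, z}:= \pi_S\circ \Theta_{a, u, z}: S\to S$. When $a=1$ and $u\equiv 0$, $\Xi_{1,0,z} = \mathrm{id}_S$. By the $C^1$-smoothness of $\pi_S$ on $N_{r_0}(S)$ and the assumed $C^1$-smallness of $u$ and $|a-1|$, the map $\Xi_{a, u, z}$ is $C^1$-close to $\mathrm{id}_S$; in particular $d\Xi_{a, u, z}$ is invertible at every point of $S$. Thus $\Xi_{a, u, z}$ is a local $C^1$-diffeomorphism; since $S$ is compact and connected and $\Xi_{a,u,z}$ is homotopic to $\mathrm{id}_S$ through $(a(s), u(s)) = (1 + s(a-1), su)$, it is a $C^1$-diffeomorphism of $S$ onto itself. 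Define
\[
\check{u}(x, z, a):= d_S\bigl(\Theta_{a, u, z}(y_{x, z, a})\bigr),\ \ \ \text{ where } y_{x, z, a}:= \Xi_{a, u, z}^{-1}(x).
\]
By construction, $x + \check{u}(x, z, a)\nu_x = a(y_{x, z, a} + u(y_{x, z, a}, z)\nu_{y_{x, z, a}})$, which gives the desired identity $\graph_S(\check{u}(\cdot, z, a)) = a\cdot \graph_S(u(\cdot, z))$; the uniqueness of $\check{u}$ follows because any such $\check{u}$ must realize $d_S\circ \Theta_{a,u,z}\circ \Xi_{a,u,z}^{-1}$.

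For the estimates, I would differentiate the defining identity. At $a=1, u\equiv 0$, $\check{u}\equiv 0$ and $y_{x,z,a} = x$. Writing $\check{u}$ as the composition $d_S\circ \Theta_{a, u, z}\circ \Xi_{a, u, z}^{-1}$ and expanding in $a-1$ and $u$, one obtains $\check{u}(x, z, a) = (a-1)\langle x, \nu_x\rangle + a u(x, z) + O\bigl((|a-1|+\|u\|_{C^0})^2+ \|u\|_{C^1}(|a-1|+\|u\|_{C^0})\bigr)$ at leading order, and similarly for $\nabla_S\check{u}$, with implicit constants only depending on the $C^2$-geometry of $S$. This gives (\ref{App_Scal graph_|check(u)|_C^1}). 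The estimate (\ref{App_Scal graph_|partial_a check(u)|}) follows from differentiating the identity $\Psi(\Xi_{a,u,z}(y), \check{u}(\Xi_{a,u,z}(y), z, a)) = a(y + u(y,z)\nu_y)$ in $a$ at fixed $y$, solving for $\partial_a \check{u}$, and bounding the right-hand side by $|y+u(y,z)\nu_y|\leq R_0+\vartheta_S\leq C(S)$.

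The only mildly subtle point is the global invertibility of $\Xi_{a,u,z}$: a local $C^1$-perturbation argument gives local invertibility for free, but to promote this to a global diffeomorphism I need to use that $S$ is closed and connected, together with a degree or homotopy argument as sketched; once this is in place, the remainder is a straightforward application of the chain rule and Taylor expansion, with all constants controlled by the normal injectivity radius, the $C^2$-norm of the tubular-neighborhood chart, and $\sup_{S}|x|$.
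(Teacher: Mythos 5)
Your proof is correct and takes essentially the same route as the paper: both use the nearest-point projection to send the rescaled graph back to a map of $S$, observe this map is $C^1$-close to the identity (hence invertible), define $\check{u}$ via the signed distance composed with the inverse, and obtain the estimates by the chain rule and a Taylor expansion. Your explicit degree/homotopy argument for the global invertibility of $\Xi_{a,u,z}$ fills in a point the paper leaves implicit under ``implicit function theorem,'' but is otherwise the same construction with $\Xi_{a,u,z}$ playing the role of the $S$-component of the paper's $\mbfP_{a,u}$ and $y_{x,z,a}$ playing the role of $\mbfQ_{a,u}(x,z)$.
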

		\begin{proof}
			Let $\delta=\delta_S\in (0, 1)$ such that $\mbfG: S\times (-\delta_S, \delta_S) \to \RR^n$, $(x, y)\mapsto x+ y \nu_S(x)$ is a $C^2$ diffeomorphism onto its image; $\Pi: \BB_\delta(S)\to S$ be the $C^2$ nearest point projection onto $S$. 
			By implicit function theorem, there exists $0<\vartheta(S)\ll1$ such that 
			\begin{itemize}
				\item for every $|a-1|\leq \vartheta$, $a \cdot\BB_{\vartheta_S}(S)\subset \BB_{\delta/2}(S)$; 
				\item for every $|a-1|<\vartheta$ and every $u\in C^1(S\times [0, 1])$ with $\|u\|_{C^1, S\times [0, 1]}$, the following  
				\begin{align*}
					\mbfP_{a, u} : S\times [0, 1] \to S\times [0, 1], \ \ \ (x, z) \mapsto (\Pi(a\cdot \mbfG(x, z)), z) , 
				\end{align*}
				is a $C^1$ diffeomorphism and is $C^2$ in $a$. 
			\end{itemize} 
			Moreover, by choosing $\vartheta\ll1$,
			\begin{align*}
				\|\mbfP_{a, u} - id_{S\times [0,1]}\|_{C^1, S\times [0, 1]} + \|\mbfP_{a, u}^{-1} - id_{S\times [0,1]}\|_{C^1, S\times [0, 1]} & \leq C(S)|a-1|; \\
				\|\partial_a \mbfP_{a, u}\|_{C^0, S\times [0, 1]} + \|\partial_a \mbfP_{a, u}^{-1}\|_{C^0, S\times [0, 1]} & \leq C(S).
			\end{align*}
			Let $\mbfQ_{a, u}\in C^1(S\times [0, 1], S)$ be the $S$-component of $\mbfP^{-1}_{a, u}$. Under this notation, $\check{u}\in C^1$ is uniquely defined by the explicit expression, \[
			\check{u}(x, z, a):= \rho_S\left(a\cdot \mbfG\left( \mbfQ_{a, u}(x, z), u\circ \mbfP_{a,u}^{-1}(x, z)\right)\right),  \]
			where $\rho_S:= dist_{\RR^n}(S, \cdot)$.  Since every map is $C^1$, this immediately proves (\ref{App_Scal graph_|partial_a check(u)|}). To see (\ref{App_Scal graph_|check(u)|_C^1}), note that,
			\begin{align*}
				|\check{u}(\cdot, \cdot, a)|& \leq |\check{u}(\cdot, \cdot, 1)| + C(S)|a-1| \leq C(S)(\|u\|_{C^0, S\times [0, 1]}+|a-1|) ; \\
				\|D\check{u}(\cdot, \cdot, a)\| & = \left\|D\rho_S\big|_{a\mbfG} \cdot a\left(\partial_x \mbfG \cdot D \mbfQ_{a, u} + \partial_y \mbfG\cdot Du \cdot D \mbfP_{a, u}^{-1}\right)\right\| \\
				& = a\left\|\left( D\rho_S\big|_{a\mbfG} - D\rho_S\big|_\mbfG \right) \cdot \partial_x \mbfG \cdot D \mbfQ_{a, u} + D\rho_S\big|_{a\mbfG}\cdot \partial_y \mbfG\cdot Du\cdot D \mbfP_{a, u}^{-1} \right\| \\
				& \leq C(S)(|a-1| + \|u\|_{C^1, S\times [0, 1]}).
			\end{align*}
		\end{proof}
		
		\bigskip
		
		\noindent\textbf{Data availability statement.} This work has no associated data.
		
		\medskip
		
		\noindent\textbf{Conflicts of interest.} The authors have no conflicts of interest to declare that are relevant to the content of this article.

		\bibliographystyle{alpha}
		\bibliography{GMT}
		
	\end{document}